\newcommand{\ot}{[0,t]}
\newcommand{\ott}{[0,T]}
\newcommand{\1}{{\bf 1}}
\newcommand{\R}{\mathbb R}
\newcommand{\be}{\mathbf{E}}
\newcommand{\bp}{\mathbf{P}}
\newcommand{\ca}{\mathcal A}
\newcommand{\cd}{\mathcal D}
\newcommand{\cf}{\mathcal F}
\newcommand{\ci}{\mathcal I}
\newcommand{\cn}{\mathcal N}
\newcommand{\cv}{\mathcal V}
\newcommand{\cz}{\mathcal Z}
\newcommand{\XX}{\mathfrak X}
\newcommand{\HH}{\mathfrak H}
\newcommand{\al}{\alpha}
\newcommand{\ep}{\varepsilon}
\newcommand{\ga}{\gamma}
\newcommand{\gga}{\Gamma}
\newcommand{\ka}{\kappa}
\newcommand{\la}{\lambda}
\newcommand{\laa}{\Lambda}
\newcommand{\om}{\omega}
\newcommand{\oom}{\Omega}
\newcommand{\si}{\sigma}
\newcommand{\vp}{\varphi}
\newcommand{\lp}{\left(}
\newcommand{\rp}{\right)}
\newcommand{\lc}{\left[}
\newcommand{\rc}{\right]}
\newcommand{\lcl}{\left\{}
\newcommand{\rcl}{\right\}}
\newcommand{\lln}{\left|}
\newcommand{\rrn}{\right|}
\newtheorem{theorem}{Theorem}[section]
\newtheorem{corollary}[theorem]{Corollary}
\newtheorem{definition}[theorem]{Definition}
\newtheorem{lemma}[theorem]{Lemma}
\newtheorem{proposition}[theorem]{Proposition}
\theoremstyle{remark}
\newtheorem{remark}[theorem]{Remark}
\let\Section=\section
\def\section{\setcounter{equation}{0}\Section}
\def\RR{\mathbb{R}}
\def\EE{\mathbb{E}}
\def\cH{{\cal H}}
\def\la{{\lambda}}
\def\si{{\sigma}}
\def \eref#1{\hbox{(\ref{#1})}}
\begin{document}

\title[Multiplicative stochastic heat equation]
{Stochastic heat equation\\
with rough dependence in space}

\author[Y. Hu, J. Huang, K. L\^e, D. Nualart, S. Tindel]{Yaozhong Hu
 \and Jingyu Huang \and Khoa L\^e \and David Nualart \and Samy Tindel}

\address{Yaozhong Hu, Jingyu Huang, Khoa L\^e and David Nualart: Department of Mathematics, University of Kansas, 405 Snow Hall, Lawrence, Kansas, USA.}
\email{yhu@ku.edu, huangjy@ku.edu, khoale@ku.edu,  nualart@ku.edu}

\address{Samy Tindel: Institut {\'E}lie Cartan,
Universit\'e de Lorraine, B.P. 239,
54506 Vand{\oe}u\-vre-l{\`e}s-Nancy, France.}
\email{samy.tindel@univ-lorraine.fr}
\thanks{Y. Hu is partially supported by a grant from the Simons Foundation \#209206}
\thanks{D. Nualart was supported by the NSF grant  DMS1208625 and the ARO grant FED0070445}
\thanks{S. Tindel is member of the BIGS (Biology, Genetics and Statistics) team at INRIA}
\subjclass[2010]{60G15; 60H07; 60H10; 65C30}

\keywords{Stochastic heat equation, fractional Brownian motion,
Feynman-Kac formula, Wiener chaos expansion, intermittency. }

\begin{abstract}
This paper studies  the nonlinear one-dimensional stochastic heat equation
 driven by a Gaussian noise which  is white in time and which
  has the covariance of a fractional Brownian motion with Hurst parameter
 $H \in \left( \frac 14, \frac 12 \right)$ in the space variable.  The existence and uniqueness of the solution $u$ are
 proved assuming the nonlinear coefficient $\sigma(u)$  is differentiable with a Lipschitz derivative and $\sigma(0)=0$.
 In the  case of a multiplicative noise, that is, $\sigma(u)=u$,
 we derive the Wiener chaos expansion of the solution and a Feynman-Kac formula
 for the moments of the solution. These results allow us to
 establish sharp  lower and upper  asymptotic bounds for $\be[u^n(t,x)]$.
\end{abstract}

\maketitle

\tableofcontents

\def\hbp{{\cH(\beta,p)}}
\def\norm{\mathcal{N}}
\def\sp{{\XX^{p,\theta}_\varepsilon}}
\section{Introduction}

In this paper we are interested in the one-dimensional stochastic  partial differential equation
\begin{equation}\label{spde with sigma}
\frac{\partial u}{\partial t}=\frac{\kappa}{2}\frac{\partial^2 u}{\partial x^2}+\sigma(u)\dot{W}\,, \quad t\ge 0, \quad x\in\mathbb{R}\,,
\end{equation}
  where $W$ is a centered Gaussian process with covariance given
by
\begin{equation}  \label{cov}
\be \lc W(s,x)W(t,y)\rc=
\frac{1}{2}\lp |x|^{2H}+|y|^{2H}-|x-y|^{2H}\rp \, (s\wedge t),
\end{equation}
with $\frac 14<H<\frac{1}{2}$.  That is, $W$ is a standard Brownian motion in time and a
fractional Brownian motion with Hurst parameter $H$  in  the space variable and $\dot{W} =\frac {\partial ^2W}{\partial t \partial x}$.
For this stochastic heat equation with a rough noise in space, understood
in the It\^o sense, our aim is twofold: on   one hand, for a  differentiable coefficient
 $\si$ with a Lipschitz derivative and satisfying $\sigma(0)=0$, we will obtain the existence and uniqueness
 of the solution. On the other hand, we shall  further investigate the special relevant case
  $\si(u)=u$. We now detail those two main points.

\noindent
\textbf{(1)}
Since the pioneering work by Peszat-Zabczyk \cite{PZ} and Dalang (see \cite{Dal}), there has been a lot of interest in stochastic partial differential equations driven by a Brownian motion in time with spatial  homogeneous covariance.  After more than a decade of investigations, the standard assumptions on $W$ under which existence and uniqueness hold take the following form

\noindent
\textit{(i)} $\be [ \dot{W}(s,x)\dot{W}(t,y)]=\Lambda(x-y) \, \delta_0(s-t)$, where $\Lambda$ is a positive distribution of positive type.

\noindent
\textit{(ii)} The Fourier transform of the spatial covariance $\Lambda$ is a tempered measure $\mu$ that satisfies the integrability condition $\int_{\mathbb{R}} \frac {\mu(d\xi)}{ 1+|\xi|^2} <\infty$.

\noindent
In case of the covariance \eqref{cov} under consideration, one can easily compute the measure $\mu$, whose explicit expression is  $\mu(d\xi)= c_{1,H}|\xi|^{1-2H}d\xi$, where $c_{1,H}$ is a constant depending on $H$ (see expression \eqref{eq:expr-c1H} below). In addition, it is readily checked that $\mu$ fulfills the condition $\int_{\mathbb{R}} \frac {\mu(d\xi)}{ 1+|\xi|^2} <\infty$ for all $H\in(0,1)$. However, the corresponding covariance $\Lambda$ is a distribution which fails to be positive when $H<\frac 12$,  and the covariance of two stochastic integrals with respect to
$\dot{W}$ is expressed in terms of fractional derivatives. For this reason, the standard methodology used in the classical references \cite{Dal,DQ,PZ} to handle homogeneous spatial covariances does not apply to our case of interest.

In a recent paper, Balan, Jolis and Quer-Sardanyons \cite{BJQ} proved the existence of a unique mild
 solution for equation (\ref{spde with sigma}) in the case $\sigma(u)=au+b$, using techniques of
 Fourier analysis.  The method used in \cite{BJQ} cannot be extended to general nonlinear coefficients.
 Indeed, the isometry property of stochastic integrals with respect to $W$ involves the semi norm
 \[
 \cn_{\frac 12-H,2}u(t,x)  = \lp\int_\RR  \be | u(t,x+h) -u(t,x)|^2 |h|^{2H-2} dh\rp^{\frac 12},
  \]
  where $\cn_{\beta,p}$ is defined in \eref{e4}.
 Then, if $u$ and $v$ are two solutions, $\cn_{\frac 12-H,2}(\sigma(u)-\sigma(v))$ cannot be bounded in terms of $\mathcal{N}_{\frac12-H,2}( u -v)$,
 due to the presence of a double increment of the form $\sigma(u(s, z+h)) -\sigma(v(s,z+h)) - \sigma(u(s,z))+ \sigma (v(s,z))$. To overcome this difficulty we shall use a truncation argument to show the uniqueness of mild solutions, inspired by the work of Gy\"ongy and Nualart in \cite{Gyo2} on  the stochastic Burgers equation on the whole real line driven by a space-time white noise. The main ingredient is a uniform estimate of the $L^p(\Omega)$-norm of a stochastic convolution (see Lemma \ref{lem2}). Due to this argument, the uniqueness is obtained in the space $\cz_T^p$ (see \eref{eq:dcp-norm-ZTp} for the definition of the norm in $\cz_T^p$), which requires an integrability condition in the space variable.

 The existence of a solution is much  more involved. The methodology, inspired by the work of Gy\"ongy in \cite{Gyo} on semi-linear stochastic partial differential equations, consists in taking approximations obtained by regularizing the noise and using a  compactness argument on  a suitable space of trajectories, together with the  strong uniqueness result.

Once existence and uniqueness are obtained, we establish the H\"older continuity of the solution $u$ in both space and time variables.
 We also derive   upper bounds for the moments of the solution using a sharp Burkholder's inequality, as well as the matching lower bounds for the second moment by means of a Sobolev embedding argument.
 Summarizing, we get a complete basic picture of the solution to equation \eqref{spde with sigma}  in the case $\frac 14 < H< \frac 12$. The critical parameter $H=\frac 14$ is worthwhile noting, since it is also the threshold under which rough differential equations driven by a fractional Brownian motion are ill-defined.

\noindent
\textbf{(2)}
The particular case $\sigma(u)=u$ in equation \eqref{spde with sigma} deserves a special  attention. Indeed, this linear equation turns out to be a continuous version of the parabolic Anderson model, and is related to challenging systems in random environment like KPZ equation \cite{Ha,BeC} or polymers~\cite{AKQ,BTV}. The localization and intermittency properties of the linear version of \eqref{spde with sigma} have thus been thoroughly studied for equations driven by a space-time white noise (see \cite{Kh} for a nice survey), while a recent trend consists in extending this kind of result to equations driven by very general Gaussian noises \cite{Ch14,HHNT,HN,HNS}.

Nevertheless, the rough noise $W$ with covariance \eqref{cov} presented here is not covered by the aforementioned references, and we wish to fill this gap. We will thus particularize our setting to $\si(u)=u$, and first go back to the existence and uniqueness problem. Indeed, in this linear case, one can implement a rather simple procedure involving Fourier transform, as well as a chaos expansion technique, in order to achieve existence and uniqueness of the solution to~\eqref{spde with sigma}. Since this point of view is interesting in its own right and short enough, we develop it at Section \ref{sec:anderson-exist-uniq}. Moreover in this case we can consider more general initial conditions.

We then move to a Feynman-Kac type representation for the solution: following the approach introduced in \cite{HN,HHNT},  we obtain an explicit formula for the kernels of the Wiener chaos expansion and we show its convergence. In fact, we  cannot  expect  a Feynman-Kac formula for the solution, because the  covariance   is rougher than the space-time white noise case, and this type of formula requires smoother covariance structures  (see, for instance,  \cite{HNS}). However,  by means of Fourier analysis techniques as in \cite{HN,HHNT}, we are able to obtain a Feynman-Kac formula for the moments that involves a fractional derivative of the Brownian local time.

Finally, the previous considerations allow us to handle, in the last section of the paper,  the intermittency properties of the solution.   More precisely,  we show  sharp lower bounds for the
moments of the solution of the form $\be [u(t,x)^n]\ge\exp(C n^{1+\frac 1H} t)$, for all $t\ge 0$, $x\in \R $ and $n\ge 2$. These bounds entail the intermittency phenomenon and match the corresponding estimates for the case $H>\frac 12$ obtained in \cite{HHNT}.

The paper is organized as follows. Section \ref{sec:preliminaries} contains some preliminaries on stochastic integration with respect to the noise $W$ and elements of Malliavin calculus.  Section \ref{sec:momentest} deals with basic moment estimates and H\"older continuity properties of stochastic convolutions. We establish the uniqueness of the solution in Section \ref{sec:existence}. To do  this, first we derive moment estimates for the supremum norm in space and time for the stochastic convolution. In order to show the existence, we need to introduce several spaces of functions in Subsection \ref{sub:space_time_function_spaces} and derive compactness criteria.  Section \ref{sec:anderson-exist-uniq} deals with the parabolic Anderson model, that is, the case $\sigma(u)=u$. In Section \ref{sec:Anderson.momentbounds}, we derive Feynman-Kac type formulas for the moments of the solution which allow us to derive sharp lower and upper moment estimates and intermittency properties.

\section{Preliminaries}\label{sec:preliminaries}
In this section we introduce some of the functional spaces we will deal with in the remainder of the paper, as well as some general Malliavin calculus tools.

\subsection{Noise structure and stochastic integration}

Our noise $W$ can be seen as a Brownian motion with values in an infinite dimensional Hilbert space. One might thus think that the stochastic integration theory with respect to $W$ can be handled by classical theories  (see e.g \cite{BP,Dal,DPZ}). However, the spatial covariance function of $W$, which is formally equal to $H(2H-1) |x-y|^{2H-2}$,  is not   locally integrable when  $H<1/2$  (in other words, the Fourier transform  of $|\xi|^{1-2H}$ is not a function), and $W$ thus lies outside the scope of application of these classical references. Due to this fact, we provide some details about the construction of a stochastic integral with respect to our noise.

Let us start by introducing our basic notation on Fourier transforms
of functions. The space of   Schwartz functions is
denoted by $\mathcal{S}$. Its dual, the space of tempered distributions, is $\mathcal{S}'$.  The Fourier
transform of a function $u \in \mathcal{S}$ is defined with the normalization
\[ \mathcal{F}u ( \xi)  = \int_{\mathbb{R}} e^{- i
   \xi  x } u ( x) d x, \]
so that the inverse Fourier transform is given by $\mathcal{F}^{- 1} u ( \xi)
= ( 2 \pi)^{- 1} \mathcal{F}u ( - \xi)$.

 Let $ \mathcal{D}((0,\infty)\times \R)$ denote the space  of real-valued infinitely differentiable functions with compact support on $(0, \infty) \times \R$.
Taking into account the spectral representation of the covariance function of the fractional Brownian motion in the case $H<\frac 12$
proved in \cite[Theorem 3.1]{PT}, we represent  our noise $W$   by a zero-mean Gaussian family $\{W(\vp) ,\, \vp\in
\mathcal{D}((0,\infty)\times \R)\}$ defined on a complete probability space
$(\Omega,\cf,\bp)$, whose covariance structure
is given by
\begin{equation}\label{eq:cov1}
\be\lc W(\vp) \, W(\psi) \rc
=  c_{1,H}\int_{\R_{+}\times\R}
\cf\varphi(s,\xi) \, \overline{\cf\psi(s,\xi)} \, |\xi|^{1-2H} \, ds  d\xi,
\end{equation}
where the Fourier transforms $\cf\varphi,\cf\psi$ are understood as Fourier transforms in space only and
\begin{equation}\label{eq:expr-c1H}
c_{1,H}= \frac 1 {2\pi} \Gamma(2H+1)\sin(\pi H)  \,.
\end{equation}

The inner product appearing in (\ref{eq:cov1}) can be expressed in terms of fractional derivatives. Let $\beta$ be in $(0,1)$. The Marchaud fractional derivative $D_-^{\beta}$ of order $\beta$ with respect to the space variable is defined, for a function $\varphi : \RR_+\times \RR \rightarrow \RR$, as follows
\begin{equation}\label{eq:def-frac-deriv}
D^{\beta}_- \varphi(s,x) = \lim_{\varepsilon \to 0}D_{-,\varepsilon}^{\beta}\varphi(s,x)\,,
\end{equation}
where
\begin{equation*}
D_{-,\varepsilon}^{\beta}\varphi(s,x) = \frac{\beta}{\gga(1-\beta)}
\int_{\varepsilon}^{\infty} \frac{\varphi(s,x) - \varphi(s, x+y)}{y^{1+\beta}} \, dy\,.
\end{equation*}
We also define the fractional integral of order $\beta$ of a function $\psi: \RR_+\times \RR \rightarrow \RR$ by
\begin{equation*}
I_-^{\beta}\psi(s,x)=\frac{1}{\Gamma(\beta)}\int_{x}^\infty \psi(s,u)(x-u) ^{\beta-1}du\,.
\end{equation*}
 Note again that here the fractional differentiation and integration are only with respect to space variables.
Observe  that  if  $\varphi =I ^{\beta} _{-}  \psi $  for some $\psi \in L^2( \RR_+\times \RR)$, then by  Theorem 6.1 in~\cite{SKM}
 we have
\[
D^{\beta}_- \varphi = D^{\beta} _- (I^{\beta}_- \psi)= \psi
\]
and, hence,
\[
\int_{\RR_+ \times \RR} \left[D^{\beta}_- \varphi(s,x)\right]^2  dsdx= \int_{\RR_+ \times \RR}  \psi^2(s,x) dsdx <\infty.
\]

The previous notions can be related to our noise in the following way: it is known  (cf. \cite{PT} for further details) that
\begin{equation}  \label{eq1}
\be\lc W(\vp) \, W(\psi) \rc
=   c_ {2,H} \int_{\RR_+\times \RR}D_-^{\frac{1}{2}-H}\varphi(s,x)D_-^{\frac{1}{2}-H}\psi(s,x)dsdx,
\end{equation}
where
\begin{equation}\label{eq:def-c2H}
c_{2,H}     = \left[ \Gamma\left(H+\frac 12\right)  \right]^2  \left( \int_0^{\infty} \left( (1+s)^{H-\frac{1}{2}}-s^{H-\frac{1}{2}}\right)^2 ds +\frac{1}{2H}\right)^{-1}\,.
\end{equation}
for any $\varphi, \psi \in \mathcal{D}((0,\infty)\times \R)$.

Based on the previous observation and relation (\ref{eq1}), we introduce a new set of function spaces. Indeed, let  $\HH$ be the class of functions  $\varphi : \RR_+ \times \RR \rightarrow \RR $ such that  there exists $ \psi \in L^2(\RR_+\times \RR)  $ satisfying $\varphi(s,x)=I_-^{\frac12- H}\psi(s,x) $. The relation between $\HH$ and our noise $W$  is given in the following proposition.

\begin{proposition} \label{prop: H}
The class of functions $\HH $ is a Hilbert space equipped with the inner product
\begin{equation}\label{eq: H inner prod}
\langle \varphi,\psi\rangle_{ \HH}:=\ c_{2 ,  H }\int_{\RR_+\times \RR}D_-^{\frac 12-H }\varphi(s,x)D_-^{\frac 12-H}\psi(s,x)dsdx\,,
\end{equation}
  and  $ \mathcal{D}((0,\infty)\times \R)$ is dense in  $\HH$. Moreover if $\HH_0$ denotes the  class of functions   $\varphi  \in L^2( \RR_+\times \RR)$ such that $ \int_{\RR_+\times \RR} |\mathcal{F}\varphi(s,\xi)|^2|\xi|^{1-2H}d\xi ds < \infty $, then $ \HH_0$ is not complete  and the inclusion
  $\HH_0 \subset \HH$ is strict. Also for any $\varphi,\psi \in \HH_0$,
  \begin{equation}\label{eq: H_0 element H prod}
  \langle\varphi, \psi \rangle_{ \HH}=c_{1, H}\int_{\RR_+\times \RR}\mathcal{F}\varphi(s,\xi)\overline{\mathcal{F}\psi(s,\xi)}|\xi|^{1-2H }d\xi ds\,.
  \end{equation}
\end{proposition}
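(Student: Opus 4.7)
The plan is to realize $\HH$ as an isometric copy of $L^2(\RR_+\times\RR)$ via the fractional derivative. Define $T\varphi := D_-^{1/2-H}\varphi$ for $\varphi\in\HH$. By the very definition of $\HH$, every $\varphi$ is of the form $I_-^{1/2-H}\psi$ for some $\psi\in L^2$; the inversion formula $D_-^{1/2-H}I_-^{1/2-H}\psi=\psi$ quoted from \cite{SKM} then gives $T\varphi=\psi$, so $T$ is a bijection onto $L^2(\RR_+\times\RR)$. The inner product \eqref{eq: H inner prod} is literally $c_{2,H}\langle T\varphi, T\psi\rangle_{L^2}$, so $T$ is an isomorphism of inner-product spaces and the Hilbert space structure of $L^2$ transfers directly to $\HH$.

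To establish the Fourier representation \eqref{eq: H_0 element H prod} on $\HH_0$ (and, in the process, the inclusion $\HH_0\subset\HH$), I would combine Plancherel in the spatial variable with the Fourier-multiplier property of the Marchaud derivative. A short computation starting from the integral representation of $D_-^{\beta}$ shows that for $\varphi$ in the Schwartz class, $|\mathcal{F}(D_-^{1/2-H}\varphi)(\xi)|^2 = |\xi|^{1-2H}|\mathcal{F}\varphi(\xi)|^2$, and Plancherel (with the $(2\pi)^{-1}$ factor dictated by the Fourier convention of the paper) then yields \eqref{eq: H_0 element H prod} for test functions, once one checks that $c_{2,H}/(2\pi) = c_{1,H}$ from \eqref{eq:expr-c1H}--\eqref{eq:def-c2H}. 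Given $\varphi\in\HH_0$, the same multiplier computation identifies $\psi := \mathcal{F}^{-1}\bigl((-i\cdot)^{1/2-H}\mathcal{F}\varphi\bigr)$ as an $L^2$ function (its $L^2$-norm squared is precisely the $\HH_0$ integrability condition), and the Fourier-side inversion gives $\varphi = I_-^{1/2-H}\psi$, placing $\varphi$ in $\HH$; the identity \eqref{eq: H_0 element H prod} then extends to all of $\HH_0$ by density of Schwartz functions in the Fourier-side norm.

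Density of $\mathcal{D}((0,\infty)\times\RR)$ in $\HH$ is where the main work lies. The plan is to use the isometry $T$: density of $\mathcal{D}$ in $\HH$ is equivalent to density of $T(\mathcal{D})$ in $L^2$. Given $\psi\in L^2(\RR_+\times\RR)$, a standard truncate-and-mollify argument first produces $\psi_n\in C_c^\infty\bigl((0,\infty)\times\RR\bigr)$ with $\psi_n\to\psi$ in $L^2$, so that $\varphi_n:=I_-^{1/2-H}\psi_n\to\varphi:=I_-^{1/2-H}\psi$ in the $\HH$-norm by the isometry. The remaining and delicate step, which I expect to be the \emph{main obstacle}, is to replace each $\varphi_n$ by a genuine test function. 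Since $I_-^{1/2-H}\psi_n$ has polynomial rather than compact support in space, a spatial cut-off introduces an error which must be quantified through the explicit decay rate of $I_-^{1/2-H}\psi_n$ and controlled in the fractional-derivative norm before mollifying a second time.

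Finally, to see that the inclusion $\HH_0\subset\HH$ is strict and $\HH_0$ is not complete, I would exhibit a single $\psi\in L^2(\RR_+\times\RR)$ such that $I_-^{1/2-H}\psi\notin L^2(\RR_+\times\RR)$; the corresponding $\varphi\in\HH\setminus\HH_0$ shows strict inclusion. Combined with the density of $\mathcal{D}\subset\HH_0$ in $\HH$ from the previous step, the space $\HH_0$ must then be a dense proper subspace of the complete space $\HH$, and therefore cannot itself be complete.
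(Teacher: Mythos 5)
Your overall architecture is sound, and it is in fact the same as that of the reference the paper leans on: the paper itself gives no proof of this proposition, citing instead Pipiras--Taqqu \cite{PT} (Theorems 3.1--3.3 there) and noting that the passage from functions on $\RR$ to functions on $\RR_+\times\RR$ is routine. Your first step (the bijection $T\varphi=D_-^{1/2-H}\varphi$ onto $L^2(\RR_+\times\RR)$, justified by Theorem 6.1 of \cite{SKM}, which transports completeness), your Fourier-multiplier identification of the two inner products, and your ``dense proper subspace of a complete space cannot be complete'' conclusion are all correct. The constant identity $c_{2,H}=2\pi c_{1,H}$ that you flag as ``to be checked'' is indeed true, but be aware that it amounts to the classical Mandelbrot--Van Ness normalization $\int_0^\infty\bigl((1+s)^{H-1/2}-s^{H-1/2}\bigr)^2ds+\tfrac1{2H}=\Gamma(H+\tfrac12)^2/\bigl(\Gamma(2H+1)\sin(\pi H)\bigr)$, a genuine computation that you would have to cite (it is in \cite{PT}) or redo.

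The genuine gap is the density of $\mathcal{D}((0,\infty)\times\RR)$ in $\HH$, which is the hardest assertion of the proposition and which your proposal explicitly leaves as ``the main obstacle''. Your cut-off route is not a dead end: for $\psi_n\in C_c^\infty$ the function $\varphi_n=I_-^{1/2-H}\psi_n$ is smooth, has the same compact time-support as $\psi_n$, and decays in space like $O(|x|^{\beta-1})$ with $\beta=\frac12-H$; cutting off with $\chi(x/R)$ and interpolating via $\|g\|_{\dot H^{\beta}}\le\|g\|_{L^2}^{1-\beta}\|g'\|_{L^2}^{\beta}$ one gets $\|D_-^{\beta}\bigl((1-\chi(\cdot/R))\varphi_n\bigr)\|_{L^2}^2=O(R^{-1})$, which closes the argument. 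But none of this quantitative work appears in the proposal, and it is exactly where the content of the density claim lies. A cleaner route, and the one behind Theorem 3.3 of \cite{PT} (which the paper itself invokes in Step 3 of the proof of Proposition \ref{prop:intg-wrt-W}), is duality: if $g\in L^2(\RR_+\times\RR)$ is orthogonal to $D_-^{1/2-H}\varphi$ for all $\varphi=a\otimes b$ with $a\in\mathcal{D}((0,\infty))$ and $b\in\mathcal{D}(\RR)$, then for a.e.\ $s$ the tempered distribution $\overline{(-i\xi)^{1/2-H}}\,\mathcal{F}g(s,\cdot)$ pairs to zero against the class $\mathcal{F}(\mathcal{D}(\RR))$, which is dense in $\mathcal{S}(\RR)$, hence vanishes; so $\mathcal{F}g(s,\cdot)=0$ a.e.\ and $g=0$, and density follows through the isometry $T$ with no cut-off needed. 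A second, smaller, gap: your proof of strictness of $\HH_0\subset\HH$ (and hence of non-completeness) requires exhibiting $\psi\in L^2$ with $I_-^{1/2-H}\psi\notin L^2$, which you assert but never construct. One line fixes it: take $\mathcal{F}\psi(s,\xi)=a(s)\,|\xi|^{-H}{\bf 1}_{\{|\xi|\le 1\}}$ with $a\in\mathcal{D}((0,\infty))$; then $\psi\in L^2$ because $2H<1$, while $|\mathcal{F}(I_-^{1/2-H}\psi)(s,\xi)|=|a(s)|\,|\xi|^{-1/2}{\bf 1}_{\{|\xi|\le 1\}}$ is not square-integrable, so $I_-^{1/2-H}\psi\in\HH\setminus L^2\subset\HH\setminus\HH_0$.
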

We refer to \cite{PT} for the proof of this  proposition. Note that in \cite{PT}, the functions considered there are from $\RR$ to $\RR$, but by scrutinizing the proofs we see that the results of this paper can be easily  extended to our case, i.e. for functions from $\RR_+\times \RR$ to $\RR$. We omit the details.

Let us now identify our space $\HH$ with another classical space in harmonic analysis. Indeed, according to Proposition 1.37 in \cite{BCD}, for any $\beta\in (0,1)$  the homogeneous Sobolev space $\dot{H}^\beta$ can be defined as the completion of the space of infinitely differentiable functions with compact support with respect to the norm
\begin{equation}  \label{E1}
\|f\|^2_{\dot{H} ^{\beta}}= \int_{\RR}|D_-^{\beta}f (x)|^2 dx= c^2_{3,\beta} \int_\RR\int_\RR|f(x+y)-f(x)|^2|y|^{-1-2 \beta}dxdy\,,
	\end{equation}
	where  $c_{3,\beta}^2= (1/2-\beta)\beta \, c_{2, \frac12-\beta}^{-1}$ and $c_{2, \frac12-\beta}$ is defined by \eqref{eq:def-c2H}.
	As a consequence, our Hilbert space $\HH$ can be identified with the homogenous Sobolev space of order $\beta= \frac 12-H$ of functions with values in $L^2(\R_+)$. Namely $\HH= \dot{H} ^{\frac 12- H} (L^2(\R_+))$, and for any $f  \in \HH$ the quantity $\|f\|_{\HH}$ can be represented as
	\[
	\| f \|_{\HH}^2= c^2_{3, \frac 12-H} \int_{\R_+} \int_\RR\int_\RR |f(s,x+y)-f(s,x)|^2|y|^{2H-2}dxdyds.
	\]
 From Proposition  \ref{prop: H}, we see that the Gaussian family $W$ can be extended as an isonormal  Gaussian process $W=\{W(\phi), \phi\in  \HH\}$  indexed by  the Hilbert space $\HH$.

Let us now turn to the stochastic integration with respect to $W$. Since we are handling a Brownian motion in time, one can start by integrating elementary processes.

\begin{definition}\label{def:elementary-process}
For any $t\ge0$, let $\mathcal{F}_{t}$ be the $\sigma$-algebra generated by $W$ up to time $t$. An elementary process $g$ is a process given by
\begin{equation*}
g(s,x)
=
\sum_{i=1}^{n} \sum_{j=1}^m X_{i,j} \, \1_{(a_{i},b_{i}]}(s) \, \1_{(h_j,l_{j}]}(x),
\end{equation*}
where $n$ and $m$ are finite positive integers, $-\infty<a_{1}<b_{1}<\cdots<a_{n}<b_{n}<\infty$, $h_j<l_{j}\ $    and $X_{i,j}$ are $\cf_{a_{i}}$-measurable random variables for $i=1,\ldots,n$. The integral of  such a process with respect to $W$ is defined as
\begin{eqnarray}
\int_{\mathbb{R}_+}\int_{\mathbb{R}}g(s,x) \, W(ds,dx)
&=&\sum_{i=1}^{n} \sum_{j=1}^m X_{i,j} \, W\lp \1_{(a_{i},b_{i}]} \otimes \1_{(h_j,l_{j}]}\rp   \label{eq:riemann-sums-W}\\
&=&\sum_{i=1}^{n} \sum_{j=1}^m X_{i,j} \,\big[W(b_{i},l_{j})-W(a_{i},l_{j}) -W(b_i,h_j)+ W(a_i, h_j)\big]\,. \nonumber
\end{eqnarray}

\end{definition}

We can now extend the notion of integral with respect to $W$ to a broad class of adapted processes.

\begin{proposition}\label{prop:intg-wrt-W}
Let $\laa_{{H}}$ be the space of predictable processes $g$ defined on $\R_{+}\times\R$ such that
almost surely $g\in \HH$ and $\be[\|g\|_{\HH}^{2}]<\infty$. Then, we have:

\noindent
\emph{(i)}
The space of elementary processes defined in Definition \ref{def:elementary-process} is dense in $\laa_{H}$.

\noindent
\emph{(ii)}
For $g\in\laa_{H}$, the stochastic integral $\int_{\mathbb{R}_+}\int_{\mathbb{R}}g(s,x) \, W(ds,dx)$ is defined as the $L^{2}(\oom)$-limit of Riemann sums along elementary processes approximating $g$, and we have
\begin{equation}\label{int isometry}
\be\lc \lp \int_{\mathbb{R}_+}\int_{\mathbb{R}}g(s,x) \, W(ds,dx) \rp^{2} \rc
=
\be \left[ \|g\|_{\HH}^{2}\right].
\end{equation}
\end{proposition}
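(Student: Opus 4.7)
My plan is to mirror the classical It\^o construction for stochastic integrals, adapted to the roughness of the spatial covariance encoded in $\HH$: first verify the isometry \eqref{int isometry} on elementary processes, then establish density in $\laa_{H}$, and finally extend the integral by continuity. For an elementary $g=\sum_{i,j}X_{i,j}\mathbf{1}_{(a_i,b_i]}(s)\mathbf{1}_{(h_j,l_j]}(x)$, I would square the Riemann sum \eqref{eq:riemann-sums-W}, take expectations, and collect cross terms by pairs $(i,j),(i',j')$. When $i\neq i'$, say $b_i\le a_{i'}$, the product $X_{i,j}X_{i',j'}\cdot W(\mathbf{1}_{(a_i,b_i]}\otimes \mathbf{1}_{(h_j,l_j]})$ is $\cf_{a_{i'}}$-measurable while the remaining factor is a Brownian increment on $(a_{i'},b_{i'}]$ with mean zero given $\cf_{a_{i'}}$, so off-diagonal time contributions vanish by conditioning. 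For $i=i'$, the $\cf_{a_i}$-measurability of $X_{i,j},X_{i,j'}$ and the independence of the Brownian increment from $\cf_{a_i}$ factor the expectation as $\be[X_{i,j}X_{i,j'}]\,(b_i-a_i)\,\langle \mathbf{1}_{(h_j,l_j]},\mathbf{1}_{(h_{j'},l_{j'}]}\rangle_{\dot{H}^{1/2-H}}$. Summing and comparing with the expansion of $\be\|g\|_{\HH}^2$ via \eqref{eq: H inner prod} and the identification $\HH=\dot{H}^{1/2-H}(L^2(\R_+))$ then yields \eqref{int isometry} on elementary $g$.

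For the density claim (i), I would approximate a general predictable $g\in\laa_{H}$ in three layers. First, truncate by setting $g^{(N)}:=g\,\mathbf{1}_{\{|g|\le N\}}\mathbf{1}_{[0,N]}(s)$, which converges to $g$ in $L^2(\Omega;\HH)$ by dominated convergence. Second, regularize in time via backward averaging, $g^{(N),\ep}(s,x):=\ep^{-1}\int_{(s-\ep)^+}^{s}g^{(N)}(r,x)\,dr$; this preserves predictability and converges to $g^{(N)}$ in $\laa_{H}$ by $L^2$-continuity of translations in the $s$-variable with values in the spatial component of $\HH$. Third, on a fine partition of $[0,N]$ freeze the time dependence using left endpoint values (keeping the $\cf$-measurability intact), and approximate each resulting random spatial slice in the homogeneous Sobolev norm $\dot{H}^{1/2-H}$ by a finite linear combination of indicators of intervals $(h_j,l_j]$. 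The last step is available because $\mathcal D$ is dense in $\HH$ by Proposition~\ref{prop: H}, and smooth compactly supported functions can in turn be approximated in the Gagliardo seminorm \eqref{E1} by step functions.

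With density and the elementary-case isometry in hand, part (ii) follows by the usual Hilbert-space argument: for $g\in\laa_{H}$ pick elementary $g_n\to g$ in $L^2(\Omega;\HH)$; the isometry makes $\int g_n\,W(ds,dx)$ Cauchy in $L^2(\Omega)$; define $\int g\,W(ds,dx)$ as the resulting limit, verify independence of the approximating sequence, and pass to the limit in the isometry. The main obstacle will be the spatial approximation by indicators of intervals in the nonlocal norm $\dot{H}^{1/2-H}$: control of the Gagliardo double integral \eqref{E1} under step-function refinement requires estimating contributions where $|y|$ is small, since the integrand is singular of order $|y|^{2H-2}$. This can be overcome by using the equicontinuity of the smooth target to control the $|y|<\delta$ regime uniformly, while the tail at infinity is handled by the $L^2$ norm of the target. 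Throughout, predictability of the coefficients $X_{i,j}$ must be preserved by evaluating random quantities only at the left endpoints of the time partition, which is the reason for the three-layer structure above.
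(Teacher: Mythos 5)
Your plan reproduces, in essence, the paper's own proof of item (i), and supplements it with the standard parts the paper leaves implicit. The core of both arguments is the same: discretize time in an adaptedness-preserving way (your backward averaging followed by left-endpoint freezing is the paper's Step 1 split into two moves), approximate each $\cf_{t_k}$-measurable spatial slice by finite sums $\sum_j X_j\phi_j$ with $X_j$ random and $\phi_j$ deterministic (the paper's Step 2, i.e.\ density of simple functions in the Bochner space $L^2(\Omega;\dot{H}^{\frac12-H})$), and finally replace the deterministic $\phi_j$ by linear combinations of indicators. The one genuine methodological difference is in this last step: the paper invokes Theorem 3.3 of \cite{PT}, while you propose a self-contained proof (smooth compactly supported functions are dense, and a piecewise-constant interpolant $f_h$ of a smooth compactly supported $f$ satisfies $\|f-f_h\|^2_{\dot{H}^{\frac12-H}}=O(h^{1+2H})$, by splitting the Gagliardo integral at $|y|=h$ exactly as you indicate; the region $|y|<h$ is harmless precisely because $2(\tfrac12-H)<1$). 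That route works, and your explicit verification of the isometry on elementary processes (off-diagonal time blocks killed by conditioning on $\cf_{a_{i'}}$, diagonal blocks factored using independence of the increments of $W$ after $a_i$ from $\cf_{a_i}$), together with the Cauchy-sequence extension, correctly supplies item (ii), which the paper does not write out.

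The one step that would not survive scrutiny as written is your first layer: the amplitude truncation $g\,\mathbf{1}_{\{|g|\le N\}}$ justified ``by dominated convergence'' (the time restriction $\mathbf{1}_{[0,N]}(s)$ is unproblematic). The $\HH$-norm is a norm on spatial increments, and the increments of the error $g\,\mathbf{1}_{\{|g|>N\}}$ are not dominated by those of $g$: wherever $g$ crosses the level $N$, the hard-truncated field acquires a jump of height roughly $N$ while the increments of $g$ there can be arbitrarily small, so there is no integrable dominating function and the DCT argument does not apply. Either replace it by the soft truncation $T_N(g)=(g\wedge N)\vee(-N)$, whose error has increments bounded by $2\,|g(s,x+h)-g(s,x)|\,\mathbf{1}_{\{|g(s,x+h)|>N \text{ or } |g(s,x)|>N\}}$, so that dominated convergence genuinely applies, or simply delete the layer: none of your later steps (vector-valued continuity of translations, left-endpoint freezing, Bochner approximation) uses boundedness of $g$, and indeed the paper's proof never truncates.
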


\begin{proof}
Let us prove item (i). To this aim, consider $g \in \Lambda _{H}$ and set $\varphi(t,x)=D_-^{\frac 12 -H}g(t,x)$.
According to the definition of $\laa_{H}$, we have  $\be[\int_{\mathbb{R}_+}\int_{\mathbb{R}}|\varphi(s,x)|^2dxds] < \infty$. Then we will show that $g(t,x)$ can be approximated by elementary processes in $L^2(\Omega;\HH)$ in three steps.

\noindent
{\it Step 1}. Recall that we have set $\dot{H}^{\frac 12- H}$ for the class of functions $f$,  such that there exists $h \in L^2(\RR)$ satisfying $f=I_-^{1/2-H}h$. We show that the process $g$ can be approximated in $L^2(\Omega;\HH)$ by functions of the form
\begin{equation} \label{zz}
\psi_m(s,x;\omega)=\sum_{i=1}^m{\bf 1}_{(a_i,b_i]}(s)\phi_i(x;\omega)\,,
\end{equation}
where for each $i$, $\phi_i(x;\omega)$ is an $\mathcal{F}_{a_i}$-measurable $L^2(\Omega;\dot{H}^{\frac 12- H})$-valued random field. To see this, we just set
\begin{equation*}
\psi_m(s,x;\omega)=\sum_{k=1}^{m2^m} {\bf 1}_{((k-1)2^{-m}, k2^{-m}]}(s)2^m\int_{(k-1)2^{-m}}^{k2^{-m}}g(r,x;\omega)dr\,,
\end{equation*}
and we easily get  that $ D^{\frac 12 -H}_-\psi_m(s,x;\omega)\to D^{\frac 12 -H}_-g(s,x;\omega)$  in $L^2(\Omega\times\R_+ \times \R)$ as $m$ tends to infinity. In this way we get the desired approximation.

\noindent
{\it Step 2}.  We show that each $\psi_m(s,x;\omega)$ of the form  (\ref{zz}) can be approximated in $L^2(\Omega; \HH)$ by a linear combination of elements of the form $X{\bf 1}_{(a,b]}(s) h(x)$. Indeed, for each $\phi_i(x;\omega)$, we notice that since
\begin{equation*}
\be \int_{\RR}|D_-^{\frac{1}{2}-H}\phi_i(x;\omega)|^2 dx < \infty\,,
\end{equation*}
the random function $D_-^{\frac{1}{2}-H}\phi_i(x;\omega)$ can be approximated in $L^2(\Omega; L^2(\RR))$ by functions of the form $\sum_{j=1}^N X_j h_j(x)$, where each $X_j$ is an $\mathcal{F}_{a_i}$-measurable random variable and each $h_j$ is an element in $L^2(\RR)$. Thus, it is easily seen that $\phi_i(x;\omega)$ can be approximated by a sequence of functions  of  the form
\begin{equation*}
\sum_{j=1}^N X_j I_-^{\frac{1}{2}-H}h_j(x)\,.
\end{equation*}
So we conclude that $\psi_m(s,x;\omega)$ can be approximated in $L^2(\Omega; \mathfrak{H})$ by
\begin{equation*}
\sum_{i=1}^m {\bf 1}_{(a_i,b_i]}(s)\sum_{j=1}^N X_{i,j} I_-^{\frac{1}{2}-H}h_{i,j}(x)\,,
\end{equation*}
 where for each $(i,j)$, $X_{i,j}$ are $\mathcal{F}_{a_i}$-measurable random variables and $h_{i,j} \in L^2(\R)$.

\noindent
{\it Step 3}.  Owing to Theorem 3.3 in
\cite{PT} we know that
\begin{equation*}
\text{Span}\lcl
D_-^{\frac{1}{2}-H}\1_{(h,l]},\,  h<l
\rcl
\end{equation*}
is dense in $\Lambda_0:= \{D_-^{\frac{1}{2}-H}f: f \in \dot{H}^{\beta}\}$, in $L^2(\RR)$ norm. This observation and the results in Step 2 immediately show that $\psi_m(s,x;\omega)$ can be approximated by elementary processes in $L^2(\Omega; \HH)$.  This completes the proof.
 \end{proof}

With this stochastic integral defined, we are ready to state the definition of the solution to equation \eref{spde with sigma}.
\begin{definition}\label{def-sol-sigma}
Let $u=\{u(t,x), 0 \leq t \leq T, x \in \mathbb{R}\}$ be a real-valued predictable stochastic process  such that for all $t\in[0,T]$ and $x\in\R$ the process $\{p_{t-s}(x-y)\sigma(u(s,y)) \1_{[0,t]}(s), 0 \leq s \leq t, y \in \mathbb{R}\}$ is an element of $\laa_{H}$, where $p_t(x)$ is the heat kernel on the real line related to $\frac{\ka}{2}\Delta$. We say that $u$ is a mild solution of \eref{spde with sigma} if for all $t \in [0,T]$ and $x\in \mathbb{R}$ we have
\begin{equation}\label{eq:mild-formulation sigma}
u(t,x)=p_t u_0(x) + \int_0^t \int_{\mathbb{R}}p_{t-s}(x-y)\sigma(u(s,y)) W(ds,dy) \quad a.s.,
\end{equation}
where the stochastic integral is understood in the sense of Proposition \ref{prop:intg-wrt-W}.
\end{definition}

\subsection{Elements of Malliavin calculus}

We recall   that the Gaussian family $W$ can be extended to $\HH$ and this produces an isonormal Gaussian process, where $\HH$ is the Hilbert space  introduced in Proposition \ref{prop: H}. We refer to~\cite{Nua}
for a detailed account of the Malliavin calculus with respect to a
Gaussian process. On our Gaussian space, the  smooth and cylindrical
random variables $F$ are of the form
\begin{equation*}
F=f(W(\phi_1),\dots,W(\phi_n))\,,
\end{equation*}
with $\phi_i \in \HH$, $f \in C^{\infty}_p (\R^n)$ (namely $f$ and all
its partial derivatives have polynomial growth). For this kind of random variable, the derivative operator $D$ in the sense of Malliavin calculus is the
$\HH$-valued random variable defined by
\begin{equation*}
DF=\sum_{j=1}^n\frac{\partial f}{\partial
x_j}(W(\phi_1),\dots,W(\phi_n))\phi_j\,.
\end{equation*}
The operator $D$ is closable from $L^2(\Omega)$ into $L^2(\Omega;
\HH)$  and we define the Sobolev space $\mathbb{D}^{1,2}$ as
the closure of the space of smooth and cylindrical random variables
under the norm
\[
\|DF\|_{1,2}=\sqrt{\be [F^2]+\be [\|DF\|^2_{\HH}  ]}\,.
\]
We denote by $\delta$ the adjoint of the derivative operator (or divergence) given
by the duality formula
\begin{equation}\label{dual}
\be  \lc \delta (u)F \rc =\be  \lc \langle DF,u
\rangle_{\HH}\rc ,
\end{equation}
for any $F \in \mathbb{D}^{1,2}$ and any element $u \in L^2(\Omega;
\HH)$ in the domain of $\delta$.

For any integer $n\ge 0$ we denote by $\mathbf{H}_n$ the $n$th Wiener chaos of $W$. We recall that $\mathbf{H}_0$ is simply  $\R$ and for $n\ge 1$, $\mathbf {H}_n$ is the closed linear subspace of $L^2(\Omega)$ generated by the random variables $\{ H_n(W(\phi)),\phi \in \HH, \|\phi\|_{\HH}=1 \}$, where $H_n$ is the $n$th Hermite polynomial.
For any $n\ge 1$, we denote by $\HH^{\otimes n}$ (resp. $\HH^{\odot n}$) the $n$th tensor product (resp. the $n$th  symmetric tensor product) of $\HH$. Then, the mapping $I_n(\phi^{\otimes n})= H_n(W(\phi))$ can be extended to a linear isometry between    $\HH^{\odot n}$ (equipped with the modified norm $\sqrt{n!}\| \cdot\|_{\HH^{\otimes n}}$) and $\mathbf{H}_n$.

Consider now a random variable $F\in L^2(\Omega)$ which is measurable with respect to the $\sigma$-field  $\cf$ generated by $W$. This random variable can be expressed as
\begin{equation}\label{eq:chaos-dcp}
F= \be \lc F\rc + \sum_{n=1} ^\infty I_n(f_n),
\end{equation}
where the series converges in $L^2(\Omega)$, and the elements $f_n \in \HH ^{\odot n}$, $n\ge 1$, are determined by $F$.  This identity is called the Wiener-chaos expansion of $F$.

The Skorohod integral (or divergence) of a random field $u$ can be
computed by  using the Wiener chaos expansion. More precisely,
suppose that $u=\{u(t,x) , (t,x) \in \R_+ \times\R\}$ is a random
field such that for each $(t,x)$, $u(t,x)$ is an
$\cf_t$-measurable and square integrable random  variable.
Then, for each $(t,x)$ we have a Wiener chaos expansion of the form
\begin{equation}  \label{exp1}
u(t,x) = \be \lc u(t,x) \rc + \sum_{n=1}^\infty I_n (f_n(\cdot,t,x)).
\end{equation}
Suppose   that $\be [\|u\|_{ \HH}^{2}]$ is finite.
Then, we can interpret $u$ as a square  integrable
random function with values in $\HH$ and the kernels $f_n$
in the expansion (\ref{exp1}) are functions in $\HH
^{\otimes (n+1)}$ which are symmetric in the first $n$ variables. In
this situation, $u$ belongs to the domain of the divergence operator (that
is, $u$ is Skorohod integrable with respect to $W$) if and only if
the following series converges in $L^2(\Omega)$
\begin{equation}\label{eq:delta-u-chaos}
\delta(u)= \int_0 ^\infty \int_{\R^d}  u(t,x) \, \delta W(t,x)
= W(\be [u]) + \sum_{n=1}^\infty I_{n+1} (\widetilde{f}_n(\cdot,t,x)),
\end{equation}
where $\widetilde{f}_n$ denotes the symmetrization of $f_n$ in all its $n+1$ variables. We note that whenever $u\in \laa_{H}$ the integral $\delta(u)$ coincides with the It\^o integral.

\medskip
Along the paper we denote by $C$ a generic constant that may vary from line to line.

\section{Moment estimates and H\"older continuity of stochastic convolutions}
\label{sec:momentest}

This section is devoted to a thorough study of the stochastic convolution related to our noise $\dot{W}$, including moment bounds and H\"older continuity estimates.

\subsection{Moment bound of the solution} First we introduce some notation, which makes  some of our  formulae easier to read, and which will prevail until the end of the article. Let
 $(B,\|\cdot\|)$ be a Banach space equipped with the norm $\|\cdot\|$, and let $\beta\in(0,1)$ be a fixed number. For every  function $f:   \RR\to B$, we introduce the function $ \cn_\beta^{B} f :    \RR \to[0,\infty]$ defined by
 	\begin{equation}  \label{e1}
 	 	\cn_\beta^{B}  f(x)=\left( \int_ {\RR } \| f( x+ h)- f( x)\|^2|h|^{-1-2 \beta}dh \right)^{\frac12}\,.
 	\end{equation}
When $B=\R$, we abbreviate the notation $\cn _\beta^{\R} f$ into $\cn _\beta f$.
 With this notation, the norm of the homogeneous  Sobolev space $\dot{H} ^\beta$  can be written as $c_{3,\beta} \|  \cn _\beta f\|_{L^2 (\RR)}$. The following technical lemma will be used along the paper.

\begin{lemma} \label{lem1}
For any   $\beta \in (0,1 )$,
\[
 \int_{\RR}  [ \cn_\beta p_s(x)]^2 dx \le C_\beta (\kappa s)^{-\frac 12-\beta}\,.
 \]
\end{lemma}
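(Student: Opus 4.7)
The plan is to pass to Fourier variables and exploit the scaling of the heat kernel. First I would unfold the definition of $\cn_\beta$ and use Fubini to get
\[
\int_{\RR} [\cn_\beta p_s(x)]^2\,dx
= \int_{\RR}\int_{\RR}|p_s(x+h)-p_s(x)|^2\,dx\,|h|^{-1-2\beta}\,dh,
\]
i.e.\ essentially (up to the constant $c_{3,\beta}^{-2}$ appearing in \eqref{E1}) the squared homogeneous Sobolev norm $\|p_s\|^2_{\dot H^\beta}$.

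Next I would apply Plancherel's identity to the inner space integral. With the Fourier normalization used in the paper,
\[
\int_{\RR}|p_s(x+h)-p_s(x)|^2\,dx
= \frac{1}{2\pi}\int_{\RR}|e^{i\xi h}-1|^2\,|\cf p_s(\xi)|^2\,d\xi,
\]
and then swap the order of integration in $(h,\xi)$. The elementary identity
\[
\int_{\RR}|e^{i\xi h}-1|^2\,|h|^{-1-2\beta}\,dh
= 2\int_{\RR}\bigl(1-\cos(\xi h)\bigr)|h|^{-1-2\beta}\,dh
= C_\beta\,|\xi|^{2\beta}
\]
(valid since $\beta\in(0,1)$) reduces the problem to estimating the weighted Fourier integral
\[
\int_{\RR}|\xi|^{2\beta}\,|\cf p_s(\xi)|^2\,d\xi.
\]

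For the heat kernel $p_s$ associated with $\frac{\kappa}{2}\Delta$ we have $\cf p_s(\xi)=e^{-\kappa s\xi^2/2}$, so $|\cf p_s(\xi)|^2 = e^{-\kappa s\xi^2}$. A change of variable $u=\sqrt{\kappa s}\,\xi$ then yields
\[
\int_{\RR}|\xi|^{2\beta}\,e^{-\kappa s\xi^2}\,d\xi
= (\kappa s)^{-\frac12-\beta}\int_{\RR}|u|^{2\beta}\,e^{-u^2}\,du,
\]
and the remaining Gaussian integral is a finite constant depending only on $\beta$. Collecting constants gives the claimed bound $C_\beta(\kappa s)^{-\frac12-\beta}$.

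There is no real obstacle here; the only thing to watch is keeping track of the Fourier normalization and of the absolute constants, and verifying that $\int_{\RR}(1-\cos(\xi h))|h|^{-1-2\beta}\,dh$ really scales as $|\xi|^{2\beta}$ (it does, by rescaling $h\mapsto h/|\xi|$, provided $\beta\in(0,1)$ so that both the singularity at $0$ and the decay at infinity are integrable). The result can equivalently be viewed as the standard smoothing estimate $\|p_s\|_{\dot H^\beta}^2\asymp s^{-1/2-\beta}$ (up to the $\kappa$ rescaling).
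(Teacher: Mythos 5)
Your proof is correct and follows essentially the same route as the paper's: Plancherel's identity in the space variable, the scaling identity $\int_{\RR}|e^{i\xi h}-1|^2|h|^{-1-2\beta}\,dh = C_\beta|\xi|^{2\beta}$, and the change of variables $u=\sqrt{\kappa s}\,\xi$ in the resulting Gaussian integral. The only cosmetic difference is the order in which the $h$- and $\xi$-integrations are carried out, which does not affect the argument.
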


\begin{proof}
Recalling that  $\cf p_{s}(\xi)=e^{-\frac \kappa 2  s\xi^2}$ and invoking Plancherel's identity we can write		 \begin{align*}
		 \int_{\RR}  [ \cn_\beta p_s(x)]^2  dx&= \int_ {\RR } \int_{\RR}  | p_s( x+ h)-  p_s( x)|^2|h|^{-1-2 \beta}dh  dx \\
		  &=    \int_{\RR}\int_{\RR}e^{-\kappa s|\xi|^2}|e^{- i \xi z} -1|^2 |z|^{-1-2\beta} dz d\xi\\
		   &= C_{1,\beta}  \int_{\RR} e^{-\kappa s\xi^2} |\xi|^{  2\beta} d\xi\,,
		\end{align*}
where the second relation is obtained by a scaling $v\equiv \xi \, z$ in the integral in $z$
and $C_{1,\beta}=  \frac 1{2\pi}\int_{\RR}  |e^{iv} -1|^2 |v|^{-1-2\beta} dv$, which is easily seen to be a convergent integral. Setting now $\eta=(\kappa s)^{1/2}\xi$ in the integral in $\xi$, we get
\begin{equation*}
 \int_{\RR}  [ \cn_\beta p_s(x)]^2  dx
\le
C_\beta  (\kappa s)^{-\frac 12-\beta} \, ,
\end{equation*}
where $C_\beta = C_{1,\beta}\int_{\RR} e^{-\eta^2} |\eta| ^{2\beta} d\eta$.
\end{proof}

The transformation $\cn_{\beta}^B$ can also be defined for functions $f$ defined on $\RR_+\times \RR$ acting on the spacial variable,  and in this case, $\cn_{\beta}^Bf : \RR_+\times \RR \to [0,\infty]$.
Now fix $p\ge 2$, and suppose that $f=\{ f(t,x), t\ge 0, x\in \RR\}$ is a random field such that $\be |f(t,x)|^p <\infty$ for all $(t,x)$. Then we can consider $f$ as an $L^p(\Omega)$-valued function and we will  denote   by $\cn_{\beta,p} f $ the transformation introduced in (\ref{e1}) for $B=L^p(\Omega)$, that is,
 	\begin{equation}   \label{e4}
 	 	\cn_{\beta,p} f (t,x)=\left( \int_ {\RR } \| f(t, x+ h)- f(t, x)\| _{L^p(\Omega)}^2|h|^{-1-2 \beta}dh \right)^{\frac12}\,.
 	\end{equation}

With the above notation in mind, the following proposition is essential in our approach.  	
\begin{proposition}		\label{prop.burkholder}
Let $W$ be the Gaussian noise defined by the covariance \eqref{eq:cov1}, and consider  a predictable random field $f\in \Lambda_H$.  Then, for any $p\ge 2$ we have
 \begin{equation}\label{eq:ineq-burk-2}
\left \|\int_0^t\int_\RR f(s,y)W(ds,dy) \right\|_{L^p(\Omega)}
 \le
\sqrt{4p}  c_{3, \frac 12-H} \lp \int_{0}^{t} \int_{\RR}  [\cn_{\frac 12-H, p} f(s,y)]^2 dyds
 \rp^{\frac 12},
\end{equation}
where $c_{3, \beta}$ is defined by relation \eqref{E1}.
	\end{proposition}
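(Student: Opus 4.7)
The plan is to view the stochastic integral as a continuous martingale in the time variable, apply the Burkholder--Davis--Gundy inequality with the sharp constant $\sqrt{4p}$, and then use Minkowski's inequality to exchange the $L^{p/2}(\Omega)$ norm with the spatial and temporal integrals.

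First, I would observe that, since $W$ is white in time and $f$ is predictable, the process
$$M_t := \int_0^t\int_\RR f(s,y)\, W(ds,dy),\qquad t\in [0,T],$$
is a continuous square-integrable $(\cf_t)$-martingale. The key preliminary step is to identify its quadratic variation. Starting from elementary processes (dense in $\Lambda_H$ by Proposition \ref{prop:intg-wrt-W}(i)), where the quadratic variation is computed by hand, and passing to the limit via the isometry \eqref{int isometry}, I expect
$$\langle M\rangle_t \;=\; \|f\,\mathbf 1_{[0,t]}\|_{\HH}^{2} \;=\; c_{3,\frac 12-H}^{2}\int_0^t\!\!\int_\RR\!\!\int_\RR \bigl|f(s,y+h)-f(s,y)\bigr|^{2}|h|^{2H-2}\,dh\,dy\,ds,$$
where the second equality uses the identification of $\|\cdot\|_\HH$ with the norm on $\dot H^{\frac 12-H}(L^{2}(\R_+))$ recorded in the text following Proposition \ref{prop: H}.

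Next, I would invoke the Burkholder--Davis--Gundy inequality for continuous martingales in the sharp form $\|M_t\|_{L^p(\Omega)}\le \sqrt{4p}\,\|\langle M\rangle_t^{1/2}\|_{L^p(\Omega)}$, valid for $p\ge 2$ (this can be obtained by applying It\^o's formula to $|M_t|^p$ and a standard Gronwall-type argument). Then, since $p/2\ge 1$, Minkowski's inequality for the $L^{p/2}(\Omega)$-norm of the triple integral gives
$$\|\langle M\rangle_t\|_{L^{p/2}(\Omega)} \le c_{3,\frac 12-H}^{2}\int_0^t\!\!\int_\RR\!\!\int_\RR \bigl\|f(s,y+h)-f(s,y)\bigr\|_{L^p(\Omega)}^{2}|h|^{2H-2}\,dh\,dy\,ds.$$
The inner integral in $h$ is precisely $[\cn_{\frac 12-H,p}f(s,y)]^{2}$, so taking square roots and combining with the BDG bound yields the desired inequality \eqref{eq:ineq-burk-2}.

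The main obstacle, I anticipate, is the pathwise identification of the quadratic variation with $\|f\mathbf 1_{[0,t]}\|_\HH^{2}$. The $L^2(\Omega)$-isometry is immediate, but computing $\langle M\rangle_t$ requires that a subsequence of sums of squared martingale increments of approximating elementary integrands converges almost surely (or in probability) to the stated fractional-Sobolev expression; this entails a careful use of the elementary-process approximation established in Step 2 of the proof of Proposition \ref{prop:intg-wrt-W}. A secondary technical point is to ensure that both sides of \eqref{eq:ineq-burk-2} are finite simultaneously, which can be arranged by first establishing the inequality for bounded elementary integrands and then passing to the limit using Fatou's lemma on the left and monotone convergence on the right.
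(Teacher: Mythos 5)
Your proposal follows essentially the same route as the paper's proof: that proof likewise applies the Burkholder--Davis--Gundy inequality with the constant $\sqrt{4p}$, identifies the quadratic variation with $c_{3,\frac12-H}^{2}\int_0^t\int_{\R^2}|f(s,y+h)-f(s,y)|^{2}|h|^{2H-2}\,dh\,dy\,ds$ via the norm expression \eqref{E1}, and then uses Minkowski's inequality to pull the $L^{p/2}(\Omega)$-norm inside the integrals, exactly as you do. The quadratic-variation identification that you flag as the main obstacle is treated there as a standard consequence of the isometry \eqref{int isometry} (in its conditional form), so no work beyond what you outline is required.
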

	
	\begin{proof}
		Applying Burkholder's inequality, we have
		\begin{equation}\label{eq:burk-simple}
			\left\|\int_0^t\int_\RR f(s,y)W(ds,dy)\right\|_{L^p(\Omega)}\le\sqrt{4p}\left\|\int_0^t\left\|f(s,\cdot)\right\|_{\dot{H}^{\frac 12- H}}^2ds \right\|_{L^{\frac p2}(\Omega)}^{\frac12}\,.
		\end{equation}
Moreover, using \eref{E1} we can write
\begin{equation} \label{e2}
\|f(s,\cdot)\|^2_{\dot{H}^{\frac 12-H}  }
=c_{3,\frac 12-H} ^2
   \int_{\R^{2}} \left| f(s,y+h) -f(s,y)  \right|^{2}
|h|^{2H-2}  \, dh dy .
\end{equation}
We now invoke Minkowski's  inequality, under the form
$$
\left\|\int_{S} U(\xi) \mu(d\xi)\right\|_{L^{q}(\oom)}
\le \int_{S} \|U(\xi)\|_{L^{q}(\oom)} \mu(d\xi),
$$
for a measure $\mu$ on the state space $S$. Together with (\ref{e2}),  this yields
\begin{align*}
\left\|\int_0^t\left\|f(s,\cdot)\right\|_{\dot{H}^{\frac 12- H}}^2ds \right\|_{L^{\frac p2}(\Omega)}^{\frac12}
&\le
 c_{3,\frac 12-H} \int_{\R^{2}} \left\| \lp f(s,y+h) -f(s,y) \rp^{2} \right\|_{L^{\frac{p}{2}}(\Omega)}
|h|^{2H-2}  \, dh dy \\
&=
 c_{3,\frac 12-H}  \int_{\R^{2}} \left\|  f(s,y+h) -f(s,y)  \right\|_{L^{p}(\Omega)}^{2}
|h|^{2H-2}  \, dh dy,
\end{align*}
from which identity \eqref{eq:ineq-burk-2} is easily deduced.
	\end{proof}

From now on, we fix a finite time horizon $T$. We introduce the following functions space which plays an important role through out the paper. 
\begin{definition} Let $\mathfrak{X}^\beta_T(B)$ be the space of all continuous functions $f:[0,T]\times \RR\to B$ such that
 \[
 \|f \|_{\mathfrak{X}^\beta_ T(B)}:=\sup_{t\in [0,T], x\in\RR} \|f(t,x)\|+\sup_{t\in [0,T], x\in\RR} \cn_{\beta}^B f(t,x) <\infty,
 \]
 where we recall that $\cn_{ \beta}^B$ is defined by \eqref{e1}.
\end{definition}
	We equip $\mathfrak{X}^\beta_ T(B)$ with the norm $\|\cdot\|_{\mathfrak{X}^\beta_ T(B)}$ defined above. Then $\mathfrak{X}^\beta_{T}(B)$ is a normed vector space. In fact, the following proposition states that $\mathfrak{X}^\beta_ T(B)$ is complete.
		\begin{proposition}\label{prop.XBanach}
		$\mathfrak{X}^\beta_T(B)$ is a   Banach space.
	\end{proposition}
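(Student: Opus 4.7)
The plan is the standard three-step completeness argument: extract a candidate limit by uniform convergence, upgrade its regularity via Fatou's lemma, and then conclude $\mathfrak{X}^\beta_T(B)$-convergence by another Fatou argument. Throughout, the fact that $B$ itself is a Banach space is what underwrites every pointwise limit.

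Let $\{f_n\}_{n\ge 1}$ be a Cauchy sequence in $\mathfrak{X}^\beta_T(B)$. Since the $\sup$-norm piece of $\|\cdot\|_{\mathfrak{X}^\beta_T(B)}$ dominates $\sup_{t,x}\|f_n(t,x)-f_m(t,x)\|$, the sequence is uniformly Cauchy on $[0,T]\times \RR$ with values in the Banach space $B$. Hence there exists $f:[0,T]\times \RR \to B$ such that $f_n \to f$ uniformly on $[0,T]\times \RR$; in particular $f$ is continuous, and $\sup_{t,x}\|f(t,x)\|<\infty$.

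Next I would show that $\sup_{t,x}\cn_\beta^B f(t,x)<\infty$. Set $M:=\sup_n \|f_n\|_{\mathfrak{X}^\beta_T(B)}<\infty$ (finite because Cauchy sequences are bounded). For each fixed $(t,x)\in [0,T]\times \RR$, uniform convergence of $f_n$ to $f$ implies the pointwise convergence
\[
\|f_n(t,x+h)-f_n(t,x)\|^2 |h|^{-1-2\beta} \longrightarrow \|f(t,x+h)-f(t,x)\|^2 |h|^{-1-2\beta}
\]
for every $h\ne 0$. Fatou's lemma then gives
\[
[\cn_\beta^B f(t,x)]^2 \le \liminf_{n\to\infty} [\cn_\beta^B f_n(t,x)]^2 \le M^2,
\]
uniformly in $(t,x)$.

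For convergence in norm, fix $\varepsilon>0$ and choose $N$ such that $\|f_n-f_m\|_{\mathfrak{X}^\beta_T(B)}<\varepsilon$ whenever $n,m\ge N$. For $n\ge N$ fixed and arbitrary $(t,x)$, apply Fatou's lemma as $m\to\infty$ to the nonnegative integrand $\|(f_n-f_m)(t,x+h)-(f_n-f_m)(t,x)\|^2|h|^{-1-2\beta}$ (which converges pointwise in $h$ by uniform convergence of $f_m$ to $f$) to obtain
\[
\cn_\beta^B(f_n-f)(t,x) \le \liminf_{m\to\infty}\cn_\beta^B(f_n-f_m)(t,x)\le \varepsilon.
\]
Combined with the trivial bound $\sup_{t,x}\|(f_n-f)(t,x)\|\le \varepsilon$ from uniform convergence, this yields $\|f_n-f\|_{\mathfrak{X}^\beta_T(B)}\le 2\varepsilon$ for $n\ge N$, proving $f_n\to f$ in $\mathfrak{X}^\beta_T(B)$. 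The only mild subtlety is the dual use of Fatou's lemma (once to control $f$ itself, once to control $f-f_n$), but both applications are clean because the uniform convergence of $f_n$ to $f$ on $[0,T]\times \RR$ yields pointwise convergence of all the integrands in $h$.
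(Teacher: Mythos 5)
Your proof is correct and follows essentially the same route as the paper's: extract the uniform limit $f$ via completeness of bounded continuous $B$-valued functions, then apply Fatou's lemma in the $h$-integral to the increments of $f_n-f_m$ to get $\cn_\beta^B(f_n-f)\le\varepsilon$ and hence norm convergence. Your intermediate step bounding $\cn_\beta^B f\le M$ is a harmless redundancy (it already follows from the final Fatou estimate and the seminorm triangle inequality), but otherwise the two arguments coincide.
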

	\begin{proof}
		Let $\{f_n\}$ be a Cauchy sequence in $\mathfrak{X}^\beta_T(B)$. Since the space $C_{b}([0,T]\times\RR;B)$ of bounded continuous functions from $[0,T]\times \RR$ to $B$ is complete, there exists a bounded continuous function $f:[0,T]\times\RR\to B$ such that
		\begin{equation*}
			\lim_{n\to\infty}\sup_{t\in[0,T], \,x\in \RR}\|f_n(t,x)-f(t,x)\|=0\,.
		\end{equation*}
		For any $\varepsilon>0$  there exists $n_0>0$ such that
		\begin{equation*}
			\sup_{x\in \RR}\cn_{\beta}^B (f_n-f_m)(t,x)< \varepsilon
		\end{equation*}
		for all $m,n\ge n_0$. It follows from Fatou's lemma that
		\begin{equation*}
			\cn_\beta^B(f_n-f)(t,x)\le\liminf_{m\to \infty} \cn_\beta^B (f_n-f_m)(t,x)\le\varepsilon
		\end{equation*}
		for every $t\in[0,T],\,x\in \RR$ and $n\ge n_0$. This implies that $\lim_{n\to\infty}\sup_{t\le T,x\in\RR} \cn_\beta^B(f_n-f)(t,x)=0$ which means $f_n$ converges to $f$ in $\XX^\beta_T(B)$.
	\end{proof}

When $B=L^p(\Omega)$ with $p\in[1,\infty)$, we use the notation $\mathfrak{X}^{\beta,p}_T=\mathfrak{X}^{\beta}_T(L^p(\Omega)) $. A function $f$ in $\XX^{\beta,p}_T$ can be considered as a stochastic process indexed by $(t,x)$ in $[0,T]\times\RR$ such that
	\begin{equation*}
		\sup_{t\in [0,T],\, x\in \RR}\|f(t,x)\|_{L^p(\Omega)}+\sup_{t\in [0,T],x\in \RR}\left(\int_\RR\|f(t,x+y)-f(t,x)\|_{L^p(\Omega)} ^2|y|^{-2 \beta-1}dy \right)^{\frac12}<\infty\,.
	\end{equation*}
Next, for  $\theta>0$, $\varepsilon>0$ and $\beta\in (0,1)$, we consider the following norm on $\mathfrak{X}^{\beta,p}_{T}$
	\begin{equation}\label{norm.xTe}
		\|u\|_{\mathfrak{X}^{\beta,p}_{T, \theta , \varepsilon} }
		:=\sup_{t\in[0,T], x \in \RR}e^{-\theta t}\|u(t,x)\|_{L^p(\Omega)}
		+ \varepsilon \sup_{t\in[0,T], x\in \RR} e^{-\theta t}  \cn_{ \beta,p}u(t,x)\,,
	\end{equation}
where we recall that $\cn_{ \beta,p}$ is defined by \eqref{e4}.   

\begin{remark}\label{rmk:norm-X-beta-p}
  (i) In the case $\varepsilon=1$, we simply write $\| \cdot \|_{\mathfrak{X}^{\beta,p}_{T,\theta}}$.

  \noindent
 (ii) The second term in the norm in \eqref{norm.xTe} is not invariant by scaling while the first term is. Indeed, denote $f_ \lambda(t,x)=f(t,\lambda x)$, then
		\begin{multline*}
			\sup_{x\in\RR}\left(\int_\RR\|f_ \lambda(t,x+h)-f_ \lambda(t,x)\|^2_{L^p(\Omega)}|h|^{-1-2\beta}dh\right)^{\frac12} \\
			=\lambda^{\beta} \sup_{x\in\RR}\left(\int_\RR\|f(t,x+h)-f(t,x)\|^2_{L^p(\Omega)}|h|^{-1-2\beta}dh\right)^{\frac12}\,.
		\end{multline*}
		This is the very reason why various orders of $(t-s)$ appear in the proof of Proposition \ref{prop.young} below. We bypass this technical difficulty by the introduction of an additional scaling factor $\varepsilon$ in \eqref{norm.xTe}.

\noindent
		(iii) Another way to see the role of $\varepsilon$ is via dimensional analysis. Suppose that the amplitude of $f$ has unit $L$, the spatial variable $x$ has unit $S$, while the randomness $\omega$ is dimensionless. Then the first term in \eqref{norm.xTe} has unit $L$ while the second term has unit $L/S^{\beta}$. Hence, in order for the two terms to have the same dimension, we multiply the second term with a constant $\varepsilon$ having unit of $S^\beta$.
		
\noindent
(iv) Because $T$ is finite, the norm $\|\cdot\|_{\mathfrak{X}^{\beta,p}_{T, \theta , \varepsilon}}$ defined as above is equivalent to the norm $\|\cdot\|_{\mathfrak{X}^{\beta,p}_{T}}$.


  \end{remark}

The next proposition gives a convenient bound on the stochastic convolution in term of the spaces $\mathfrak{X}^{\beta,p}_{T}$.
	\begin{proposition}\label{prop.young}
	Consider  a predictable random field $f\in \Lambda_H$ and define process $\{\Phi(t,x), \, t\ge 0, x\in\R\}$   by
	\begin{equation}\label{eq:def-A-t-x}
\Phi(t,x)=\int_0^t\int_\RR p_{t-s}(x-y)f(s,y)W(ds,dy)\,.
\end{equation}
Then, for any $  \beta<H$ and $p\ge 2$,  the following inequality holds:
\begin{eqnarray}  \nonumber
\left\|\Phi\right\|_{\mathfrak{X}^{\beta,p}_{T,\theta , \varepsilon}}& \le&  C_0\sqrt{p}\|f\|_{\mathfrak{X}^{\frac 12-H, p}_{T,\theta, \varepsilon}} \\ \label{k1}
&& \times  \left( \kappa^{\frac H2 -\frac12}\theta^ {-\frac H2} +   \kappa ^{-\frac 14- \frac \beta 2}
\theta^{  \frac \beta 2- \frac 14}+\varepsilon^{-1}\kappa^{-\frac 14}\theta^{-\frac 14} +\varepsilon  \kappa^{\frac H2- \frac \beta 2 -\frac12}\theta^{\frac \beta 2-\frac H2}\right)\,,
		\end{eqnarray}
		where $C_0$ is a  constant depending only on $H$ and $\beta$.
\end{proposition}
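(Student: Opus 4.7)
The plan is to bound separately the two summands in the norm \eqref{norm.xTe}, and for each to apply the Burkholder-type estimate of Proposition \ref{prop.burkholder}. Set
$$N_0(f):=\sup_{s\le T,\,y\in\R}e^{-\theta s}\|f(s,y)\|_{L^p(\Omega)},\qquad N_1(f):=\sup_{s\le T,\,y\in\R}e^{-\theta s}\mathcal{N}_{\frac12-H,p}f(s,y),$$
so that $\|f\|_{\mathfrak{X}^{\frac12-H,p}_{T,\theta,\varepsilon}}=N_0(f)+\varepsilon N_1(f)$. The first summand $\sup e^{-\theta t}\|\Phi(t,x)\|_{L^p(\Omega)}$ is handled by feeding \eqref{eq:def-A-t-x} directly into Proposition \ref{prop.burkholder}. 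The second summand $\varepsilon \sup e^{-\theta t}\mathcal{N}_{\beta,p}\Phi(t,x)$ is handled by rewriting $\Phi(t,x+h)-\Phi(t,x)$ as a single stochastic integral against $[p_{t-s}(x+h-y)-p_{t-s}(x-y)]f(s,y)$, applying Proposition \ref{prop.burkholder}, and then integrating against $|h|^{-1-2\beta}\,dh$.

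In both cases one is led to bounding $\int_0^t\int_\R [\mathcal{N}_{\frac12-H,p}(G_s f(s,\cdot))(y)]^2\,dy\,ds$ for a deterministic kernel $G_s$. The key device is a Leibniz-type splitting
$$G_s(y+k)f(s,y+k)-G_s(y)f(s,y)=(G_s(y+k)-G_s(y))f(s,y+k)+G_s(y)(f(s,y+k)-f(s,y)),$$
which, combined with $(a+b)^2\le 2a^2+2b^2$, yields the pointwise bound
$$[\mathcal{N}_{\frac12-H,p}(G_s f(s,\cdot))(y)]^2 \le 2e^{2\theta s}\bigl\{N_0(f)^2[\mathcal{N}_{\frac12-H}G_s(y)]^2+N_1(f)^2\,G_s(y)^2\bigr\}.$$
Integrating in $y$ (and in $h$ against $|h|^{-1-2\beta}dh$ for the second summand) reduces matters to four deterministic kernel integrals. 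Three of them are straightforward: for $G_s=p_{t-s}$, Lemma \ref{lem1} gives $\int [\mathcal{N}_{\frac12-H}p_{t-s}(y)]^2\,dy\le C(\kappa(t-s))^{H-1}$ and the Gaussian bound $\int p_{t-s}(y)^2\,dy\le C(\kappa(t-s))^{-1/2}$; for $g_{s,h}(y):=p_{t-s}(x+h-y)-p_{t-s}(x-y)$, Lemma \ref{lem1} applied after translation yields $\int\int g_{s,h}(y)^2\,dy\,|h|^{-1-2\beta}dh\le C(\kappa(t-s))^{-1/2-\beta}$.

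The only nontrivial kernel estimate is
$$\int_\R\int_\R [\mathcal{N}_{\frac12-H}g_{s,h}(y)]^2\,dy\,|h|^{-1-2\beta}dh \le C(\kappa(t-s))^{H-\beta-1},$$
which I would prove by Plancherel. Since $\widehat{g_{s,h}}(\xi)=e^{-i\xi x}(e^{-i\xi h}-1)e^{-\kappa(t-s)\xi^2/2}$, the triple integral in $(y,k,h)$ collapses, via two uses of the identity $\int_\R|e^{iv}-1|^2|v|^{-1-2\alpha}dv=C_\alpha|\xi|^{2\alpha}$ (with $\alpha=\tfrac12-H$ for the $k$-integral and $\alpha=\beta$ for the $h$-integral), to $C\int_\R|\xi|^{1-2H+2\beta}e^{-\kappa(t-s)\xi^2}d\xi$; rescaling $\eta=\sqrt{\kappa(t-s)}\xi$ produces the asserted power of $\kappa(t-s)$, and convergence at $s=t$ uses exactly the hypothesis $\beta<H$.

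Finally, all four temporal integrands have the form $e^{-2\theta(t-s)}(\kappa(t-s))^{-\alpha}$ with $\alpha\in\{1-H,\tfrac12,1+\beta-H,\tfrac12+\beta\}\subset(0,1)$ under $\beta<H<\tfrac12$, and the substitution $u=t-s$ combined with $\int_0^\infty e^{-2\theta u}u^{-\alpha}du=C\theta^{\alpha-1}$ yields $\int_0^t e^{-2\theta(t-s)}(\kappa(t-s))^{-\alpha}ds\le C\kappa^{-\alpha}\theta^{\alpha-1}$. Taking square roots and using $N_0(f)\le\|f\|_{\mathfrak{X}^{\frac12-H,p}_{T,\theta,\varepsilon}}$ and $N_1(f)\le\varepsilon^{-1}\|f\|_{\mathfrak{X}^{\frac12-H,p}_{T,\theta,\varepsilon}}$ recovers the four terms in \eqref{k1}: the $\varepsilon^{-1}\kappa^{-1/4}\theta^{-1/4}$ term comes from the first summand paired with $N_1(f)$, while the $\varepsilon\kappa^{H/2-\beta/2-1/2}\theta^{\beta/2-H/2}$ term comes from the external $\varepsilon$ in \eqref{norm.xTe} paired with $N_0(f)$ on the mixed fourth-order increment. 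I expect the single substantive step to be the Plancherel reduction in paragraph three; the remainder is bookkeeping of the $\kappa$, $\theta$, $\varepsilon$ exponents.
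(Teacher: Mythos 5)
Your proposal is correct and follows essentially the same route as the paper's proof: the same split of the norm into its two summands, the same Leibniz-type decomposition of the increment of $p_{t-s}(x-y)f(s,y)$ fed into Proposition \ref{prop.burkholder}, the same four kernel estimates (Lemma \ref{lem1}, the Gaussian $L^2$ bound, and the Plancherel computation for the rectangular increment, which the paper only sketches with ``arguing as in the proof of Lemma \ref{lem1}''), and the same weighted-in-time integrals producing the four powers of $\kappa$, $\theta$, $\varepsilon$. Your bookkeeping of which term pairs with $N_0$ versus $N_1$ and where $\beta<H$ enters matches the paper exactly.
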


\begin{remark}
According to relation \eqref{k1}, the stochastic convolution induces some stability properties in the spaces $\mathfrak{X}^{\beta,p}_{\theta , \varepsilon}$ whenever $\frac12-H<\beta<H$. This imposes the restriction $H>\frac14$ already at this stage.
\end{remark}

\begin{proof}[Proof of Proposition \ref{prop.young}]
According to our definition \eqref{norm.xTe}, we have $\|\Phi\|_{\mathfrak{X}^{\beta,p}_{T,\theta, \varepsilon}}=\ca_{1}+\ep\ca_{2}$, with
\begin{equation*}
\ca_{1}=\sup_{t\in[0,T], x \in \RR}e^{-\theta t}\|\Phi(t,x)\|_{L^p(\Omega)},
\quad\text{and}\quad
\ca_{2}= \sup_{t\in[0,T], x\in \RR} e^{-\theta t} \cn_{\beta,p}\Phi(t,x).
\end{equation*}
We now estimate those terms separately. Along the proof $C$ will denote a generic constant depending only on $H$ and $\beta$.

\noindent
\textit{Step 1: Upper bound for $\ca_{1}$.}
The term $\Phi(t,x)$ is of the form
\begin{equation*}
\int_{0}^{t} \int_{\R} g_{t,x}(s,y)W(ds,dy),
\quad\text{with}\quad
g_{t,x}(s,y) = p_{t-s}(x-y)f(s,y).
\end{equation*}
Applying inequality \eqref{eq:ineq-burk-2}, we thus have
\begin{equation*}
\|\Phi(t,x)\|_{L^{p}(\Omega)}
\le C \sqrt{p} \,
\lp \int_{0}^{t} \int_{\R^{2}} \left\| g_{t,x}(s,y+h) -g_{t,x}(s,y)  \right\|_{L^{p}(\Omega)}^{2} |h|^{2H-2}
\, dh \, dy \, ds \rp^{\frac 12}.
\end{equation*}
A simple decomposition of the increment $g_{t,x}(s,y+h) -g_{t,x}(s,y)$ then yields
\begin{equation*}
\|\Phi(t,x)\|_{L^{p}(\Omega)}
\le C \sqrt{ p}  \left[
\lp \int_{0}^{t} J_{1}(s)  ds\rp^{\frac 12}  + \lp \int_{0}^{t} J_{2}(s) \, ds \rp^{\frac 12}  \right]\, ,
\end{equation*}
where
		\begin{equation*}
			J_{1}(s)=\int_\RR\int_\RR |p_{t-s}(x-y-z)-p_{t-s}(x-y)|^2\|f(s,y+z)\|_{L^p(\Omega)} ^2|z|^{2H-2}dydz
		\end{equation*}
		and
		\begin{equation*}
			J_{2}(s)=\int_\RR\int_\RR p^2_{t-s}(x-y)\|f(s,y+z)-f(s,y)\|_{L^p(\Omega)}^2 |z|^{2H-2}dydz\,.
		\end{equation*}
To estimate $J_{1}(s)$, we write
\[
J_1(s) \le \sup_{x \in \RR} \| f(s,x)\|^2_{L^p(\Omega)}   \int_{\RR}  [\cn_{\frac 12-H } p_{t-s}(y)]^2 dy.
\]
Applying Lemma \ref{lem1} with $\beta=\frac 12-H$, we obtain
\begin{equation*}
J_{1}(s)
\le
C  \sup_{x \in \RR} \| f(s,x)\|^2_{L^p(\Omega)} [\kappa (t-s)]^{H-1} \, .
\end{equation*}

Let us now turn to   estimate   $J_{2}(s)$. Recalling our notation \eqref{e4}, we have
\begin{eqnarray}\label{est.f2}
J_{2}(s)&=&
\int_{\R} p_{t-s}^{2}(x-y) [\cn_{\frac 12-H,p}f(s,y)]^2 \, dy
\le
\sup_{x \in \RR} [\cn_{\frac 12-H,p}f(s,x)]^2 \int_{\R} p_{t-s}^{2}(x-y)  \, dy \notag \\
&\le&
[2\pi \kappa (t-s)]^{-\frac12} \, \sup_{x \in \RR} [\cn_{\frac 12-H,p}f(s,x)]^2 .
\end{eqnarray}
Hence, putting together our bounds on $J_{1}$ and $J_{2}$,  we get
\begin{multline*}
e^{-\theta t}\sup_{x\in\RR}\|\Phi(t,x)\|_{L^p(\Omega)}
\le
C \sqrt {p} \sup_{\substack{0\le s\le T\\  x \in \RR}} e^{-\theta s} \|f(s,x)\|_{L^p(\Omega)} \left (\int_0^t e^{-2\theta (t-s)} [\kappa (t-s)]^{H-1} ds \right)^{\frac12} \\
+ C \sqrt{p} \, \varepsilon \sup_{\substack{0\le s\le T\\  x \in \RR}} e^{-\theta s}
     \sup_{x \in \RR}  \cn_{\frac 12-H,p}f(s,x)  \,
     \frac{\left (\int_0^t e^{-2 \theta (t-s)}
     [\kappa (t-s)]^{-\frac 12} ds \right)^{\frac 12}}{\varepsilon} ,
\end{multline*}
and some elementary computations for the integrals above yield
\begin{equation*}
\ca_{1} = \sup_{t\in[0,T] ,  x\in\RR} e^{-\theta t}\ \|\Phi(t,x)\|_{L^p(\Omega)}
\le
C \sqrt{p} \|f\|_{\mathfrak{X}^{\frac 12-H,p}_{T,\theta, \varepsilon}} (\kappa^{\frac H2-\frac 12}\theta^{ -\frac H2}+ \varepsilon ^{-1}\kappa^{-\frac14}\theta^{-\frac14})\,.
\end{equation*}

\noindent
\textit{Step 2: Upper bound for $\ca_{2}$.}
According to the definition of $\ca_{2}$, we have to bound $\cn_{\beta,p}\Phi(t,x)$, where we recall that
\begin{equation}\label{eq:recall-N-beta-p-Phi}
\cn_{\beta,p}\Phi(t,x) = \left(\int_{\R} \|\Phi(t,x+h) - \Phi(t,x) \|_{L^{p}(\oom)}^{2} |h|^{-1-2\beta} dh\right)^{\frac12} .
\end{equation}
Furthermore, arguing as in Step 1 above, it is easily seen that
\begin{equation}\label{est.Gxh}
\|\Phi(t,x+h) - \Phi(t,x) \|_{L^{p}(\oom)}
\le C\sqrt{p}\left[
\lp \int_0^t  J_{1}^{\prime}(s,h) \, ds \rp^{1/2}+ \lp \int_0^t(J_{2}^{\prime}(s,h)) \, ds \rp^{1/2}\right],
\end{equation}
where
\begin{align*}
			J_{1}^{\prime}(s,h)=\int_\RR\int_\RR |p_{t-s}(x+h-y-z)-p_{t-s}(x-y-z)-&p_{t-s}(x+h-y)+p_{t-s}(x-y)|^2
			\\&\times \|f(s,y+z)\|_{L^p(\Omega)} ^2|z|^{2H-2}dydz\,,
		\end{align*}
		and
		\begin{align*}
			J_{2}^{\prime}(s,h)=\int_\RR\int_\RR |p_{t-s}(x+h-y)-p_{t-s}(x-y)|^2\|f(s,y+z)-f(s,y)\|_{L^p(\Omega)} ^2|z|^{2H-2}dydz\,.
		\end{align*}
Plugging \eqref{est.Gxh} into \eqref{eq:recall-N-beta-p-Phi}, we end up with
\begin{equation*}
\cn_{\beta,p}\Phi(t,x)
\le C\sqrt{p}\left[
\int_{0}^{t}\int_{\RR} J_{1}^{\prime}(s,h) |h|^{-1-2 \beta} \, dh \, ds
+
\int_{0}^{t}\int_{\RR} J_{2}^{\prime}(s,h) |h|^{-1-2 \beta} \, dh \, ds\right].
\end{equation*}
In addition, arguing again as in the proof of Lemma \ref{lem1}, we can show that
		
\[
\int_{\RR} J_{1}^{\prime}(s,h) |h|^{-1-2 \beta} dh \le C [\kappa (t-s)]^{ H-\beta-1} \sup_{x \in \RR} \|f(s,x)\|^2_{L^p(\Omega)}.
\]
On the other hand, applying Lemma  \ref{lem1} leads to
\[
\int_{\RR}  J_{2}^{\prime}(s,h) |h|^{-1-2 \beta} dh  \le  C [\kappa(t-s)]^{-\frac 12-\beta} \sup_{x \in \RR} [\cn_{\frac 12-H,p}f(s,x)]^2 \,.
\]
		Combining these estimates for $J_1'$, $J_2'$ and resorting to \eqref{est.Gxh}, similarly as the estimate for $e^{-\theta t}\|\Phi(t,x)\|_{L^p(\Omega)}$, we obtain
\begin{equation*}
\ca_{2}
\le
C \sqrt{p}  \left(\|f\|_{\mathfrak{X}^{\frac 12-H ,p}_{T,\theta, \varepsilon}} \kappa^{\frac H2- \frac \beta 2 -\frac 12} \theta^{ \frac \beta 2-\frac H2}
+  \varepsilon ^{-1}  \|f\|_{\mathfrak{X}^{\frac 12-H, p}_{T,\theta, \varepsilon}}  \kappa^{-\frac 14-\frac \beta 2}\theta^{\frac \beta 2 -\frac 14} \right)\,.
\end{equation*}
Putting together Step 1 and Step 2, our claim \eref{k1} is now easily checked.
	\end{proof}
	
We conclude this section by a simple remark which is labeled for further use. In the particular case $\beta =\frac12-H$, and using the simplified notation
  $\| \cdot \|_{\mathfrak{X}^{\frac12-H, p}_{T,\theta, \varepsilon}}=
   \| \cdot \|_{\mathfrak{X}^{  p}_{T,\theta, \varepsilon}}$, the estimate  \eref{k1} can be written as
   \begin{equation}\label{est.normXX}
\left\|\Phi\right\|_{\mathfrak{X}^{ p}_{T,\theta , \varepsilon}} \le C_0\sqrt{p}\|f\|_{\mathfrak{X}^{  p}_{T,\theta, \varepsilon}} \left( \kappa^{\frac H2-\frac12}\theta^{-\frac H2} + \varepsilon^{-1}\kappa^{-\frac 14}\theta^{-\frac 14} +\varepsilon \kappa^{H-\frac34}\theta^{\frac 14-H} \right)\,.
		\end{equation}

\subsection{H\"older continuity estimates}

A natural question arising from the definition \eqref{eq:def-A-t-x} of the process $\Phi$ is the derivation of H\"older type exponents in both time and space. Some estimates in this direction are provided in the next proposition. We set  $\mathfrak{X}^{p}_{T}=\mathfrak{X}^{\frac 12-H, p}_{T}$, and the norm $\| \cdot \|_{ \mathfrak{X}^{p}_{T,\theta_0}}$ is given by  \eref{norm.xTe} with $\ep=1$ and $\beta =\frac 12-H$.

\begin{proposition}\label{prop:holder.est}
Recall that the noise $W$ is given by the covariance \eqref{eq:cov1}. Consider $p\ge2$ and a predictable random field $f\in\mathfrak{X}^{p}_{T}$, where $T$ is a fixed finite time horizon. Let $\theta_0$ be any positive number.  We define the random field $\Phi$ as in \eqref{eq:def-A-t-x}. Then for every $x,h\in\RR$, $t_1,t_2\in[0,T]$ and every $\ga\in[0,H]$ we have
		\begin{equation}\label{eq:joint Holder}
			\|\Phi([t_1,t_2],x+h)-\Phi([t_1,t_2],x)\|_{L^p(\Omega)}\leq C \sqrt{p} e^{\theta_0 T} \|f\|_{\mathfrak{X}^{p}_{T,\theta_0}} |t_2-t_1|^{\frac{ H- \ga}{2}}|h|^{ \ga } \,.
		\end{equation}
		In the above, the   constant $C$ depends on $T$ and does not depend on $p$, and we are using the notation
	\begin{equation*}
	\Phi([t_1,t_2],x)=\Phi(t_2,x)-\Phi(t_1,x)\,.
	\end{equation*}
In particular, if we let $t_1 = 0$, we get the H\"older estimate of the space variable.  For the H\"older estimate of the time variable, we have
\begin{equation}\label{eq:time Holder}
\|\Phi(t_2,x)-\Phi(t_1,x)\|_{L^p(\Omega)} \leq C \sqrt{p} e^{\theta_0 T} \|f\|_{\mathfrak{X}^{p}_{T,\theta_0}} |t_2-t_1|^{\frac{H}{2}}\,.
\end{equation}
	\end{proposition}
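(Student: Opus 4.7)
The strategy is to reduce the joint estimate \eqref{eq:joint Holder} to two separate extreme cases, $\gamma=H$ (pure space) and $\gamma=0$ (pure time), and then geometrically interpolate. Observe first that, with the notation of the proposition,
\begin{equation*}
\Phi([t_1,t_2],x+h)-\Phi([t_1,t_2],x)=\bigl[\Phi(t_2,x+h)-\Phi(t_2,x)\bigr]-\bigl[\Phi(t_1,x+h)-\Phi(t_1,x)\bigr],
\end{equation*}
and also equals $[\Phi(t_2,x+h)-\Phi(t_1,x+h)]-[\Phi(t_2,x)-\Phi(t_1,x)]$. Thus, by the triangle inequality, whenever we have two individual bounds
\begin{equation*}
\|\Phi(t,x+h)-\Phi(t,x)\|_{L^p(\Omega)}\le A\,|h|^H,
\qquad
\|\Phi(t_2,x)-\Phi(t_1,x)\|_{L^p(\Omega)}\le A\,|t_2-t_1|^{H/2},
\end{equation*}
the double increment is dominated by $2A\min\bigl(|h|^H,|t_2-t_1|^{H/2}\bigr)$. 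Using $\min(a,b)\le a^{1-\lambda}b^\lambda$ with $\lambda=\gamma/H\in[0,1]$ instantly produces the mixed bound $2A\,|h|^\gamma|t_2-t_1|^{(H-\gamma)/2}$ claimed in \eqref{eq:joint Holder}. In particular, \eqref{eq:time Holder} is the case $\gamma=0$ with $t_1$ arbitrary.

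The task therefore reduces to establishing the two individual bounds with the correct constant $A=C\sqrt{p}\,e^{\theta_0 T}\|f\|_{\mathfrak{X}^p_{T,\theta_0}}$. For the spatial increment, one writes $\Phi(t,x+h)-\Phi(t,x)=\int_0^t\!\!\int_\R [p_{t-s}(x+h-y)-p_{t-s}(x-y)]f(s,y)\,W(ds,dy)$ and applies Burkholder, i.e.\ Proposition \ref{prop.burkholder}. Expanding the double increment $g_{t,x}(s,y+z)-g_{t,x}(s,y)$ and splitting off the increment of $f$ exactly as in the proof of Proposition \ref{prop.young} produces two deterministic space-time integrals. Each of them is then bounded by Plancherel/Fourier methods; the key ingredient is the identity
\begin{equation*}
\int_{\R}|\cf[p_{t-s}(\cdot+h)-p_{t-s}(\cdot)](\xi)|^2|\xi|^{1-2H}\,d\xi=\int_{\R}e^{-\kappa(t-s)\xi^2}|e^{i\xi h}-1|^2|\xi|^{1-2H}\,d\xi,
\end{equation*}
which after the change of variable $\eta=\sqrt{\kappa(t-s)}\,\xi$ yields a bound of the form $C|h|^{2H}(\kappa(t-s))^{-1/2}$ once one uses $|e^{i\xi h}-1|^2\le C|\xi h|^{2H}$. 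The remaining $ds$-integral is a convergent Beta function, and after weighting with $e^{-\theta_0 s}\|f\|_{\mathfrak{X}^p_{T,\theta_0}}$ the pre-factor $e^{\theta_0 T}$ appears as stated.

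For the temporal increment one splits the stochastic integral into the two standard pieces
\begin{equation*}
\Phi(t_2,x)-\Phi(t_1,x)=\int_0^{t_1}\!\!\!\int_\R [p_{t_2-s}(x-y)-p_{t_1-s}(x-y)]f(s,y)W(ds,dy)+\int_{t_1}^{t_2}\!\!\!\int_\R p_{t_2-s}(x-y)f(s,y)W(ds,dy),
\end{equation*}
applies Burkholder to each, and again reduces to heat-kernel integrals via the spectral representation. The second piece is direct: it produces $(t_2-s)^{H-1}$ which integrates over $[t_1,t_2]$ to a multiple of $(t_2-t_1)^H$. The first (and trickier) piece uses $|e^{-\kappa(t_2-t_1)\xi^2/2}-1|^2\le (\kappa(t_2-t_1)\xi^2/2)^{2\alpha}$ with $\alpha\in[0,1]$ chosen so that the resulting $\xi$-integral is convergent after the heat kernel $e^{-\kappa(t_1-s)\xi^2}$, and so that the subsequent $s$-integral over $[0,t_1]$ converges; tuning $\alpha$ so that the overall exponent of $(t_2-t_1)$ comes out to $H/2$ is the main technical point.

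The main obstacle is therefore the time-Hölder step: balancing the Fourier decay of the heat kernel against the singularity in $|\xi|^{1-2H}$ while extracting exactly the exponent $H/2$. The restriction $\gamma\le H$ (and the rough-regime threshold $H>\tfrac14$ implicit in all our estimates) is precisely what makes this balance possible. Once both individual bounds are in place, the interpolation argument in the first paragraph closes the proof, and setting $t_1=0$ or $h=0$ recovers the pure space and time statements.
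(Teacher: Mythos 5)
Your reduction of the joint bound \eqref{eq:joint Holder} to the two endpoint cases is correct as far as it goes: writing the rectangular increment in the two ways you indicate, the triangle inequality bounds it by twice the minimum of the pure-space and pure-time bounds, and $\min(a,b)\le a^{1-\lambda}b^{\lambda}$ interpolates. This is a genuinely different organization from the paper, which never separates the two increments: it keeps both decay factors $|1-e^{-\kappa\Delta t\xi^2/2}|^2$ and $\sin^2(\xi h/2)$ inside one Fourier integral (the quantity $I$ in \eqref{E2}) and interpolates between two bounds on $I$ itself. The trouble is that your reduction pushes all of the difficulty into the endpoint estimates ($\gamma=H$ in space, exponent $H/2$ in time), and the technique you propose for those endpoints fails in both cases.

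For the spatial endpoint, the computation you describe is wrong: inserting $|e^{i\xi h}-1|^2\le C|\xi h|^{2H}$ into $\int_\R e^{-\kappa(t-s)\xi^2}|e^{i\xi h}-1|^2|\xi|^{1-2H}\,d\xi$ gives $C|h|^{2H}\int_\R e^{-\kappa(t-s)\xi^2}|\xi|\,d\xi=C|h|^{2H}(\kappa(t-s))^{-1}$, not $(\kappa(t-s))^{-1/2}$, and the subsequent $ds$-integral is $\int_0^t(t-s)^{-1}ds=\infty$ rather than a convergent Beta function. The temporal endpoint has the same structural defect: bounding $|e^{-\kappa\Delta t\xi^2/2}-1|^2\le(\kappa\Delta t\xi^2/2)^{2\alpha}$ and integrating against $e^{-\kappa(t_1-s)\xi^2}|\xi|^{1-2H}\,d\xi$ yields $C(\kappa\Delta t)^{2\alpha}(\kappa(t_1-s))^{-(2\alpha+1-H)}$, whose $s$-integral over $[0,t_1]$ converges only for $\alpha<H/2$ strictly; no tuning of $\alpha$ reaches the exponent $H/2$ of \eqref{eq:time Holder}, you only get $|t_2-t_1|^{\alpha}$ with $\alpha<H/2$, which is a strictly weaker bound for small increments. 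In both cases the missing idea is the one the paper relies on: perform the $s$-integration \emph{exactly first}, via $\int_0^{t_1}e^{-\kappa(t_1-s)\xi^2}\,ds\le(\kappa\xi^2)^{-1}$ (inequality \eqref{eq:elem-dif-exp}), which converts the weight $|\xi|^{1-2H}$ into $|\xi|^{-1-2H}$, and only then rescale the resulting convergent $\xi$-integral ($\xi\mapsto\xi/|h|$, respectively $\xi\mapsto(\kappa\Delta t)^{-1/2}\xi$) to extract $|h|^{2H}$, respectively $(\kappa\Delta t)^{H}$, exactly as in the two bounds \eqref{E3} on $I$. Pointwise power bounds applied before the time integration inevitably lose the endpoint exponents, and those endpoints are precisely what your interpolation step needs as input.
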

	\begin{proof}
		First we prove \eref{eq:joint Holder}. Without loss of generality, we assume $t_1<t_2$ and denote $\Delta t=t_2-t_1$.
		We also set:
\begin{equation}\label{eq:def-V1-V2}
V_1(f)=\sup_{t\in[0,T]}\sup_{x\in\RR}\|f(t,x)\|_{L^p(\Omega)}\,,
\quad\quad
	V_2(f)=\sup_{t\in[0,T]}\sup_{x\in\RR}  \cn_{H,p}f(t,x),
\end{equation}
		and $V(f)=V_1(f)+V_2(f)$. Observe that according to \eqref{norm.xTe}, we have $V(f)\le \exp(\theta_{0} T) \|f\|_{\mathfrak{X}^{p}_{T,\theta_0}}$.
		
		As in the proof of Proposition \ref{prop.young},
		we first write $\Phi([t_1,t_2],x+h)-\Phi([t_1,t_2],x)=\ca_1+\ca_2$, where
		\begin{equation*}
		 	\ca_1=\int_0^{t_1}\int_\RR [p_{[t_1-s,t_2-s]}(x+h-y)-p_{[t_1-s,t_2-s]}(x-y)]f(s,y)W(ds,dy)\,,
		\end{equation*}
		and
		\begin{equation*}
			\ca_2=\int_{t_1}^{t_2}\int_\RR [p_{t_2-s}(x+h-y)-p_{t_2-s}(x-y)]f(s,y)W(ds,dy)\,.
		\end{equation*}
We now treat those two terms separately. To alleviate notation we will include the $\sqrt{p}$ into the constant $C$ below. 

\noindent
\textit{Step 1: Upper bound for $\ca_{1}$.}		The computations are carried out analogously to the proof of Proposition~\ref{prop.young}, and we have
		\begin{align*}
			\|\ca_1\|_{L^p(\Omega)}^2\leq C \int_0^{t_1}(A_{11}(s)+A_{12}(s))ds,
		\end{align*}
		where $A_{11}$ and $A_{12}$ are analogous to $J_{1},J_{2}$ in the proof of Proposition~\ref{prop.young}, and are respectively defined by
		\begin{multline*}
			A_{11}(s)=\int_\RR\int_\RR |p_{[t_1-s,t_2-s]}(x+h-y-z)-p_{[t_1-s,t_2-s]}(x-y-z)
			\\-p_{[t_1-s,t_2-s]}(x+h-y)+p_{[t_1-s,t_2-s]}(x-y)|^2
			\|f(s,y+z)\|_{L^p(\Omega)} ^2|z|^{2H-2}dydz\,,
		\end{multline*}
		and
		\begin{multline*}
			A_{12}(s)=\int_\RR\int_\RR |p_{[t_1-s,t_2-s]}(x+h-y)-p_{[t_1-s,t_2-s]}(x-y)|^2  \\
		 \times	\|f(s,y+z)-f(s,y)\|_{L^p(\Omega)} ^2|z|^{2H-2}dydz\,.
		\end{multline*}
Let us now bound $A_{11}$. Invoking Plancherel's identity with respect to $y$ and the explicit formula for $\cf p_{t}$, we have
		\begin{align*}
			A_{11}(s)&\leq C V_1^2(f)
			\quad\int_\RR\int_\RR\big|p_{[t_1-s,t_2-s]}(h+y-z)-p_{[t_1-s,t_2-s]}(y-z) \\
			&\hspace{2in}-p_{[t_1-s,t_2-s]}(h+y)+p_{[t_1-s,t_2-s]}(y)\big|^2|z|^{2H-2}dydz
			\\&\leq C V_1^2(f)\int_\RR\int_\RR  |e^{-\frac{t_2-s}{2} \kappa |\xi|^2}-e^{-\frac{t_1-s}{2}\kappa|\xi|^2}|^2 |e^{-i \xi z}-1|^2|e^{i \xi h}-1|^2 |z|^{2H-2}d\xi dz
			\\&\leq C V_1^2(f)\int_\RR   |e^{-\frac{t_2-s}{2} \kappa |\xi|^2}-e^{-\frac{t_1-s}{2}\kappa|\xi|^2}|^2 |e^{i \xi h}-1|^2 |\xi|^{1-2H}d\xi\,.
		\end{align*}
Moreover, owing to the inequality
\begin{equation}\label{eq:elem-dif-exp}
\int_0^{t_1} |e^{-\frac{t_2-s}{2} \kappa |\xi|^2}-e^{-\frac{t_1-s}{2}\kappa|\xi|^2}|^2 ds
\leq  \frac{|e^{-\frac{\Delta t \kappa}{2} |\xi|^2} -1|^2}{\kappa |\xi|^2} \,,
\end{equation}
	we obtain
		\begin{align}\label{eq:bnd-A11s-ds}
			\int_0^{t_1}A_{11}(s)ds
			&\leq C \kappa^{-1}V_1^2(f)\int_\RR |e^{-\frac{\Delta t \kappa }{2}|\xi|^2}-1|^2 |e^{i\xi h}-1|^2 |\xi|^{-1-2H} d\xi			    \notag  \\
			&\leq C \kappa^{-1} V_1^2(f) \, I \,,
		\end{align}
		where
\begin{equation}   \label{E2}
I:=
\int_\RR |1-e^{-\frac{\Delta t \kappa}{2}|\xi|^2}|^2\sin^2(\xi h/2) |\xi|^{-1-2H}d\xi.
\end{equation}
Our next step is to bound $I$ in two elementary and different ways.

\noindent
\emph{(i)}
The change of variable $h\xi:=\xi$ yields 		
		\begin{equation*}
			I= |h|^{2H}\int_\RR \lp 1-e^{-\frac{\kappa  \Delta t}{2|h|^2} |\xi|^2}\rp^2
			\sin^2( \xi/2)|\xi|^{-1-2H}d \xi\,,
		\end{equation*}
		and we then bound $1-e^{-\frac{\kappa  \Delta t}{2h^2}}$ by 1 to obtain $I\leq C |h|^{2H}$.

\noindent
\emph{(ii)}		
On the other hand, the change of variable $( \kappa  \Delta t )^{1/2}\xi:=\xi$ in \eref{E2} leads to
		\begin{equation*}
			I=(\kappa \Delta t)^{H}\int_\RR \lp 1-e^{-\xi^2/2}\rp^2
			\sin^2\Big(\frac{h \xi }{2(\kappa \Delta t)^{1/2}} \Big) \, |\xi|^{-1-2H}d \xi\,,
		\end{equation*}
		and we bound the trigonometric function $\sin^2$ by 1 to obtain $I\leq C (\kappa \Delta t)^{H}$.

		Interpolating the two estimates we have obtained for $I$, with a coefficient $\delta=\frac{\ga}{2H}\in[0,1]$, we see that
		\begin{equation}  \label{E3}
I \le C |h|^{2H\delta} \lp \ka \Delta t \rp^{H(1-\delta)}
		\leq C (\kappa \Delta t)^{\frac{2H- \ga}{2}}|h|^{\ga}\,.
		\end{equation}
		Plugging this identity back into \eqref{eq:bnd-A11s-ds}, we have shown
		\begin{equation*}
			\int_0^{t_1}A_{11}(s)ds\leq C \kappa^{-1}(\kappa \Delta t)^{\frac{2H- \ga}2}|h|^{\ga}V_1^2(f)\,,
		\end{equation*}
			forall  $\ga\in[0,2H]$.
		Let us now turn to the estimate for $A_{12}$. Similarly to what has been done for $A_{11}$ we get
		\begin{align*}
			\int_0^{t_1}A_{12}(s)ds&\leq C V_2^2(f)\int_0^{t_1}\int_\RR|p_{[t_1-s,t_2-s]}(h+y)-p_{[t_1-s,t_2-s]}(y)|^2dyds
			\\&\leq C V_2^2(f)\int_\RR\int_0^{t_1}| e^{-\frac{t_2-s}{2}\kappa |\xi|^2} - e^{-\frac{t_1-s}{2} \kappa |\xi|^2}  |^2ds|e^{i \xi h}-1|^2 d \xi\,.
		\end{align*}
Thanks to \eqref{eq:elem-dif-exp}, we thus end up with
\begin{equation*}
\int_0^{t_1}A_{12}(s)ds
\le
C	 \kappa^{-1}V_2^2(f)\int_\RR|1-e^{-\frac{\Delta t \kappa}{2}|\xi|^2}|^2\sin^2(h \xi/2) |\xi|^{-2} d \xi\,.
\end{equation*}
In addition, the integral on the right hand side can be estimated as $I$ above, and we get
		\begin{equation*}
			\int_0^{t_1} A_{12}(s) ds \leq C \, V_2^2(f) ( \kappa \Delta t)^{\frac{1-\ga'}{2}} |h|^{\ga'}\,	
			\end{equation*}
			for all $\gamma' \in [0,1]$.
		Since $1>2H$, we may choose $\ga'=\ga$ to obtain
		\begin{equation*}
			\int_0^{t_1}A_{12}(s)ds\leq C \kappa^{-1} (\kappa \Delta t)^{\frac{2H- \ga}{2}}|h|^{\ga} V_2^2(f)\,,
		\end{equation*}
		for all  $\ga\in[0,2H]$\,.
		Hence, the bounds on $A_{11}$ and $A_{12}$ yield
		\begin{align*}
			\|\ca_1\|_{L^p(\Omega)}^2\leq C V^2(f)(\Delta t)^{\frac{2H- \ga}{2}}h^{\ga}\,, 		\end{align*}
			for all $ \ga\in[0,2H]$\,.

\noindent
\textit{Step 2: Upper bound for $\ca_{2}$.}			
		The term $\|\ca_{2}\|^2_{L^p(\Omega)}$ can be estimated analogously to $\ca_{1}$. Indeed, the reader can check that, owing to inequality \eqref{eq:ineq-burk-2} and Plancherel's identity, we have
		\begin{align*}
			\|\ca_{2}\|^2_{L^p(\Omega)}\leq C V_{1}^2(f) \int_{0}^{\Delta t}\int_\RR e^{-s \kappa |\xi|^2}\sin^2({h \xi} /{2}) ( |\xi|^{1-2H}+1)d\xi ds \,,
		\end{align*}
		where we recall that $V_{1}$ is defined by \eqref{eq:def-V1-V2}.
		Taking integration in $ds$  first,  we see that
		\begin{align*}
			\|\ca_{2}\|^2_{L^p(\Omega)}\leq C \kappa^{-1} V_{1}^2(f)\int_\RR (1-e^{-\Delta t \kappa |\xi|^2})\sin^2(h \xi/2) ( |\xi|^{-1-2H}+ |\xi|^{- 2} )d\xi\,.
		\end{align*}
		These two integrals can be estimated as the term $I$ in \eref{E3}, and we get
		\begin{equation*}
			\|\ca_{2}\|^2_{L^p(\Omega)}\leq C V_{1}^2(f)(\Delta t)^{\frac{2H - \ga}{2}}|h|^{\ga}\,,		\end{equation*}
			for all  $\ga\in[0,2H]$\,.
		Let us remark that the constants in all previous estimates depend only on $T$, $p$ and $\kappa^{-1}$. In addition, as functions of $(p,\kappa^{-1})$, these constants have at most polynomial growth. Hence, gathering the estimates for $\|\ca_1\|^2_{L^p(\Omega)}$ and $\|\ca_{2}\|^2_{L^p(\Omega)}$ the proof of our claim \eqref{eq:joint Holder} is finished.
		
\noindent
\textit{Step 3: Proof of \eref{eq:time Holder}.}
   Again, we assume that $t_1 < t_2$, and we proceed as in the previous steps and the proof of Proposition \ref{prop.young}.  Indeed, we begin by writing
\begin{equation*}
 \|\Phi(t_2,x)-\Phi(t_1,x)\|_{L^p(\Omega)}
 \le
 B_1+B_2,
\end{equation*}
where
\[
B_{1}=\left\|\int_0^{t_1} \int_{\RR} p_{[t_1-s, t_2-s]}(x-y)f(s,y)W(ds,dy)\right\|_{L^p(\Omega)}
\]
and
\[
B_{2}=
\left \| \int_{t_1}^{t_2} \int_{\RR} p_{t_2-s}(x-y)f(s,y)W(ds,dy) \right \|_{L^p(\Omega)}.
\]
Once again we handle those two terms separately.

For the term $B_1$, we resort to inequality \eqref{eq:ineq-burk-2} in our usual way. We get
\begin{eqnarray*}
B_{1} &\le&
C \left ( \int_0^{t_1} \int_{\RR} \int_{\RR} p^2_{[t_1-s, t_2-s]}(x-y) \|f(s,y)-f(s,y+z)\|_{L^p(\Omega)}^2 |z|^{2H-2} dz dy ds \right ) ^{\frac{1}{2}}\\
&& + C \Big ( \int_0^{t_1} \int_{\RR} \int_{\RR}
|p_{[t_1-s, t_2-s]}(x-y) - p_{[t_1-s, t_2-s]}(x-y-z) |^2 \\
&&\hspace{3in}\times\|f(s,y+z)\|_{L^p(\Omega)}^2 |z|^{2H-2} dz dy ds \Big )^{\frac{1}{2}}.
\end{eqnarray*}
With the definition \eqref{eq:def-V1-V2} in mind, it is now readily checked that
\begin{equation}\label{eq:bnd-B1-1}
B_{1} \le
C \left (B_{11} V_2(f) + B_{12} V_1(f) \right )\,,
\end{equation}
with
\[
 B_{11} = \left (  \int_0^{t_1} \int_{\RR} |p_{[t_1-s, t_2-s]}(x-y)|^2 dy ds \right)^{\frac{1}{2}}
 \]
 and
 \[
 B_{12} = \left ( \int_0^{t_1} \int_{\RR} \int_{\RR} |p_{[t_1-s, t_2-s]}(x-y) - p_{[t_1-s, t_2-s]}(x-y-z) |^2 |z|^{2H-2} dz dy ds \right )^{\frac{1}{2}}.
 \]
We now appeal to Plancherel's identity to get
\begin{equation*}
B_{11}
=
C  \left ( \int_0^{t_1} \int_{\RR} \left |e^{-\frac{t_2-s}{2} \kappa |\xi|^2}-e^{-\frac{t_1-s}{2} \kappa |\xi|^2} \right|^2 d\xi ds \right)^{\frac{1}{2}}
= C (t_2-t_1)^{\frac{1}{4}}\,,
\end{equation*}
and
 \begin{eqnarray*}
B_{12}
&=& C \left ( \int_0^{t_1} \int_{\RR} \int_{\RR} \left | e^{-\frac{t_2-s}{2} \kappa |\xi|^2} -e^{-\frac{t_1-s}{2}\kappa |\xi|^2}\right|^2 |e^{-i\xi z} -1|^2 |z|^{2H-2} dz d\xi ds  \right)^{\frac{1}{2}}\\
&=&C \left ( \int_0^{t_1} \int_{\RR} \left | e^{-\frac{t_2-s}{2}\kappa |\xi|^2} - e^{-\frac{t_1-s}{2}\kappa |\xi|^2}\right|^2 |\xi|^{1-2H} d\xi ds \right)^{\frac 12}\\
&=&C (t_2-t_1)^{\frac H 2}\,.
 \end{eqnarray*}
Reporting these estimates in \eqref{eq:bnd-B1-1} and observing that $H < \frac{1}{2}$, we end up with
 \begin{equation*}
 B_1
\leq
C (t_2-t_1)^{\frac{H}{2}} \lc V_1(f) + V_2(f) \rc
 \leq  C (t_2-t_1)^{\frac{H}{2}} \| f\|_{\mathfrak {X}_{T,\theta_0}^ p} e^{\theta_0 T}.
  \end{equation*}
The patient reader might check that the same kind of upper bound is valid for $B_{2}$, and gathering the estimates for $B_{1}$ and $B_{2}$ yields inequality \eqref{eq:time Holder}.
	\end{proof}

\section{Existence and uniqueness of the solution}
\label{sec:existence}
In this section we will  establish a result regarding the uniqueness of the solution.  Then we will describe the structure of some new spaces of stochastic processes. Those spaces will finally be used to show the existence of the solution.

\subsection{Uniqueness of the solution}

In this subsection we give some results about the uniqueness of the solution assuming that the solution has enough regularity. To this end, we first introduce a norm $\|\cdot\|_{\mathcal Z_T^p}$ for a random field $v(t,x)$ as follows
\begin{equation}\label{eq:dcp-norm-ZTp}
\|v\|_{\mathcal{Z}_T^{p}} = \sup_{t\in[0,T]} \|v(t,\cdot)\|_{L^p(\Omega \times \RR)}  + \sup_{t\in[0,T]}  \cn^*_{\frac 12-H, p} v(t),
\end{equation}
where $p\ge 2$ and
\begin{equation}\label{def: space Z}
 \cn^*_{\frac 12-H, p} v(t)= \left (\int_{\RR}  \|v(t,\cdot)-v(t,\cdot+h)\|^2_{L^p(\Omega\times \RR)}  |h|^{2H-2}  dh \right)^{\frac{1}{2}}.
\end{equation}
Then the space $\mathcal Z_T^p$ will consist all the random fields such that the above quantity is finite. Observe that according to the definition \eqref{e1}, we have $\cn^{*}_{\frac12-H, p} v(t)=\cn^{L^p(\Omega\times \RR)}_{\frac12-H, p} v(t)$.

\begin{remark}
	A similar quantity to $\cn^*_{\frac12-H}$ defined in \eqref{def: space Z} appears in the characterization of Besov spaces in finite differences, see \cite[Theorem 2.36]{BCD}.
\end{remark}

The proof of the uniqueness theorem  requires a localization argument, based on  uniform estimates (in space and time) of stochastic convolutions. This is provided by the following lemma.

\begin{lemma}  \label{lem2}
Suppose that $p> \frac 6{4H-1}$. Let $v$ be a process in the space $\mathcal Z_T^p$.    As in \eqref{eq:def-A-t-x}, define
\begin{equation}\label{eq:not-stoch-convolution}
\Phi(t,x) = \int_0^t \int_{\RR} p_{t-s}(x-y)v(s,y )W(ds,dy)\, .
\end{equation}
Then, there exists a constant $C$ depending on $T$, $p$ and $H$, such that
\begin{equation}  \label{E4}
 \left\| \sup_ {t\in [0,T] ,\, x\in \RR}   \cn_{\frac 12-H} \Phi(t,x)   \right\|_{L^p(\Omega)}
\le C  \| v\|_{ \mathcal Z_T^p},
\end{equation}
where recalling our definition \eqref{e1}, we have $\cn_{1/2-H} \Phi(t,x)=\cn_{1/2-H}^{\R}\Phi(t,x)$.
\end{lemma}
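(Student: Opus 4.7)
The plan is to apply a Garsia--Rodemich--Rumsey (GRR) type inequality to a Hilbert-space valued two-parameter process, with Proposition \ref{prop.burkholder} as the source of moment bounds compatible with the integrated norm $\mathcal{Z}_T^p$.

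\smallskip

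\noindent\textbf{Step 1: Hilbert-space reformulation.} I would introduce the weighted Hilbert space $\mathcal{K}:=L^2(\RR,|h|^{2H-2}dh)$ and the $\mathcal{K}$-valued random field $F(t,x)(h):=\Phi(t,x+h)-\Phi(t,x)$, so that $\mathcal{N}_{\frac12-H}\Phi(t,x)=\|F(t,x)\|_{\mathcal{K}}$. The bound \eqref{E4} then reduces to a uniform $L^p(\Omega)$-control of $\|F(t,x)\|_{\mathcal{K}}$ over $(t,x)\in[0,T]\times\RR$.

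\smallskip

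\noindent\textbf{Step 2: Joint moment bounds for increments of $F$.} Writing $F(t,x)(h)-F(t',x')(h)$ as a stochastic integral whose kernel is a double increment of the heat kernel, I would apply Proposition \ref{prop.burkholder} and then exchange the $\mathcal{K}$-integration with the $L^p$-moment via Minkowski's integral inequality. The target is a H\"older-type estimate of the form
\begin{equation*}
\|F(t,x)-F(t',x')\|_{L^p(\Omega;\mathcal{K})} \le C\,\bigl(|t-t'|^{\alpha_1}+|x-x'|^{\alpha_2}\bigr)\,\|v\|_{\mathcal{Z}_T^p},
\end{equation*}
for suitable $\alpha_1,\alpha_2>0$ depending on $H$. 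The crucial feature here is that the spatial integrals produced by Burkholder's estimate land precisely on quantities controlled by the $L^p(\Omega\times\RR)$-part of $\|v\|_{\mathcal{Z}_T^p}$, in contrast with the framework of Section \ref{sec:momentest} where a uniform-in-$x$ bound on the integrand was available.

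\smallskip

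\noindent\textbf{Step 3: Uniform bound via GRR.} I would convert the moment estimate into the sup bound by a two-parameter GRR inequality for $\mathcal{K}$-valued functions on $[0,T]\times\RR$. Since $\RR$ is unbounded, I would partition $\RR=\bigcup_{n\in\Z}[n,n+1]$, apply GRR on each strip $[0,T]\times[n,n+1]$, and sum the contributions. Summability over $n$ should follow from the spatial $L^p(\Omega\times\RR)$-integrability that $\Phi$ inherits from $v$ through the $L^1$-mass of the heat kernel.

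\smallskip

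The main obstacle is precisely the non-compactness of $\RR$ in the spatial direction tackled in Step 3: bare GRR cannot control a supremum over an unbounded set without a decay mechanism, and this mechanism is only available through the integrated structure of $\mathcal{Z}_T^p$ rather than the uniform $\mathfrak{X}^p$-norms used earlier. The restriction $p>6/(4H-1)$ is expected to emerge at this step as the condition that the H\"older exponents $\alpha_1,\alpha_2$ of Step 2 exceed $1/p$ by a margin sufficient to accommodate the two-dimensional $(t,x)$ parameter space together with the weight $|h|^{2H-2}$ appearing in the $\mathcal{K}$-norm.
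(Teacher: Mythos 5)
Your Steps 1 and 2 are sound as far as they go: for $p\ge 2$ one can indeed combine Proposition \ref{prop.burkholder} with Minkowski's inequality, plus H\"older's inequality in the spatial integration variable (pairing heat-kernel increments in $L^{2p/(p-2)}(\RR)$ against $v$ in $L^p$), to get pointwise bounds of the form $\|F(t,x)-F(t',x')\|_{L^p(\Omega;\mathcal{K})}\le C\lp|t-t'|^{\alpha_1}+|x-x'|^{\alpha_2}\rp\|v\|_{\mathcal{Z}_T^p}$. The genuine gap is at the junction between Step 2 and Step 3: this bound is \emph{uniform} in the spatial location, and a uniform increment bound cannot produce a summable family of Garsia--Rodemich--Rumsey (GRR) estimates over the infinitely many strips $Q_n=[0,T]\times[n,n+1]$. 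Concretely, GRR on $Q_n$ controls $\sup_{Q_n}\|F\|_{\mathcal K}$ by a reference-point term plus the double integral
\begin{equation*}
\int_{Q_n}\int_{Q_n}\frac{\be\|F(z)-F(z')\|^p_{\mathcal K}}{|z-z'|^{\gamma p+4}}\,dz\,dz',
\end{equation*}
and with only the uniform bound from Step 2 the best available estimate of each such integral is a constant multiple of $\|v\|^p_{\mathcal{Z}_T^p}$ \emph{independent of $n$}, so the resulting sum over $n\in\Z$ diverges. The decay mechanism you invoke (spatial integrability inherited by $\Phi$) can plausibly handle the reference-point terms, after averaging the reference point over each strip, but it does nothing for the increment integrals, which is where GRR actually spends its strength. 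To repair the argument you would need spatially \emph{integrated} increment estimates, e.g.
\begin{equation*}
\int_\RR \be\|F(t,x)-F(t',x+w)\|^p_{\mathcal K}\,dx \le C\lp|t-t'|^{1/2}+|w|\rp^{\theta p}\|v\|^p_{\mathcal{Z}_T^p},
\end{equation*}
obtained by applying Minkowski's integral inequality in $x$ \emph{before} separating the heat-kernel factors from $v$, exploiting translation covariance of the kernel. This is a genuinely different estimate from the one stated in your Step 2, and it is the essential one.

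For comparison, the paper avoids chaining altogether via the factorization method: writing $\Phi(t,x)=\frac{\sin(\pi\alpha)}{\pi}\int_0^t\int_\RR(t-r)^{\alpha-1}p_{t-r}(x-z)Y(r,z)\,dz\,dr$, with $Y(r,z)=\int_0^r\int_\RR(r-s)^{-\alpha}p_{r-s}(z-y)v(s,y)W(ds,dy)$, the supremum over $(t,x)\in[0,T]\times\RR$ --- including the unbounded $x$-direction --- is absorbed \emph{pathwise} by H\"older's inequality into $L^q(\RR)$-norms of heat-kernel increments, which are uniform in $x$ by translation invariance; what remains random is the single integrated quantity $\int_0^T\|Y(r,\cdot)\|^p_{L^p(\RR)}\,dr$, whose expectation is bounded by $\|v\|^p_{\mathcal{Z}_T^p}$ using exactly the Minkowski-plus-translation-invariance device above. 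The window $\frac{3}{2p}+\frac14-\frac H2<\alpha<\frac H2$, hence $p>\frac{6}{4H-1}$, comes from compatibility of the two steps. If you push the integrated increment estimates through, your GRR route should also close with the same constraint on $p$, but it essentially forces you to re-derive the paper's Step 2 machinery and then still run a two-parameter chaining argument on top of it.
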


\begin{remark}
Let us stress the following facts:

\noindent
(i)
In relation \eqref{E4}, the operator $\cn_{\frac12-H}$ (defined in \eref
{e4}) acts on the trajectories of the random field $\Phi(t,x)$. As a consequence,  $\cn_{\frac 12-H} \Phi(t,x)$ is a random variable.

\noindent
(ii) With respect to Proposition \ref{prop.young}, inequality \eqref{E4} involves a $\sup$ in the variable $x\in\R$ before taking $L^{p}(\oom)$ norms. We thus get a stronger result with different kind of assumptions (namely $v\in\mathcal Z_T^p$ instead of $v\in\mathfrak{X}_{T}^{\frac12-H,p}$).

\end{remark}

\begin{proof}[Proof of Lemma \ref{lem2}]
We shall apply the factorization method to handle the stochastic convolution (see, for instance,   \cite{DPZ}). Namely, an application of a stochastic version of Fubini's theorem enables to write
\begin{equation*}
\Phi (t,x)=\frac{\sin(\pi \alpha)}{\pi}\int_0^t \int_{\RR} (t-r)^{\alpha-1}p_{t-r}(x-z)Y(r,z)dz dr\,,
\end{equation*}
with
\begin{equation*}
Y(r,z)=\int_0^r \int_{\RR}(r-s)^{-\alpha}p_{r-s}(z-y)v(s,y)W(ds,dy)\,,
\end{equation*}
and where $\alpha\in(0,1)$ is a parameter whose value  will be chosen later.  The proof will be done in two steps.

\noindent
\textit{Step 1:  Uniform estimate of $  \cn_{\frac 12-H} \Phi(t,x)$.}
In order to estimate  $\cn_{\frac 12-H} \Phi(t,x)$, we   bound the difference $\Phi(t,x)-\Phi(t,x+h)$ as follows
\begin{eqnarray*}
&&|\Phi(t,x)-\Phi(t,x+h)|\\
&=&\frac{\sin(\alpha \pi)}{\pi} \left| \int_0^t \int_{\RR}(t-r)^{\alpha-1}\left (p_{t-r}(x-z)-p_{t-r}(x+h-z) \right) Y(r,z)dz dr \right|\\
&\leq&\frac{\sin(\alpha \pi)}{\pi} \int_0^t (t-r)^{\alpha-1}\left \|p_{t-r}(\cdot)-p_{t-r}(\cdot+h) \right\|_{L^{q}(\RR)} \|Y(r,\cdot)\|_{L^p(\RR)}dr\,,
\end{eqnarray*}
where   $q$ satisfies $p^{-1}+q^{-1}=1$.  So using Minkowski's integral inequality, we get
\begin{align}\label{eq:bnd-incr-Phi-1}
&\int_{\RR} |\Phi(t,x)-\Phi(t,x+h)|^2 |h|^{2H-2} dh \notag\\
&\le C \, \int_{\RR} \left (\int_0^t (t-r)^{\alpha-1} \left \|p_{t-r}(x-\cdot)-p_{t-r}(x+h-\cdot) \right\|_{L^q(\RR)} \left\| Y(r,\cdot)\right\|_{L^p(\RR)} dr \right)^2 |h|^{2H-2}dh \notag\\
&\leq C \,  \left (\int_0^t (t-r)^{\alpha-1} \left \|Y(r,\cdot) \right\|_{L^p(\RR)}
\lc K_{t}(r) \rc^{1/2}  \, dr \right)^2\, ,
\end{align}
where we have set
\begin{equation*}
K_{t}(r)
:=
\int_{\RR} \left\|p_{t-r}(x-z)-p_{t-r}(x+h-z) \right\|^2_{L^q(\RR, dz)}|h|^{2H-2} dh\, .
\end{equation*}
Now the kernel $K_{t}$ can be bounded by elementary methods:
with the change of variable $z \rightarrow \sqrt{t-r}z$ and $h \rightarrow \sqrt{t-r}h$, we obtain
\begin{multline*}
K_{t}(r)=\int_{\RR} \left\|p_{t-r}(x-z)-p_{t-r}(x+h-z) \right\|^2_{L^q(\RR, dz)}|h|^{2H-2} dh\\
= C \, (t-r)^{-\frac{3}{2}+\frac{1}{q}+H} \int_{\RR} \left( \int_{\RR} \left |e^{-\frac{z^2}{2\kappa}}-e^{-\frac{(z+h)^2}{2\kappa}} \right|^q dz \right)^{\frac{2}{q}} |h|^{2H-2}dh
=C (t-r)^{-\frac{1}{2}-\frac{1}{p}+H}\,,
\end{multline*}
where we have used the fact that $q^{-1}=1-p^{-1}$, and the constant $C$ in the above equation and below in this proof may depend on $\kappa$.  Going back to \eqref{eq:bnd-incr-Phi-1}, the following holds true:
\begin{multline*}
\int_{\RR} |\Phi(t,x)-\Phi(t,x+h)|^2 |h|^{2H-2} dh
\leq C  \left (\int_0^t (t-r)^{\alpha-1+\frac{1}{2}(H-\frac{1}{p}-\frac{1}{2})} \left \| Y(r, \cdot)\right\|_{L^p(\RR)} dr \right)^2\\
\leq C \left (\int_0^t (t-r)^{q [\alpha-1+\frac{1}{2}(H-\frac{1}{2}-\frac{1}{p})]} dr\right)^{\frac{2}{q}} \left (\int_0^t \left \|Y(r,\cdot) \right\|^p_{L^p(\RR)} dr\right)^{\frac{2}{p}}\, .
\end{multline*}
We can now start to tune our parameters. It is easily checked that
the first integral in the right hand side above is finite (uniformly in $0 < t \leq T$) if and only if
\begin{equation}\label{eq: restriction alpha}
\alpha > \frac{3}{2p}+\frac{1}{4}-\frac{H}{2}\,.
\end{equation}
With this choice of $\alpha$, we get
\begin{equation*}
\int_{\RR} |\Phi(t,x)-\Phi(t,x+h)|^2 |h|^{2H-2} dh\leq C \left ( \int_0^t \left\|Y(r,\cdot) \right\|^p_{L^p(\RR)} dr \right)^{\frac{2}{p}}\,,
\end{equation*}
and since this bound is uniform in $x$, this yields
\begin{equation}\label{eq:bnd-incr-Phi-2}
\sup_{t\in[0,T], x \in \RR} [\cn _{\frac 12-H} \Phi(t,x)]^2 \leq C \left ( \int_0^T \left\|Y(r,\cdot) \right\|^p_{L^p(\RR)} dr \right)^{\frac{2}{p}}\,.
\end{equation}
Then, to prove \eref{E4}  it suffices to show that
\begin{equation}
\label{e5}
\be   \int_{\R} |Y(r,z)|^p \, dz   \le C \| v\|_{ \mathcal Z_T^p}^p.
\end{equation}

\noindent
\textit{Step 2: Proof of  \eqref{e5}.}
Set $g_{r,z}(s,y) = (r-s)^{-\alpha} p_{r-s}(z-y) v(s,y) $, so that
\begin{equation*}
Y(r,z)=\int_{0}^{r}\int_{\R} g_{r,z}(s,y) \, W(ds,dy).
\end{equation*}
Then applying the Burkholder type inequality \eqref{eq:ineq-burk-2}, plus an elementary decomposition of the increments of $g_{r,z}$, we obtain
\begin{equation*}
  \be   \int_{\R} |Y(r,z)|^p \, dz
\le
C\, \lc D_{1}(r) + D_{2}(r) \rc,
\end{equation*}
where
\begin{multline*}
D_{1}(r)=
\int_{\RR} \Big (\int_0^r \int_{\RR^2} (r-s)^{-2\alpha} \left |p_{r-s}(y)-p_{r-s}(y+h)\right|^2 \\
\times \left\|v(s,y+z+h) \right\|_{L^p(\Omega)}^2 |h|^{2H-2} dh dy ds\Big)^{\frac{p}{2}} dz
\end{multline*}
and
\begin{multline*}
D_{2}(r)=
\int_{\RR} \Big (\int_0^r \int_{\RR^2} (r-s)^{-2\alpha} \left |p_{r-s}(y)\right|^2 \\
\times \left\|v(s,y+z+h)-  v(s,y+z)\right\|_{L^p(\Omega)}^2 |h|^{2H-2} dh dy ds\Big)^{\frac{p}{2}} dz.
\end{multline*}

Let us now bound the term $D_{1}$. Invoking Minkowski's integral inequality, it is easily seen that
\begin{equation*}
D_{1}(r)
\le
\Big( \int_0^r \int_{\RR^2} (r-s)^{-2\alpha} \left |p_{r-s}(y)-p_{r-s}(y+h)\right|^2 \left\|v(s,\cdot) \right\|_{L^p(\Omega \times \RR)}^2 |h|^{2H-2} dh dy ds\Big)^{\frac{p}{2}}.
\end{equation*}
Integrating this identity in $h$ and $y$, we end up with
\begin{equation*}
D_{1}(r)
\le
C  \, \Big (\int_0^r  (r-s)^{-2\alpha+H-1}   \|v(s,\cdot) \|_{L^p(\Omega \times \RR)} ^2 \, ds\Big)^{\frac{p}{2}} \,.
\end{equation*}
Similarly we get the following estimate for $D_{2}(r)$
\begin{eqnarray*}
D_{2}(r) &\leq& C \, \bigg (\int_0^r \int_{\RR} (r-s)^{-2\alpha-\frac{1}{2}} \left\|v(s,\cdot+h)-v(s,\cdot) \right\|_{L^p(\Omega\times \RR)}^2 |h|^{2H-2} dh ds\bigg)^{\frac{p}{2}} \\
&=&
C \, \bigg (\int_0^r  (r-s)^{-2\alpha-\frac{1}{2}}  \left[\cn^*_{\frac 12-H,p} v(s)\right]  ^2\, ds\bigg)^{\frac{p}{2}}\,.
\end{eqnarray*}
Combining the estimates for $D_{1}(r)$ and $D_{2}(r)$ we obtain
\begin{multline}\label{eq:bnd-Y-Lp}
  \be     \int_{\R} |Y(r,z)|^p \, dz
\le
C  \, \bigg(\int_0^r  \Big[ (r-s)^{-2\alpha+H-1}  \|v(s,\cdot) \|_{L^p(\Omega \times \RR)} ^2\\
    +
(r-s)^{-2\alpha-\frac{1}{2}}  \left[\cn^*_{\frac 12-H,p} v(s)\right]  ^2
\Big]  \, ds\bigg)^{\frac{p}{2}}   \,.
\end{multline}

Let us go back now to the values of our parameters $\al,p$. One can check that the two singularities in the integrals on the right hand side above are non divergent whenever $\alpha < \frac{H}{2}$. Combining this condition with the restriction \eref{eq: restriction alpha}, we end up with the relation
\begin{equation}\label{eq: restriction-alpha-2}
\frac{3}{2p}+\frac{1}{4} -\frac{H}{2} < \alpha < \frac{H}{2}.
\end{equation}
Those two conditions can be jointly met if and only if $H > \frac{1}{4}$ and $p > \frac 6{4H-1}$.
This completes the proof of the lemma.
\end{proof}

\begin{remark}\label{rmk:stoch-convol-in-Z-T-p}
Notice that the previous lemma implies that for any process $v\in \mathcal Z_T^p$,  the random variable
$ \sup_ {t\in[0,T]} \sup_{x\in \RR}   \cn_{\frac 12-H} \Phi(t,x)$ is finite almost surely, if $\Phi$ is given by
\eref{eq:not-stoch-convolution}.
\end{remark}

We can now turn to the uniqueness result  for equation \eqref{spde with sigma}.

\begin{theorem}\label{thm:uniqueness}
 Assume the following conditions hold  true:
 \begin{enumerate}
 \item For   $p> \frac 6{4H-1}$, the initial condition $u_0$ is in $L^p(\RR)$ and
 \begin{equation}  \label{e7}
\int_{\RR}\|u_0(\cdot)-u_0(\cdot+h)\|^2_{L^p(\RR)}|h|^{2H-2} dh< \infty\,.
 \end{equation}
 \item $\sigma$ is differentiable, its derivative is Lipschitz and $\sigma(0)=0$.
 \item $u$ and $v$ are two solutions of \eref{spde with sigma} and $u,v \in \mathcal Z_T^p$.
 \end{enumerate}
Then for every $t \in [0,T]$ and $x \in \RR$, $u(t,x)=v(t,x), a.s.$
\end{theorem}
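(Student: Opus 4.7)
The plan combines the Burkholder estimate of Proposition \ref{prop.burkholder}, a Taylor expansion of $\sigma$, and a stopping-time localization in the spirit of Gy\"ongy--Nualart. Subtracting the two mild formulations, the difference $w=u-v$ satisfies
$$
w(t,x) = \int_0^t\int_{\RR} p_{t-s}(x-y)\bigl[\sigma(u(s,y))-\sigma(v(s,y))\bigr] W(ds,dy),
$$
and Proposition \ref{prop.burkholder} reduces the control of $\|w(t,x)\|_{L^p(\Omega)}^2$ to controlling $\cn_{\frac12-H,p}$ of the integrand. The standard splitting of the spatial increment of $p_{t-s}(x-\cdot)[\sigma(u)-\sigma(v)](s,\cdot)$ into a ``kernel-increment times value'' and a ``value times integrand-increment'' part isolates the difficult term, namely the double increment
$$
\Theta_h(s,y):=\sigma(u(s,y+h))-\sigma(u(s,y))-\sigma(v(s,y+h))+\sigma(v(s,y)).
$$

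A first-order Taylor expansion rewrites $\Theta_h$ as
$\int_0^1 \sigma'(u(s,y)+\theta\Delta_h u)\,d\theta\,(\Delta_h u-\Delta_h v) + \int_0^1[\sigma'(u(s,y)+\theta\Delta_h u)-\sigma'(v(s,y)+\theta\Delta_h v)]\,d\theta\,\Delta_h v$, where $\Delta_h f=f(\cdot,\cdot+h)-f(\cdot,\cdot)$. Since $\sigma'$ is Lipschitz with at most linear growth, $|\Theta_h|$ is pointwise bounded by $(|\sigma'(0)|+L|u|+L|\Delta_h u|)|\Delta_h w|+L(|w|+|\Delta_h w|)|\Delta_h v|$. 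The residual factors $|u|,|v|,|\Delta_h u|,|\Delta_h v|$ are superlinear and would blow up any direct $L^p$ estimate; to close the argument I would therefore introduce the stopping time
$$
\tau_N = \inf\Bigl\{t\in[0,T]:\sup_{s\le t,\,x\in\RR}\bigl[|u(s,x)|\vee|v(s,x)|\vee\cn_{\frac12-H}u(s,x)\vee\cn_{\frac12-H}v(s,x)\bigr]>N\Bigr\}\wedge T,
$$
which, thanks to Lemma \ref{lem2} (pathwise finiteness of $\sup_{s,x}\cn_{\frac12-H}u$, $\sup_{s,x}\cn_{\frac12-H}v$) together with a Sobolev embedding that promotes the $\cn_{\frac12-H}$ control plus the $\cz_T^p$ integrability into pointwise bounds on $u,v$, satisfies $\tau_N\uparrow T$ almost surely.

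On $[0,\tau_N]$ all the superlinear factors are bounded by $N$, and repeating the Burkholder--Taylor estimate for the stopped process $w\,\mathbf{1}_{[0,\tau_N]}$ produces an inequality of the form
$$
\|w(t\wedge\tau_N,\cdot)\|_{\cz_t^p}^2 \le C(N,p)\int_0^t (t-s)^{-\alpha_H}\|w(s\wedge\tau_N,\cdot)\|_{\cz_s^p}^2 \,ds,
$$
with $\alpha_H<1$ precisely because $H>\tfrac14$ (mirroring the exponent constraints already visible in Proposition \ref{prop.young} and Lemma \ref{lem2}). A singular Gr\"onwall lemma then forces $w\equiv 0$ on $[0,\tau_N]$, and letting $N\to\infty$ yields $u(t,x)=v(t,x)$ almost surely for every $t\in[0,T]$ and $x\in\RR$. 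The main obstacle is the construction of $\tau_N$: one has to convert the pathwise almost-sure control from Lemma \ref{lem2} into honest pointwise bounds on $u,v$ strong enough to neutralize the superlinear remainders coming from the Taylor expansion of $\sigma$, while keeping the estimates compatible with the scaling of the $\cz_T^p$ norm so that the Gr\"onwall loop actually closes; the restriction $p>6/(4H-1)$ in the hypotheses is dictated exactly by this balance.
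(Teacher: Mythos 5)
Your proof plan has the same skeleton as the paper's argument (subtract the two mild formulations, control the stochastic convolution via the Burkholder-type estimate, split the spatial increment of the integrand into a kernel-increment part and an integrand-increment part, localize by stopping times, close with a singular Gr\"onwall argument whose kernel $(t-s)^{2H-\frac32}$ is integrable because $H>\frac14$), but the one step where you deviate is exactly where the gap is. Your Taylor expansion of the rectangular increment produces the coefficient $|\sigma'(0)|+L|u|+L|\Delta_h u|$ in front of $|\Delta_h w|$, i.e.\ a factor involving the \emph{pointwise values} of the solution, and you are then forced to put $\sup_{t,x}|u|$ and $\sup_{t,x}|v|$ into the stopping time $\tau_N$. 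This cannot be justified under the stated hypotheses. First, nothing available gives a.s.\ finiteness of $\sup_{t\le T,\,x\in\RR}|u(t,x)|$ for a solution that is merely in $\mathcal Z_T^p$: Lemma \ref{lem2} bounds only the increment seminorm $\cn_{\frac12-H}\Phi$, not $\sup|\Phi|$, and your ``Sobolev embedding'' promotion (H\"older regularity from Proposition \ref{prop.embed} plus $L^p(\RR)$ integrability in space) yields a pointwise bound only for each \emph{fixed} $t$, on a $t$-dependent null set; without time continuity of $u$ (not assumed for solutions) you cannot pass to the supremum over the uncountable set $t\in[0,T]$. Second, and more fatally, hypothesis (1) does not force $u_0$ to be bounded: when $H>\frac25$ one may take $p$ with $\frac{6}{4H-1}<p<\frac{2}{1-2H}$, and then there are unbounded $u_0\in L^p(\RR)$ satisfying \eqref{e7}; for such data $\sup_x|u(0,x)|=\infty$, so your $\tau_N$ equals $0$ for every $N$ and the localization never exhausts $[0,T]$ --- letting $N\to\infty$ yields nothing.

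The paper circumvents pointwise values of $u,v$ entirely by a different grouping of the rectangular increment: $|\sigma(a)-\sigma(b)-\sigma(c)+\sigma(d)|\le C|a-b-c+d|+C|a-b|\,(|a-c|+|b-d|)$ with $a=u(s,y)$, $b=v(s,y)$, $c=u(s,y+h)$, $d=v(s,y+h)$. Every term is then either the double increment of $w=u-v$, or $|w|$ multiplied by \emph{single spatial increments of $u$ or $v$}; consequently the stopping times $T_k$ need to control only $\sup_{s\le t,\,x}\int_\RR|u(s,x+h)-u(s,x)|^2|h|^{2H-2}dh$ and the same quantity for $v$, which is precisely what \eqref{e7}, the membership $\sigma(u),\sigma(v)\in\cz_T^p$ (this is where $\sigma(0)=0$ enters) and Lemma \ref{lem2} via Remark \ref{rmk:stoch-convol-in-Z-T-p} guarantee to be a.s.\ finite. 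On $\{s<T_k\}$ the cross terms contribute the factor $k$, giving the closed inequality $\ci_1(t)+\ci_2(t)\le Ck\int_0^t(t-s)^{2H-\frac32}(\ci_1(s)+\ci_2(s))\,ds$ for the sup-in-$x$ second moments, whence $u=v$ on $\{t<T_k\}$ and $T_k\uparrow T$. (Note the paper's inequality implicitly uses boundedness of $\sigma'$, i.e.\ that $\sigma$ is Lipschitz, an assumption the authors use throughout --- e.g.\ already to get $\sigma(u)\in\cz_T^p$; your expansion was attempting to cope with an unbounded $\sigma'$, which is what generated the superlinear factors in the first place.) Your route could only be salvaged by strengthening the hypotheses to $u_0\in L^\infty$ \emph{and} proving a sup-norm companion of Lemma \ref{lem2} (the factorization method does give one for $\alpha\in(\frac1{2p},\frac H2\wedge\frac14)$), neither of which you do. Finally, a small attribution point: the restriction $p>\frac{6}{4H-1}$ is not dictated by the balance of your Taylor remainders; it comes from the admissible window \eqref{eq: restriction-alpha-2} for the factorization parameter in Lemma \ref{lem2}.
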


\begin{remark}
This is the first occurrence of the hypothesis $\si(0)=0$, and one might wonder about the necessity of this assumption. To this respect, let us mention that if we define $\Phi$ as in \eqref{eq:def-A-t-x} for $f\equiv \1$, then $\Phi$ does not belong to $\mathcal{Z}_T^p$.
\end{remark}

\begin{proof}
Assume that $u$ solves \eref{spde with sigma} and $u \in \mathcal Z_T^p$. From the mild formulation of the solution we have
\begin{equation}  \label{E5}
u(t,x)=p_tu_0(x)+\int_0^t \int_{\RR} p_{t-s}(x-y)\sigma(u(s,y))W(ds,dy)\, .
\end{equation}
We claim that
\begin{equation}  \label{e8}
 \sup_ {t\in[0,T]} \sup_{x\in \RR}   \cn_{\frac 12-H} u(t,x)<\infty, \quad {\rm a.s.}
 \end{equation}
  This follows from the   decomposition \eref{E5}. Indeed, on one hand,  \eref{e7} implies that, if $g(t,x)=p_tu_0(x)$, then
\[
 \sup_ {t\in[0,T]} \sup_{x\in \RR}   \cn_{\frac 12-H} g (t,x) <\infty.
 \]
 On the other hand, from the properties of $\sigma$, it follows that if $u\in \mathcal Z_T^p$, then $\sigma(u)$ also belongs to $\mathcal Z_T^p$ (notice that to estimate the first term of \eref{eq:dcp-norm-ZTp}  for $\sigma(u)$, we need to assume $\sigma(0)=0$). Hence, Remark \ref{rmk:stoch-convol-in-Z-T-p} entails
 \[
 \sup_ {t\in[0,T]} \sup_{x\in \RR}   \cn_{\frac 12-H}  \sigma(u)(t,x) <\infty, \quad {\rm a.s.}
 \]
If $v$ is another solution of equation \eref{spde with sigma} belonging also to $\mathcal Z_T^p$, then \eref{e8} also holds for $v$. In this way, we can define the stopping times
\begin{eqnarray*}
T_k&=&\inf \bigg\{0 \leq t \leq T: \sup_{0 \leq s \leq t , x \in \RR} \int_{\RR} |u(s,x)-u(s,x+h)|^2|h|^{2H-2} dh \geq k\\
&& \quad \text{or} \quad \sup_{0 \leq s \leq t , x \in \RR} \int_{\RR} |v(s,x)-v(s,x+h)|^2|h|^{2H-2} dh \geq k \bigg\}\,,
\end{eqnarray*}
and $T_k \uparrow T$, almost surely, as $k$ tends to infinity.
Our strategy will be to control the two following quantities
\[
I_{1}(t,x)=\be\lc {\bf 1}_{\{ t < T_k\}}  | u(t,x)-v(t,x)  |^2 \rc
\]
and
\[
I_{2}(t,x)=
\be \lc \int_{\RR}  {\bf 1}_{\{ t < T_k\}} \left | u(t,x) -  v(t,x) - u (t, x+h) +  v(t, x+h) \right|  ^2
|h|^{2H-2} dh \rc \, .
\]
We also set $\ci_{j}(t)=\sup_{x\in\R}I_{j}(t,x)$ for $j=1,2$.

In order to bound $I_{1}$, let us first use elementary properties of It\^o's integral, which yield
 \begin{align*}
&{\bf 1}_{\{ t < T_k\}} \lp u(t,x)-v(t,x) \rp
=
{\bf 1}_{\{ t < T_k\}} 
\int_0^{t \wedge T_k} \int_{\RR} p_{t-s}(x-y)\left [ \sigma(u(s,y))-\sigma(v(s,y)) \right] W(ds,dy)  \\
=&
{\bf 1}_{\{ t < T_k\}} \int_0^t \int_{\RR} p_{t-s}(x-y){\bf 1}_{\{s < T_k\}} \left [ \sigma(u(s,y))-\sigma(v(s,y)) \right] W(ds,dy) .
\end{align*}
We thus get $I_{1}(t,x) \le C( I_{11}(t,x) + I_{12}(t,x) )$, where
\begin{multline*}
I_{11}(t,x) =
\be \int_0^t \int_{\RR^2} |p_{t-s}(x-y)-p_{t-s}(x-y-h)|^2\\
 \times   {\bf 1}_{\{ s < T_k\}}    \left|   \sigma(u(s,y+h))-   \sigma (v(s,y+h))  \right|^2 |h|^{2H-2}dh dy ds \, ,
\end{multline*}
and
\begin{multline*}
I_{12}(t,x) =
\be \int_0^t \int_{\RR^2} p^2_{t-s}(x-y)  {\bf 1}_{\{ s < T_k\}}    \big|\sigma (u(s,y)  )-\sigma  (v(s,y) ) \\
 - \sigma (u(s,y+h)  )+ \sigma (u(s,y+h)  )\big|^2 |h|^{2H-2} dh dy ds \, .
\end{multline*}
Next we bound the term $I_{11}(t,x)$ as follows
\begin{multline*}
I_{11}(t,x) \le
C \, \be \int_0^t \int_{\RR^2} |p_{t-s}(x-y)-p_{t-s}(x-y-h)|^2 \\
 \times
 {\bf 1}_{\{ s < T_k\}} |u(s,y+h) - v(s,y+h)| ^2
 |h|^{2H-2}dh dy ds
\le
C \, \int_0^t (t-s)^{H-1} \ci_{1}(s)  \, ds,
\end{multline*}
where we recall that $\ci_{1}(t)=\sup_{x\in\R}I_{1}(t,x)$, and the constant $C$ in the above inequality and below in this proof may depend on $\kappa$. 
Let us now invoke the following elementary bound on the rectangular increments of $\si$, valid whenever $\si^{\prime}$ is Lipschitz
\begin{eqnarray*}
|\sigma(a)-\sigma(b)-\sigma(c)+\sigma(d)| \leq C |a-b-c+d| + C |a-b|(|a-c|+|b-d|)\,,
\end{eqnarray*}
With this additional ingredient, and along the same lines as for $I_{11}(t,x)$, we get
\begin{equation*}
I_{12}(t,x)
\le Ck\,
\int_0^t (t-s)^{-\frac{1}{2}} \lc \ci_{1}(s) + \ci_{2}(s)  \rc \, ds\,.
\end{equation*}
Finally, gathering our estimates on $I_{11}$ and $I_{12}$ we end up with
\begin{equation*}
\ci_{1}(t) \le Ck \,
\int_0^t (t-s)^{H-1} \lc \ci_{1}(s) + \ci_{2}(s)  \rc \, ds\,.
\end{equation*}
The term $I_{2}(t,x)$ above is dealt with exactly  the same way, and we leave to the reader the task of showing that
\begin{equation*}
\ci_{2}(t) \le Ck \,
\int_0^t (t-s)^{2H-\frac32} \lc \ci_{1}(s) + \ci_{2}(s)  \rc \, ds\,.
\end{equation*}
As a consequence,
\[
\ci_{1}(t)  + \ci_{2}(t) \le Ck \,
\int_0^t (t-s)^{2H-\frac32} \lc \ci_{1}(s) + \ci_{2}(s)  \rc \, ds\,,
\]
 which implies $\ci_{1}(t)  + \ci_{2}(t)=0$ for all $t \in [0,T]$. In particular,
\[
\be \lc {\bf 1}_{\{t < T_k\}}  |  u(t,x)-   v(t,x) |^2 \rc=0\, ,
\]
which implies $u(t,x)=v(t,x)$ a.s. on $\{t<T_k\}$ for all $k\ge 1$ and $t\in [0,T] $. Therefore, taking into account that $T_k \uparrow \infty$ a.s. as $k$ tends to infinity, we conclude that $u(t,x) =v(t,x)$ a.s. for all $(t,x)\in [0,T] \times \RR$.
This proves the uniqueness.
\end{proof}
\subsection{Space-time function spaces} 
\label{sub:space_time_function_spaces}
	We introduce here the function spaces which form the underlying spaces of our treatment for the  existence  of the solution. Since these spaces do not belong to standard classes of function spaces, we describe them in detail.

	We denote by $C_{\mathrm {uc}}([0,T]\times\RR)$ the space of    all  real-valued continuous functions on $[0,T]\times \RR$ equipped with the topology of convergence uniformly over compact sets.
Let $(B,\|\cdot\|)$ be a Banach space equipped with the norm $\|\cdot\|$. Let $\beta\in(0,1)$ be a fixed number. For every $\delta\in(0,\infty]$ and every function $f: \RR\to B$, we introduce the function $\cn_{\beta}^{B,(\delta)}  f:  \RR\to[0,\infty]$ defined by
 	\begin{equation}  \label{E6}
 	 	\cn_{\beta}^{B,(\delta)} f(x)=\left( \int_ {|h|\le \delta } \| f(x+ h)- f(x)\|^2|h|^{-1-2 \beta}dh \right)^{\frac12}\,.
 	\end{equation}
 	 Notice that for $\delta =\infty$, the quantity \eqref{E6} coincides with the function $ \cn_{\beta}^{B,(\infty)}  f =\cn_\beta^B f$ introduced in  \eref{e1}. As our usual practice, when $B=\RR$ we omit the dependence of $\RR$ in $\cn_{\beta}^{\RR,(\delta)}$ and simply write $\cn_{\beta}^{(\delta)}$.
	
	   As we will see later along the development of the paper, $ \cn_{\beta}^{B,(\delta)}  f$ plays a role analogous to the modulus of continuity of $f$ near $x$.  It follows from  the triangular inequality, that $\cn$ satisfies
 	\begin{equation}\label{V.subadditive}
 		| \cn_{\beta}^{B,(\delta)} f (x)- \cn_{\beta}^{B,(\delta)} g(x)  |\le \cn_{\beta}^{B,(\delta)} (f -g)(x) 	\end{equation}
 	for all $\delta\in(0,\infty]$, functions $f, g$ and $x$ in $\RR$. Thus, $\cn $ is a seminorm.

 	Suppose, for instance, that a function $f$ has modulus of continuity  $|h|^\beta \omega(h)$ at $x$, for any $|h| \le \delta$. Then $[\cn_{\beta}^{B,(\delta)} f  (x)]^2 $ is majorized by $2\int_0^\delta\omega^2(h)h^{-1}dh$. Thus,   for $\cn_{\beta}^{B,(\delta)} f(x)$ to be finite, it is sufficient that $ \omega^2(h)h^{-1}$ is integrable near 0. On the other hand, if $\cn_{\beta}^{B,(\delta)} f$ is bounded over a domain, the following proposition asserts that $f$ is necessarily H\"older continuous.
 		\begin{proposition}\label{prop.embed}
		Let $I$ be a non-empty open interval of $\RR$ and $\delta\in(0,\infty]$. Let $f$ be a function on $\RR$ such that $\sup_{x\in \bar I}\cn_{\beta}^{B,(\delta)} f(x)$ is finite. Then
		\begin{equation}\label{ineq.embed}
			\sup_{x\in I, |y|\le \frac{\delta}3\wedge \mathrm{dist}(x,\partial I)} \frac{\|f(x+y)-f(x)\|}{|y| ^\beta} \le c(\beta)\sup_{x\in \bar I}\cn_{\beta}^{B,(\delta)} f(x)
		\end{equation}
		for some finite constant $c(\beta)$ which depends only on $\beta$.
	\end{proposition}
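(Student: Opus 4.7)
I would prove this inequality by a direct averaging argument of Campanato/Morrey type, using the fact that $\cn_\beta^{B,(\delta)}$ controls a weighted $L^2$ average of increments.

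\textbf{Set-up.} Fix $x \in I$ and $y$ with $0 < |y| \le \frac{\delta}{3} \wedge \mathrm{dist}(x,\partial I)$. By symmetry I may assume $y>0$; otherwise an analogous argument with the averaging interval on the other side works verbatim. Write $M = \sup_{z\in \bar I}\cn_{\beta}^{B,(\delta)} f(z)$, and note that $x+y\in\bar I$ thanks to the distance condition.

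\textbf{Key decomposition.} The idea is to compare $f(x+y)$ and $f(x)$ through an auxiliary point $z$ lying outside the segment $[x,x+y]$, namely $z = x+y+w$ with $w\in[y,2y]$. For each such $w$ I write
\begin{equation*}
f(x+y)-f(x) \;=\; -\bigl[f((x+y)+w)-f(x+y)\bigr] + \bigl[f(x+(y+w))-f(x)\bigr].
\end{equation*}
Averaging this identity in $w$ over $[y,2y]$, taking the $B$-norm, squaring, and applying Cauchy--Schwarz gives
\begin{equation*}
\|f(x+y)-f(x)\|^2 \le \frac{2}{y}\int_{y}^{2y}\|f((x+y)+w)-f(x+y)\|^2 dw + \frac{2}{y}\int_{y}^{2y}\|f(x+(y+w))-f(x)\|^2 dw.
\end{equation*}

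\textbf{Inserting the $|h|^{-1-2\beta}$ weight.} In the first integral, $|w|\in[y,2y]$, so $1\le (2y)^{1+2\beta}|w|^{-1-2\beta}$; hence the first term is bounded by $(2y)^{1+2\beta}\cn_{\beta}^{B,(\delta)}f(x+y)^2$, provided the range $w\in[y,2y]$ lies in $[-\delta,\delta]$, i.e.\ $2y\le\delta$. In the second integral I change variables $h = y+w\in[2y,3y]$, which gives an integrand of the form $\|f(x+h)-f(x)\|^2$, and then bound it by $(3y)^{1+2\beta}\cn_{\beta}^{B,(\delta)}f(x)^2$ provided $3y\le\delta$. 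The condition $y\le\delta/3$ in the proposition is precisely what makes both weighted integrals legitimate pieces of the defining integral for $\cn_\beta^{B,(\delta)}$. Combining gives
\begin{equation*}
\|f(x+y)-f(x)\|^2 \le \bigl(2\cdot 2^{1+2\beta}+2\cdot 3^{1+2\beta}\bigr) y^{2\beta} M^2,
\end{equation*}
which yields \eqref{ineq.embed} with $c(\beta)^2 = 2\cdot 2^{1+2\beta}+2\cdot 3^{1+2\beta}$.

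\textbf{Main obstacle.} The argument is routine once the averaging is set up correctly, but the delicate point is the \emph{choice of location and length of the averaging interval} in $w$. If one symmetrizes about the midpoint $x+y/2$ (a natural first try), the $|h|^{-1-2\beta}$ weight becomes singular at the endpoints of the integration range and the bookkeeping degrades the constant. Placing $w$ into $[y,2y]$ on one side of the segment keeps $|w|$ and $|y+w|$ bounded away from $0$ by a multiple of $y$, and simultaneously bounded above by $3y$, which is exactly what produces the clean constant $\delta/3$ in the statement.
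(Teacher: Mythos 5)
Your proof is correct, and it takes a genuinely different decomposition from the paper's. The paper runs a Campanato/Morrey-type mean-value comparison: it introduces the local averages $f_{x,R}=\frac{1}{2R}\int_{-R}^{R}f(x+u)\,du$ with $R=|y|$ and splits $f(x+y)-f(x)$ into the three terms $[f(x+y)-f_{x+y,R}]+[f_{x+y,R}-f_{x,R}]+[f_{x,R}-f(x)]$; the two endpoint terms are handled exactly as in your estimates, but the middle term $\|f_{x+y,R}-f_{x,R}\|$ forces a Minkowski-plus-Cauchy--Schwarz bound over a product domain, where the constraint $|y+z-w|\le 3R\le\delta$ is what produces the threshold $\delta/3$. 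You instead route the increment through a single third point $x+y+w$ and average only over its location $w\in[y,2y]$, so that everything reduces to two one-dimensional integrals of increments anchored at the base points $x$ and $x+y$, each dominated directly by $\cn_{\beta}^{B,(\delta)}f$ there (with $\delta/3$ appearing because $y+w\le 3y$); your bookkeeping of the constants ($2\cdot 2^{1+2\beta}y^{2\beta}M^2$ and $2\cdot 3^{1+2\beta}y^{2\beta}M^2$ after the $2/y$ prefactors) is accurate, and the only hypotheses used --- $x, x+y\in\bar I$ and $3|y|\le\delta$ --- are exactly those granted by the statement. What each approach buys: yours avoids the double integral entirely and yields an explicit clean constant; the paper's mean-value scheme is the standard one and transfers verbatim to settings (higher dimensions, averages over balls) where a one-sided averaging interval is less natural.
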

	\begin{proof}
		For every $x\in I$ and positive $R$, $R\le \delta$, we denote $f_{x,R}=\frac{1}{2R}\int_{-R}^R f(y+x)dy$. We first estimate $\|f(x)-f_{x,R}\|$ as follows
		\begin{align}
			\|f(x)-f_{x,R}\|&\le\frac1{2R}\int_{-R}^R \|f(x)-f(x+y)\|dy
			\nonumber\\&\le \frac1{2R}\left(\int_{-R}^R \|f(x)-f(x+y)\|^2 |y|^{-1-2 \beta}dy\right)^{\frac 12 }\left(\int_{-R}^R |y|^{1+2 \beta}dy \right)^{\frac 12}
			\nonumber\\&\le \frac{R^{\beta}}  {2\sqrt{(1+\beta)}} \sup_{x\in \bar I}\cn_{\beta}^{B,(\delta)} f(x)\,. \label{tempt.ffr}
		\end{align}
		Let us now fix $x\in I$ and $y\in\RR$ such that $|y|\le \delta/3\wedge \mathrm{dist}(x,\partial I)$. We also choose $R=|y|$. It follows from triangle inequality that
		\begin{equation}\label{tempt.fxyr}
			\|f(x+y)-f(x)\|\le\|f(x+y)-f_{x+y,R}\|+\|f_{x+y,R}-f_{x,R}\|+\|f(x)-f_{x,R}\|\,.
		\end{equation}
		For the second term, we apply Minkowski's inequality to get
\begin{equation*}
\|f_{x+y,R}-f_{x,R}\|
\le\frac1{4R^2} \int_{-R}^R\int_{-R}^R\|f(x+y+z)-f(x+w)\| \, dzdw,
\end{equation*}
and invoking Cauchy-Schwarz' inequality this yields
\begin{multline*}
\|f_{x+y,R}-f_{x,R}\|
\le\frac1{4R^2} \int_{-R}^R
\left(\int_{-R}^R\|f(x+y+z)-f (x+w)\|^2 |y+z-w|^{-2 \beta-1} dz\right)^{\frac12}  \\
\times \left(\int_{-R}^R |y+z-w|^{2 \beta+1}dz \right)^{\frac12}  dw\,.
\end{multline*}
Notice that because of the restrictions on the variables, the domain of integration above satisfies $|y+z-w|\le 3R\le \delta$ and $x+w\in \bar I$. Hence
		\begin{align*}
			\|f_{x+y,R}-f_{x,R}\|\leq C_\beta \sup_{y\in \bar I}\cn_{\beta}^{B,(\delta)} f (y)R^\beta\,.
		\end{align*}
		We can now conclude our proof as follows: the first and third terms on  the right hand side of \eqref{tempt.fxyr} are estimated in \eqref{tempt.ffr}. Combining these estimates within \eqref{tempt.fxyr} yields \eqref{ineq.embed}.
	\end{proof}

\begin{remark} 
In the same way as for the quantities $\cn_{\beta}^{B} f$, the function $\cn_{\beta}^{B,(\delta)} f$ can be defined for functions  defined on $\RR_+ \times \RR$. In this case, we have $\cn_{\beta}^{B,(\delta)}f : \RR_+ \times \RR \to [0,\infty]$.
\end{remark}

	Whenever $\sigma$ is an affine function (i.e. $\sigma(u)=au+b$ for some constants $a,b$), the spaces $\XX^{\beta,p}_T$ are sufficient to show existence and uniqueness for equation \eref{spde with sigma}. On the other hand, the case of general Lipschitz function $\sigma$ leads to the consideration of additional spaces, which we are going to study now.

  For every $h\in\RR$, let $\tau_h$ be the translation map in the spatial variable, that is $\tau_hf(t,x)=f(t,x-h)$.

  \begin{definition}  \label{def1}
  Let $X^\beta_T$ be the space of all real-valued continuous functions $f$ on $[0,T]\times \RR$ such that
  \begin{itemize}
  \item[(i)]   $(t,x)\mapsto \cn_{\beta }^{(1)} f (t,x)$ is finite and continuous on $[0,T]\times \RR$.
  \item[(ii)]  $\lim_{h\downarrow0}\sup_{t\in[0,T],\,x\in[-R,R]} \cn_{\beta }^{(1)} (\tau_hf-f) (t,x)=0$
 for every positive $R$.
 \end{itemize}
\end{definition}
	
We equip $X^\beta_T$ with the following topology. A sequence $\{f_n\}$ in $X^\beta_T$  converges to $f$ in $X^\beta_T$ if for all $R>0$, the sequences $\{f_n\}$ and $ \{\cn_{\beta }^{(1)} (f_n-f)\} $ converge uniformly on $[0,T]\times[-R,R]$ to $f$ and $0$ respectively. We define a metric on $X^\beta_T$ as follows
	\begin{equation}\label{metric.d}
		 d_ \beta(f,g)=\sum_{n=1}^\infty 2^{-n}\frac{\|f-g\|_{n,\beta}}	{1+\|f-g\|_{n,\beta}}\,,
	\end{equation}
	where $\|\cdot\|_{n,\beta}$ is the seminorm
	\begin{equation*}
		\|f\|_{n,\beta}:=\sup_{t\in[0,T],\,x\in[-n,n]}|f(t,x)|+\sup_{t\in[0,T],\,x\in[-n,n]}\cn_{\beta }^{(1)} f(t,x) \,.
	\end{equation*}
	Since functions in $X^\beta_T$ are locally bounded, the topology of $X^\beta_T$ is not altered if in the previous definition  $\cn_{\beta }^{(1)}f$ is replaced by $\cn_{\beta}^{(\delta)} f$ for some \textit{finite}  and positive $\delta$. We emphasize that replacing $\delta$ by $\infty$ would create a strictly smaller space.
	\begin{remark} 
		The space which satisfies only condition (i) in Definition \ref{def1} would be too big and fail to be separable.  Analogous situations occur frequently in analysis. In the study of Morrey spaces, this fact was first observed by Zorko in \cite{Z}. Continuity spatial translations with respect to a norm is therefore sometimes called Zorko condition. 
	\end{remark}

	\begin{proposition}\label{prop.Ncomplete}
		$X^\beta_T$ is a complete metric space.
	\end{proposition}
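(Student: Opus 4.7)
The plan is to start from a $d_\beta$-Cauchy sequence $\{f_n\}\subset X^\beta_T$ and extract a limit $f$ by compact-set uniform convergence, then verify in turn that $f$ is continuous, that $\cn_\beta^{(1)}f$ is finite and continuous, and finally that $f$ satisfies the Zorko condition (ii) in Definition \ref{def1}. Throughout I will use the sub-additivity \eqref{V.subadditive} of $\cn_\beta^{(1)}$ and the identity $\cn_\beta^{(1)}(\tau_h g)(t,x)=\cn_\beta^{(1)} g(t,x-h)$, which follows directly from the change of variables $k=h+k'$ in the integral defining $\cn_\beta^{(1)}$.

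First, by construction of the metric \eqref{metric.d}, the condition $d_\beta(f_n,f_m)\to 0$ is equivalent to $\|f_n-f_m\|_{N,\beta}\to 0$ for every $N\ge 1$. In particular, for each $N$ the sequence $\{f_n\}$ is uniformly Cauchy on $[0,T]\times[-N,N]$, hence converges uniformly on compact sets to a continuous function $f:[0,T]\times\RR\to\RR$. Moreover, for fixed $n$ and $(t,x)$, Fatou's lemma applied to the integral defining $\cn_\beta^{(1)}$ gives
\[
\cn_\beta^{(1)}(f_n-f)(t,x)\le \liminf_{m\to\infty}\cn_\beta^{(1)}(f_n-f_m)(t,x),
\]
and since the right-hand side is bounded above by $\liminf_{m}\|f_n-f_m\|_{N,\beta}$ uniformly in $(t,x)\in[0,T]\times[-N,N]$, the Cauchy property yields $\sup_{t\in[0,T],x\in[-N,N]}\cn_\beta^{(1)}(f_n-f)(t,x)\to 0$ as $n\to\infty$, for every $N$. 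Combined with the uniform convergence $f_n\to f$, this gives $\|f_n-f\|_{N,\beta}\to 0$ for each $N$, hence $d_\beta(f_n,f)\to 0$ once we know $f\in X^\beta_T$.

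Next I verify that $f\in X^\beta_T$. Finiteness of $\cn_\beta^{(1)}f$ follows from \eqref{V.subadditive} applied to $f=(f-f_n)+f_n$, using the preceding uniform control together with the fact that $\cn_\beta^{(1)}f_n$ is already finite and (locally) bounded. For continuity of $(t,x)\mapsto \cn_\beta^{(1)}f(t,x)$, note that \eqref{V.subadditive} gives
\[
|\cn_\beta^{(1)}f_n(t,x)-\cn_\beta^{(1)}f(t,x)|\le \cn_\beta^{(1)}(f_n-f)(t,x),
\]
so $\cn_\beta^{(1)}f_n\to\cn_\beta^{(1)}f$ uniformly on each $[0,T]\times[-N,N]$. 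Being a uniform limit of continuous functions on compact sets, $\cn_\beta^{(1)}f$ is itself continuous, yielding (i).

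The main obstacle is (ii), the Zorko-type spatial equicontinuity. Fix $R>0$ and $\varepsilon>0$, and use \eqref{V.subadditive} together with the translation identity to decompose
\[
\cn_\beta^{(1)}(\tau_h f-f)(t,x)
\le \cn_\beta^{(1)}(f-f_n)(t,x-h)+\cn_\beta^{(1)}(\tau_h f_n-f_n)(t,x)+\cn_\beta^{(1)}(f_n-f)(t,x).
\]
Restricting to $|h|\le 1$ and $x\in[-R,R]$, the first and third terms are at most $\sup_{t\in[0,T],\,y\in[-R-1,R+1]}\cn_\beta^{(1)}(f_n-f)(t,y)$, which by the first part of the proof can be made smaller than $\varepsilon/3$ by choosing $n$ large enough (depending only on $R$ and $\varepsilon$). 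With $n$ so fixed, property (ii) for $f_n\in X^\beta_T$ allows us to find $\delta_0\in(0,1]$ such that the middle term is $<\varepsilon/3$ whenever $|h|\le \delta_0$. This proves $f\in X^\beta_T$ and completes the proof.
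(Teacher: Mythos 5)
Your proof is correct and follows essentially the same route as the paper's: uniform convergence on compacts via completeness of $C_{\mathrm{uc}}([0,T]\times\RR)$, Fatou's lemma to control $\cn_{\beta}^{(1)}(f_n-f)$ uniformly, sub-additivity \eqref{V.subadditive} for continuity of $\cn_{\beta}^{(1)}f$, and the same three-term decomposition (with the shifted interval $[-R-1,R+1]$) to verify condition (ii). Your write-up is slightly more explicit about the translation identity and the equivalence of $d_\beta$-convergence with seminorm convergence, but these are details the paper leaves implicit rather than genuine differences.
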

	\begin{proof}
		Let $\{f_n\}$ be a Cauchy sequence in $X^\beta_T$.  Since the space $C_{\mathrm {uc}}([0,T]\times\RR)$ is complete, there exists continuous function $f:[0,T]\times\RR\to \RR$ such that for all compact intervals $I$,
		\begin{equation*}
			\lim_{n\to\infty}\sup_{t\in[0,T], \, x\in I}|f_n(t,x)-f(t,x)|=0\,.
		\end{equation*}
		Let us fix a compact interval $I=[-N,N]$, and $\varepsilon>0$. There exists $n_0>0$ such that
		\begin{equation*}
			\sup_{  t \in [0,T], \, x\in I }\cn_{\beta }^{(1)}(f_n-f_m)(t,x)< \varepsilon
		\end{equation*}
		for all $m,n\ge n_0$. It follows from Fatou's lemma that
		\begin{equation*}
			\cn_{\beta }^{(1)} (f_n-f)(t,x)\le\liminf_{m\to \infty}  \cn_{\beta }^{(1)}(f_n-f_m)(t,x)\le\varepsilon ,
		\end{equation*}
		for every $t\in[0,T],\,x\in I$ and $n\ge n_0$. This implies that $\cn_{\beta }^{(1)}(f_n-f)$ converges to 0 uniformly on $[0,T]\times I$. In addition, from \eqref{V.subadditive}, it follows that $\cn_{\beta }^{(1)}f_n$ converges to $\cn_{\beta }^{(1)}f$ uniformly on $[0,T]\times I$, thus the continuity of $\cn_{\beta }^{(1)} f_n$ implies that  of $\cn_{\beta }^{(1)}f$.

		It remains to check that $f$ satisfies the condition (ii) of  Definition \ref{def1}. For every $\varepsilon>0$ and $|h|\le 1$, choose $n$ sufficiently large so that $\sup_{t\in[0,T],\,x\in[N-1,N+1]} \cn _{\beta }^{(1)}(f_n-f)(t,x)<\varepsilon$. Applying Minkowski's inequality, for every $(t,x)\in[0,T]\times[-N,N]$, we have
	\begin{align*}
		\cn_{\beta }^{(1)}(\tau_hf-f)(t,x)  &\le \cn_{\beta }^{(1)}(\tau_h f- \tau_hf_n)(t,x)+\cn_{\beta }^{(1)}(\tau_h f_n- f_n)(t,x)  +\cn_{\beta }^{(1)}(f_n-f)(t,x) \\
		&\le  2 \varepsilon+\cn_{\beta }^{(1)}(\tau_h f_n- f_n)(t,x).
	\end{align*}
	Since $f_n$ belongs to $X^\beta_T$, $\lim_{h\to0}\sup_{t\in[0,T],\,x\in[-N,N]} \cn_{\beta }^{(1)}(\tau_h f_n- f_n)(t,x)=0$ which implies $f$ belongs to $X^\beta_T$.
		\end{proof}

	The next results give some characterizations of the space $X^{\beta}_T$.
	\begin{lemma}\label{lem.barcn}
		Let $f:[0,T]\times\RR\to \RR$ be a continuous function such that $t\mapsto \cn_{\beta }^{(1)}f(t,x)$ is continuous for every fixed $x$. Suppose in addition that for every $R>0$,
			$$\lim_{\delta\downarrow0}\sup_{t\in[0,T],\,x\in[-R,R]}\int_{-\delta}^\delta |f(t,x+y)-f(t,x)|^2|y|^{-2 \beta-1}dy=0\,.$$
		Then $ \cn_{\beta }^{(1)}f$ is continuous and $f$ belongs to $X_T^\beta$.
	\end{lemma}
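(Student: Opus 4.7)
The plan is to split the defining integral of $\cn_{\beta}^{(1)}f(t,x)$ at a threshold $\delta\in(0,1)$ into a near-zero piece
\[
B_\delta(t,x):=\int_{|h|<\delta}|f(t,x+h)-f(t,x)|^2|h|^{-1-2\beta}dh
\]
and a far-from-zero piece
\[
A_\delta(t,x):=\int_{\delta\le|h|\le 1}|f(t,x+h)-f(t,x)|^2|h|^{-1-2\beta}dh,
\]
so that $[\cn_{\beta}^{(1)}f(t,x)]^2=A_\delta(t,x)+B_\delta(t,x)$. The standing hypothesis is precisely that $B_\delta$ vanishes uniformly on $[0,T]\times[-R,R]$ as $\delta\downarrow0$. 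For $A_\delta$, joint continuity of $f$ makes its integrand jointly continuous in $(t,x,h)$ and uniformly bounded on $[0,T]\times[-R,R]\times\{\delta\le|h|\le 1\}$, so dominated convergence gives continuity of $A_\delta$ on any such compact. Presenting $[\cn_{\beta}^{(1)}f]^2$ as the uniform-on-compacts limit of the continuous functions $A_\delta$ then yields both its finiteness on $[0,T]\times\RR$ and its continuity, whence also that of $\cn_{\beta}^{(1)}f$.

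For the Zorko condition (ii), I would apply the same splitting to the second-order increment
\[
\Delta^{(2)}_{h}f(t,x,y):=f(t,x+y-h)-f(t,x+y)-f(t,x-h)+f(t,x),
\]
which equals $(\tau_{h}f-f)(t,x+y)-(\tau_{h}f-f)(t,x)$, so that
\[
[\cn_{\beta}^{(1)}(\tau_{h}f-f)(t,x)]^2=\int_{|y|\le 1}|\Delta^{(2)}_{h}f(t,x,y)|^2|y|^{-1-2\beta}dy.
\]
On $|y|\le\delta$ the elementary bound $|\Delta^{(2)}_{h}f|^2\le 2|f(t,x+y-h)-f(t,x-h)|^2+2|f(t,x+y)-f(t,x)|^2$ controls this part of the integral by $2B_\delta(t,x-h)+2B_\delta(t,x)$, which for $|h|\le 1$ and $x\in[-R,R]$ is bounded by $4\sup_{(s,z)\in[0,T]\times[-R-1,R+1]}B_\delta(s,z)$ and hence is arbitrarily small by the hypothesis. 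On $\delta<|y|\le 1$, the uniform continuity of $f$ on the compact set $[0,T]\times[-R-2,R+2]$ produces, for any $\eta>0$, an $h_0>0$ such that $|\Delta^{(2)}_{h}f(t,x,y)|\le 2\eta$ whenever $|h|\le h_0$, giving the bound $C\eta^2\delta^{-2\beta}$ for this part of the integral.

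Given $\ep>0$ and $R>0$, the argument then first fixes $\delta$ small enough that the near-zero part is at most $\ep$, and then chooses $\eta$ (and hence $h_0$) small enough that $C\eta^2\delta^{-2\beta}\le\ep$; this delivers the required uniform limit in (ii). The only delicate point is the order of quantifiers: $\delta$ must be chosen before $h$, since the far-from-zero estimate has an inverse power of $\delta$, but this is harmless because the hypothesis lets us fix $\delta$ in terms of $\ep$ alone, after which the uniform continuity of $f$ supplies a suitable $h_0$. This ordering is essentially the only pitfall; otherwise the proof reduces to splitting at a scale and combining dominated convergence with the uniform continuity of $f$.
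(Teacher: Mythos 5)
Your proof is correct, and it splits into two halves that compare differently with the paper's own argument. The Zorko-condition half is essentially the paper's proof: the paper likewise fixes $\delta$ from the hypothesis so that the near-origin part of $[\cn_{\beta}^{(1)}(\tau_h f-f)(t,x)]^2$ is small on the enlarged compact $[0,T]\times[-R-1,R+1]$, bounds the remaining part by $\sup_{[0,T]\times[-R-1,R+1]}|\tau_h f-f|^2\int_{|y|>\delta}|y|^{-1-2\beta}dy$, and lets $h\to 0$ using uniform continuity of $f$ on compacts --- the same estimate, with the same order of quantifiers ($\delta$ before $h$) that you single out as the delicate point. Where you genuinely depart from the paper is the continuity of $\cn_{\beta}^{(1)}f$: the paper gets it as a corollary of the Zorko condition, combining the seminorm inequality \eqref{V.subadditive} (which yields equicontinuity in $x$, uniformly in $t$) with the standing hypothesis that $t\mapsto \cn_{\beta}^{(1)}f(t,x)$ is continuous for each fixed $x$, these two facts together upgrading to joint continuity; you instead prove joint continuity directly, exhibiting $[\cn_{\beta}^{(1)}f]^2$ as the locally uniform limit of the tail integrals $A_\delta(t,x)=\int_{\delta\le|h|\le 1}|f(t,x+h)-f(t,x)|^2|h|^{-1-2\beta}dh$, each of which is continuous by dominated convergence. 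Your route costs one extra (easy) dominated-convergence argument but buys something real: it never invokes the assumed $t$-continuity of $\cn_{\beta}^{(1)}f$, so it establishes the lemma under strictly weaker hypotheses, whereas the paper's more economical route reuses the Zorko estimate and does need that assumption.
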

	\begin{proof}
		Fix $R,\ep>0$, and choose $\delta$ such that
		\begin{equation*}
			\sup_{t\in[0,T],\,x\in[-R-1,R+1]}\int_{-\delta}^\delta |f(t,x+y)-f(t,x)|^2|y|^{-2 \beta-1}dy<\varepsilon\,.
		\end{equation*}
		Then for every $t\in[0,T],\,x\in[-R,R]$ and $|h|\le 1$
		\begin{equation*}
			[\cn_{\beta }^{(1)}(\tau_hf-f)(t,x)]^2\le 2\varepsilon+\sup_{t\in[0,T],\,x\in[-R-1,R+1]} 2|\tau_hf(t,x)-f(t,x)| ^2\int_{|y|>\delta}|y|^{-2 \beta-1}d y\,.
		\end{equation*}
		Since $f$ is continuous, $\lim_{h\to0}\sup_{t\in[0,T],x\in[-R-1,R+1]} |\tau_hf(t,x)-f(t,x)|=0$. Together with the previous estimate, this yields $\lim_{h\to0}\sup_{t\in[0,T],\, x\in[-R,R]} \cn_{\beta }^{(1)}(\tau_hf-f)(t,x)=0$ which on one hand, together with \eqref{V.subadditive} implies the continuity of $ \cn_{\beta }^{(1)}f$. On the other hand, it obviously implies $f\in X_T^\beta$.
	\end{proof}

	\begin{proposition}\label{prop.Xbeta}
		Let $\phi\in C^\infty(\RR)$ be supported in $[-1,1]$, such that $\int_{\RR} \phi(x)dx=1$ and $0\le \phi\le 1$. Set  $\phi_n(x)=n \phi(nx)$. Then
		\begin{enumerate} 
			\item\label{X.mfier} If  $f\in X^\beta_T$, then $f*\phi_n\to f$ in $X^\beta_T$ as $n\to\infty$, where $*$ denotes the convolution with respect to the space variable.
			\item \label{X.dense} $C ^{0,1}([0,T]\times\RR)$ i.e., the space of functions which are continuous in time and continuously differentiable in space, is dense in $X^\beta_T $.
			\item\label{X.sup0} Suppose that $f$ is a continuous function on $[0,T]\times\RR$ such that $t\mapsto \cn_{\beta}^{(1)} f(t,x)$ is finite and continuous in time for every fixed $x\in\RR$. Then $f$ belongs to $X^\beta_T$ if and only if for every $R>0$
			\begin{equation}\label{cond.sup0}
				\lim_{\delta\downarrow0}\sup_{t\in[0,T],\,x\in[-R,R]}\int_{-\delta}^\delta|f(t,x+y)-f(t,x)|^2|y|^{-2 \beta-1}dy=0\,.
			\end{equation}
		\end{enumerate}
	\end{proposition}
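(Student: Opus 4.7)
The plan is to address the three parts in order, with Part (2) following immediately from Part (1) and the ``if'' direction of Part (3) reducing to Lemma \ref{lem.barcn}.

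For Part (1), I first verify that $f*\phi_n \in X^\beta_T$. Since $\phi_n$ is smooth with compact support, the convolution $f*\phi_n$ is continuous in $(t,x)$ and $C^\infty$ in $x$, hence locally Lipschitz in the spatial variable. An $L$-Lipschitz function $g$ satisfies the pointwise bound
$$\bigl|g(t,x+y-h)-g(t,x+y)-g(t,x-h)+g(t,x)\bigr|\le 2L\min(|y|,|h|),$$
which after integration against $|y|^{-1-2\beta}dy$ on $[-1,1]$ yields conditions (i) and (ii) of Definition \ref{def1} for $g=f*\phi_n$. For the convergence $f*\phi_n\to f$, uniform convergence on compacts is classical, while the change of variable $z=ny$ transforms the remainder into
$$(f*\phi_n)(t,x)-f(t,x) = \int_{-1}^{1}\phi(z)\,(\tau_{z/n}f-f)(t,x)\,dz,$$
and Minkowski's integral inequality in $L^2\bigl([-1,1];|h|^{-1-2\beta}dh\bigr)$ produces
$$\cn_\beta^{(1)}(f*\phi_n-f)(t,x)\le \int_{-1}^{1}\phi(z)\,\cn_\beta^{(1)}(\tau_{z/n}f-f)(t,x)\,dz.$$
Taking the supremum over $(t,x)\in[0,T]\times[-R,R]$ and invoking condition (ii) of Definition \ref{def1} with $h=z/n$ running through $[-1/n,1/n]$ forces the right-hand side to $0$ as $n\to\infty$. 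Part (2) is then immediate: each $f*\phi_n$ is continuous in time and $C^\infty$ in space, hence lies in $C^{0,1}([0,T]\times\RR)$.

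For Part (3), the ``if'' direction is exactly Lemma \ref{lem.barcn}. For the ``only if'' direction, the plan is a Dini-type argument applied to the auxiliary function
$$v_\delta(t,x):=\int_{\delta\le|y|\le 1}\bigl|f(t,x+y)-f(t,x)\bigr|^2\,|y|^{-1-2\beta}\,dy.$$
For each fixed $\delta>0$ the kernel $|y|^{-1-2\beta}$ is bounded on the domain of integration, so local boundedness of $f$ combined with dominated convergence shows $v_\delta$ is continuous on $[0,T]\times\RR$. By monotone convergence, $v_\delta(t,x)\uparrow[\cn_\beta^{(1)}f(t,x)]^2$ pointwise as $\delta\downarrow 0$, and the limit is continuous by the standing hypothesis $f\in X^\beta_T$ (Definition \ref{def1}(i)). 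Dini's theorem applied to the monotone sequence $\{v_\delta\}$ on the compact rectangle $[0,T]\times[-R,R]$ then upgrades pointwise convergence to uniform convergence, which is precisely the content of \eqref{cond.sup0}.

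The main subtlety is the choice to work with $v_\delta$ rather than with the complementary integral $u_\delta:=\int_{-\delta}^{\delta}|f(t,x+y)-f(t,x)|^2|y|^{-1-2\beta}dy$: a direct continuity argument for $u_\delta$ runs into the singularity of the kernel at $y=0$, while $v_\delta$ involves only a bounded kernel and is straightforward to handle. Exploiting the joint continuity of $\cn_\beta^{(1)}f$ furnished by membership in $X^\beta_T$ is the device that makes the Dini reduction work, and it is the reason one cannot avoid condition (i) of Definition \ref{def1} in the ``only if'' direction.
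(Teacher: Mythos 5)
Your proof is correct, and Parts (1), (2) and the sufficiency half of (3) follow essentially the paper's own route: the same mollification identity $(f*\phi_n-f)(t,x)=\int_{-1}^{1}\phi(z)(\tau_{z/n}f-f)(t,x)\,dz$, with Minkowski's integral inequality playing the role of the Jensen inequality used in the paper (both yield the bound by $\sup_{|h|\le 1/n}\sup_{t,x}\cn_{\beta}^{(1)}(\tau_h f-f)(t,x)$), the observation that the mollifications are $C^{0,1}$, and Lemma \ref{lem.barcn} for sufficiency; the only cosmetic difference is that you check $f*\phi_n\in X_T^\beta$ directly from the Lipschitz double-increment bound, whereas the paper proves the inclusion $C^{0,1}([0,T]\times\RR)\subset X_T^\beta$ via \eqref{eq:bnd-V-C01} and Lemma \ref{lem.barcn} and then quotes it. The genuine divergence is in the necessity half of (3). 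The paper deduces \eqref{cond.sup0} from the density statement (2): choose $g\in C^{0,1}$ with $\sup_{t\in[0,T],|x|\le R}\cn_{\beta}^{(1)}(f-g)(t,x)<\varepsilon$ and transfer the small-scale estimate from $g$, for which it is trivial, to $f$ at the cost of $2\varepsilon^2$. You instead run a Dini argument on $v_\delta(t,x)=\int_{\delta\le|y|\le1}|f(t,x+y)-f(t,x)|^2|y|^{-1-2\beta}dy$: each $v_\delta$ is continuous, $v_\delta$ increases pointwise to $[\cn_{\beta}^{(1)}f]^2$ as $\delta\downarrow0$, and the limit is continuous by condition (i) of Definition \ref{def1}, so convergence is uniform on the compact rectangle, which is exactly \eqref{cond.sup0}. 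This is valid (Dini for the monotone family indexed by $\delta$ follows from the sequential version by monotonicity in $\delta$), and it buys two things the paper's argument does not: it makes the necessity in (3) logically independent of Parts (1)--(2), and it shows that only the joint continuity of the seminorm field $(t,x)\mapsto\cn_{\beta}^{(1)}f(t,x)$ is used — so that, combined with Lemma \ref{lem.barcn}, condition (i) of Definition \ref{def1} already forces the translation condition (ii) for continuous $f$, a sharpening that the paper's density route does not reveal (and which suggests the paper's remark on the Zorko condition should be read as referring to mere finiteness, rather than continuity, of $\cn_{\beta}^{(1)}f$). What the paper's route buys in exchange is economy: it reuses machinery already established in the section and disposes of the necessity in three lines.
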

	\begin{proof}
		We denote $f_n=f*\phi_n$. To show \eref{X.mfier}, we observe that
		\begin{eqnarray*}
			&&f_n(t,x+y)-f_n(t,x)-f(t,x+y)+f(t,x)\\
			&&\quad =\int_\RR[\tau_hf(t,x+y)-\tau_hf(t,x)-f(t,x+y)+f(t,x)]\phi_n(h)dh
		\end{eqnarray*}
		and hence, for every $x\in[-R,R]$, applying Jensen's inequality, we get
		\begin{align*}
			\int_{-1}^1 &|f_n(t,x+y)-f_n(t,x)-f(t,x+y)+f(t,x)|^2|y|^{-2 \beta-1}dy
			\\&\le\int_\RR \int_{-1}^{1}|\tau_hf(t,x+y)-\tau_hf(t,x)-f(t,x+y)+f(t,x)|^2|y|^{-2 \beta-1}   \phi_n(h)dhdy
			\\&\le \  \sup_{r\in[0,T],\,z\in[-R-1,R+1]} \sup_{|h| \le \frac 1n} [\cn_{\beta }^{(1)}(\tau_hf-f) (r,z)]^2 \,.
		\end{align*}
		By assumption $f$ belongs to $X_T^\beta$. Therefore, owing to condition (ii) in Definition \ref{def1}, this integral converges to 0 when $n\to\infty$. This proves item \eref{X.mfier}.

		To show \eref{X.dense}, we first prove that $X_T^\beta$ contains $C^{0,1}([0,T]\times\RR)$. Indeed, if $g$ is a function in $C^{0,1}([0,T]\times\RR)$, by dominated convergence theorem, it is easy to show that $ \cn_{\beta }^{(1)} g(t,x)$ is finite and continuous in time for every fixed $x$. Moreover, for every $R>0$, we have
		\begin{equation}\label{eq:bnd-V-C01}
			\sup_{t\in[0,T],\,x\in[-R,R]} \int_{-\delta}^{\delta}|g(t,x+y)-g(t,x)|^2|y|^{-2 \beta-1}dy\le \sup_{x\in [-R,R]} \|\partial_x g\|_{\infty}\int_{|y|\le \delta}|y|^{1-2 \beta}dy\,.
		\end{equation}
Since $\lim_{\delta\to 0} \int_{|y|\le \delta}|y|^{1-2 \beta}dy = 0$, Lemma~\ref{lem.barcn} implies that $g$ belongs to $X_T^\beta$. We have thus proved that $C^{0,1}\subset X_T^\beta$. Together with item \eref{X.mfier}, this yields item \eref{X.dense}.

		The sufficiency of \eref{X.sup0} is in fact the content of Lemma \ref{lem.barcn}. We focus on the necessity of~\eqref{cond.sup0}. Assume that $f$ belongs to $X^\beta_T$. Fix $R>0$, $\varepsilon>0$ and choose $g$ in $C^{0,1}$ so that
\begin{equation*}
\sup_{t\in[0,T],\,x\in[-R,R]} \cn_{\beta }^{(1)}(f-g)(t,x)< \varepsilon \, .
\end{equation*}
Then for every $\delta>0$ we have
\begin{multline}\label{tmp.fdelta}
\sup_{t\in[0,T],\,|x|\le R}\int_{-\delta}^\delta|f(t,x+y)-f(t,x)|^2|y|^{-2 \beta-1}dy  \\
\le 2 \varepsilon^2+2\sup_{t\in[0,T],\,|x|\le R}\int_{-\delta}^\delta|g(t,x+y)-g(t,x)|^2|y|^{-2 \beta-1}dy\,.
\end{multline}
		Since $g$ is $C^{0,1}$, the last term converges to 0 when $\delta\downarrow0$ (see relation \eqref{eq:bnd-V-C01}). Due to the fact that $\varepsilon$ can be chosen arbitrarily small, this implies that $f$ satisfies the condition \eqref{cond.sup0}.
	\end{proof}
	\begin{corollary}
			$X_T^\beta$ is a Polish (complete and separable)  space.
	\end{corollary}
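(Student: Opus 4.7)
The plan is to leverage the two ingredients already in hand: Proposition~\ref{prop.Ncomplete} (completeness) and Proposition~\ref{prop.Xbeta}\eqref{X.dense} (density of $C^{0,1}([0,T]\times\RR)$). Since $(X_T^\beta,d_\beta)$ is already known to be a complete metric space, only separability remains to be established, and for this it suffices to exhibit a \emph{countable} subset of $C^{0,1}([0,T]\times\RR)$ that is dense in the $d_\beta$-topology.

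To build such a subset, first recall that the metric $d_\beta$ is controlled by the countable family of seminorms $\|\cdot\|_{n,\beta}$, so it is enough to approximate any $f\in X_T^\beta$ in each seminorm $\|\cdot\|_{n,\beta}$ separately. Given $f$, first apply Proposition~\ref{prop.Xbeta}\eqref{X.dense} to choose $g\in C^{0,1}([0,T]\times\RR)$ with $\|f-g\|_{n,\beta}$ as small as we wish. Next, for each $n$ fix a smooth cutoff $\chi_n$ with $\chi_n\equiv 1$ on $[-n-1,n+1]$ and compactly supported, and note that $g\chi_n$ coincides with $g$ on the relevant enlarged strip, so that the contribution of the tails in $x$ does not affect $\|\cdot\|_{n,\beta}$. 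Finally, on the compact set $[0,T]\times \mathrm{supp}(\chi_n)$, the bound \eqref{eq:bnd-V-C01} shows that $\|\cdot\|_{n,\beta}$ is dominated (up to a constant depending on $n,\beta$) by the $C^{0,1}$-norm $\sup|h|+\sup|\partial_x h|$ on that compact set, and the latter space is separable — for instance, the countable family $\cq_n$ of products $p(t,x)\chi_n(x)$ with $p$ a two-variable polynomial with rational coefficients is dense in $C^{0,1}$ on this compact set via a standard Stone--Weierstrass argument applied to $\partial_x g$ followed by integration.

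Taking the countable union $\cq=\bigcup_{n\ge 1}\cq_n$ therefore produces a countable subset of $C^{0,1}([0,T]\times\RR)\subset X_T^\beta$. To conclude density, given $f\in X_T^\beta$ and $\ep>0$, choose $N$ so large that $\sum_{n>N}2^{-n}<\ep/2$, approximate $f$ by some $g_N\in \cq_N\subset\cq$ so that each of the finitely many seminorms $\|f-g_N\|_{n,\beta}$ for $n\le N$ is made sufficiently small, and assemble the pieces: the definition \eqref{metric.d} of $d_\beta$ together with the cutoff observation yields $d_\beta(f,g_N)<\ep$.

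The only step that requires some care is ensuring that the cutoff operation $g\mapsto g\chi_n$ does not spoil the approximation on the relevant scale — one just needs to take $\chi_n\equiv 1$ on a slightly enlarged interval (of width at least $n+1$) because $\cn_{\beta}^{(1)}$ at $x\in[-n,n]$ involves increments $h\in[-1,1]$. Beyond this, everything reduces to the classical facts that polynomials with rational coefficients are dense in $C^1$ on a compact rectangle and that a countable product of separable pieces is separable. I do not expect any genuine obstacle, since all the hard analytic work has already been done in Propositions~\ref{prop.Ncomplete} and~\ref{prop.Xbeta}.
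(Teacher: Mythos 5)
Your proof is correct and follows essentially the same route as the paper: completeness is quoted from Proposition~\ref{prop.Ncomplete}, and separability is obtained from the density of $C^{0,1}([0,T]\times\RR)$ (Proposition~\ref{prop.Xbeta}, item~\eqref{X.dense}) combined with approximation by polynomials with rational coefficients and a truncation argument. The paper states this in one line; your write-up simply supplies the details (the cutoffs $\chi_n$, the domination of $\|\cdot\|_{n,\beta}$ by the local $C^{0,1}$-norm via \eqref{eq:bnd-V-C01}, and the assembly through the metric \eqref{metric.d}) of exactly that sketch.
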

	\begin{proof}
		Completeness comes from Proposition \ref{prop.Ncomplete}. For separability, we invoke Proposition ~\ref{prop.Xbeta}\eref{X.dense} and the fact that the functions in $C^{0,1}([0,T]\times \RR)$ can be approximated by polynomials  with rational coefficients, using a truncation argument.
	\end{proof}

	\begin{proposition} The inclusion $X_T^\beta\subset X_T^\alpha$ holds continuously for $\beta> \alpha$.
	\end{proposition}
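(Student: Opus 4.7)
The whole argument rests on a single pointwise comparison. For $0<|h|\le 1$ the map $p\mapsto |h|^{p}$ is non-increasing, so $\alpha<\beta$ yields $|h|^{-1-2\alpha}\le |h|^{-1-2\beta}$. Consequently, for every real-valued function $g$ defined on $\RR$ and every $x\in\RR$,
\begin{equation}\label{eq:plan-pw}
[\cn_\alpha^{(1)} g(x)]^2=\int_{|h|\le1}|g(x+h)-g(x)|^2|h|^{-1-2\alpha}dh\le[\cn_\beta^{(1)} g(x)]^2.
\end{equation}
The same inequality holds with $g$ replaced by $f(t,\cdot)$, by $(\tau_h f-f)(t,\cdot)$, etc. With this in hand, the containment $X_T^\beta\subset X_T^\alpha$ is quite soft, and I would split the verification into two steps followed by a continuity argument for the inclusion map.

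\textbf{Step 1: verifying that $f\in X_T^\beta$ satisfies condition (i) of Definition \ref{def1} for the exponent $\alpha$.} Finiteness of $\cn_\alpha^{(1)} f(t,x)$ is immediate from \eqref{eq:plan-pw}. To obtain continuity I would apply the characterization of Proposition \ref{prop.Xbeta}\eref{X.sup0}. The small-scale condition \eqref{cond.sup0} for $\alpha$ follows straight from \eqref{eq:plan-pw} (restricted to $|y|\le\delta\le1$), using that the same condition holds for $\beta$. It then remains to check that $t\mapsto\cn_\alpha^{(1)} f(t,x)$ is continuous for each fixed $x$. I would split
\[
[\cn_\alpha^{(1)} f(t,x)]^2=\int_{|h|\le\delta}\!\!|f(t,x+h)-f(t,x)|^2|h|^{-1-2\alpha}dh+\int_{\delta<|h|\le1}\!\!|f(t,x+h)-f(t,x)|^2|h|^{-1-2\alpha}dh,
\]
bound the first piece uniformly in $t$ by the analogous $\beta$-quantity (which tends to $0$ as $\delta\downarrow0$ by condition \eqref{cond.sup0} for $\beta$), and observe that the second piece is continuous in $t$ by dominated convergence (the weight $|h|^{-1-2\alpha}$ is bounded on $\{\delta<|h|\le1\}$ and $f$ is continuous). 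A uniform-in-$t$ limit of continuous functions is continuous, giving the desired property. Lemma \ref{lem.barcn} (or equivalently the sufficiency part of Proposition \ref{prop.Xbeta}\eref{X.sup0}) then delivers $f\in X_T^\alpha$, but we still need condition (ii) separately for the topology argument.

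\textbf{Step 2: condition (ii) of Definition \ref{def1} for $\alpha$.} This is the easiest part: applying \eqref{eq:plan-pw} with $g=\tau_h f(t,\cdot)-f(t,\cdot)$ gives
\[
\sup_{t\in[0,T],\,x\in[-R,R]}\cn_\alpha^{(1)}(\tau_h f-f)(t,x)\le\sup_{t\in[0,T],\,x\in[-R,R]}\cn_\beta^{(1)}(\tau_h f-f)(t,x),
\]
and the right-hand side tends to $0$ as $h\downarrow0$ because $f\in X_T^\beta$. Hence $f\in X_T^\alpha$.

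\textbf{Step 3: continuity of the inclusion.} The pointwise bound \eqref{eq:plan-pw} upgrades to the seminorm inequality $\|g\|_{n,\alpha}\le\|g\|_{n,\beta}$ for every $g$ and every $n\ge1$. Since the map $s\mapsto s/(1+s)$ is non-decreasing on $[0,\infty)$, the metric defined in \eqref{metric.d} satisfies $d_\alpha(f,g)\le d_\beta(f,g)$ for all $f,g\in X_T^\beta$, which makes the identity map $X_T^\beta\hookrightarrow X_T^\alpha$ $1$-Lipschitz, hence continuous. There is no real obstacle in this proof; the only mildly delicate point is the continuity in $t$ of $\cn_\alpha^{(1)} f(t,x)$ in Step 1, which is handled by the standard truncate-and-dominate trick sketched above.
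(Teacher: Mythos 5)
Your proof is correct, but it follows a genuinely different route from the paper's. The paper's argument goes through the embedding Proposition \ref{prop.embed}: boundedness of $\cn_{\beta}^{(3)}f$ on compacts upgrades $f$ to H\"older-$\beta$ continuity, so the integrand in the $\alpha$-seminorm is dominated by the $t$-independent, integrable function $C|y|^{2\beta-2\alpha-1}$ (integrable near $0$ precisely because $\beta>\alpha$), and a single application of dominated convergence then gives finiteness and joint continuity of $(t,x)\mapsto[\cn_{\alpha}^{(1)}f(t,x)]^2$ in one stroke. Your argument instead exploits the elementary monotonicity of the kernel, $|h|^{-1-2\alpha}\le|h|^{-1-2\beta}$ for $|h|\le1$, yielding the pointwise comparison $\cn_{\alpha}^{(1)}\le\cn_{\beta}^{(1)}$. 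This buys several things the paper leaves implicit: the Zorko condition (ii) of Definition \ref{def1} transfers for free (the paper never addresses it explicitly), and the seminorm inequality $\|\cdot\|_{n,\alpha}\le\|\cdot\|_{n,\beta}$ makes the inclusion $1$-Lipschitz for the metrics \eqref{metric.d}, which is a cleaner justification of the word ``continuously'' in the statement. The price you pay is that your dominating function depends on $t$, so continuity of $t\mapsto\cn_{\alpha}^{(1)}f(t,x)$ cannot come from dominated convergence alone; you correctly compensate with the truncate-at-$\delta$ argument, using the necessity half of Proposition \ref{prop.Xbeta}\eref{X.sup0} for $\beta$ to make the near-origin piece uniformly small and then invoking Lemma \ref{lem.barcn}. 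Both proofs are sound; yours is more elementary and more complete on the topological assertions, while the paper's is shorter on the analytic core because the H\"older bound hands it a uniform integrable majorant.
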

	\begin{proof}
		Suppose $f$ belongs to $X_T^\beta$. Fix $n\ge1$. By Proposition \ref{prop.embed}, we see that
		\begin{equation*}
			\sup_{t\in [0,T],\, |x|\le n}|f(t,x+y)-f(t,x)| \leq C \sup_{t\in [0,T],\,|x|\le n+1}\cn_{\beta }^{(3)}f(t,x) |y|^\beta
		\end{equation*}
		for every $|y|\le 1$. Hence for every $t\le T$, $|x|\le n$ and $\al<\beta$ we have:
		\begin{align*}
			\int_{|y|\le 1}|f(t,x+y)-f(t,x)|^2|y|^{-2 \alpha-1}dy\leq C \sup_{t\in [0,T ,\,|x|\le n+1}\cn_{\beta }^{(3)}f(t,x)\, ,
		\end{align*}
which is a  finite quantity. The continuity of $(t,x)\mapsto \int_{|y|\le 1}|f(t,x+y)-f(t,x)|^2|y|^{-2 \alpha-1}dy$ follows at once from dominated convergence theorem.
	\end{proof}
	\noindent 
	We state an analogous result for $\XX^{\beta,p}_T$ without proof.
	\begin{proposition}\label{prop:XXcompare}
		The inclusion $\XX^{\beta,p}_T\subset \XX^{\alpha,q}_T$ holds continuously for $\beta>\alpha$ and $p\ge q$.
	\end{proposition}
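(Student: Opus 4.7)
The strategy is to control each of the two terms in the $\XX^{\alpha,q}_T$-norm separately, using the fact that the probability measure $\bp$ is finite to compare $L^p(\Omega)$ and $L^q(\Omega)$ norms, and Proposition \ref{prop.embed} to convert a bound on $\cn_{\beta,p}$ into a genuine H\"older estimate.

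First I would handle the supremum part. Since $\bp(\Omega)=1$ and $p\ge q$, Jensen's inequality yields $\|X\|_{L^q(\Omega)}\le \|X\|_{L^p(\Omega)}$ for every random variable $X$. Consequently $\sup_{t,x}\|f(t,x)\|_{L^q(\Omega)}\le \sup_{t,x}\|f(t,x)\|_{L^p(\Omega)}\le \|f\|_{\XX^{\beta,p}_T}$, and continuity of $f$ as an $L^p(\Omega)$-valued map on $[0,T]\times \RR$ forces continuity as an $L^q(\Omega)$-valued map.

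Next I would control $\sup_{t,x}\cn_{\alpha,q}f(t,x)$ by splitting the defining integral into a far-field piece $|h|>1$ and a near-field piece $|h|\le 1$. For the far field the triangle inequality and the $L^p$--$L^q$ comparison give $\|f(t,x+h)-f(t,x)\|_{L^q(\Omega)}\le 2\|f\|_{\XX^{\beta,p}_T}$, and $\int_{|h|>1}|h|^{-1-2\alpha}dh<\infty$ since $\alpha>0$, producing a bound $C\|f\|_{\XX^{\beta,p}_T}^2$. The near-field piece is the heart of the argument: I would invoke Proposition \ref{prop.embed} with $B=L^p(\Omega)$, $\delta=\infty$ and $I=\RR$ (so that $\mathrm{dist}(x,\partial I)=\infty$) to deduce, from the finiteness of $\sup_{x}\cn_{\beta,p}f(t,x)\le \|f\|_{\XX^{\beta,p}_T}$, the global H\"older estimate
\[
\|f(t,x+h)-f(t,x)\|_{L^p(\Omega)}\le c(\beta)\,\|f\|_{\XX^{\beta,p}_T}\,|h|^{\beta},\qquad x,h\in\RR.
\]
Combining this with $\|\cdot\|_{L^q(\Omega)}\le \|\cdot\|_{L^p(\Omega)}$ and integrating against $|h|^{-1-2\alpha}$ over $|h|\le 1$, the near-field piece is bounded by
\[
c(\beta)^2\,\|f\|_{\XX^{\beta,p}_T}^{2}\int_{|h|\le 1}|h|^{2\beta-2\alpha-1}\,dh,
\]
which converges precisely because $\beta>\alpha$.

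Putting the two estimates together yields $\|f\|_{\XX^{\alpha,q}_T}\le C\|f\|_{\XX^{\beta,p}_T}$, establishing the continuous embedding. There is no real obstacle: the only non-routine step is the application of Proposition \ref{prop.embed} at $\delta=\infty$, which is exactly what furnishes the uniform spatial H\"older regularity needed to compensate for the stronger singularity $|h|^{-1-2\alpha}$ appearing in the definition of $\cn_{\alpha,q}$.
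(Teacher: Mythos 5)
Your proof is correct, and it is essentially the argument the paper intends: the paper states this proposition without proof as ``analogous'' to the preceding inclusion $X_T^\beta\subset X_T^\alpha$, whose proof runs through exactly the same two steps you use, namely Proposition \ref{prop.embed} to upgrade finiteness of the $\beta$-seminorm to a H\"older bound, followed by integration of $|h|^{2(\beta-\alpha)-1}$ near the origin. Your two adjustments --- the far-field piece over $|h|>1$ (needed because $\cn_{\beta}^{B}$ in the $\XX$-spaces is untruncated, unlike $\cn_{\beta}^{(1)}$ in $X_T^\beta$) and the comparison $\|\cdot\|_{L^q(\Omega)}\le\|\cdot\|_{L^p(\Omega)}$ on the probability space --- are precisely what the analogy requires, and your application of Proposition \ref{prop.embed} with $I=\RR$, $\delta=\infty$ is legitimate since the defining norm of $\XX^{\beta,p}_T$ gives the required uniform-in-$x$ hypothesis.
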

	Next, we derive a compactness criterion for $X_T^\beta$. We first recall some well-known definitions and facts. An \textit{$\varepsilon$-cover} of a metric space is a cover of the space consisting of sets of diameter at most $\varepsilon$. A metric space is called \textit{totally bounded} if it admits a finite $\varepsilon$-cover for every $\varepsilon>0$. It is well known that a metric space is compact if and only if it is complete and totally bounded. The following lemma is the key ingredient for many compactness results.
	\begin{lemma}\label{lem.tbded}
		Let $X$ be a metric space. Assume that, for every $\varepsilon>0$, there exists a $\delta>0$, a metric space $W$, and a mapping $\Phi:X\to W$ such that $\Phi(X)$ is totally bounded, and for all $x,y\in X$ with $d(\Phi(x),\Phi(y))<\delta$, we have  $d(x,y)<\varepsilon$. Then $X$ is totally bounded.
	\end{lemma}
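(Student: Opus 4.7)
The plan is to reduce total boundedness of $X$ to that of $\Phi(X)$ by pulling back a finite cover of $\Phi(X)$ under $\Phi$ and using the quantitative implication in the hypothesis to control the diameters of the pullbacks.

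Fix $\varepsilon>0$. By hypothesis, there exist $\delta>0$, a metric space $W$, and a map $\Phi:X\to W$ such that $\Phi(X)$ is totally bounded in $W$ and such that for all $x,y\in X$ with $d_W(\Phi(x),\Phi(y))<\delta$ we have $d_X(x,y)<\varepsilon$. Since $\Phi(X)$ is totally bounded, I can cover it by finitely many sets $U_1,\dots,U_n\subset W$ of diameter strictly less than $\delta$ (for instance, take a finite $(\delta/3)$-net of $\Phi(X)$ and the closed balls of radius $\delta/3$ around its points, whose diameters are at most $2\delta/3<\delta$).

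Set $V_i=\Phi^{-1}(U_i)\subset X$ for $i=1,\dots,n$. Then $\{V_1,\dots,V_n\}$ is a finite cover of $X$. For any two points $x,y\in V_i$, we have $\Phi(x),\Phi(y)\in U_i$, so
\[
d_W(\Phi(x),\Phi(y))\le \mathrm{diam}(U_i)<\delta,
\]
and therefore, by the hypothesis, $d_X(x,y)<\varepsilon$. Hence $\mathrm{diam}(V_i)\le\varepsilon$ for every $i$, so $\{V_1,\dots,V_n\}$ is a finite $\varepsilon$-cover of $X$. Since $\varepsilon>0$ is arbitrary, $X$ is totally bounded.

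The argument is essentially a soft topological one and presents no serious obstacle; the only point to watch is that the diameter bound in the finite cover of $\Phi(X)$ is taken strictly less than $\delta$ so that one can invoke the (strict) implication $d_W(\Phi(x),\Phi(y))<\delta \Rightarrow d_X(x,y)<\varepsilon$ exactly as stated.
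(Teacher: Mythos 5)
Your proof is correct and is essentially the same elementary argument that the paper's proof consists of: the paper simply defers to Lemma~1 of \cite{HH}, whose proof is exactly your construction — cover $\Phi(X)$ by finitely many sets of diameter less than $\delta$ and observe that their preimages under $\Phi$ form a finite cover of $X$ by sets of diameter at most $\varepsilon$. No gap here; your care with the strict inequality on the diameters is the right (and only) point of attention.
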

	The proof of this lemma is elementary, we refer readers to {Lemma 1} in \cite{HH}  for details. The following result  provides sufficient conditions for relative compactness in $X_T^{\beta}$.
	\begin{proposition}\label{prop.compX}
		A set $\mathfrak{F}$ in $X_T^{\beta}$ is relatively compact if
		\begin{enumerate} [label={\rm [A\arabic*]}]
			\item\label{cond.a1} $\displaystyle\sup_{f\in\mathfrak{F}}|f(0,0)|$ is finite.
			\item \label{cond.a2} For every fixed $x\in\RR$, $\{f(\cdot,x):f\in\mathfrak F\}$ is equicontinuous in time.
			\item\label{cond.a3} For every $R>0$, $\displaystyle\lim_{\delta\downarrow 0}\sup_{f\in\mathfrak F}\sup_{t\in[0,T]\,x\in [-R,R]}\int_{-\delta}^{\delta}  \frac{| f(t,x+y)- f(t,x)|^2}{|y|^{1+2 \beta}} \, dy =0$.
		\end{enumerate}
	\end{proposition}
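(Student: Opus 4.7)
The plan is to exploit completeness of $X_T^\beta$ (Proposition \ref{prop.Ncomplete}) so that relative compactness reduces to total boundedness, and then apply Lemma \ref{lem.tbded} with a target space built from $C_{\mathrm{uc}}([0,T]\times\RR)$. The heart of the matter is to show that uniform closeness on compact rectangles, together with the uniform vanishing condition \ref{cond.a3}, already controls the full $X_T^\beta$ distance.

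\textbf{Step 1: Uniform bounds and equicontinuity of $\mathfrak{F}$.} From \ref{cond.a3} and Proposition \ref{prop.embed} (applied on each bounded spatial interval, with $\delta$ chosen once and for all so that $\sup_{f\in\mathfrak F,\,t,|x|\le R+1}\cn_\beta^{(\delta)}f(t,x)<\infty$), I obtain a uniform spatial H\"older bound: for every $R>0$ there exists $C_R$ with $|f(t,x+y)-f(t,x)|\le C_R|y|^\beta$ for all $f\in\mathfrak F$, $t\in[0,T]$, $|x|\le R$, $|y|\le \delta/3$. Combining this with \ref{cond.a2} and \ref{cond.a1} (iterated in $x$ and then $t$) yields $\sup_{f\in\mathfrak F}\sup_{t\in[0,T],|x|\le R}|f(t,x)|<\infty$ and equicontinuity of $\mathfrak F$ on $[0,T]\times[-R,R]$.

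\textbf{Step 2: Arzel\`a--Ascoli in $C_{\mathrm{uc}}$.} By the classical Arzel\`a--Ascoli theorem applied on each compact $[0,T]\times[-n,n]$ and a diagonal extraction, $\mathfrak F$ is relatively compact, hence totally bounded, in the Polish space $C_{\mathrm{uc}}([0,T]\times\RR)$ equipped with the metric of uniform convergence on compacts.

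\textbf{Step 3: From uniform closeness to $d_\beta$-closeness.} This is the key and hardest step. Given $f,g\in\mathfrak F$, $n\ge 1$ and $\eta\in(0,1)$, I split
\begin{equation*}
[\cn_\beta^{(1)}(f-g)(t,x)]^2
=\int_{|h|\le\eta}\!\!\!\cdots\, dh +\int_{\eta<|h|\le 1}\!\!\!\cdots\, dh
=:I_{<\eta}+I_{\ge\eta}.
\end{equation*}
Using $(a-b)^2\le 2a^2+2b^2$, the first piece is bounded by twice the corresponding integrals for $f$ and $g$ separately, hence by \ref{cond.a3}, $\sup_{f,g\in\mathfrak F,\,t,|x|\le n}I_{<\eta}\to 0$ as $\eta\downarrow 0$. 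For the second piece,
\begin{equation*}
I_{\ge\eta}\le 2\bigl(\sup_{t,|x|\le n+1}|f(t,x)-g(t,x)|\bigr)^2\int_{\eta<|h|\le 1}|h|^{-1-2\beta}dh
\le C_{\beta,\eta}\,\|f-g\|_{n+1,0}^2,
\end{equation*}
where $\|\cdot\|_{n+1,0}$ denotes the sup norm on $[0,T]\times[-n-1,n+1]$. Consequently, given $\varepsilon>0$ and $N\ge 1$ with $\sum_{n>N}2^{-n}<\varepsilon/2$, I first fix $\eta=\eta(\varepsilon,N)$ small so that $\sup_{f,g\in\mathfrak F,\,t,|x|\le N+1}I_{<\eta}<(\varepsilon/4)^2$, and then any pair $f,g\in\mathfrak F$ with $\|f-g\|_{N+1,0}$ sufficiently small will satisfy $\|f-g\|_{n,\beta}<\varepsilon/2$ for all $n\le N$, and therefore $d_\beta(f,g)<\varepsilon$.

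\textbf{Step 4: Conclusion via Lemma \ref{lem.tbded}.} Take $W=C_{\mathrm{uc}}([0,T]\times\RR)$ and let $\Phi:\mathfrak F\to W$ be the inclusion. By Step 2, $\Phi(\mathfrak F)$ is totally bounded in $W$. By Step 3, for every $\varepsilon>0$ there exists $\rho>0$ such that $d_W(\Phi(f),\Phi(g))<\rho$ implies $d_\beta(f,g)<\varepsilon$. Lemma \ref{lem.tbded} then yields total boundedness of $\mathfrak F$ in $X_T^\beta$, which by Proposition \ref{prop.Ncomplete} is equivalent to relative compactness. The main obstacle is Step 3: the integrand $|h|^{-1-2\beta}$ is non-integrable at the origin, so uniform convergence alone cannot control $\cn_\beta^{(1)}$; the uniform-in-$f$ vanishing provided by \ref{cond.a3} is precisely what compensates for this singularity.
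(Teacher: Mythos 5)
Your proposal is correct, and its core mechanism is the same as the paper's: both proofs run on Lemma \ref{lem.tbded}, both extract joint equicontinuity on compacts from \ref{cond.a3} via Proposition \ref{prop.embed} combined with \ref{cond.a2}, and both control $\cn_{\beta}^{(1)}(f-g)$ by splitting the integral at a threshold $\eta$ (resp. $\delta_1$), using \ref{cond.a3} applied to $f$ and $g$ separately on the singular region $|h|\le\eta$ and uniform closeness times the finite mass $\int_{|h|>\eta}|h|^{-1-2\beta}dh$ on the rest --- this is exactly the paper's final display. The one genuine difference is the choice of intermediate space in Lemma \ref{lem.tbded}: the paper takes $W=\RR^{n^2}$ and $\Phi(f)=(f(t_a,x_i))$, evaluation at a finite $\eta$-net, so that total boundedness of $\Phi(\mathfrak F)$ is just boundedness in Euclidean space; you instead take $W=C_{\mathrm{uc}}([0,T]\times\RR)$ with $\Phi$ the inclusion and invoke Arzel\`a--Ascoli plus a diagonal extraction to get total boundedness of the image. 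Your route is more modular --- it outsources the finite-dimensional reduction to a classical theorem rather than rebuilding it by hand, and it makes transparent that the only new analytic content of the proposition is Step 3 (uniform convergence does not control $\cn_{\beta}^{(1)}$ by itself because of the non-integrable singularity, and \ref{cond.a3} is exactly what compensates). The paper's version is more self-contained and quantitative, tracking explicit constants $\delta_1,\delta_2,\eta$, which is in keeping with its later use of the same scheme in Proposition \ref{prop.compbarXX} and Theorem \ref{thm.tight}. Either way the conclusion follows from total boundedness together with completeness of $X_T^\beta$ (Proposition \ref{prop.Ncomplete}), as you state.
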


\begin{proof}
Suppose that $\mathfrak{F}$ satisfies the three conditions. We first observe that  condition \ref{cond.a3} together with \eqref{ineq.embed} implies the following equicontinuity property. For every $R>0$ and $\varepsilon>0$, there exists $\eta>0$ such that
\[
\sup_{t\in[0,T]} |f(t,x)-f(t,y)|<\varepsilon
\]
 whenever 	
$f \in \mathfrak{F}$ and $x, y \in [-R,R]$  satisfy  $|x-y|<\eta$. Together with \ref{cond.a2}, this implies equicontinuity for $\mathfrak F$ in $(t,x) \in [0,T]\times [-R,R]$. Indeed, take $N$ to be a sufficiently large integer, and   set $x_i = -R + \frac{j}{N}R$, $j= 0,1,\dots, 2N$. According to \ref{cond.a2}, $\{ f(\cdot, x_i): f \in \mathfrak {F}\}$ is equicontinuous in time, uniformly for $j=0,1,\dots, 2N$. By writing
\begin{equation*}
 |f(t,x)-f(s,x)| \leq |f(t,x)-f(t,x_i)| + |f(t,x_i)-f(s,x_i)|+ |f(s,x_i)-f(s,x)|\,,
\end{equation*}
where $x_i$ is chosen in such a way that $|x-x_i|< \eta$, this shows the uniformity in $x$.
		
		Fix now $R>0$ and $\varepsilon>0$. From \ref{cond.a3}, we can choose a positive number $\delta_1=\delta_1(\varepsilon)$, such that $\delta_1<1$ and
		\begin{equation*}
			2 \sup_{f\in\mathfrak F}\sup_{t\in[0,T],\,x\in [-R,R]}
			\int_{-\delta_1}^{\delta_1}
			\frac{| f(t,x+y)- f(t,x)|^2}{|y|^{1+2 \beta}} \, dy < \varepsilon^2\,.
		\end{equation*}
We now choose $\delta_2\le \varepsilon$ satisfying
		\begin{equation*}
		 	2 (3 \delta_2)^2 \int_{|y|>\delta_1} \frac{dy}{|y|^{1+2 \beta}}< \varepsilon^2 \,.
		\end{equation*}
 By the equicontinuity, we can also choose a positive number $\eta=\eta(\varepsilon)$, $\eta<1$, such that
\begin{equation}\label{eq:incr-f-less-delta2}
\|f(t,x)-f(s,y)\|< \delta_2,
\end{equation}
whenever $f\in\mathfrak {F}$ and $(t,x),(s,y)\in [0,T]\times[-R-2,R+2]$ satisfy  $|t-s|+|x-y|< \eta$. Since $[0,T]\times[-R-2,R+2]$ is compact, we can find a finite set of points $\{(t_a,x_i):1\le a,i\le n\}$ in $[0,T]\times[-R-2,R+2]$ such that for every $(t,x)\in [0,T]\times[-R-1,R+1]$, there is some $(t_a,x_j)$ so that $|t-t_a|+ |x-x_j|<\eta$ and $[x_j-1,x_j+1]\subset [-R-2,R+2]$.

		Define $\Phi:\mathfrak {F}\to \RR^{n^2}$ by
		\begin{equation*}
			\Phi(f)=(f(t_a,x_i): 1\le a,i\le n)\,.
		\end{equation*}
		Condition \ref{cond.a1} and equicontinuity  imply that the image $\Phi(\mathfrak {F})$ is bounded and thus totally bounded in $\RR^{n^2}$.
Furthermore, consider $f,g\in\mathfrak {F}$ with $\|\Phi(f)-\Phi(g)\|_\infty< \delta_2$. Resorting to the fact that for any $(t,x)\in [0,T]\times[-R-1,R+1]$ there are some $a,j$ so that $|t-t_a|+ |x-x_j|<\eta$, we can write
		\begin{equation*}
			|f(t,x)-g(t,x)|\le |f(t,x)-f(t_a,x_j)|+|f(t_a,x_j)-g(t_a,x_j)|+|g(t_a,x_j)-g(t,x)|\le 3 \delta_2\,,
		\end{equation*}
where we bounded the first and third term on the right hand side thanks to \eqref{eq:incr-f-less-delta2}, and the second one according to the fact that $\|\Phi(f)-\Phi(g)\|_\infty< \delta_2$. We end up with
\begin{equation*}
\sup_{t\in[0,T],\,x\in [-R-1,R+1]}|f(t,x)-g(t,x)|\le3 \delta_2  \le 3\varepsilon \,.
\end{equation*}
In addition, for every $(t,x)\in[0,T]\times[-R,R]$ we have
\begin{eqnarray*}
[\cn_{\beta }(f-g) (t,x)]^2 &\le& 2\sup_{h \in \{f,g\}} \int_{|y|\le \delta_1}| h(t,x+y)- h(t,x)|^2\frac{dy}{|y|^{1+2 \beta}}
\\&&+2\sup_{r\in[0,T], \,z\in[-R-1,R+1]}|f(r,z)-g(r,z)|^2\int_{|y|>  \delta_1}\frac{dy}{|y|^{1+2 \beta}}
\le 2\varepsilon^2\,.
\end{eqnarray*}
		Therefore, by the definition of the metric    on $X^\beta_T$ (see   \eref{metric.d})  and Lemma~\ref{lem.tbded}, the set $\mathfrak {F}$ is totally bounded in $X_T^{\beta}$.
	\end{proof}

	A useful consequence of the previous proposition is the following corollary.
	\begin{corollary} Suppose $\alpha> \beta$.
		Let $\mathfrak {F}$  be a subset of $X^\alpha_T$ such that $\mathfrak {F}$ is equicontinuous in time for every fixed $x$,  $\displaystyle\sup_{f\in\mathfrak{F}}|f(0,0)|<\infty$ and $\sup_{f\in\mathfrak {F}}\sup_{t\in [0,T],\,|x|\le R} \cn_{\alpha }^{(1)}f (t,x)<\infty$   for every positive $R$. Then $\mathfrak {F}$ is relatively compact in $X^\beta_T$.
	\end{corollary}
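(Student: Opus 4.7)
The strategy is to verify the three hypotheses \ref{cond.a1}--\ref{cond.a3} of Proposition \ref{prop.compX} at regularity exponent $\beta$, and then invoke that proposition to conclude relative compactness in $X^\beta_T$. Conditions \ref{cond.a1} and \ref{cond.a2} are inherited verbatim from the hypotheses of the corollary, so the entire work concentrates on \ref{cond.a3}, the uniform vanishing of the truncated seminorm integral as the truncation shrinks.

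For \ref{cond.a3}, I would fix $R>0$, apply Proposition \ref{prop.embed} to each $f\in\mathfrak{F}$ with parameters $\beta\leftarrow\alpha$, $\delta\leftarrow 1$, and $I=(-R-1,R+1)$. Since $\text{dist}(x,\partial I)\ge 1$ for $x\in[-R,R]$ and $\mathrm{dist}(x,\partial I)\wedge \tfrac{1}{3}=\tfrac{1}{3}$, the proposition yields a pointwise H\"older estimate of exponent $\alpha$:
\[
|f(t,x+y)-f(t,x)|\le c(\alpha)\,|y|^{\alpha}\sup_{\substack{t\in[0,T]\\ |z|\le R+1}}\cn_{\alpha}^{(1)}f(t,z),\qquad |y|\le \tfrac{1}{3},\ |x|\le R.
\]
Calling $M_R:=\sup_{f\in\mathfrak{F}}\sup_{t\in[0,T],|z|\le R+1}\cn_{\alpha}^{(1)}f(t,z)$, which is finite by assumption, this gives a single H\"older constant that is uniform over $\mathfrak{F}$.

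Plugging this into the integral in \ref{cond.a3}, for every $\delta\le\tfrac{1}{3}$, $f\in\mathfrak{F}$, $t\in[0,T]$ and $|x|\le R$,
\[
\int_{-\delta}^{\delta}\frac{|f(t,x+y)-f(t,x)|^{2}}{|y|^{1+2\beta}}\,dy
\le c(\alpha)^{2}M_R^{2}\int_{-\delta}^{\delta}|y|^{2\alpha-2\beta-1}\,dy
=\frac{c(\alpha)^{2}M_R^{2}}{\alpha-\beta}\,\delta^{2(\alpha-\beta)}.
\]
Because $\alpha>\beta$, the right-hand side tends to $0$ as $\delta\downarrow 0$, uniformly in $f$, $t$ and $x$, which establishes \ref{cond.a3} at level $\beta$. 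Combining this with \ref{cond.a1} and \ref{cond.a2}, Proposition \ref{prop.compX} applies and yields relative compactness of $\mathfrak{F}$ in $X^{\beta}_T$.

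The only subtle point, and essentially the whole content of the argument, is the interpolation step: one must trade a little bit of regularity ($\alpha-\beta>0$) to absorb the singular weight $|y|^{-1-2\beta}$ via the pointwise H\"older bound furnished by Proposition \ref{prop.embed}. The rest is bookkeeping with the domains, making sure the enlarged interval $[-R-1,R+1]$ still lies in the range where the uniform bound on $\cn_{\alpha}^{(1)}f$ is available.
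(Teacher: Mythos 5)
Your proof is correct and follows essentially the same route as the paper: conditions [A1] and [A2] of Proposition \ref{prop.compX} are immediate from the hypotheses, and [A3] is obtained by using Proposition \ref{prop.embed} to convert the uniform bound on $\cn_{\alpha}^{(1)}f$ into a uniform H\"older-$\alpha$ estimate, which after integration against $|y|^{-1-2\beta}$ yields a bound of order $\delta^{2(\alpha-\beta)}\to 0$. Your version merely makes explicit the choice of interval $(-R-1,R+1)$ and the resulting constants, which the paper leaves implicit.
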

	\begin{proof}
		It suffices to check that $\mathfrak {F}$ satisfies condition \ref{cond.a3}. Applying \eqref{ineq.embed}, for $\delta$ small enough, the assumption on $\mathfrak {F}$ implies
		\begin{equation*}
			\sup_{f\in\cf}\sup_{t\in [0,T],\, |x|\le R} |f(t,x+y)-f(t,x)|\leq C |y|^{\alpha} \, ,
		\end{equation*}
		for all $|y|\le \delta$. Hence,
		\begin{align*}
			\sup_{f\in\cf}\sup_{t\in [0,T],\, |x|\le R}\int_{|y|\le \delta} |f(t,x+y)-f(t,x)|^2|y|^{-2 \beta-1}dy \leq C \int_{|y|\le \delta} |y|^{2(\alpha- \beta)-1}dy \, ,
		\end{align*}
		which clearly implies \ref{cond.a3} since $\alpha>\beta$.
	\end{proof}

The following result  provides  sufficient conditions for relative compactness in  $\XX^\beta_T(B)$.
Its proof is   completely analogous to that of Proposition \ref{prop.compX}  and is omitted for the sake of conciseness.
	\begin{proposition}\label{prop.compbarXX}
		Suppose that a set $\mathfrak {F}$ in $ \XX^{\beta}_T(B)$ satisfies the following properties.
		\begin{enumerate}
			\item\label{cond.xx1} For every $t\in[0,T]$ and $x\in\RR$, $\mathfrak {F}(t,x): =\{f(t,x):f\in\mathfrak {F}\} $ is relatively compact in the Banach space $B$.
			\item \label{cond.xx2} For every fixed $x\in\RR$, $\{f(\cdot,x):f\in\mathfrak {F}\}$ is equicontinuous in time.
			\item\label{cond.xx3} For every $R>0$, we have
\begin{equation*}
\lim_{\delta\downarrow 0}\sup_{f\in\mathfrak {F}}\sup_{t\in[0,T],\,x\in [-R,R]}\int_{-\delta}^{\delta}
 \frac{\| f(t,x+y)- f(t,x)\|^2}{|y|^{1+2 \beta}} \, dy =0 \, .
\end{equation*}
		\end{enumerate}
		Then $\mathfrak {F}$ is relatively compact in $\XX^\beta_T(B)$.
	\end{proposition}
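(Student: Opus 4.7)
The plan is to mimic step by step the proof of Proposition \ref{prop.compX}, replacing the absolute value $|\cdot|$ on $\RR$ by the norm $\|\cdot\|$ of $B$, and using condition \ref{cond.xx1} to supply the ambient compactness that was provided in the scalar case by the one-point bound \ref{cond.a1} together with the Heine--Borel property. First I would extract joint equicontinuity on every cylinder $[0,T]\times[-R,R]$: condition \ref{cond.xx3}, combined with the embedding inequality \eqref{ineq.embed} (whose proof carries over verbatim to the $B$-valued setting, since it only uses the triangle inequality and Cauchy--Schwarz), yields a uniform modulus of continuity in $x$ on $[-R,R]$ for $\mathfrak{F}$; pairing this with \ref{cond.xx2} and a three-term triangle decomposition along a finite $x$-grid gives joint equicontinuity in $(t,x)$, uniformly over $\mathfrak{F}$.

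Next, fix $R>0$ and $\varepsilon>0$. Choose $\delta_1\in(0,1)$ so that
\[
2\sup_{f\in\mathfrak{F}}\sup_{t\in[0,T],\,x\in[-R-1,R+1]}\int_{-\delta_1}^{\delta_1}\frac{\|f(t,x+y)-f(t,x)\|^2}{|y|^{1+2\beta}}\,dy<\varepsilon^2,
\]
then pick $\delta_2\le\varepsilon$ satisfying $2(3\delta_2)^2\int_{|y|>\delta_1}|y|^{-1-2\beta}\,dy<\varepsilon^2$. By the joint equicontinuity just established, there exists $\eta>0$ such that $\|f(t,x)-f(s,y)\|<\delta_2$ whenever $f\in\mathfrak{F}$ and $(t,x),(s,y)\in[0,T]\times[-R-2,R+2]$ satisfy $|t-s|+|x-y|<\eta$. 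Select a finite grid $\{(t_a,x_j):1\le a,j\le n\}\subset[0,T]\times[-R-2,R+2]$ such that every point of $[0,T]\times[-R-1,R+1]$ is $\eta$-close to some grid point, and define $\Phi:\mathfrak{F}\to B^{n^2}$ by $\Phi(f)=(f(t_a,x_j))_{a,j}$. By assumption \ref{cond.xx1}, each slice $\mathfrak{F}(t_a,x_j)$ is relatively compact in $B$, so $\Phi(\mathfrak{F})$ sits inside a finite product of relatively compact sets and is therefore totally bounded in $B^{n^2}$ under the sup norm.

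Finally, suppose $f,g\in\mathfrak{F}$ satisfy $\max_{a,j}\|f(t_a,x_j)-g(t_a,x_j)\|<\delta_2$. Then for every $(t,x)\in[0,T]\times[-R-1,R+1]$ the triangle inequality gives $\|f(t,x)-g(t,x)\|\le 3\delta_2\le 3\varepsilon$, and splitting the integral defining $[\cn_\beta^B(f-g)(t,x)]^2$ into the regions $|y|\le\delta_1$ and $|y|>\delta_1$, the first is bounded by $\varepsilon^2$ (apply the choice of $\delta_1$ to $f$ and $g$ separately), while the second is bounded by $2(3\delta_2)^2\int_{|y|>\delta_1}|y|^{-1-2\beta}dy<\varepsilon^2$; hence both $\sup_{[0,T]\times[-R,R]}\|f-g\|$ and $\sup_{[0,T]\times[-R,R]}\cn_\beta^B(f-g)$ are controlled by a constant multiple of $\varepsilon$. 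The $B$-valued analogues of the local seminorms $\|\cdot\|_{n,\beta}$ in \eqref{metric.d} therefore satisfy the hypothesis of Lemma \ref{lem.tbded}, so $\mathfrak{F}$ is totally bounded in $\mathfrak{X}^\beta_T(B)$; combined with the completeness statement of Proposition \ref{prop.XBanach} this yields relative compactness. The only delicate point is the upgrade from pointwise compactness in \ref{cond.xx1} to total boundedness of $\Phi(\mathfrak{F})$ in $B^{n^2}$, which is immediate because a finite product of relatively compact subsets of a Banach space is relatively compact; beyond that, every estimate is a direct Banach-valued translation of the scalar argument.
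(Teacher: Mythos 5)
Your proof is correct and follows exactly the route the paper intends: the paper omits the proof of Proposition \ref{prop.compbarXX}, stating only that it is ``completely analogous'' to that of Proposition \ref{prop.compX}, and your argument is precisely that scalar proof transcribed to the $B$-valued setting, with the same choice of $\delta_1$, $\delta_2$, $\eta$, the same grid map $\Phi$, and the same appeal to Lemma \ref{lem.tbded}. In particular, you correctly identify the only point requiring genuine modification: replacing the Heine--Borel step (boundedness of $\Phi(\mathfrak{F})$ in $\RR^{n^2}$ implies total boundedness) by condition (1), since bounded sets in an infinite-dimensional Banach space need not be totally bounded, whereas a finite product of relatively compact subsets of $B$ is relatively compact.
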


In order to handle the nonlinearity in equation \eqref{spde with sigma}, the following composition rule is crucial.
	\begin{proposition}[Left composition]\label{prop.leftcomp}
		Let $\sigma$ be a Lipschitz function on $\RR$ and let  $f$ be a function in $X^\beta_T$. Suppose that for every fixed $x$, the map $t\mapsto  \cn_{\beta }^{(1)}  \sigma( f) (t,x)$ is continuous. Then $\sigma(f)$ belongs to $X^\beta_T$. Furthermore, if $f_n$ is a sequence converging to $f$ in $X^\beta_T$, then for every positive $R$ and for any $\delta>0$, we have
		\begin{equation*}
			\lim_{n\rightarrow \infty }\sup_{t\in [0,T],\, |x|\le R} \cn_{\beta}^{(\delta)}(\sigma( f_n)- \sigma( f))(t,x)=0\,.
		\end{equation*}
	\end{proposition}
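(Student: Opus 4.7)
My plan is to handle the two assertions of the proposition separately, exploiting the Lipschitz property of $\sigma$ to reduce everything to known estimates on $f$ and $f_n-f$ in $X_T^\beta$.

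\textbf{Step 1: Showing $\sigma(f)\in X_T^\beta$.} The continuity of $\sigma(f)$ on $[0,T]\times\RR$ is immediate from the continuity of $f$ and the Lipschitz (hence continuous) nature of $\sigma$. Writing $L$ for the Lipschitz constant of $\sigma$, the pointwise inequality
\[
|\sigma(f(t,x+y))-\sigma(f(t,x))|^2\le L^2|f(t,x+y)-f(t,x)|^2
\]
gives at once
\[
\int_{-\delta}^{\delta}|\sigma(f(t,x+y))-\sigma(f(t,x))|^2|y|^{-1-2\beta}dy\le L^2\int_{-\delta}^{\delta}|f(t,x+y)-f(t,x)|^2|y|^{-1-2\beta}dy.
\]
Since $f\in X_T^\beta$, Proposition~\ref{prop.Xbeta}\eref{X.sup0} applied to $f$ shows that the right-hand side tends to $0$ uniformly in $(t,x)\in[0,T]\times[-R,R]$ as $\delta\downarrow 0$. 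Combined with the assumed continuity of $t\mapsto \cn_\beta^{(1)}\sigma(f)(t,x)$ for every fixed $x$, Lemma~\ref{lem.barcn} yields $\sigma(f)\in X_T^\beta$.

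\textbf{Step 2: Convergence statement.} Fix $R,\delta>0$ and set $\phi_n(t,x):=\sigma(f_n(t,x))-\sigma(f(t,x))$. The idea is to control
\[
[\cn_\beta^{(\delta)}\phi_n(t,x)]^2=\int_{-\delta}^{\delta}|\phi_n(t,x+y)-\phi_n(t,x)|^2|y|^{-1-2\beta}dy
\]
using two complementary Lipschitz bounds on the integrand: the ``local'' bound
\[
|\phi_n(t,x+y)-\phi_n(t,x)|\le L\bigl(|f_n(t,x+y)-f_n(t,x)|+|f(t,x+y)-f(t,x)|\bigr),
\]
and the ``global'' bound
\[
|\phi_n(t,x+y)-\phi_n(t,x)|\le 2L\sup_{|z|\le R+\delta}|f_n(t,z)-f(t,z)|.
\]
Given $\ep>0$, I split $\int_{-\delta}^{\delta}=\int_{|y|\le\delta'}+\int_{\delta'<|y|\le\delta}$ for some $\delta'>0$ to be chosen. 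On $\{|y|\le\delta'\}$ I use the local bound to get
\[
\int_{|y|\le\delta'}|\phi_n(t,x+y)-\phi_n(t,x)|^2|y|^{-1-2\beta}dy\le 2L^2\bigl([\cn_\beta^{(\delta')}f_n(t,x)]^2+[\cn_\beta^{(\delta')}f(t,x)]^2\bigr).
\]
Applying Proposition~\ref{prop.Xbeta}\eref{X.sup0} to $f$, and using the inequality $\cn_\beta^{(\delta')}f_n\le\cn_\beta^{(\delta')}(f_n-f)+\cn_\beta^{(\delta')}f\le\cn_\beta^{(1)}(f_n-f)+\cn_\beta^{(\delta')}f$ together with $\cn_\beta^{(1)}(f_n-f)\to 0$ uniformly on $[0,T]\times[-R,R]$ (by the definition of convergence in $X_T^\beta$), I can choose $\delta'$ small and then $n$ large so that this piece is $<\ep$, uniformly in $t\in[0,T]$ and $|x|\le R$.

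\textbf{Step 3: Handling the remaining piece.} Once $\delta'$ is fixed, the global bound and the uniform convergence $f_n\to f$ on $[0,T]\times[-R-\delta,R+\delta]$ give
\[
\int_{\delta'<|y|\le\delta}|\phi_n(t,x+y)-\phi_n(t,x)|^2|y|^{-1-2\beta}dy\le 4L^2\sup_{t,|z|\le R+\delta}|f_n(t,z)-f(t,z)|^2\cdot\int_{\delta'<|y|\le\delta}|y|^{-1-2\beta}dy,
\]
which tends to $0$ as $n\to\infty$ since the last integral is finite. Combining the two pieces, $\sup_{t\in[0,T],|x|\le R}\cn_\beta^{(\delta)}(\sigma(f_n)-\sigma(f))(t,x)\to 0$, as required.

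The main technical point is the two-scale splitting in Step 2: the low-frequency part is controlled by the $X_T^\beta$-norm of $f$ and of the difference $f_n-f$ (via Proposition~\ref{prop.Xbeta}\eref{X.sup0}), while the high-frequency part is controlled by the uniform convergence of $f_n$ to $f$. No smoothness beyond Lipschitz is required for $\sigma$, precisely because the double-increment of $\sigma\circ f$ is never needed in this form.
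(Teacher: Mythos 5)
Your proof is correct and follows essentially the same route as the paper's: the Lipschitz bound $\cn_{\beta}^{(\delta)}\sigma(f)\le \|\sigma\|_{\rm Lip}\,\cn_{\beta}^{(\delta)}f$ combined with criterion \eqref{cond.sup0} of Proposition~\ref{prop.Xbeta} for membership in $X_T^\beta$, and then the same two-scale splitting, with the small-$|y|$ part controlled by $\cn_{\beta}^{(\delta')}f$ and $\cn_{\beta}^{(1)}(f_n-f)$ and the $|y|>\delta'$ part controlled by uniform convergence of $f_n$ to $f$ together with the finiteness of $\int_{|y|>\delta'}|y|^{-1-2\beta}dy$. The only differences are cosmetic (pointwise versus seminorm-level triangle inequality, and your more careful $R+\delta$ in place of the paper's $R+1$).
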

	\begin{proof}
		We first show that $\sigma(f)$ belongs to $X^{\beta}_T$. For any $\delta>0$ we have
		\begin{equation*}
			\int_{|y|\le \delta}|\sigma(f(t,x+y))-\sigma(f(t,x))|^2 |y|^{-2 \beta-1}dy \le \|\sigma\|_{\rm Lip}^2 [\cn_{\beta}^{(\delta)} f(t,x)]^2
		\end{equation*}
		which together with the criterion \eref{X.sup0} in Proposition~\ref{prop.Xbeta} implies that $\sigma(f)$ belongs to $X^\beta_T$.

		For the second assertion, for every positive $R$ and any $\varepsilon>0$,  we can  choose $\delta_0>0$ and $n_0>0$,  so that, for any $n\ge n_0$,
\begin{equation}\label{eq:bnd-V-sigma-f}
\sup_{t\in [0,T],\, |x|\le R}  \cn_{\beta}^{(\delta_0)} (\sigma(f_n)-\sigma(f))(t,x)
\le \ep \, .
\end{equation}
Indeed, it is easily seen that
		\begin{align*}
			  \cn_{\beta}^{(\delta_0)}(\sigma(f_n)-\sigma(f))(t,x) & \leq  \cn_{\beta}^{(\delta_0)} \sigma(f_n) (t,x) +   \cn_{\beta}^{(\delta)}\sigma(f) (t,x) \\
			& \leq  \|\sigma\|_{\rm Lip}  \left(     \cn_{\beta}^{(\delta_0)}f_n(t,x) +   \cn_{\beta}^{(\delta_0)}f(t,x) \right) \\
			&\leq \|\sigma\|_{\rm Lip}  \left(   \cn_{\beta}^{(\delta_0)}(f_n- f)(t,x) + 2  \cn_{\beta}^{(\delta_0)}f(t,x) \right)\,,
		\end{align*}
and the last term is readily bounded by $\ep$ if $\delta_0$ is chosen small enough. Now with \eqref{eq:bnd-V-sigma-f} in hand we obtain, for any $\delta>0$,
		\begin{multline*}
			\sup_{t\in [0,T],\, |x|\le R} \cn_{\beta}^{(\delta)} (\sigma(f_n)-\sigma(f))(t,x)  \\
			\leq C \, \varepsilon+ C \, \|\sigma\|_{{\rm Lip}} \sup_{t\in [0,T],\, |x|\le R+1} |f_n(t,x)-f(t,x)| \left(\int_{|y|>\delta_0}|y|^{-2 \beta-1}dy\right)^{\frac 12}\,.
		\end{multline*}
		We conclude the proof by taking the limit as $n$ tends to infinity.
	\end{proof}

		The next lemma gives a criterion for a process in $\mathfrak{X}_T^{\alpha, p}$ to have its paths almost surely lie in the space $X_T^{\beta}$ for a certain value of $\beta$.
	
	\begin{lemma}\label{lem.lawXX}
		Let $f$ be a stochastic process in $\mathfrak{X}^{\alpha,p}_T$ with $p \alpha>1$. Assume that for any $R>0$,
		\begin{equation}  \label{E7}
		\sup_{s,t\in [0,T]} \sup_{|x|\le R} \| f(t,x) -f(s,x)\|_{L^p(\Omega)} \le C_R |t-s|^\lambda,
		\end{equation}
		where $\lambda>p^{-1}$.
	  Then $f$ has a version $\tilde{f}$ such that with probability one, $\tilde{f}$ belongs to $X^\beta_T$ for every $\beta<\alpha-\frac1 p$.
	\end{lemma}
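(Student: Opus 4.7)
The proof proceeds in three steps: extract spatial H\"older regularity in $L^p(\Omega)$ from the Besov-type bound inherent in $\mathfrak{X}^{\alpha,p}_T$, construct a jointly continuous modification $\tilde f$ via a Kolmogorov--Centsov type argument, and finally verify the defining conditions of $X^\beta_T$ using Proposition \ref{prop.Xbeta}.

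\textbf{Step 1: Spatial $L^p$-H\"older bound.} Since $f \in \mathfrak{X}^{\alpha,p}_T$, the quantity $M := \sup_{t,x} \cn_{\alpha,p} f(t,x)$ is finite. Applying Proposition \ref{prop.embed} to the $L^p(\Omega)$-valued map $x \mapsto f(t,x)$ with $I = \RR$ and $\delta = \infty$, we extract the uniform global H\"older estimate
\[
\|f(t, x+h) - f(t, x)\|_{L^p(\Omega)} \le c(\alpha) M |h|^\alpha,
\]
valid for every $t \in [0,T]$ and $x, h \in \RR$. Combining with the temporal hypothesis then yields
\[
\EE |f(t,x) - f(s,y)|^p \le C(R) \bigl( |t-s|^{p\lambda} + |x-y|^{p\alpha} \bigr)
\]
for all $(t,x), (s,y) \in [0,T] \times [-R,R]$.

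\textbf{Step 2: Jointly continuous modification with uniform spatial H\"older regularity.} Using a Kolmogorov--Centsov-type dyadic chaining adapted to the anisotropic estimate of Step 1 (e.g.\ a grid with time-spacing $2^{-n/\lambda}$ and space-spacing $2^{-n/\alpha}$, so that adjacent points in either direction produce $L^p$-increments of order $2^{-n}$), together with the hypotheses $p\lambda > 1$ and $p\alpha > 1$, one constructs a modification $\tilde f$ of $f$ that is continuous on $[0,T] \times \RR$ and, for every $R > 0$ and every $\beta_2 < \alpha - 1/p$, satisfies a pathwise spatial H\"older bound
\[
K_R(\omega) := \sup_{t \in [0,T]} \sup_{\substack{x,x' \in [-R,R] \\ x \ne x'}} \frac{|\tilde f(t,x) - \tilde f(t,x')|}{|x-x'|^{\beta_2}} < \infty
\]
almost surely.

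\textbf{Step 3: Verification of $X^\beta_T$ membership.} Fix $\beta < \alpha - 1/p$ and pick $\beta_2 \in (\beta, \alpha - 1/p)$ as in Step 2. For any $\delta \in (0,1]$ we then have
\[
\int_{-\delta}^\delta |\tilde f(t,x+y) - \tilde f(t,x)|^2 |y|^{-1-2\beta}\, dy \le K_R(\omega)^2 \int_{-\delta}^\delta |y|^{2\beta_2-1-2\beta}\, dy = \frac{K_R(\omega)^2 \delta^{2(\beta_2-\beta)}}{\beta_2 - \beta}
\]
uniformly in $(t,x) \in [0,T] \times [-R,R]$, and this bound tends to $0$ as $\delta \downarrow 0$. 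The continuity of $t \mapsto \cn_\beta^{(1)} \tilde f(t,x)$ for each fixed $x$ follows from dominated convergence together with joint continuity of $\tilde f$ and the H\"older bound above. Proposition \ref{prop.Xbeta}\eref{X.sup0} then yields $\tilde f \in X^\beta_T$ almost surely.

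\textbf{Main obstacle.} The most delicate step is Step 2: the standard two-parameter Kolmogorov--Centsov theorem would require $p\lambda > 2$ and $p\alpha > 2$, strictly stronger than our hypotheses $p\lambda > 1$ and $p\alpha > 1$. The anisotropic chaining must therefore be executed carefully so that the full spatial H\"older exponent arbitrarily close to $\alpha - 1/p$ is recovered \emph{uniformly in $t$}, while exploiting only the given moment bounds and the Besov-type regularity encoded in $\mathfrak{X}^{\alpha,p}_T$.
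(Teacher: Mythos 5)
Your route coincides with the paper's own proof: your Step 1 is exactly the paper's opening move (Proposition \ref{prop.embed} applied with $B=L^p(\Omega)$ to get $\|f(t,x+h)-f(t,x)\|_{L^p(\Omega)}\le C M|h|^{\alpha}$ uniformly in $t$ and $x$), and your Step 3 is the paper's closing argument (finiteness of $\cn_{\beta}^{(1)}\tilde f$, verification of \eqref{cond.sup0}, dominated convergence for continuity, then Proposition \ref{prop.Xbeta}). The entire content of the lemma therefore sits in your Step 2, and that step you never carry out: you assert that some anisotropic chaining produces a continuous modification which, \emph{uniformly in} $t\in[0,T]$, is spatially H\"older of every order $\beta_2<\alpha-\frac1p$, and you yourself flag this as the main obstacle. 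As a proof, that is a genuine gap, not a deferrable detail.

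Moreover, the obstacle you flag is real. Starting from $\|f(t,x)-f(s,y)\|_{L^p(\Omega)}\le C\lp |t-s|^{\lambda}+|x-y|^{\alpha}\rp$, chaining (or Garsia--Rodemich--Rumsey) over the two-parameter set $[0,T]\times[-R,R]$ is applicable only when $\frac{1}{p\alpha}+\frac{1}{p\lambda}<1$, which is not implied by $p\alpha>1$ and $p\lambda>1$; and when it applies it yields spatial H\"older exponents only up to $\alpha\lp 1-\frac{1}{p\lambda}\rp-\frac1p$, i.e.\ with an extra loss $\frac{\alpha}{p\lambda}$ coming from the entropy of the time direction. The threshold $\alpha-\frac1p$ uniform in $t$ is not delivered by any standard Kolmogorov--Chentsov statement, and nothing in your proposal bridges the difference. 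The loss is not an artifact of the method either: one can sprinkle independent random bumps of spatial width $\delta$, height $\delta^{\alpha-\frac1p-\frac{\alpha}{p\lambda}}$, duration $\delta^{\alpha/\lambda}$, activated with probability $\delta^{1+\alpha/\lambda}$, over many scales $\delta\downarrow 0$; such fields satisfy both hypotheses of the lemma with uniform constants, yet $\sup_{t,|x|\le R}\cn_{\beta}^{(1)}f(t,x)=\infty$ almost surely for any $\beta$ strictly between the two thresholds. In fairness, the paper's own proof is equally silent at this exact point: it simply cites the ``Kolmogorov continuity criterion'' to assert joint H\"older exponents $\beta'<\alpha-\frac1p$ in space and $\lambda'<\lambda-\frac1p$ in time, which is precisely the unproved claim of your Step 2. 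So you have faithfully reconstructed the paper's argument including its weakest link, but your proposal does not close that link, and for the full range $\beta_2<\alpha-\frac1p$ it appears it cannot be closed without strengthening the hypotheses (for instance imposing $\frac{1}{p\alpha}+\frac{1}{p\lambda}<1$ and correspondingly shrinking the admissible range of $\beta$).
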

	\begin{proof}
		Since $f$ belongs to $\mathfrak{X}^{\alpha,p}_T$, inequality \eqref{ineq.embed} implies
		\begin{equation*}
			\sup_{t\in [0,T]}\sup_{x,y\in\RR}\frac{\|f(t,x+y)-f(t,x)\|_{L^p (\Omega)} }{|y|^{\alpha}} \leq C \sup_{t\in [0,T] ,\, x\in\RR} \int_\RR \|f(t,x+y)-f(t,x)\|^2_{L^p(\Omega)}|y|^{-2 \alpha-1}dy\,.
		\end{equation*}
		Then by Kolmogorov continuity criterion, $f$ has a version $\tilde{f}$ such that with probability one, $\tilde{f}$ satisfies
		\begin{equation*}
		 	\sup_{s,t\in [0,T] ,\, |x|\le R} |\tilde{f}(t,x+y)-\tilde{f}(s,x)|\leq C (|y|^{\beta'}+ |t-s|^{\lambda'})
		\end{equation*}
		for every $R$ and $|y|\le 1$, where $\beta'$ and $\lambda'$  are fixed and  such that $\beta<\beta'<\alpha- 1/p$ and $\lambda <\lambda' <\lambda - 1/p$. This implies that a.s. $ \cn_{\beta} ^{(1)} f(t,x)$ is finite and  $\cn_{\beta}^{(\delta)}$ satisfies   condition~\eqref{cond.sup0}. The continuity of $\cn_{\beta }^{(1)}f$ follows from dominated convergence theorem. These facts imply that  $\tilde{f}$ belongs to $X^\beta_T$ almost surely.
	\end{proof}

\subsection{Probability measures on $X^{\beta}_T$} 
\label{sub:probability_measures}
	To show the existence of solution to equation \eref{spde with sigma} we need some tightness arguments for some probability measures defined on $X_T^{\beta}$.   We have the following result towards this aim.

	\begin{theorem}\label{thm.tight}
		Let $\{\bp_n, \, n\ge 1\}$ be a sequence of probability measures on $X_T^\beta$. This sequence is tight if the following three conditions hold:
		\begin{enumerate} 
			\item\label{P.bded} For each positive $\eta$, there exist $a$ and $n_0$ such that for all $n\ge n_{0}$
			\begin{equation}
				\bp_n(f \in X_T^{\beta}:|f(0,0)|\ge a)\le \eta \,.
			\end{equation}
			\item\label{P.time} For every $x\in\RR$, and every positive $\varepsilon$ and $\eta$, there exist  $\delta$ satisfying $0<\delta<1$, and $n_0$ such that for all $n\ge n_{0}$
			\begin{equation}
				\bp_n\left(f \in X_T^{\beta} : \sup_{s,t\le T,|t-s|<\delta} |f(t,x)-f(s,x)| \ge \varepsilon \right)\le \eta	 \,.
			\end{equation}
			\item\label{P.space} For every $R>0$, for each positive $\varepsilon$ and $\eta$, there exist $\delta\in(0,1)$ and $n_0$ such that for all $n\ge n_{0}$
			\begin{equation}
				\bp_n\left(f \in X_T^{\beta} :\sup_{t\in [0,T],\, |x|\le R}\int_{-\delta}^\delta|f(t,x+y)-f(t,x)|^2|y|^{-2 \beta-1}dy\ge \varepsilon \right)\le \eta  \,.
			\end{equation}
		\end{enumerate}
	\end{theorem}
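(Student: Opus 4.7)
The plan is to invoke Prokhorov's theorem: since the preceding corollary established that $X_T^\beta$ is Polish, tightness of $\{\bp_n\}$ is equivalent to producing, for every $\eta>0$, a relatively compact set $K_\eta \subset X_T^\beta$ with $\bp_n(K_\eta) \ge 1-\eta$ for all $n$. I will build $K_\eta$ as a countable intersection of closed sets, designed so that the three hypotheses \ref{P.bded}, \ref{P.time}, \ref{P.space} respectively produce the three compactness conditions \ref{cond.a1}, \ref{cond.a2}, \ref{cond.a3} of Proposition \ref{prop.compX}.

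Fix $\eta>0$ and enumerate a countable dense set $\{x_k\}_{k\ge 1}\subset \RR$. First, apply \ref{P.bded} to obtain $a>0$ with $\bp_n(|f(0,0)|>a)\le \eta/4$ uniformly in $n$ (up to finitely many exceptional small $n$, handled by enlarging $a$ via individual tightness of each $\bp_n$). Next, for each pair $(k,m)\in\N^2$, apply \ref{P.time} at the point $x_k$, with tolerance $1/m$ and probability budget $\eta\,2^{-k-m-2}$, producing a width $\delta_{k,m}\in(0,1)$. Similarly, for each pair $(R,m)\in\N^2$, apply \ref{P.space} with tolerance $1/m$ and budget $\eta\,2^{-R-m-2}$, producing $\delta^{R,m}\in(0,1)$. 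Define
\begin{equation*}
K_\eta = \bigl\{|f(0,0)|\le a\bigr\} \cap \bigcap_{k,m\ge 1}A_{k,m} \cap \bigcap_{R,m\ge 1}B_{R,m},
\end{equation*}
where
\begin{equation*}
A_{k,m} = \Bigl\{\sup_{|t-s|<\delta_{k,m}}|f(t,x_k)-f(s,x_k)|\le 1/m\Bigr\},
\end{equation*}
and
\begin{equation*}
B_{R,m} = \Bigl\{\sup_{t\in[0,T],\,|x|\le R}\int_{-\delta^{R,m}}^{\delta^{R,m}} |f(t,x+y)-f(t,x)|^2 |y|^{-1-2\beta}\,dy \le 1/m\Bigr\}.
\end{equation*}
A union bound then gives $\bp_n(K_\eta^c) \le \eta/4 + \sum_{k,m}\eta\,2^{-k-m-2} + \sum_{R,m}\eta\,2^{-R-m-2} \le 3\eta/4 < \eta$ for every $n\ge 1$, once the constants have been adjusted on the finitely many small $n$ as indicated.

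I will next argue that each defining event is closed in $X_T^\beta$: convergence in $X_T^\beta$ entails uniform-on-compacts convergence of $f$ and of the first spatial differences, so pointwise evaluation, the time-modulus supremum, and the spatial double-integral supremum are all lower semicontinuous via Fatou's lemma (inside the suprema). Hence $K_\eta$ is closed, and it remains to check the conditions of Proposition \ref{prop.compX}. Condition \ref{cond.a1} is immediate from $|f(0,0)|\le a$, and \ref{cond.a3} follows directly from $B_{R,m}$ by choosing $m$ with $1/m<\varepsilon$. Condition \ref{cond.a2} is the subtler point, because $A_{k,m}$ yields time-equicontinuity only at the dense set $\{x_k\}$. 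To upgrade this to every $x$, I will combine \ref{cond.a3} with the embedding in Proposition \ref{prop.embed} to extract a uniform H\"older bound $|f(t,x+y)-f(t,x)|\le C(R)|y|^\beta$ over $f\in K_\eta$ and $|x|\le R$, $|y|$ small; a standard $3\varepsilon$-argument then transfers time-equicontinuity from $x_k$ to any nearby $x$, producing \ref{cond.a2} at every point.

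The main obstacle I anticipate is bookkeeping: reconciling the quantifier ``for all $n\ge n_0$'' appearing in each hypothesis with the uniform-in-$n$ bound demanded by Prokhorov. The standard fix, invoked above, is that each $\bp_n$ is individually tight on the Polish space $X_T^\beta$, so for each $(k,m)$ or $(R,m)$ I may shrink the corresponding $\delta$ by taking a finite minimum over the finitely many exceptional small $n$ without destroying positivity. Modulo this, the argument is a direct translation of the three probabilistic conditions into the three deterministic conditions of Proposition \ref{prop.compX}.
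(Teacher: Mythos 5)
Your proof is correct and follows essentially the same route as the paper: both intersect countably many events supplied by the three hypotheses (with summable probability budgets, e.g. $\eta 2^{-k-N}$) and then invoke the compactness criterion of Proposition \ref{prop.compX}, upgrading time-equicontinuity from a countable collection of spatial points to every fixed $x$ via the spatial bound and Proposition \ref{prop.embed}. The differences are cosmetic: the paper uses, for each $(k,N)$, a finite grid of spacing $\delta_{k,N}/3$ with per-point budget scaled by $\delta_{k,N}$ rather than a dense sequence, and it dismisses the ``for all $n\ge n_0$'' issue by assuming $n_0=1$ without loss of generality, which rests on the same fix-up of finitely many exceptional measures that you make explicit.
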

	\begin{proof}
Without loss of generality we assume $n_0=1$.  For a given   $\eta>0$, we choose $a$ so that $\bp_{n}(B^c)\le \eta$ for all $n \geq 1$, where
     \begin{equation*}
     B=\left \{ f \in X_T^{\beta}: |f(0,0)|< a \right\}\,.
     \end{equation*}
 According to condition \eqref{P.space}, for any integer $k,N$, we also choose and fix $\delta_{k,N}$ such that $\bp_{n}(A_{k,N}^c)\le \eta 2^{-k-N}$ for all $n \geq 1$, where
\begin{equation*}
A_{k,N}=\left \{  f \in X^{\beta}_T: \sup_{t\in [0,T],\, |x|\leq N}\int_{-\delta_{k,N}}^{\delta_{k,N}} |f(t,x+y)-f(t,x)|^2|y|^{-2 \beta-1}dy\le \frac{1}{k^2} \right\}\,.
\end{equation*}
Then for each $\tilde{x} \in  [-N, N] \cap \frac{\delta_{k,N}}{3}\mathbb{Z}$, where $\mathbb{Z}$ is the set of integers (note that the number of such $\tilde{x}$ has order $\frac{N}{\delta_{k,N}}$), we choose $\delta^{\prime}_{k,N}(\tilde{x})$ according to condition \eqref{P.time} such that $\bp_{n}(B^c_{k,N} (\tilde{x}))\le \delta_{k,N} \eta 2^{-k-N}$, where
\begin{equation*}
B_{k,N}(\tilde{x})=\left \{f \in X_T^{\beta}: \sup _{t,s, \leq T, |t-s|\leq \delta^{\prime}_{k,N}(\tilde{x})} |f(t,\tilde{x})-f(s,\tilde{x})|\leq \frac{1}{k^2} \right\}\, .
\end{equation*}
Consider now $B_{k,N}=\cap_{\tilde{x} \in [-N,N] \cap \frac{\delta_{k,N}}{3}\mathbb{Z}} B_{k,N}(\tilde{x})$. It is easy to see that
\begin{eqnarray*}
\bp_n(B_{k,N}^c)\leq \sum_{\tilde{x} \in [-N,N]\cap \frac{\delta_{k,N}}{3}\mathbb{Z}}\bp_n (B^c_{k,N}(\tilde{x})) \leq C \frac{N}{\delta_{k,N}}\eta \delta_{k,N} 2^{-k-N}= C \eta 2^{-k-N}N \,.
\end{eqnarray*}
We thus set $A = \cap_{k,N} (A_{k,N}\cap B_{k,N}) \cap B$. Then according to Proposition \ref{prop.compX} we see that the closure of $A$ is compact in $X_T^{\beta}$, and $\bp_n(A) \geq 1-C\eta$. This shows the tightness of $\bp_n$.
	\end{proof}

The following proposition states that under some moment conditions, a sequence of processes $u_n$ can be regarded as a tight sequence of probability measures on the space $X_T^{\beta}$. 	
	
	\begin{proposition}\label{prop.utight}
		Assume that $\alpha, \lambda\in(0,1)$ and $p\ge 1$ satisfy $p\alpha >1$, $p \lambda>1$ and $\beta<\alpha-1/p$. Let $\{u_n , \, n \ge 1\}$ be a sequence of stochastic processes such that
		\begin{enumerate} 
				\item $\displaystyle \lim_{\delta\to \infty}\limsup_n \bp(|u_n(0,0)|> \delta )=0$\,,
				\item For every $R>0$,   $\displaystyle\sup_{n} \sup_{s,t\in [0,T], |x| \le R}\|u_n(t,x)-u_n(s,x)\|_{L^p(\Omega)}\leq C_R |t-s|^{\lambda} $\,,
				\item $\displaystyle\sup_{n}\|u_n\|_{\mathfrak{X}^{\alpha,p}_T}$ is finite\,.
		\end{enumerate}	
		From Lemma~\ref{lem.lawXX}, the law of $u_n$ can be considered as a probability measure on $X^\beta_T$. In addition, as probability measures on $X^\beta_T$, the sequence $\{u_n , \, n \ge 1\}$ is tight.
	\end{proposition}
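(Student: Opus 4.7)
The plan is to verify the three hypotheses \eqref{P.bded}--\eqref{P.space} of Theorem \ref{thm.tight} applied to the sequence of laws of the $u_n$'s on $X^\beta_T$. Hypothesis \eqref{P.bded} is an immediate consequence of assumption (1) and does not use the $L^p$-structure. For \eqref{P.time}, fix $x\in\RR$. Assumption (2) gives $\|u_n(t,x)-u_n(s,x)\|_{L^p(\Omega)}\le C_R|t-s|^\lambda$ with $p\lambda>1$, so a one-parameter Kolmogorov / Garsia--Rodemich--Rumsey argument produces, for every $\lambda'\in(1/p,\lambda)$, a random variable $M^x_n$ such that $|u_n(t,x)-u_n(s,x)|\le M^x_n|t-s|^{\lambda'}$ for all $s,t\in[0,T]$, with $\sup_n \be[(M^x_n)^p]<\infty$. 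Markov's inequality then gives $\bp(\sup_{|t-s|<\delta}|u_n(t,x)-u_n(s,x)|\ge\varepsilon)\le C\delta^{p\lambda'}\varepsilon^{-p}$, which handles \eqref{P.time}.

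Hypothesis \eqref{P.space} is the main obstacle and requires a uniform \emph{joint} modulus of continuity for $u_n$. To obtain it, I would first combine Proposition \ref{prop.embed} (applied with $B=L^p(\Omega)$) with assumption (3) to get
\[
\|u_n(t,x+y)-u_n(t,x)\|_{L^p(\Omega)}\le C\,|y|^\alpha, \qquad |y|\le 1,
\]
for all $t\in[0,T]$, $x\in\RR$, uniformly in $n$. Adding the time estimate from assumption (2) produces the joint bound
\[
\|u_n(t,x)-u_n(s,y)\|_{L^p(\Omega)}\le C_R\bigl(|t-s|^\lambda+|x-y|^\alpha\bigr)
\]
on $(t,x),(s,y)\in[0,T]\times[-R-1,R+1]$, uniformly in $n$. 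Applying a two-parameter Kolmogorov / Garsia--Rodemich--Rumsey lemma with exponents $\beta'\in(\beta,\alpha-1/p)$ and $\lambda'\in(1/p,\lambda)$ yields a random variable $M^R_n$ satisfying
\[
|u_n(t,x)-u_n(s,y)|\le M^R_n\bigl(|t-s|^{\lambda'}+|x-y|^{\beta'}\bigr)
\]
for all $(t,x),(s,y)\in[0,T]\times[-R,R]$, with $\sup_n\be[(M^R_n)^p]<\infty$.

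With $M^R_n$ in hand, \eqref{P.space} becomes routine. Indeed, since $\beta'>\beta$,
\[
\sup_{t\in[0,T],\,|x|\le R}\int_{-\delta}^{\delta}|u_n(t,x+y)-u_n(t,x)|^2|y|^{-2\beta-1}dy\le (M^R_n)^2\int_{-\delta}^{\delta}|y|^{2(\beta'-\beta)-1}dy\le C(M^R_n)^2\delta^{2(\beta'-\beta)}.
\]
Markov's inequality applied to $(M^R_n)^2$, together with the uniform $L^{p/2}$ moment bound, then produces the required probability estimate, so Theorem~\ref{thm.tight} applies and the sequence $\{u_n\}$ is tight on $X^\beta_T$. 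The only non-routine ingredient is the two-parameter Kolmogorov estimate; it is classical, but one must be careful to invoke it in the quantitative $L^p$-moment form so that the resulting Hölder constants are controlled uniformly in $n$.
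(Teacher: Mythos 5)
Your proposal is essentially the paper's own proof: the paper disposes of this proposition in one line (``use the same ideas as in the proof of Lemma \ref{lem.lawXX} and Theorem \ref{thm.tight}''), and that is exactly what you execute --- condition \eqref{P.bded} from assumption (1), condition \eqref{P.time} from assumption (2) via a quantitative Kolmogorov/Garsia--Rodemich--Rumsey bound plus Markov's inequality, and condition \eqref{P.space} from assumption (3) via Proposition \ref{prop.embed} and the joint two-parameter Kolmogorov modulus with $L^p$-controlled constants (precisely the modulus asserted in the proof of Lemma \ref{lem.lawXX}), again followed by Markov's inequality. One bookkeeping slip: with only $p$-th moment bounds, GRR yields time exponents $\lambda'\in(0,\lambda-1/p)$ rather than $(1/p,\lambda)$ --- harmless here, since \eqref{P.time} only requires some positive exponent and $\lambda-1/p>0$ by hypothesis, and the paper's own Lemma \ref{lem.lawXX} contains the analogous typo (``$\lambda<\lambda'<\lambda-1/p$'').
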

	\begin{proof}
	This proposition can be easily proved using the same ideas as in the proof of Lem\-ma~\ref{lem.lawXX} and Theorem \ref{thm.tight}, we omit the details.
	\end{proof}

\subsection{Existence of the solution}

The main result of this subsection is the     existence of a solution for equation  \eref{spde with sigma}.   The methodology, inspired by the work of Gy\"ongy \cite{Gyo} on semilinear stochastic partial differential equations,  consists in proving tightness of a sequence of solutions obtained by regularizing the noise, and then using the uniqueness result.  The space $\cz_T^p$, where we proved our uniqueness result, consists of $L^p(\RR)$-valued processes, and  it is not clear how to characterize compactness of probability laws  on the space of trajectories of these processes. For this reason, we prove  the existence of a solution with paths in  the space $X^{\frac 12-H}_T$ introduced in Definition \ref{def1}, equipped with the metric \eref{metric.d}.

\begin{theorem}\label{thm:exist with sigma}
Assume that for equation \eref{spde with sigma} the following conditions hold:
 \begin{enumerate}
 \item For some $\beta_0> \frac{1}{2}-H$ and some   $p>\max (\frac 6 {4H-1}, \frac{1}{\beta_0+H-{1}/{2}})$, the initial condition $u_0$ is in $L^p(\RR)\cap L^\infty(\RR)$ and  
 \begin{equation}
   \sup_{x\in\RR}\cn_{\beta_0}u_0(x)+ \left(\int_{\RR}\|u_0(\cdot)-u_0(\cdot+h)\|^2_{L^p(\RR)}|h|^{2H-2}dh\right)^{\frac12} < \infty\,.
 \end{equation}
 \item $\sigma$ is differentiable and the derivative of $\sigma$ is Lipschitz and $\sigma(0)=0$.\end{enumerate}
 Then there exists a solution $u$ to \eref{spde with sigma} in $\mathcal{Z}_T^p\cap \XX^{\frac12-H,p}_T$. In addition, the solution has sample paths in the space $X^{\frac{1}{2}-H}_T$.
 \end{theorem}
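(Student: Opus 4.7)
Following Gy\"ongy's approximation method for semilinear SPDEs, the plan is: regularize the noise $W$ to $W_n$ by spatial mollification; truncate $\sigma$ to a globally Lipschitz $\sigma_N$; construct and uniformly control the approximating solutions $u^{n,N}$; prove tightness of their laws on $X^{\frac12-H}_T$; pass to the limit to identify a solution of the truncated equation; and remove the truncation via a localization argument. Strong uniqueness (Theorem \ref{thm:uniqueness}) then forces the whole approximating sequence to converge. Truncation is needed because only $\sigma'$, not $\sigma$ itself, is assumed Lipschitz, so $\sigma$ is merely locally Lipschitz with at most quadratic growth.

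\textbf{Approximation, uniform estimates, tightness.} Fix a mollifier $\rho_n(x) = n\rho(nx)$, $\rho\in C_c^\infty(\RR)$ with $\int\rho=1$, and define the regularized noise by $W_n(\varphi) = W(\varphi *_x \rho_n)$; its spatial covariance is smooth, so standard Dalang--Walsh theory applies to any Lipschitz nonlinearity. Choose $\sigma_N\in C^{1,1}(\RR)$ agreeing with $\sigma$ on $[-N,N]$, globally Lipschitz, with $\sigma_N(0)=0$. For each pair $(n,N)$, Picard iteration in the weighted space $\mathfrak{X}^{\frac12-H,p}_{T,\theta,\varepsilon}$ furnishes a unique solution $u^{n,N}$: the key estimate \eqref{est.normXX} yields a contraction once $\theta$ is taken large enough to absorb $C_0 \sqrt{p}\,\mathrm{Lip}(\sigma_N)$. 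Since spatial convolution with $\rho_n$ is a contraction on $\HH$, the bounds from Proposition \ref{prop.young}, Lemma \ref{lem2} and Proposition \ref{prop:holder.est} hold with constants independent of $n$; one obtains, uniformly in $n$, $\|u^{n,N}\|_{\mathfrak{X}^{\beta,p}_T} < \infty$ for some $\beta\in(\tfrac12-H, H)$, $\|u^{n,N}\|_{\mathcal{Z}_T^p} < \infty$, and H\"older continuity of $t\mapsto u^{n,N}(t,x)$ in $L^p(\Omega)$. These ingredients verify the hypotheses of Proposition \ref{prop.utight}, so the laws of the $u^{n,N}$ are tight on $X^{\frac12-H}_T$.

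\textbf{Passage to the limit and removal of truncation.} Prokhorov and Skorohod yield, on an enlarged probability space, copies $\tilde u^{n_k,N} \to \tilde u^N$ almost surely in $X^{\frac12-H}_T$. The composition rule Proposition \ref{prop.leftcomp} propagates this convergence through $\sigma_N$. For the stochastic integral, rewrite the $W_n$-integral as the $W$-integral of a spatially mollified integrand, invoke $\|\varphi - \varphi *_x \rho_n\|_\HH \to 0$ for $\varphi\in\HH$, and use the uniform moment bounds to upgrade almost sure convergence to $L^p(\Omega)$ convergence. The limit $\tilde u^N$ is then a mild solution of \eqref{spde with sigma} with $\sigma$ replaced by $\sigma_N$; Theorem \ref{thm:uniqueness} applied to the Lipschitz $\sigma_N$ forces the original sequence to converge as well. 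To remove the truncation, observe via uniqueness that $\tilde u^N$ and $\tilde u^M$ must coincide on any region where both stay in $[-\min(N,M), \min(N,M)]$; the uniform $\mathfrak{X}^{\frac12-H,p}_T$-bound combined with Chebyshev's inequality implies that, on any fixed compact $K\subset[0,T]\times\RR$, the event $\{\sup_K |\tilde u^N| \ge N\}$ has probability tending to $0$ as $N\to\infty$, yielding a solution $u\in \mathcal{Z}_T^p \cap \mathfrak{X}^{\frac12-H,p}_T$ with paths in $X^{\frac12-H}_T$.

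\textbf{Main obstacle.} The single hardest point is the simultaneous passage to the limit in the stochastic integrand $\sigma_N(u^{n,N})$ and in the driving noise $W_n$. The mechanism described above --- interpreting $W_n$-integrals as $W$-integrals of mollified integrands and combining the strong convergence $W_n\to W$ on $\HH$ with the tight a priori bounds --- is what makes the interchange of limits rigorous. A secondary subtlety is reaching the endpoint spatial regularity $\beta=\tfrac12-H$ in $X^{\frac12-H}_T$: this uses the gain-of-regularity inequality of Proposition \ref{prop.young} (which raises the exponent from $\tfrac12-H$ up to any $\beta<H$), and is precisely where the assumption $H>\tfrac14$ becomes essential.
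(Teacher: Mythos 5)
Your overall architecture is the paper's: mollify the noise in space, use the covariance domination $\be\lc|W_\varepsilon(\varphi)|^2\rc\le\be\lc|W(\varphi)|^2\rc$ together with the contraction estimate \eqref{est.normXX} to get bounds that are uniform in the mollification parameter, deduce tightness of the laws on $X^{\frac12-H}_T$ from Proposition \ref{prop.utight}, pass to the limit by a Skorohod/Gy\"ongy--Krylov argument (Lemma \ref{Gyongy lemma}) in which strong uniqueness (Theorem \ref{thm:uniqueness}) upgrades weak subsequential limits to convergence in probability of the whole sequence, and identify the limit as a mild solution by reading the $W_n$-integral as a $W$-integral of a mollified integrand (this is exactly the content of Lemma \ref{int conv lemma}, Lemma \ref{lem.fnW} and Lemma \ref{lemma: u in Z} in the paper). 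Up to that point your proposal is a faithful compression of the actual proof.

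The genuine gap is the truncation of $\sigma$ and, above all, its removal. The step is motivated by reading the hypotheses as allowing non-Lipschitz $\sigma$ (quadratic growth), but every tool you invoke afterwards --- Theorem \ref{thm:uniqueness}, Proposition \ref{prop.leftcomp}, the Picard/Gronwall bounds of Lemma \ref{lemma: u in Z}, the moment bounds stated in terms of $\|\sigma\|_{\rm Lip}$ --- is proved in the paper using the global Lipschitz property of $\sigma$ itself, so the truncation buys nothing: either $\sigma$ is globally Lipschitz and $\sigma_N$ is superfluous, or it is not and these supporting results are unavailable to you. Worse, the removal argument breaks down precisely in the only case where it would be needed. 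First, your a priori bounds on $u^{n,N}$ are uniform in $n$ but not in $N$: the contraction in $\mathfrak{X}^{\frac12-H,p}_{T,\theta,\varepsilon}$ forces the weight $\theta\sim(\mathrm{Lip}(\sigma_N))^{2/H}$, so the unweighted moment bound is of order $\exp\big(cT(\mathrm{Lip}(\sigma_N))^{2/H}\big)$; for quadratic $\sigma$ one has $\mathrm{Lip}(\sigma_N)\sim N$, and even granting the pointwise-to-supremum upgrade, Chebyshev yields a bound of order $\exp(cTN^{2/H})N^{-p}$, which diverges instead of tending to $0$. Second, the event you must control is $\lcl\sup_{[0,T]\times\RR}|\tilde u^N|\ge N\rcl$, not its restriction to a compact $K$: since $\sigma_N$ differs from $\sigma$ wherever $|u|>N$, including at spatial infinity, controlling compacts does not allow you to conclude that $\tilde u^N$ solves the untruncated equation. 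Third, the assertion that $\tilde u^N$ and $\tilde u^M$ ``coincide on the region where both stay below $\min(N,M)$'' is not an application of Theorem \ref{thm:uniqueness}: that theorem compares two $\mathcal{Z}_T^p$-solutions of the \emph{same} equation on all of $[0,T]\times\RR$, whereas agreement of solutions to \emph{different} equations on a random region requires a stopping-time localization (with a spatially uniform stopping rule) that you have not constructed and that the paper never needs. The repair is simply to drop the truncation: treat $\sigma$ as globally Lipschitz with Lipschitz derivative and $\sigma(0)=0$, as all of the paper's supporting results do; your remaining steps then reproduce the paper's proof.
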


\begin{proof}

 As mentioned above, we follow the methodology developed in \cite{Gyo} and we consider a regularization of the noise in space. Indeed, for $\ep>0$ and  $\vp\in  \HH $,  we define
 	\begin{equation}\label{eq:cov-W-epsilon}
	W_{\varepsilon}(\varphi)
	= \int_0^t \int_{\mathbb{R}} [\rho_{\ep}*\varphi](s,x)W(ds,dy)
	=\int_0^t \int_{\mathbb{R}}\int_{\mathbb{R}}\varphi(s,x)\rho_{\varepsilon}(x-y)W(ds,dy)dx\,,
	\end{equation}
	where $\rho_t (x)=(2\pi t)^{-\frac{1}{2}} e^{-{x^2}/{2t}}$. Notice that relation \eqref{eq:cov-W-epsilon} can be also read (either in Fourier or direct coordinates) as:
	\begin{eqnarray}\label{eq:ident-cov-W-ep}
	\be\left[W_{\varepsilon}(\varphi) W_{\varepsilon}(\psi) \right]
	&=&
	c_{1,H} \int_0^t \int_{\mathbb{R}}
	\mathcal{F}\varphi(s,\xi)\, \overline{\mathcal{F}\psi(s,\xi)} \, e^{-\varepsilon |\xi|^2} |\xi|^{1-2H} d\xi ds   \\
	&=&
	c_{1,H} \int_0^t \int_{\mathbb{R}}\int_{\mathbb{R}}\varphi(s,x)f_{\varepsilon}(x-y)\psi(s,y) \, dx   dy   ds,  \notag
	\end{eqnarray}
	where $f_{\ep}$ is given by $f_{\varepsilon}(x)=
	\mathcal{F}^{-1}(e^{-\varepsilon |\xi|^2} |\xi|^{1-2H})$. In other
	words, our noise is still a white noise in time but its space
	covariance is now given by $f_{\ep}$. Note that $f_{\varepsilon}$
	is a  {real} positive definite function, but is not necessarily
	positive. As assessed by \eqref{eq:ident-cov-W-ep}, we however have
	\begin{equation}\label{ineq.WeW}
		\be\lc |W_\varepsilon(\varphi)|^2\rc \le \be\lc |W(\varphi)|^2 \rc \, ,
	\end{equation}
	for all $\varphi$ in $\HH$.

	For every fixed $\varepsilon>0$, the noise $W_{\ep}$ induces an approximation to equation \eref{eq:mild-formulation sigma}, namely
	\begin{equation}\label{appr eq sigma}
		u_{\ep}(t,x)=p_t u_0(x) + \int_0^t \int_{\mathbb{R}}p_{t-s}(x-y)\sigma(u_{\ep}(s,y)) \, W_{\ep}(ds,dy) ,
	\end{equation}
	where the integral is understood in the It\^o sense. Since $|\xi|^{1-2H}e^{-\varepsilon |\xi|^2}$ is in $L^1(\RR)$, $|f_{\varepsilon}|$ is bounded. Thus, using Picard iteration, it is easy to see that \eqref{appr eq sigma} has a unique random field solution, and by estimating the $p$th moment of $|u_{\varepsilon}(t,x)-u_{\varepsilon}(t,x')|$, we see that each solution $u_{\varepsilon}(t,x)$ is H\"older continuous in space with order $\beta$ for all $\beta \in (0,1)$. Therefore we conclude that  $u_\varepsilon$ is in $\XX^{\beta,p}_T$ for all $\beta\in(0,1)$.
	  We remark that $\|u_\varepsilon\|_{\XX^{\beta,p}_T}$ may not be bounded uniformly in $\varepsilon$  as seen from this procedure.
{However, using \eqref{ineq.WeW},  \eqref{est.normXX} for suitable parameters yielding a contraction, plus the norm equivalence stated in Remark \ref{rmk:norm-X-beta-p} item (iv), we obtain the following uniform bound:}
	\begin{align*}
		\sup_{\varepsilon>0} \|u_{\varepsilon}\|_{\XX^{\beta,p}_T}<\infty \, ,
	\end{align*}
	for all $\beta\le \beta_0$ and $\beta<H$. In particular, because $\frac12-H<\beta_0-\frac1p$, we can choose $\beta $ such that $\frac12-H<\beta - \frac1p$.
In addition, we can show that $u_\varepsilon$ satisfies Condition (2) in Proposition~\ref{prop.utight}. With these properties, we can check that the three conditions in Proposition~\ref{prop.utight} are satisfied. Hence the  laws of the processes $u_\varepsilon$, considered as probability measures on the space $X^{\frac 12-H}_T$,    are tight and hence weakly relatively compact.

	We now base our final considerations on the forthcoming Lemmas \ref{lemG} - \ref{int conv lemma}.  Fix a sequence $\varepsilon_n $ converging to zero and set
	$u_n= u_{\varepsilon_n}$.
We shall hinge on Lemma \ref{Gyongy lemma} in order to prove that the sequence $u_n$ actually converges in probability. To apply this lemma, we consider now two sequences $  u_{m(n)}$ and $ u_{l(n)}$, where $\{m(n), n\ge 1\} $ and $\{ l(n), n\ge1 \}$ are strictly increasing sequences of positive integers.  For each $n \ge 1$, the triplet $(u_{m(n)},u_{l(n)},W)$ defines probability measure on the space
$$
\mathcal{B}:=X^{\frac12-H}_T\times X^{\frac12-H}_T\times C_{\rm uc}([0,T]\times \RR).
$$
Since the family $\{u_ \varepsilon, \, \ep >0\}$ is weakly relatively compact, there exists a subsequence  of the form $\{(u_{m(n_k)},u_{l(n_k)},W), k \ge 1 \}$ which converges in distribution as $k $ tends to infinity. Thus, by Skorokhod embedding theorem, there is a probability space $(\Omega',\mathcal{F}',\bp')$ and a sequence of random elements $z_k=(u'_{m(n_k)},u'_{l(n_k)},W')$ with values on $\mathcal{B}$ such that $z_k$ has the same distribution as $(u_{m(n_k)},u_{l(n_k)},W)$ and $z_k$ converges almost surely (in the topology of $\mathcal{B}$) to $(u',v',W')$. By Lemma \ref{int conv lemma} we see that both $u'$ and $v'$ are solutions to equation \eref{eq:mild-formulation sigma}, with $W$ replaced by $W'$. Then by Lemma~\ref{lemma: u in Z}  and the uniqueness result Theorem \ref{thm:uniqueness} we thus get that $u'=v'$ in $X_T^{\frac 12 -H}$. We can now apply Lemma \ref{Gyongy lemma} in order to assert that $u_{n}$ converges to some random field $u$ in $X_T^{\frac 12-H}$, in probability. Moreover, taking a subsequence if necessary, we see that $u_{n}$ converges to $u$ in $X_T^{\frac 12-H}$ a.s. Hence, thanks to another application of Lemma~\ref{int conv lemma} we see that $u$ satisfies equation~\eref{eq:mild-formulation sigma}. This proves the existence of the solution.
\end{proof}

We now state the lemmae on which the proof of Theorem \ref{thm:exist with sigma} relies.
The first lemma is a version of Gronwall's lemma, borrowed from \cite[Lemma 15]{Dal}, and the correction~\cite{Dal1} to this paper.

\begin{lemma}\label{lemG}
Let $g\in L^{1}(\ott;\R_{+})$ and consider a sequence of functions $\{f_{n}; \, n\ge 0\}$ with $f_{n}:\ott\to\R_{+}$, such that $f_{0}$ is bounded and for all $n\ge 1$
\begin{equation}\label{eq:gronwall-ineq}
f_{n}(t) \le c_{1} + c_{2} \int_{0}^{t} g(t-s) \, f_{n-1}( s) \, ds,
\end{equation}
for two positive constants $c_{1},c_{2}$.
Then $\sup_{n\ge 1} f_{n}$ is bounded. If we assume moreover that
$c_{1}=0$ in inequality \eqref{eq:gronwall-ineq}, we obtain that $\sum_{n\ge 0} f_{n}^{1/p}$
converges uniformly in $\ott$, for all $1\le p<\infty$.
\end{lemma}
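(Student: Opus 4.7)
The plan is to turn the recursion \eqref{eq:gronwall-ineq} into a genuine geometric contraction by weighing everything by $e^{-\lambda t}$ for a sufficiently large $\lambda>0$. This is the only place where the hypothesis $g\in L^{1}$ is really used, and it is the main (though quite mild) point of the argument: by dominated convergence the quantity
\[
A(\lambda):=c_{2}\int_{0}^{T}g(u)e^{-\lambda u}\,du
\]
tends to $0$ as $\lambda\to\infty$, so I may fix once and for all some $\lambda$ with $A:=A(\lambda)<1$.

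With this $\lambda$ in hand, I set $h_{n}^{\lambda}:=\sup_{0\le t\le T}e^{-\lambda t}f_{n}(t)$. Multiplying \eqref{eq:gronwall-ineq} by $e^{-\lambda t}$, performing the substitution $u=t-s$ inside the convolution, and taking a supremum, one immediately obtains the affine scalar recursion
\[
h_{n}^{\lambda}\le c_{1}+A\,h_{n-1}^{\lambda},\qquad n\ge 1 .
\]
Since $f_{0}$ is bounded, $h_{0}^{\lambda}<\infty$; iterating this recursion yields $h_{n}^{\lambda}\le c_{1}(1-A^{n})/(1-A)+A^{n}h_{0}^{\lambda}$, which is uniformly bounded in $n$. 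The first claim then follows at once from the pointwise inequality $f_{n}(t)\le e^{\lambda T}h_{n}^{\lambda}$ valid on $[0,T]$.

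For the second assertion, setting $c_{1}=0$ collapses the previous recursion to $h_{n}^{\lambda}\le A^{n}h_{0}^{\lambda}$, so that
\[
\sup_{0\le t\le T}f_{n}(t)^{1/p}\le\bigl(e^{\lambda T}\,\sup_{[0,T]}f_{0}\bigr)^{1/p}A^{n/p}.
\]
Since $A<1$, the scalar geometric series $\sum_{n\ge 0}A^{n/p}$ converges for every $p\in[1,\infty)$, and the Weierstrass $M$-test delivers the uniform convergence of $\sum_{n\ge 0}f_{n}^{1/p}$ on $[0,T]$. No step beyond the initial selection of $\lambda$ requires anything non-routine.
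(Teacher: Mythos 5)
Your proof is correct, and it is worth noting that the paper itself does not prove this lemma at all: it imports it as Lemma 15 of Dalang's paper \cite{Dal} together with the correction \cite{Dal1}, so there is no "paper proof" to match against. Your argument is a self-contained alternative based on the Bielecki-type weighted norm $h_{n}^{\lambda}=\sup_{t\in[0,T]}e^{-\lambda t}f_{n}(t)$: the only use of $g\in L^{1}([0,T];\R_{+})$ is dominated convergence to make $A(\lambda)=c_{2}\int_{0}^{T}g(u)e^{-\lambda u}\,du<1$, after which the factorization $e^{-\lambda t}=e^{-\lambda(t-s)}e^{-\lambda s}$ turns the convolution inequality into the affine scalar recursion $h_{n}^{\lambda}\le c_{1}+A\,h_{n-1}^{\lambda}$, and finiteness of all $h_{n}^{\lambda}$ follows by induction from the boundedness of $f_{0}$. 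Both conclusions then drop out: uniform boundedness from iterating the affine recursion, and, when $c_{1}=0$, the geometric bound $\sup_{t}f_{n}(t)\le e^{\lambda T}A^{n}h_{0}^{\lambda}$, which makes $\sum_{n}f_{n}^{1/p}$ uniformly convergent by comparison with $\sum_{n}A^{n/p}$. By contrast, the argument in Dalang's cited correction proceeds by iterating the inequality to produce $n$-fold convolutions $g^{*n}$ and controlling them by splitting $g$ into a bounded part (which gains factorial decay from simplex volumes) plus an $L^{1}$-small part (which contributes geometrically small terms). Your route is shorter, avoids any combinatorics of iterated convolutions, and yields an explicit geometric rate in the weighted norm; the convolution-iteration route has the advantage of giving quantitative information about the unweighted iterates $g^{*n}$ themselves, which is sometimes needed elsewhere (e.g.\ for Picard schemes where one tracks the individual terms), but for the statement as given your proof is complete.
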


The second lemma is a general result on convergence of random variables borrowed from \cite{GK,Gyo}.
	
		\begin{lemma}\label{Gyongy lemma}
	Let $\mathbb{E}$ be a Polish space equipped with the Borel $\sigma$-algebra. A sequence of $\mathbb{E}$-valued random elements $z_n$ converges in probability if and only if for every pair of subsequences $z_{l(n)}$, $z_{m(n)}$ there exists a subsequence $w_k:=(z_{l(n_k)},z_{m(n_k)})$ converging weakly to a random element $w$ supported on the diagonal $\{(x,y) \in \mathbb{E}\times \mathbb{E}: x=y\}$.
	\end{lemma}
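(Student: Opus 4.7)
The plan is to follow the classical Gy\"ongy--Krylov argument. Let $d$ denote a complete metric on $\mathbb{E}$ inducing its Polish topology.

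The forward direction is straightforward: if $z_n \to z$ in probability, then for any two subsequences $z_{l(n)}, z_{m(n)}$ the pair $(z_{l(n)}, z_{m(n)})$ converges in probability to $(z,z)$ in $\mathbb{E} \times \mathbb{E}$, hence converges weakly to the law of $(z,z)$, which is clearly supported on the diagonal.

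For the reverse direction, I would show that the hypothesis forces $\{z_n\}$ to be Cauchy in probability, i.e., $\lim_{m,n\to\infty} \bp(d(z_m,z_n) > \varepsilon) = 0$ for every $\varepsilon > 0$. Suppose, for contradiction, that this fails: there exist $\varepsilon > 0$, $\eta > 0$, and subsequences $l(n), m(n)$ (with both $l(n), m(n) \to \infty$) such that
\[
\bp\bigl( d(z_{l(n)}, z_{m(n)}) \ge \varepsilon \bigr) \ge \eta \qquad \text{for all } n.
\]
By the assumption of the lemma, there is a further subsequence indexed by $n_k$ such that $(z_{l(n_k)}, z_{m(n_k)})$ converges weakly to a random element $w$ on $\mathbb{E} \times \mathbb{E}$ supported on the diagonal. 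The set $F = \{(x,y) \in \mathbb{E} \times \mathbb{E} : d(x,y) \ge \varepsilon \}$ is closed and disjoint from the diagonal, so $\bp_w(F) = 0$. Portmanteau's theorem applied to $F$ gives
\[
\eta \le \limsup_{k\to\infty} \bp\bigl( d(z_{l(n_k)}, z_{m(n_k)}) \ge \varepsilon \bigr) \le \bp_w(F) = 0,
\]
a contradiction.

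Hence $\{z_n\}$ is Cauchy in probability. Since $(\mathbb{E},d)$ is complete, a standard extraction argument concludes: pick a subsequence $n_k$ with $\bp(d(z_{n_{k+1}}, z_{n_k}) > 2^{-k}) \le 2^{-k}$, apply Borel--Cantelli to conclude that $\{z_{n_k}\}$ is almost surely Cauchy, hence converges almost surely (and therefore in probability) to some $\mathbb{E}$-valued random variable $z$, and finally use the Cauchy-in-probability property of the full sequence together with the triangle inequality to upgrade to $z_n \to z$ in probability.

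The substance of the argument is entirely in the Portmanteau step; the main (rather mild) obstacle is only bookkeeping: one must be careful to apply Portmanteau to the \emph{closed} set $\{d(x,y) \ge \varepsilon\}$ rather than the open set $\{d(x,y) > \varepsilon\}$, and to verify that Cauchy-in-probability implies convergence-in-probability in the Polish (not necessarily locally compact) setting. No result beyond Portmanteau, Borel--Cantelli, and completeness of $d$ is needed.
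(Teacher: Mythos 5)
Your proof is correct, and it is exactly the classical Gy\"ongy--Krylov argument: the paper itself does not prove this lemma but cites it from \cite{GK,Gyo}, where the proof proceeds precisely as you describe (contradiction with Cauchy-in-probability, Portmanteau applied to the closed set $\{(x,y): d(x,y)\ge\varepsilon\}$ disjoint from the diagonal, then Borel--Cantelli plus completeness to upgrade Cauchy-in-probability to convergence in probability). The only bookkeeping point worth making explicit is that when negating the Cauchy property one should arrange the index maps $l(n), m(n)$ to be strictly increasing, so that they are genuine subsequences as required by the statement; this is a routine recursive extraction.
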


The next result asserts that the approximate solution to the stochastic heat equation is uniformly bounded in the space $\mathcal{Z}_T^{p}$ defined by \eqref{def: space Z}.
\begin{lemma}\label{lemma: u in Z}
The approximate solutions $u_{\varepsilon}$  satisfy the condition
\begin{equation}
\sup_{\varepsilon>0}\|u_{\varepsilon}\|_{\mathcal{Z}_T^{p}} < \infty\,.
\end{equation}
Furthermore, if $u_{\varepsilon} \to u$ in $X_T^{\frac{1}{2}-H}$ a.s., as $\varepsilon$ tends to zero,  then $u$ is also in $\mathcal{Z}_T^{p}$.
\end{lemma}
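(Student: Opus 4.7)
My plan is to decompose $u_\varepsilon(t,x) = p_t u_0(x) + \Phi_\varepsilon(t,x)$, where $\Phi_\varepsilon$ is the stochastic convolution against $W_\varepsilon$ of $\sigma(u_\varepsilon)$, and then set up a Gronwall-type iteration controlling the quantity $\phi_\varepsilon(t) := \|u_\varepsilon(t,\cdot)\|_{L^p(\Omega\times\RR)}^2 + [\cn^*_{\frac12-H,p} u_\varepsilon(t)]^2$. The deterministic term $p_t u_0$ lies in $\mathcal{Z}_T^p$ uniformly in $t\in[0,T]$ by the two hypotheses on $u_0$ (using Plancherel and standard heat-semigroup estimates), so the entire difficulty reduces to estimating $\|\Phi_\varepsilon(t,\cdot)\|_{L^p(\Omega\times\RR)}^2$ and $[\cn^*_{\frac12-H,p}\Phi_\varepsilon(t)]^2$ in terms of $\int_0^t K(t-s)\phi_\varepsilon(s)\,ds$ with $K$ locally integrable.

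The key technical step is to apply the Burkholder-type inequality of Proposition~\ref{prop.burkholder}. Since \eqref{ineq.WeW} ensures that the $\mathfrak H_\varepsilon$-seminorm is dominated by the $\mathfrak H$-seminorm, the bound of Proposition~\ref{prop.burkholder} applies (uniformly in $\varepsilon$) to integrals against $W_\varepsilon$ as well. This yields $\|\Phi_\varepsilon(t,x)\|_{L^p(\Omega)}^2\le C\int_0^t (J_1(s,x)+J_2(s,x))\,ds$ with the same two terms that appear in the proof of Proposition~\ref{prop.young}. I will then take the $L^{p/2}(\RR,dx)$-norm of this inequality (valid since $p\ge 2$), interpret each $J_i(s,\cdot)$ as a spatial convolution of $p_{t-s}^2$ or $|p_{t-s}(\cdot+h)-p_{t-s}(\cdot)|^2$ with either $\|\sigma(u_\varepsilon(s,\cdot))\|_{L^p(\Omega)}^2$ or an increment thereof, and apply Young's convolution inequality together with Lemma~\ref{lem1}. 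The identity $\|\,\|X\|_{L^p(\Omega)}^2\,\|_{L^{p/2}(\RR)}=\|X\|_{L^p(\Omega\times\RR)}^2$ lets me convert the result into $\|\sigma(u_\varepsilon(s,\cdot))\|_{L^p(\Omega\times\RR)}^2$ plus $[\cn^*_{\frac12-H,p}\sigma(u_\varepsilon)(s)]^2$, times singular but integrable kernels of order $(t-s)^{H-1}$ and $(t-s)^{-1/2}$. For $[\cn^*_{\frac12-H,p}\Phi_\varepsilon(t)]^2$ I proceed analogously, using the rectangular-increment decomposition from the proof of Proposition~\ref{prop.young}, Step~2, together with a Plancherel computation of $\int_\RR\int_\RR\int_\RR |p_{t-s}(u+h+z)-p_{t-s}(u+z)-p_{t-s}(u+h)+p_{t-s}(u)|^2|z|^{2H-2}|h|^{2H-2}du\,dz\,dh\sim (t-s)^{2H-\frac32}$, which is integrable precisely because $H>\frac14$.

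Combining all bounds and invoking the Lipschitz property of $\sigma$ with $\sigma(0)=0$ (to get $\|\sigma(u_\varepsilon(s,\cdot))\|_{L^p(\Omega\times\RR)}\le C\|u_\varepsilon(s,\cdot)\|_{L^p(\Omega\times\RR)}$ and $\cn^*_{\frac12-H,p}\sigma(u_\varepsilon)(s)\le C\cn^*_{\frac12-H,p}u_\varepsilon(s)$), we arrive at
\begin{equation*}
\phi_\varepsilon(t)\;\le\;C_0 + C\int_0^t\!\bigl[(\kappa(t-s))^{H-1}+(\kappa(t-s))^{-\frac12}+(\kappa(t-s))^{2H-\frac32}\bigr]\,\phi_\varepsilon(s)\,ds,
\end{equation*}
with $C_0$ depending on $u_0$ and $C$ independent of $\varepsilon$. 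Applying this inequality to the Picard iterates defining $u_\varepsilon$ and invoking Lemma~\ref{lemG} then produces the uniform bound $\sup_\varepsilon\|u_\varepsilon\|_{\mathcal Z_T^p}<\infty$. For the second assertion, the convergence $u_\varepsilon\to u$ in $X_T^{\frac12-H}$ almost surely implies pointwise a.s.~convergence $u_\varepsilon(t,x)\to u(t,x)$ on compacts, and Fatou's lemma (applied separately to the $L^p(\Omega\times\RR)$-norm and to the integrand $\|u(t,\cdot)-u(t,\cdot+h)\|_{L^p(\Omega\times\RR)}^2$ under the $|h|^{2H-2}\,dh$ measure) transfers the uniform bound to $u$.

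The main obstacle is bookkeeping the iterated Minkowski/Young manipulations so that the $L^p(\Omega)$-norm over $\omega$ and the $L^p(\RR)$-norm over $x$ are swapped correctly (this requires $p\ge 2$) and so that the resulting convolution kernels in $(t-s)$ produce exponents strictly greater than $-1$; in particular the exponent $2H-\frac32$ arising in the $\cn^*$-estimate is precisely what forces the restriction $H>\frac14$ that is already built into the hypothesis $p>6/(4H-1)$.
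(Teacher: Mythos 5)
Your proposal is correct, and its analytic core coincides with the paper's: the same Burkholder bound (Proposition \ref{prop.burkholder}), the same single-increment, square, and rectangular-increment heat-kernel computations producing the kernels $(t-s)^{H-1}$, $(t-s)^{-1/2}$ and $(t-s)^{2H-\frac32}$ (these are exactly the paper's inequalities \eqref{y1}--\eqref{y2}), the same Gronwall-type Lemma \ref{lemG}, and the same Fatou argument for passing to the limit $u_\varepsilon\to u$. Where you genuinely differ is in the logical organization around Gronwall, and in how the $\varepsilon$-uniformity of the Burkholder bound is packaged. The paper spends its Steps 1--2 establishing, for each \emph{fixed} $\varepsilon$, the qualitative finiteness $\|u_\varepsilon\|_{\mathcal{Z}_T^p}<\infty$ via Picard iteration, exploiting that the mollified covariance $f_\varepsilon$ is bounded (constants $C_\varepsilon$ blowing up as $\varepsilon\to0$); this a priori finiteness is what licenses applying Lemma \ref{lemG} in self-bounding form to $\Psi(t)=\|u_\varepsilon(t,\cdot)\|^2_{L^p(\Omega\times\RR)}+[\cn^*_{\frac12-H,p}u_\varepsilon(t)]^2$ itself in Step 3. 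You instead apply the $\varepsilon$-uniform inequality recursively to the Picard iterates $u_\varepsilon^n$, which is precisely the sequential form in which Lemma \ref{lemG} is stated (with $f_0$ the bounded contribution of $p_tu_0$), and then transfer the uniform-in-$(n,\varepsilon)$ bound to $u_\varepsilon$ by pointwise convergence of the iterates plus Fatou. This buys you a shorter proof: the paper's $\varepsilon$-dependent Steps 1--2 become unnecessary, at the modest price of invoking (from the fixed-$\varepsilon$ well-posedness already used in Theorem \ref{thm:exist with sigma}) that $u_\varepsilon$ is the pointwise $L^p(\Omega)$-limit of its iterates --- a limit passage you should state explicitly, though it is the same Fatou mechanism you already use for $u_\varepsilon\to u$. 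Finally, your domination of the $W_\varepsilon$-stochastic integrals through \eqref{ineq.WeW} at the level of the BDG inequality is the same spectral observation as the paper's route of rewriting $\int g\,dW_\varepsilon$ as $\int(\rho_\varepsilon* g)\,dW$ and using $e^{-\varepsilon|\xi|^2}\le1$ in \eqref{eq:fourier-domination}; both are legitimate, since the quadratic variation of the $W_\varepsilon$-integral is pathwise dominated by the corresponding $\HH$-norm.
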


\begin{proof}
  We will use Picard iteration to show that for each $\varepsilon$, $u_{\varepsilon} \in \mathcal{Z}_T^{p}$. Then we will use Gronwall's lemma to show that the processes  $u_{\varepsilon}$ are uniformly (in $\ep$) bounded in $\mathcal{Z}_T^{p}$. To this end, we first define
\begin{equation*}
u^{0}_{\varepsilon}(t,x)=p_tu_0(x)\,,
\end{equation*}
and recursively
\begin{equation*}
u^{n+1}_{\varepsilon}(t,x)=p_tu_0(x)+\int_0^t \int_{\RR} p_{t-s}(x-y)\sigma(u_{\varepsilon}^{n}(s,y))W_{\varepsilon}(ds,dy)\,.
\end{equation*}
We wish to bound $\|u_{\varepsilon}^{n}\|_{\mathcal{Z}_T^{p}}$ uniformly in $n$. First recall that
\[
\|u_{\varepsilon}^{n}\|_{\mathcal{Z}_T^{p}} = \sup_{t \in [0,T]} \|u_{\varepsilon}^{n}(t,\cdot)\|_{L^p(\Omega \times \RR)}  + \sup_{t \in [0,T]}  \cn^*_{\frac 12-H, p} u_{\varepsilon}^{n}(t),
\]
where  $\cn^*_{\frac 12-H, p}$ is defined in  \eref{def: space Z}.
  Let us now bound the terms $ \|u_{\varepsilon}^{n}(t,\cdot)\|_{L^p(\Omega \times \RR)} $ and $\cn^*_{\frac 12-H, p} u_{\varepsilon}^{n}(t)$.

\noindent
\textit{Step 1.}  We shall bound $ \|u_{\varepsilon}^{n}(t,\cdot)\|_{L^p(\Omega \times \RR)} $  uniformly in $n$ by considering the differences of Picard's iterations.
Indeed, by Burkholder's inequality we have
\begin{align*}
& \be  |u^{n+1}_{\varepsilon}(t,x)-u_{\varepsilon}^{n}(t,x)|^p  \\
& =
\be  \left |\int_0^t \int_{\RR}p_{t-s}(x-y)[\sigma(u_{\varepsilon}^{n}(s,y))-\sigma(u_{\varepsilon}^{n-1}(s,y))]W_{\varepsilon}(ds,dy)\right |^p   \\
&\leq C_p \, \be  \Big|\int_0^t \int_{\RR} p_{t-s}(x-y)p_{t-s}(x-z)[\sigma(u_{\varepsilon}^{n}(s,y))-\sigma(u_{\varepsilon}^{n-1}(s,y))] \\
&\hspace{2in} \times[\sigma(u_{\varepsilon}^{n}(s,z))-\sigma(u_{\varepsilon}^{n-1}(s,z))] f_{\varepsilon}(y-z)dy dz ds \Big|^{\frac{p}{2}}  \, .
\end{align*}
Thus, since $\|f_{\ep}\|_{\infty}\le C_{\ep}$ and owing to the fact that $\si$ is a Lipschitz function, we have
\begin{eqnarray*}
&&\be   |u^{n+1}_{\varepsilon}(t,x)-u_{\varepsilon}^{n}(t,x)|^p  \\
&\leq&C_{\varepsilon} \, \be  \left | \int_0^t \left ( \int_{\RR} p_{t-s}(y)|u_{\varepsilon}^{n}(s,x+y)-u_{\varepsilon}^{n-1}(s,x+y)| dy \right)^2 ds\right|^{\frac{p}{2}} \,,
\end{eqnarray*}
where $C_\ep$ denotes a generic constant depending on $\ep$ and $p$.
We now integrate with respect to the space variable and invoke Minkowski's inequality. In this way we obtain
\begin{eqnarray*}
&&\be \left \|u_{\varepsilon}^{n+1}(t,\cdot)-u_{\varepsilon}^{n}(t,\cdot) \right\|^p_{L^p(\RR)}\\
&\leq& C_{\varepsilon} \be \left \|\int_0^t \left ( \int_{\RR} p_{t-s}(y) |u_{\varepsilon}^{n}(s,y+\cdot)-u_{\varepsilon}^{n-1}(s,y+\cdot)|dy \right)^2 ds \right\|^{\frac{p}{2}}_{L^{\frac{p}{2}}(\RR)}\\
&\leq&C_{\varepsilon} \be \left ( \int_0^t \left (\int_{\RR} p_{t-s}(y)\left \| u_{\varepsilon}^{n}(s,\cdot)-u_{\varepsilon}^{n-1}(s,\cdot) \right\|_{L^p(\RR)} dy \right)^2 ds \right)^{\frac{p}{2}}\\
&\leq& C_{\varepsilon} \left (\int_0^t \left\|u_{\varepsilon}^{n}(s,\cdot)-u_{\varepsilon}^{n-1}(s,\cdot) \right\|^2_{L^p(\Omega\times \RR)} ds \right)^{\frac{p}{2}}\,.
\end{eqnarray*}
This relation easily entails
\begin{equation*}
 \left\|u_{\varepsilon}^{n+1}(t,\cdot)-u_{\varepsilon}^{n}(t,\cdot) \right\|^2_{L^p(\Omega\times \RR)}\leq C_{\varepsilon}\int_0^t \left\|u_{\varepsilon}^{n}(s,\cdot)-u_{\varepsilon}^{n-1}(s,\cdot) \right\|^2_{L^p(\Omega\times \RR)} ds\,,
\end{equation*}
and a direct application of Gronwall's lemma as stated in Lemma \ref{lemG} yields  that the quantity $\sup_n\sup_{t \in [0,T]}  \|u_{\varepsilon}^{n}(t,\cdot)\|_{L^p(\Omega \times \RR)} $ is finite for each fixed $\varepsilon>0$. This implies that $ \sup_{t \in [0,T]}  \|u_{\varepsilon} (t,\cdot)\|_{L^p(\Omega \times \RR)} <\infty $ for each fixed $\varepsilon>0$.

\noindent
\textit{Step 2}.
Next we estimate $\cn^*_{\frac 12-H, p} u_{\varepsilon} (t)$, and observe that we are able to handle this term   directly (namely without invoking Picard's iterations). We can write
\begin{align*}
&\int_{\RR}\be |u_{\varepsilon}(t,x)-u_{\varepsilon}(t,x+h)|^p dx
\leq C \int_{\RR} |p_tu_0(x)-p_tu_0(x+h)|^p dx\\
&\hspace{1.25in}+ C_{\varepsilon}\int_{\RR}\be \left | \int_0^t \left ( \int_{\RR} |p_{t-s}(y)-p_{t-s}(y+h)| |u_{\varepsilon}(s,y+x)| dy \right)^2 ds \right|^{\frac{p}{2}}dx\\
\leq& C \int_{\RR}|p_tu_0(x)-p_tu_0(x+h)|^p dx \\
&\hspace{1.25in}+ C_{\varepsilon}  \left ( \int_0^t \left ( \int_{\RR} |p_{t-s}(y)-p_{t-s}(y+h)| dy \right) ^2 \|u_{\varepsilon}(s,\cdot)\|^2_{L^p(\Omega\times \RR)} ds \right )^{\frac{p}{2}}\,.
\end{align*}
We thus end up with
\begin{eqnarray*}
\cn^*_{\frac 12-H, p} u_{\varepsilon} (t)&=& \int_{\RR} \frac{\|u_{\varepsilon}(t,\cdot)-u_{\varepsilon}(t,\cdot+h)\|^2_{L^p(\Omega\times \RR)}}{|h|^{2-2H}} dh\\
&\leq& C \int_{\RR} \frac{\|p_tu_0(\cdot)-p_tu_0(\cdot+h)\|^2_{L^p(\RR)}}{|h|^{2-2H}} dh +
  C_{\varepsilon} \, \sup_{s\in [0,T]} \|u_{\varepsilon}(s,\cdot)\|^2_{L^p(\Omega\times \RR)}  \\
 &&\times\int_0^t \int_{\RR} \frac{\left ( \int_{\RR} |p_{t-s}(y)-p_{t-s}(y+h)|dy \right) ^2}{|h|^{2-2H}}dh ds
,
\end{eqnarray*}
and the right-hand side in the above inequality is easily seen to be finite. Putting together the last two steps, we can conclude that for each fixed $\varepsilon$, $u_{\varepsilon} \in \mathcal{Z}_T^{p}$.

\noindent
\textit{Step 3: Uniform bounds in $\ep$.}
To prove the norms of $u_{\varepsilon}$ in $\mathcal{Z}_T^{p}$ are uniformly bounded in $\varepsilon$, we note that $u_{\varepsilon}$ satisfies the equation
\begin{equation*}
u_{\varepsilon}(t,x)=p_tu_0(x)+\int_0^t \int_{\RR} [\left(p_{t-s}(x-\cdot)\sigma(u_{\varepsilon}(s,\cdot))\right)*\rho_{\varepsilon}](y)W(ds,dy)\,.
\end{equation*}
Hence we have
\begin{align}\label{eq:fourier-domination}
 &\be |u_{\varepsilon}(t,x)|^p
  \leq C |p_tu_0(x)|^p + C \be \left ( \int_0^t \int_{\RR} \left |\mathcal{F} \left ( p_{t-s}(x-\cdot)\sigma(u_{\varepsilon}(s,\cdot))\right)(\xi) \right|^2 e^{-\varepsilon |\xi|^2}|\xi|^{1-2H} d\xi ds \right)^{\frac{p}{2}} \notag\\
&\hspace{.3in}\leq C |p_t u_0(x)|^p
+ C \be \left ( \int_0^t \int_{\RR} \left |\mathcal{F} \left ( p_{t-s}(x-\cdot)\sigma(u_{\varepsilon}(s,\cdot))\right)(\xi) \right|^2
|\xi|^{1-2H} d\xi ds \right)^{\frac{p}{2}}.
\end{align}
Going back from Fourier to direct coordinates, one can check that
\begin{equation*}
\be |u_{\varepsilon}(t,x)|^p
\le C\, |p_tu_0(x)|^p
+ \cd_{1}(t) + \cd_{2}(t) \, ,
\end{equation*}
with
\[
\cd_{1}(t)=
\Big (\int_0^t \int_{\RR^2} \left |p_{t-s}(y)-p_{t-s}(y+h)\right|^2
 \left\|u_{\ep}(s,y+x+h) \right\|_{L^p(\Omega)}^2 |h|^{2H-2} dh dy ds\Big)^{\frac{p}{2}}
 \]
 and
 \[
\cd_{2}(t)=
\Big (\int_0^t \int_{\RR^2}  \left |p_{t-s}(y)\right|^2
\left\|u_{\ep}(s,y+x+h)-  u_{\ep}(s,y+x)\right\|_{L^p(\Omega)}^2 |h|^{2H-2} dh dy ds\Big)^{\frac{p}{2}}.
\]
These terms are treated exactly as the terms $D_{1},D_{2}$ in the proof of Lemma \ref{lem2}, except for the fact that $\al=0$ in the current situation. We obtain
\begin{eqnarray}
\nonumber
&&\| u_{\ep}(t,\cdot) \|_{L^p(\Omega \times \RR)} ^{2} \le
C \, \|u_0\|^{2}_{L^p(\RR)}
+ C \,  \int_0^t (t-s)^{H-1} \|u_{\varepsilon}(s,\cdot)\|^2_{L^p(\Omega\times \RR)}  ds  \\  \label{y1}
& & \qquad+ C \,  \int_0^t (t-s)^{-\frac{1}{2}} \int_{\RR}\|u_{\varepsilon}(s,\cdot)-u_{\varepsilon}(s,\cdot+h)\|^2_{L^p(\Omega\times \RR)} |h|^{2H-2} dh ds  \, .
\end{eqnarray}
Similarly we get (see also the bounds for the terms $\ci_{1},\ci_{2}$ in the proof of Theorem \ref{thm:uniqueness})
\begin{eqnarray}  \nonumber
&&[ \cn^*_{\frac 12-H, p} u_{\varepsilon} (t)]^{2}  \le
C \int_{\RR} \|u_0(\cdot)-u_0(\cdot+h)\|^2_{L^p(\RR)} |h|^{2H-2} dh  \\  \nonumber
&&\qquad+ C \int_0^t (t-s)^{2H-\frac{3}{2}}\|u_{\varepsilon}(s,\cdot)\|^2_{L^p(\Omega\times \RR)} ds \\  \label{y2}
&&\qquad +C \int_0^t \int_{\RR} (t-s)^{H-1} \|u_{\varepsilon}(s,\cdot)-u_{\varepsilon}(s,\cdot + l)\|^2_{L^p(\Omega \times \RR)} |l|^{2H-2} dl ds \,.
\end{eqnarray}
Set
\[
\Psi(t)=\| u_{\ep}(t,\cdot) \|_{L^p(\Omega \times \RR)} ^{2}
+[ \cn^*_{\frac 12-H, p} u_{\varepsilon} (t)]^{2}.
\]
Thus combining the  estimates  \eref{y1} and \eref{y2} yields
\[
\Psi(t)
\le C \, \|u_0\|^{2}_{L^p(\RR)} +
C \int_{\RR} \|u_0(\cdot)-u_0(\cdot+h)\|^2_{L^p(\RR)} |h|^{2H-2} dh
+ C \int_0^t (t-s)^{2H-\frac{3}{2}} \Psi(s) ds \,.
\]
Since we have shown that for each fixed $\varepsilon$, $\|u_{\varepsilon}\|_{\mathcal{Z}_T^{p}} < \infty$, we can apply the Gronwall type Lemma \ref{lemG} to the above inequality to show that
\begin{equation*}
\sup_{\varepsilon > 0}\|u_{\varepsilon}\|_{\mathcal{Z}_T^{p}}< \infty\,.
\end{equation*}

\noindent
\textit{Step 4: $u$ is an element of $\mathcal{Z}_T^{p}$.}
Recall once again that we have decomposed $\|u\|_{\mathcal{Z}_T^{p}}$ according to relation \eqref{eq:dcp-norm-ZTp}. We now bound $\| u (t,\cdot) \|_{L^p(\Omega \times \RR)} ^{2}$ and $\cn^*_{\frac 12-H, p} u (t)$ in this decomposition.

Since $u_{\varepsilon}$ converges to $ u$ in $X_T^{\frac{1}{2}-H}$ a.s., we have $u_{\varepsilon}(t,x) \to u(t,x)$ a.s. for each $(t,x)\in\R_{+}\times\R$.  Thus by Fatou's lemma,
\begin{equation*}
\| u (t,\cdot) \|_{L^p(\Omega \times \RR)}
= \left ( \be \int_{\RR} \lim_{\varepsilon \to 0}|u_{\varepsilon}(t,x)|^p dx\right)^{\frac{1}{p}}\\
\leq \varliminf_{\varepsilon \to 0} \left ( \be \int_{\RR} |u_{\varepsilon}(t,x)|^p dx\right)^{\frac{1}{p}} \leq C\,.
\end{equation*}
Therefore we conclude that $\sup_{t \in [0,T]} \|u(t,\cdot)\|_{L^p(\Omega \times \RR)}$ is finite. On the other hand, for each $x$ and $h$ we have $|u_{\varepsilon}(t,x+h)-u_{\varepsilon}(t,x)|^2 \to |u(t,x+h)-u(t,x)|^2$ a.s., so by Fatou's lemma again we obtain
\begin{eqnarray*}
\int_{|h|\leq 1} \frac{\|u(t,\cdot+h)-u(t,\cdot)\|^2_{L^p(\Omega\times \RR)}}{|h|^{2-2H}}dh
&\leq& \int_{|h|\leq 1} \frac{ \varliminf_{\varepsilon \to 0}\|u_\ep(t,\cdot+h)-u_\ep(t,\cdot)\|^2_{L^p(\Omega\times \RR)}}{|h|^{2-2H}}dh\\
&\leq& \varliminf_{\varepsilon \to 0}\int_{|h|\leq 1} \frac{\|u_\ep(t,\cdot+h)-u_\ep(t,\cdot)\|^2_{L^p(\Omega\times \RR)}}{|h|^{2-2H}}dh\,.
\end{eqnarray*}
The desired bound on $\cn^*_{\frac 12-H, p} u (t)$ is obtained from the inequality above, by handling the integral on the domains $|h| \le 1$ and  $|h|>1$. In the latter case, we simply bound $\|u(t,\cdot+h)-u(t,\cdot)\|^2_{L^p(\Omega\times \RR)}$ by $2 \|u(t,\cdot)\|_{L^p(\Omega\times \RR)}$. By doing so, we conclude that
\begin{equation*}
\sup_{t \in [0,T]} \cn^*_{\frac 12-H, p} u (t)
=
\sup_{t \in [0,T]}\int_{\RR}\frac{\|u(t,\cdot+h)-u(t,\cdot)\|^2_{L^p(\Omega \times \RR)}}{|h|^{2-2H}}dh < \infty\,.
\end{equation*}
Together with the previous estimate on $\| u (t,\cdot) \|_{L^p(\Omega \times \RR)}$, we conclude that $u \in \mathcal{Z}_T^{p}$.
\end{proof}	
	
We now state a convergence result for stochastic integrals, with respect to the approximating noise $W_{\ep}$.	

\begin{lemma}\label{int conv lemma}
	Let $u_n(t,x)$ be a solution to the equation
	\begin{equation*}
	u_n(t,x)=p_tu_0(x)+\int_0^t \int_{\RR} p_{t-s}(x-y)\sigma(u_n(s,y))W_n(ds,dy)\,,
	\end{equation*}
	where  we have set $W_n = W_{\ep_{n}}$ (recall that $W_{\ep}$ is defined by \eref{eq:cov-W-epsilon}) for a sequence $\{\ep_{n}, \, n\ge 1\}$ satisfying $\lim_{n\to\infty}\ep_{n}=0$. We assume the following conditions:
	\begin{enumerate} [label=(\roman*)]
	 	\item with probability one, $u_n$ converges to $u$ in $X^{\frac12-H}_T$,
	 	\item $\sup_n\|u_n\|_{\XX^{\beta,p}_T}<\infty$, {with $ \frac 12 -H <\beta<H$ and $p>\frac2H$.}
	 \end{enumerate}
Then the process $u$ belongs to $\XX^{\frac12-H,2}_T$. Furthermore, for any fixed $t\le T$ and $x\in\RR$, the random variable  $\Phi^{n}(t,x)=\int_0^t \int_{\RR} p_{t-s}(x-y)\sigma(u_{n}(s,y))W_{n}(ds,dy)$ converges a.s. to
$\Phi(t,x)=\int_0^t \int_{\RR} p_{t-s}(x-y)\sigma(u(s,y))W(ds,dy)$, as $n \to \infty$.
	\end{lemma}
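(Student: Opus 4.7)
The plan is to first establish $u\in\XX^{\frac12-H,2}_T$ and then prove $L^2(\Omega)$-convergence $\Phi^n(t,x)\to\Phi(t,x)$ for each fixed $(t,x)$. The a.s.\ statement of the lemma then follows by a standard subsequence extraction: the hypotheses (i)--(ii) pass to any subsequence of $\{u_n\}$, hence so does the $L^2(\Omega)$ conclusion, yielding convergence in probability of the full sequence and a.s.\ convergence along subsequences. For $u\in\XX^{\frac12-H,2}_T$, Proposition \ref{prop:XXcompare} and (ii) give $M:=\sup_n\|u_n\|_{\XX^{\frac12-H,2}_T}<\infty$ (since $\beta>\frac12-H$ and $p>2$), and Fatou's lemma applied to $\be|u(t,x)|^2$ and $[\cn_{\frac12-H,2}u(t,x)]^2$, combined with the pointwise a.s.\ convergence from (i), yields $\|u\|_{\XX^{\frac12-H,2}_T}\le M$.

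For the convergence, decompose $\Phi^n(t,x)-\Phi(t,x)=I_1^n+I_2^n$ with
\begin{align*}
I_1^n &= \int_0^t\!\!\int_\RR p_{t-s}(x-y)\bigl[\sigma(u_n)-\sigma(u)\bigr](s,y)\,W_n(ds,dy),\\
I_2^n &= \int_0^t\!\!\int_\RR p_{t-s}(x-y)\sigma(u(s,y))\,(W_n-W)(ds,dy).
\end{align*}
Since $\int g\,dW_n=\int(\rho_{\ep_n}*g)\,dW$ by definition of $W_n$, the Fourier identity \eqref{eq:ident-cov-W-ep} combined with the It\^o isometry for $W$ gives
\begin{align*}
\be|I_2^n|^2 = c_{1,H}\,\be\int_0^t\!\!\int_\RR \bigl|\mathcal{F}[p_{t-s}(x-\cdot)\sigma(u(s,\cdot))](\xi)\bigr|^2\bigl(1-e^{-\ep_n|\xi|^2/2}\bigr)^2|\xi|^{1-2H}d\xi\,ds.
\end{align*}
From $u\in\XX^{\frac12-H,2}_T$ and $\sigma(0)=0$, a computation in the spirit of Proposition \ref{prop.young} using Lemma \ref{lem1} yields $\be\|p_{t-\cdot}(x-\cdot)\sigma(u)\|_{\HH}^2<\infty$, so $\be|I_2^n|^2\to 0$ by dominated convergence.

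The main obstacle is $I_1^n$. The bound \eqref{ineq.WeW} extended to It\^o integrals against $W_n$ yields $\be|I_1^n|^2\le \be\|p_{t-\cdot}(x-\cdot)(\sigma(u_n)-\sigma(u))\|_\HH^2$. Writing $A(s,y):=\sigma(u_n(s,y))-\sigma(u(s,y))$ and using $|pq-p'q'|^2\le 2p^2|q-q'|^2+2|p-p'|^2|q'|^2$ with $p,p'$ the heat kernel values at $y+h,y$ and $q,q'$ the corresponding values of $A$, split $\be\|\cdot\|_\HH^2\le T_1^n+T_2^n$. The term $T_2^n$, involving only $|A(s,y)|^2$ and the squared spatial difference of heat kernels, is handled by the Lipschitz property of $\sigma$ (which gives $|A|\le C|u_n-u|$), the pointwise $L^2$-convergence $u_n(s,y)\to u(s,y)$ (obtained via Vitali's theorem from the a.s.\ convergence in (i) and the uniform $L^p$-bound with $p>2$), and Lemma \ref{lem1}. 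The rectangular term $T_1^n$, involving the double increment
\begin{align*}
A(s,y+h)-A(s,y)=\sigma(u_n(s,y+h))-\sigma(u_n(s,y))-\sigma(u(s,y+h))+\sigma(u(s,y)),
\end{align*}
is precisely the difficulty addressed in the uniqueness argument; we handle it using
\begin{align*}
|\sigma(a)-\sigma(b)-\sigma(c)+\sigma(d)|\le C|a-b-c+d|+C|a-b|\bigl(|a-c|+|b-d|\bigr),
\end{align*}
applied with $a=u_n(s,y+h),b=u_n(s,y),c=u(s,y+h),d=u(s,y)$. The first piece is controlled on $|h|\le 1$ by $\cn^{(1)}_{\frac12-H}(u_n-u)(s,y)$, which tends to zero uniformly on compacts by (i); the $|h|>1$ tail and the cross piece are treated by H\"older's inequality, factoring the $L^p(\Omega)$-norm of $u_n-u$ (which tends to $0$ pointwise by Vitali) against the uniformly bounded spatial increments coming from $\sup_n\|u_n\|_{\XX^{\beta,p}_T}$, where the restriction $p>\frac{2}{H}$ ensures integrability of all the resulting factors. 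A final dominated convergence step, with integrable dominant supplied by the uniform $\XX^{\beta,p}_T$-bound together with Lemma \ref{lem1}, concludes that $\be|I_1^n|^2\to 0$.
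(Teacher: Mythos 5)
Your proof follows the paper's decomposition exactly: $\Phi^n-\Phi$ is split into the noise-replacement term (the limit integrand $\sigma(u)$ integrated against $W_n-W$, treated via the Fourier identity \eqref{eq:ident-cov-W-ep} and dominated convergence) and the integrand-difference term, which is reduced to an integral against $W$ itself by the domination \eqref{ineq.WeW}; both of these steps coincide with the paper's. Where you genuinely diverge is in the key step, showing $\be|I_1^n|^2\to 0$. The paper sets $f_n=\sigma(u_n)-\sigma(u)$, applies Proposition \ref{prop.leftcomp} to get $f_n\to 0$ in the pathwise space $X^{\frac12-H}_T$, and then invokes Lemma \ref{lem.fnW}, whose proof upgrades this to convergence in $\XX^{\frac12-H,2}_T$ through the compactness criterion of Proposition \ref{prop.compbarXX} and concludes with a compact-set/tail splitting of the spatial integrals. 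You bypass this machinery entirely and estimate the $\HH$-norm directly, taming the rectangular increment of $\sigma$ with the inequality $|\sigma(a)-\sigma(b)-\sigma(c)+\sigma(d)|\le C|a-b-c+d|+C|a-b|(|a-c|+|b-d|)$ and closing with Vitali plus dominated convergence, the dominating function in $h$ coming from Proposition \ref{prop.embed} and hypothesis (ii) since $2\beta+2H-2>-1$. This is a legitimate alternative: it is more elementary and self-contained, and your Fatou argument for $u\in\XX^{\frac12-H,2}_T$ (via Proposition \ref{prop:XXcompare}) is cleaner than anything the paper makes explicit. The price is that the rectangular-increment inequality requires $\sigma'$ Lipschitz, which is available in the context of Theorem \ref{thm:exist with sigma} but is not needed by the paper's route (Lipschitz $\sigma$ suffices there), and your exponent bookkeeping closes only because $p>\frac{2}{H}>4$.

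Two inaccuracies, both repairable. First, Vitali's theorem combined with a.s.\ convergence and only a \emph{uniform} $L^p(\Omega)$ bound yields $\|u_n-u\|_{L^q(\Omega)}\to 0$ for every $q<p$, not for $q=p$ as you assert; this is harmless here because in the H\"older factorization of your cross term the exponent paired against the $L^p$-bounded increment is $q=\frac{2p}{p-2}<p$. Second, your final upgrade from $L^2(\Omega)$ convergence to the a.s.\ convergence asserted in the statement does not go through: the subsequence argument delivers convergence in probability of $\Phi^n(t,x)$, and a.s.\ convergence only along subsequences. You should be aware, however, that the paper's own proof has exactly the same feature --- Lemma \ref{lem.fnW} produces $L^2(\Omega)$ convergence and nothing stronger --- and convergence in probability is all that is actually used in the proof of Theorem \ref{thm:exist with sigma}, so this is a defect of the statement rather than a defect specific to your argument.
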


	\begin{proof}
We focus on the convergence part and decompose the difference $\Phi(t,x) - \Phi^{n}(t,x)$ into $(\Phi(t,x) - \Phi^{n,1}(t,x)) + (\Phi^{n,1}(t,x) - \Phi^{n}(t,x))$, where
\begin{equation*}
\Phi^{n,1}(t,x) = \int_0^t \int_{\RR} p_{t-s}(x-y)\sigma(u(s,y))W^{n}(ds,dy) \, .
\end{equation*}
Now we note that $\Phi(t,x) - \Phi^{n,1}(t,x)$ can be expressed as
\begin{equation*}
\int_0^t \int_{\RR} p_{t-s}(x-y)\sigma(u(s,y))W(ds,dy)
-
\int_0^t \int_{\RR}\left[ \left(p_{t-s}(x-\cdot)\sigma(u(s,\cdot))\right)*\rho_{\varepsilon_n}\right](y)W(ds,dy) ,
\end{equation*}
and thus
\begin{multline*}
\be \lln \Phi(t,x) - \Phi^{n,1}(t,x)  \rrn^{2}  \\
=
C \be \int_0^t \int_{\RR} \left |e^{-\frac{\varepsilon_n |\xi|^2}{2}} -1\right|^2 \left | \mathcal{F}\left (p_{t-s}(x-\cdot) \sigma\left (u(s,\cdot) \right) \right) (\xi)\right|^2 |\xi|^{1-2H} d\xi ds\,.
\end{multline*}
The latter quantity obviously converges to $0$ as $\varepsilon_n$ goes to $0$ because of the finiteness of
	\begin{eqnarray*}
	\be \int_0^t \int_{\RR} \left | \mathcal{F}\left (p_{t-s}(x-\cdot) \sigma\left (u(s,\cdot) \right) \right) (\xi)\right|^2 |\xi|^{1-2H} d\xi ds ,
	\end{eqnarray*}
which can be seen by an application of Fatou's lemma (as in Step 4 of the proof of Lem\-ma~\ref{lemma: u in Z}).
	
	It remains to show that $\lim_{n\to\infty}\be |\Phi^{n,1}(t,x) - \Phi^{n}(t,x)|^{2}=0$. However, similarly to \eqref{eq:fourier-domination}, we have
\begin{equation*}
\be\left[|\Phi^{n,1}(t,x) - \Phi^{n}(t,x)|^{2}\right]
\le
\be  \left|\int_0^t\int_\RR p_{t-s}(x-y)f_n(s,y)W(ds,dy) \right|^2 ,
\end{equation*}
	where  we have set $f_n=\sigma( u_n)- \sigma ( u)$. Furthermore, appealing to Proposition \ref{prop.leftcomp}, we see that $f_n$ converges to 0 in $X^{\frac12-H}_T$. We will verify that $f_n$ satisfies the conditions \ref{cond.c1}-\ref{cond.c3} of Lemma \ref{lem.fnW} below. Indeed, \ref{cond.c1} is verified by assumption (i). \ref{cond.c2} is verified by assumption (ii) and the estimate \eqref{eq:time Holder}. \ref{cond.c3} is readily assumption (ii). Then an application of Lemma~\ref{lem.fnW} completes the proof.
	\end{proof}

	\begin{lemma}\label{lem.fnW} 
		Suppose  that $\{f_n,  \, n\ge 1\}$ is a sequence of stochastic processes  belonging to $\XX^{\beta,p}_T$ with $ \frac 12 -H <\beta<H$ and $p>\frac2H$. Assume that  the following conditions hold:
		\begin{enumerate}  [label= {(C\arabic*)}]
		 	\item\label{cond.c1} With probability one, $f_n$ converges uniformly to $0$ over compact sets of $[0,T]\times \RR$.
		 	\item\label{cond.c2} For every $R>0$, $\sup_n\sup_{s,t\in [0,T], |x| \le R} \be |f_n(t,x)-f_n(s,x)|^p\leq C |t-s|^{p\frac H2}$.
		 	\item\label{cond.c3} $\displaystyle\sup_n\|f_n\|_{\XX^{\beta,p}_T}\le M$, where $M$ is a finite number.
		\end{enumerate}  Then for every $t\le T$ and $x\in\RR$ the random variable $Y_{n}(t,x)$ defined by:
		\begin{equation*}
		Y_{n}(t,x) = \int_0^t\int_\RR p_{t-s}(x-y)f_n(s,y)W(ds,dy)
		\end{equation*}
		converges to 0 in $L^2(\Omega)$.
	\end{lemma}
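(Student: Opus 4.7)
\textbf{Proof plan for Lemma \ref{lem.fnW}.} The plan is to apply Proposition \ref{prop.burkholder} with $p=2$ and then to split the increment $p_{t-s}(x-y-h)f_n(s,y+h)-p_{t-s}(x-y)f_n(s,y)$ exactly as in the proof of Proposition \ref{prop.young}, to obtain
\begin{equation*}
\be|Y_n(t,x)|^2 \le C\int_0^t\bigl[J_1^n(s) + J_2^n(s)\bigr]\,ds,
\end{equation*}
where, after a simple change of variable,
\begin{equation*}
J_1^n(s) = \int_\RR \be|f_n(s,z)|^2\, K_{t-s}(x-z)\,dz,\quad K_r(u):=\int_\RR|p_r(u+h)-p_r(u)|^2|h|^{2H-2}\,dh,
\end{equation*}
and $J_2^n(s) = \int_\RR p_{t-s}^2(x-y)\,[\cn_{\frac12-H,2}f_n(s,y)]^2\,dy$. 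Uniform finiteness of these integrals follows from the computation in Lemma \ref{lem1} for $J_1^n$ and from the continuous embedding $\XX^{\beta,p}_T\subset\XX^{\frac12-H,2}_T$ provided by Proposition \ref{prop:XXcompare} for $J_2^n$, which in particular ensures that $Y_n(t,x)$ is well defined in $L^2(\oom)$. The goal is then to show $\int_0^t J_j^n(s)\,ds\to 0$ for $j=1,2$ by two successive applications of the dominated convergence theorem.

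The first step is to prove the pointwise convergence $\be|f_n(s,z)|^2\to 0$ for every $(s,z)$. Condition (C1) gives $f_n(s,z)\to 0$ almost surely, while (C3) combined with the hypothesis $p>2/H>2$ gives $\sup_n\be|f_n(s,z)|^p\le M^p$, from which uniform integrability of $\{|f_n(s,z)|^2\}$ follows at once. For $J_1^n$, the dominator is then $M^2 K_{t-s}(x-z)$, and $\int_0^t\int_\RR K_{t-s}(x-z)\,dz\,ds = C\int_0^t (t-s)^{H-1}\,ds<\infty$ thanks to the Plancherel computation in Lemma \ref{lem1} and to $H>0$. Dominated convergence yields $\int_0^t J_1^n(s)\,ds\to 0$.

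The more delicate step is to establish that $[\cn_{\frac12-H,2}f_n(s,y)]^2\to 0$ pointwise in $(s,y)$, which I would do by another application of dominated convergence in the inner $h$-integral. The pointwise limit is zero by the previous step. For the dominator, I would distinguish two regimes. On $|h|>1/3$, the uniform bound $\|f_n(s,z)\|_{L^2(\oom)}\le M$ and the integrability of $|h|^{2H-2}$ at infinity (since $H<\frac12$) are enough. On $|h|\le 1/3$, I would invoke Proposition \ref{prop.embed} applied to the $L^p(\oom)$-valued function $y\mapsto f_n(s,y)$: condition (C3) provides $\sup_{y\in\RR}\cn_\beta^{L^p(\oom),(1)}f_n(s,y)\le M$ uniformly in $n,s$, and thus
\begin{equation*}
\|f_n(s,y+h)-f_n(s,y)\|_{L^p(\oom)}\le c(\beta)M|h|^\beta \quad\text{for }|h|\le 1/3,
\end{equation*}
so that Jensen's inequality gives $\|f_n(s,y+h)-f_n(s,y)\|_{L^2(\oom)}^2|h|^{2H-2}\le CM^2|h|^{2\beta+2H-2}$. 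The assumption $\beta>\frac12-H$ ensures $2\beta+2H-2>-1$, guaranteeing integrability at the origin.

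Once this pointwise convergence is in hand, the uniform bound $[\cn_{\frac12-H,2}f_n(s,y)]^2\le CM^2$ (built by the same dichotomy as above) together with $\int_0^t\int_\RR p_{t-s}^2(x-y)\,dy\,ds<\infty$ lets me apply dominated convergence to the outer integral and conclude $\int_0^t J_2^n(s)\,ds\to 0$, hence $\be|Y_n(t,x)|^2\to 0$. The main obstacle in this plan is the pointwise convergence of the fractional seminorm $\cn_{\frac12-H,2}f_n(s,y)$: it requires upgrading the integral $\XX^{\beta,p}_T$-bound of (C3) into a pointwise H\"older control via Proposition \ref{prop.embed}, and the constraint $\frac12-H<\beta<H$ enters precisely through the integrability of the resulting dominator near the origin.
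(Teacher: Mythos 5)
Your proof is correct, and it reaches the conclusion by a genuinely different route than the paper's. Both arguments begin identically: the Burkholder-type bound of Proposition \ref{prop.burkholder} with the same splitting of the increment of $p_{t-s}(x-\cdot)f_n(s,\cdot)$ into the two terms $J_1^n$ and $J_2^n$, the uniform-integrability argument (from \ref{cond.c1} and the $L^p$ bound in \ref{cond.c3} with $p>2$) giving $\be|f_n(s,z)|^2\to 0$ pointwise, and the use of Proposition \ref{prop.embed} together with $\beta>\frac12-H$ to control small spatial increments. The difference lies in how these ingredients are assembled. The paper first upgrades them into the stronger statement that $f_n\to 0$ in $\XX^{\frac12-H,2}_T$, by invoking the abstract compactness criterion of Proposition \ref{prop.compbarXX} (relative compactness plus the fact that \ref{cond.c1} forces $0$ to be the only limit point), and then concludes by splitting the spatial integrals into a compact region $|y-x|\le R$, where this convergence makes the factors involving $f_n$ small, and a tail region, where the heat-kernel integrals are small and $f_n$ enters only through the uniform bound $M$. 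You instead apply the dominated convergence theorem directly: once for $J_1^n$ against the kernel $K_{t-s}$, whose integrability is given by Lemma \ref{lem1}; and twice for $J_2^n$ --- in the inner $h$-integral, with an $n$-independent dominator behaving like $M^2|h|^{2\beta+2H-2}$ near the origin (via Proposition \ref{prop.embed} and Jensen) and like $M^2|h|^{2H-2}$ at infinity, and then in the outer integral against $p_{t-s}^2$. Your route is more elementary, avoiding the compactness machinery altogether, and it has the notable feature of never using hypothesis \ref{cond.c2}: time-equicontinuity is needed in the paper only to verify condition (2) of Proposition \ref{prop.compbarXX}, whereas your fixed-$(t,x)$ dominated-convergence argument has no need for it, so your proof in fact shows the conclusion holds under \ref{cond.c1} and \ref{cond.c3} alone. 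What the paper's approach buys in exchange is the stronger intermediate conclusion that both the $L^2(\Omega)$ norm and the fractional seminorm of $f_n$ tend to $0$ uniformly in $(t,x)$, a statement of independent interest; your argument yields exactly what the lemma asserts and nothing more.
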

	\begin{proof} We first observe that Proposition \ref{prop:XXcompare} asserts that $f_n$ belongs to $\XX^{\frac12-H,2}_T$. Next, we show that $\{f_n, \, n\ge 1\}$ is relatively compact and converges to 0 in $\XX^{\frac12-H,2}_T$. For this purpose, we verify the three conditions \eref{cond.xx1}-\eref{cond.xx3} of Proposition~\ref{prop.compbarXX}. Condition \eref{cond.xx2} in Proposition \ref{prop.compbarXX} is evident from \ref{cond.c2}. Condition \eref{cond.xx3} in Proposition \ref{prop.compbarXX}  follows from the  following inequality, where $\delta \le 1$
		\[
		\int_{|y| \le \delta} \frac {\|f(t,x+y) -f(t,x)\|_{ L^2(\Omega)}^2}{|y|^{2-2H}} dy
		\le \sup_{|y| \le 1} \frac {\|f(t,x+y) -f(t,x)\|_{ L^2(\Omega)}^2}{|y|^{2\beta} }
		\int_{|y| \le \delta}  |y|^{ 2\beta +2H-2} dy.
		\]
		In fact,  the first factor on the right side of the above inequality is uniformly bounded in $(t,x) \in [0,T] \times \RR$ because of
		inequality \eqref{ineq.embed} and the fact that $f_n$ is bounded in $\XX^{\beta,2}_T$ by condition \ref{cond.c3}. Taking into account that
		 $\beta>1/2-H$, the second factor converges to zero as $\delta$ tends to zero. To verify condition \eref{cond.xx1} in Proposition \ref{prop.compbarXX}, we fix $t,x$ and note that \ref{cond.c1} implies $f_n(t,x)$ converges almost surely to $0$. On the other hand, $\be |f_n(t,x)|^p$ is uniformly bounded, where $p>2$. These two facts imply $\{f_n(t,x)\}$ converges to $0$ in $L^2(\Omega)$, thus condition \eref{cond.xx1} in Proposition \ref{prop.compbarXX} is verified.
		Furthermore, condition \ref{cond.c1} ensures that $0$ is the only possible limit point of $\{f_n\}$ in $\XX^{1/2-H,2}_T$. We conclude that $f_n$ converges to $0$ in $\XX^{1/2-H,2}_T$.

Let us now prove that $Y_{n}(t,x)$ converges to 0 in $L^2(\Omega)$. Applying \eqref{eq:ineq-burk-2} we   get $\be |Y_{n}(t,x)|^{2} \le C\, (J_{1}(t) + J_{2}(t))$ with
\[
J_1(t)=\int_0^t \int_\RR\int_\RR |p_{t-s}(x-y-z)-p_{t-s}(x-y)|^2 \, \be f_n^2(s,y+z) |z|^{2H-2}dydz ds
\]
and
\[
J_2(t) = \int_0^t\int_\RR\int_\RR |p_{t-s}(x-y)|^2 \, \be |f_n(s,y+z)-f_n(s,y)|^2  |z|^{2H-2}dydzds\,.
\]
Now for every fixed $\varepsilon>0$ we choose  $R>0$ sufficiently large such that 
		\begin{equation*}
			\int_0^t\int_{|y|>R}[|p_{t-s}(y)|^2+ [\cn_{\frac 12-H} p_{t-s}(x-y)]^2 ]dyds<\varepsilon \,.
		\end{equation*}
With this choice of $R$ we choose $n$ so that
		\begin{equation*}
			\sup_{s\in [0,T],\, |y|\le R}\be  f_n^2(s,y)
			+\sup_{s\in [0,T] , \, |y|\le R}\int_\RR
			\, \be |f_n(s,y+z)-f_n(s,y)|^2 |y|^{2H-2}dy  <\varepsilon \,.
		\end{equation*}
		By making a shift in $y$, we end up with
		\begin{align*}
			J_1(t)&= \int_0^t\int_\RR\int_\RR|p_{t-s}(x-y)-p_{t-s}(x-y+z)|^2 \be f_n^2(s,y)|z|^{2H-2}dydzds
			\\&\le  \int_0^t  \sup_{|y|\le R}  \be f_n^2(s,y)  \int_{|y-x|\le R} [\cn_{\frac 12-H} p_{t-s}(x-y)]^2 dy ds \\
&\hspace{2in}	+ \sup_{r\in [0,T] ,\, w\in\RR}    \be f_n^2(r,w)      \int_0^t \int_{|y-x|>R}[\cn_{\frac 12-H} p_{t-s}(x-y)]^2dy ds
			\\&\leq C \varepsilon+CM \int_0^t\int_{|y|>R} [\cn_{\frac 12-H} p_{t-s}(x-y)]^2dy ds\,.
		\end{align*}
		Similarly,
		\begin{align*}
			J_2(t)\leq C\varepsilon+C M \int_0^t\int_{|y|>R}p_{t-s}^2(y)dy ds\,.
		\end{align*}
		Then $\be  |Y_n(t,x)|^2 \leq C \varepsilon$ for $n$ sufficiently large. This implies the result. 
	\end{proof}

Finally, the techniques we have designed  to get existence and uniqueness for equation \eqref{spde with sigma} also allow us to obtain the following moment bound for the solution.
\begin{theorem}
{There are some changes in the formulae for this theorem.}
Assuming the conditions in Theorem \ref{thm:exist with sigma}, then a solution of \eqref{spde with sigma} satisfies following moment bounds
\begin{equation}\label{eq:upp-bnd-Lp-u-sigma-general}
 \sup_{x\in \RR} \|u(t,x)\|_{L^p(\Omega)}  \leq 2 \|u_0\|_{\varepsilon_0}\exp\{\theta_{0} p^{\frac{1}{H}} t\} ,
 \end{equation}
 and
 \begin{equation*}
  \sup_{x\in \RR} \cn_{1/2-H,p} u(t,x)
  \leq2 \|u_0\|_{\varepsilon_0}\varepsilon_0^{-1}  
   \exp\{\theta_{0} p^{\frac{1}{H}} t\}\,,
\end{equation*}
where we recall that $\cn_{1/2-H,p}$ is defined by \eqref{e4}, and where for any $\varepsilon>0$ we have:
\begin{equation*}
\|u_0\|_{\varepsilon}:=\sup_{x\in \RR} |u_0(x)|+\varepsilon \sup_{x\in\RR}\left(\int_{\RR} |u_0(x+h)-u_0(x)|^2 |h|^{2H-2}dh \right)^{\frac{1}{2}}\,,
\end{equation*}
In the formulae above, $C_0$ is as defined in \eref{est.normXX}, and we have $\theta_0=(6C_0)^{\frac{2}{H}} \kappa^{1-\frac{1}{H}}\|\sigma\|_{\rm Lip}^{\frac2H}$, and $\varepsilon_0 = (6C_0)^{1-\frac{1}{2H}} \kappa^{\frac{1}{4H}-\frac{1}{2}}p^{\frac{1}{2}-\frac{1}{4H}}\|\sigma\|_{\rm Lip}^{1-\frac1{2H}}$. 
In addition, from Proposition \ref{prop.embed}, we see that the initial condition $u_0$ is H\"older continuous with order $\beta_0$, then by Proposition \ref{prop:holder.est} we have
\begin{equation}
\|u(t,x)-u(s,y)\|_{L^p(\Omega)} \leq C (|t-s|^{\frac{H}{2}\wedge \frac {\beta_0}{2}}+ |x-y|^{H \wedge \beta_0})
\end{equation}
for all $s, t \in [0,T]$ and $x,y \in \RR$. 
\end{theorem}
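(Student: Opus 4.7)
The plan is to apply the contraction-type estimate \eqref{est.normXX} to the mild formulation
\[
u(t,x)=p_t u_0(x)+\int_0^t\int_\RR p_{t-s}(x-y)\sigma(u(s,y))\,W(ds,dy)=:p_tu_0(x)+\Phi(t,x),
\]
and tune the free parameters $\theta,\varepsilon$ so the coefficient in front of $\|u\|_{\mathfrak{X}^{p}_{T,\theta,\varepsilon}}$ on the right-hand side becomes exactly $\tfrac12$. Since $\sigma(0)=0$ and $\sigma$ is Lipschitz, the pointwise inequality $|\sigma(u)-\sigma(v)|\le\|\sigma\|_{\rm Lip}|u-v|$ applied with $v\equiv 0$ yields $\|\sigma(u)\|_{\mathfrak{X}^{p}_{T,\theta,\varepsilon}}\le\|\sigma\|_{\rm Lip}\|u\|_{\mathfrak{X}^{p}_{T,\theta,\varepsilon}}$, so \eqref{est.normXX} gives
\[
\|\Phi\|_{\mathfrak{X}^{p}_{T,\theta,\varepsilon}}\le C_0\sqrt{p}\,\|\sigma\|_{\rm Lip}\,\|u\|_{\mathfrak{X}^{p}_{T,\theta,\varepsilon}}\Bigl(\kappa^{\frac{H}{2}-\frac12}\theta^{-\frac{H}{2}}+\varepsilon^{-1}\kappa^{-\frac14}\theta^{-\frac14}+\varepsilon\,\kappa^{H-\frac34}\theta^{\frac14-H}\Bigr).
\]

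I would force each of the three summands above to equal $\tfrac{1}{6}$. Setting $\theta=\theta_0 p^{1/H}$ with $\theta_0=(6C_0)^{2/H}\kappa^{1-1/H}\|\sigma\|_{\rm Lip}^{2/H}$ makes the first summand equal to $\tfrac16$ (a direct substitution: the $\sqrt{p}$ is absorbed by $\theta^{-H/2}=\theta_0^{-H/2}p^{-1/2}$, and the powers of $\kappa$ and $\|\sigma\|_{\rm Lip}$ cancel to leave $C_0/(6C_0)$). The second and third summands are reciprocal in $\varepsilon$, so balancing them forces $\varepsilon^2=\kappa^{\frac12-H}\theta^{H-\frac12}$, which with $\theta=\theta_0 p^{1/H}$ produces precisely the value
$\varepsilon_0=(6C_0)^{1-\frac{1}{2H}}\kappa^{\frac{1}{4H}-\frac12}p^{\frac12-\frac{1}{4H}}\|\sigma\|_{\rm Lip}^{1-\frac{1}{2H}}$
stated in the theorem; a short verification shows that at this value each of the two balanced terms equals the first one, hence $\tfrac16$. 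Therefore with this choice of $(\theta,\varepsilon)$, the three terms sum to $\tfrac12$, and one obtains
\[
\|u\|_{\mathfrak{X}^{p}_{T,\theta_0 p^{1/H},\varepsilon_0}}\le \|p_\cdot u_0\|_{\mathfrak{X}^{p}_{T,\theta_0 p^{1/H},\varepsilon_0}}+\tfrac12\|u\|_{\mathfrak{X}^{p}_{T,\theta_0 p^{1/H},\varepsilon_0}}.
\]

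To close the argument I need the a priori finiteness of $\|u\|_{\mathfrak{X}^{p}_{T,\theta_0 p^{1/H},\varepsilon_0}}$, which is furnished by Theorem~\ref{thm:exist with sigma} together with the norm equivalence from Remark~\ref{rmk:norm-X-beta-p}(iv), so absorbing the $\tfrac12$-term yields $\|u\|_{\mathfrak{X}^{p}_{T,\theta_0 p^{1/H},\varepsilon_0}}\le 2\|p_\cdot u_0\|_{\mathfrak{X}^{p}_{T,\theta_0 p^{1/H},\varepsilon_0}}$. Writing $p_tu_0(x)=\int p_t(y)u_0(x-y)\,dy$ and applying Minkowski's integral inequality to $\mathcal{N}_{\frac12-H}(p_tu_0)$, together with $\int p_t(y)\,dy=1$, gives the deterministic bound $\|p_\cdot u_0\|_{\mathfrak{X}^{p}_{T,\theta_0 p^{1/H},\varepsilon_0}}\le\|u_0\|_{\varepsilon_0}$. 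Unwinding the weighted norm separates the two announced estimates: the contribution $\sup_{t,x}e^{-\theta_0 p^{1/H}t}\|u(t,x)\|_{L^p(\Omega)}\le 2\|u_0\|_{\varepsilon_0}$ produces \eqref{eq:upp-bnd-Lp-u-sigma-general}, while the $\varepsilon_0$-weighted part yields the bound on $\mathcal{N}_{\frac12-H,p}u$.

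For the final Hölder statement, I would split $u(t,x)-u(s,y)$ as $[p_tu_0(x)-p_su_0(y)]+[\Phi(t,x)-\Phi(s,y)]$. For the stochastic part I apply Proposition~\ref{prop:holder.est} with $f=\sigma(u)$, $\theta_0$ the parameter used above, and interpolating at the exponent $\gamma=H\wedge\beta_0$; the finiteness of $\|\sigma(u)\|_{\mathfrak{X}^{p}_{T,\theta_0 p^{1/H}}}$ is already known by the previous step. For the deterministic part, the $\beta_0$-Hölder regularity of $u_0$ (which follows from Proposition~\ref{prop.embed} applied to the assumption $\sup_x\mathcal{N}_{\beta_0}u_0(x)<\infty$) together with standard heat-semigroup estimates yields $|p_tu_0(x)-p_su_0(y)|\le C(|t-s|^{\beta_0/2}+|x-y|^{\beta_0})$. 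The main subtlety, and the only real obstacle, is the precise parameter bookkeeping in the three-term balance that identifies $\theta_0 p^{1/H}$ and $\varepsilon_0$; everything else is either an appeal to \eqref{est.normXX} or a direct Minkowski/Hölder estimate on deterministic convolutions.
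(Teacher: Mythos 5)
Your proposal is correct and follows essentially the same route as the paper: apply the stochastic-convolution estimate \eqref{est.normXX} to the mild formulation with $\|\sigma(u)\|_{\mathfrak{X}^{p}_{T,\theta,\varepsilon}}\le\|\sigma\|_{\rm Lip}\|u\|_{\mathfrak{X}^{p}_{T,\theta,\varepsilon}}$, tune $\varepsilon$ and $\theta$ so the contraction factor is $\tfrac12$ (yielding exactly the paper's $\theta_0$, $\varepsilon_0$), absorb, bound $\|p_\cdot u_0\|_{\mathfrak{X}^{p}_{T,\theta,\varepsilon}}\le\|u_0\|_{\varepsilon}$, and conclude the H\"older bound from Propositions \ref{prop.embed} and \ref{prop:holder.est}. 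Your explicit appeal to the a priori finiteness of the norm (via Theorem \ref{thm:exist with sigma} and Remark \ref{rmk:norm-X-beta-p}(iv)) before absorbing the $\tfrac12$-term is a point the paper leaves implicit, and is a welcome addition.
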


\begin{proof}
We will apply Proposition \ref{prop.young} by taking $f$ to be the solution  $u$ to equation \eref{spde with sigma}, and combine it with the mild formulation of the solution. For every fixed $\varepsilon>0$, by noticing that $\|p_tu_0\|_{\mathfrak{X}^p_{T,\theta, \varepsilon}}\leq \|u_0\|_{\varepsilon}$, we get the following bound
\begin{equation*}
\|u\|_{\mathfrak{X}^{p}_ {T,\theta, \varepsilon}}\leq \|u_0\|_{\varepsilon} + C_0\|\sigma\|_{\rm Lip} \sqrt{p} \|u\|_{\mathfrak{X}^{p}_ {T,\theta, \varepsilon}} \left( \kappa^{\frac H2-\frac12}\theta^{-\frac H2} + \varepsilon^{-1}\kappa^{-\frac 14}\theta^{-\frac 14} +\varepsilon \kappa^{H-\frac34}\theta^{\frac 14-H} \right)\,.
\end{equation*}
We optimize the formula above by choosing $\varepsilon = \kappa^{\frac{1}{4}-\frac{H}{2}}\theta^{-\frac{1}{4}+\frac{H}{2}}$, in order to obtain
\begin{equation*}
\|u\|_{\mathfrak{X}^{p}_ {T,\theta, \varepsilon}} \leq \|u_0\|_{\varepsilon} + 3C_0\|\sigma\|_{\rm Lip} \sqrt{p} \|u\|_{\mathfrak{X}^{p}_ {T,\theta, \varepsilon}} \kappa^{\frac{H}{2}-\frac{1}{2}}\theta^{-\frac{H}{2}}\,,
\end{equation*}
then choose $\theta=\theta_0$ so that $3C_0\|\sigma\|_{\rm Lip}\sqrt{p}\kappa^{\frac{H}{2}-\frac{1}{2}}\theta^{-\frac{H}{2}}=\frac{1}{2}$, that is
\begin{equation*}
\theta_0=(6C_0)^{\frac{2}{H}} p^{\frac{1}{H}}\kappa^{1-\frac{1}{H}}\|\sigma\|_{\rm Lip}^{\frac2H}\,, \quad \text {and take}\quad
\varepsilon =\varepsilon_0 := (6C_0)^{1-\frac{1}{2H}} \kappa^{\frac{1}{4H}-\frac{1}{2}}p^{\frac{1}{2}-\frac{1}{4H}}\|\sigma\|_{\rm Lip}^{1-\frac1{2H}}\,.
\end{equation*}
Plugging this choice into the above inequality gives the bound
\begin{equation*}
\|u\|_{\mathfrak{X}^{p}_ {T,\theta_0, \varepsilon_0}}  \leq 2\|u_0\|_{\varepsilon_0} \,.
\end{equation*}
from which our claims are easily deduced by noticing that the constant $C_0$ does not depend on $T$. 
\end{proof}

	We now show the matching lower bound in term of $\kappa$ and $t$ for the second moment.
	\begin{proposition}
		Under the conditions of Theorem \ref{thm:exist with sigma}, let $u$ be a solution to the equation
		\begin{equation}
			u(t,x)=p_tu_0(x)+\int_0^t\int_\RR p_{t-s}(x-y)\sigma(u(s,y))W(ds,dy).
		\end{equation}
		Suppose that $u_0$ is a bounded nontrivial function and there is a positive constant $\sigma_*$ such that $|\sigma(z)|\ge \sigma_* |z|$ for all $z\in\RR$. Then there exist some universal constants $C$ and $L$ such that
		\begin{equation}
			\be |u(t,x)|^2\ge C\frac{|p_tu_0(x)|^3}{\|u_0\|_{L^\infty}} \exp\{L \sigma_*^{\frac2H} \kappa^{1-\frac1H} t\} \,.
		\end{equation}
	\end{proposition}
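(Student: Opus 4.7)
My plan is to derive a linear integral lower bound for the second moment $\phi(t,x):=\be\,u(t,x)^2$, iterate it, and extract the exponential rate from the resulting Mittag--Leffler-type series. Starting from the mild formulation \eref{eq:mild-formulation sigma} and the It\^o isometry \eref{int isometry} one has
\[
\phi(t,x) = (p_tu_0(x))^2 + \be\,\|p_{t-\cdot}(x-\cdot)\,\sigma(u(\cdot,\cdot))\|_{\HH}^2 ,
\]
so the key step is to lower-bound the stochastic convolution term by a deterministic expression of the form $\int K(t,s,x,y)\,\phi(s,y)\,dy\,ds$. I propose to work in Fourier variables: by \eref{eq: H_0 element H prod},
\[
\be\,\|g\|_{\HH}^2 = c_{1,H}\int_0^t\!\!\int_\RR \be|\mathcal{F}g(s,\xi)|^2\,|\xi|^{1-2H}\,d\xi\,ds,
\]
and the weight $|\xi|^{1-2H}$ is a genuine non-negative function (unlike $|y-y'|^{2H-2}$, which is only distributional for $H<\tfrac12$, and this is precisely the reason one must pass to Fourier). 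Applying Jensen mode by mode in $\xi$, i.e.\ $\be|\mathcal{F}g|^2\ge |\be\,\mathcal{F}g|^2$, and combining with $\be\,u(s,y)=p_su_0(y)$ and the pointwise bound $|\sigma(z)|\ge\sigma_*|z|$, the goal is an integral inequality of the form
\[
\phi(t,x) \;\ge\;(p_tu_0(x))^2 + c\,\sigma_*^2\int_0^t\!\int_\RR G_H(t-s,x-y)\,\phi(s,y)\,dy\,ds,
\]
with a non-negative kernel $G_H$ of spatial mass of order $[\kappa(t-s)]^{H-1}$.

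Once this inequality is in hand, the rest is essentially mechanical. Iterating $n$ times and invoking Lemma~\ref{lem1} at each step, one produces a time kernel of order $\prod_{i}[\kappa(s_{i+1}-s_i)]^{H-1}$; the Dirichlet-type identity
\[
\int_{0<s_1<\cdots<s_n<t}\prod_{i=0}^{n}(s_{i+1}-s_i)^{H-1}\,ds_1\cdots ds_n \;=\;\frac{\Gamma(H)^{n+1}\,t^{(n+1)H-1}}{\Gamma((n+1)H)}
\]
(with $s_0=0,\ s_{n+1}=t$) yields the characteristic factor $t^{nH}/\Gamma(nH+1)$. The spatial convolutions collapse via the semigroup identity $\int p_{t-s}(x-y)p_s(y-z)\,dy=p_t(x-z)$, leaving a factor $p_tu_0(x)\cdot p_t|u_0|(x)$ after initializing with the trivial lower bound $\phi(s,y)\ge (p_su_0(y))^2$. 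Using Cauchy--Schwarz in the form $(p_tu_0(x))^2\le \|u_0\|_{L^\infty}\,p_t|u_0|(x)$, that is $p_t|u_0|(x)\ge (p_tu_0(x))^2/\|u_0\|_{L^\infty}$, produces the announced prefactor $(p_tu_0(x))^3/\|u_0\|_{L^\infty}$, and summing over $n$ gives
\[
\phi(t,x)\;\ge\;\frac{(p_tu_0(x))^3}{\|u_0\|_{L^\infty}}\,E_H\bigl(C\,\sigma_*^2\,\kappa^{H-1}\,t^H\bigr),
\]
where $E_H(z):=\sum_{n\ge 0}z^n/\Gamma(nH+1)$ is the Mittag--Leffler function. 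Its standard asymptotic $E_H(z)\sim H^{-1}e^{z^{1/H}}$ as $z\to+\infty$, evaluated at $z=C\sigma_*^2\kappa^{H-1}t^H$, delivers exactly $\exp(L\sigma_*^{2/H}\kappa^{1-1/H}\,t)$.

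The principal obstacle is establishing the first integral inequality. Because $|\sigma(z)|\ge\sigma_*|z|$ is merely pointwise and the $\HH$-norm mixes spatial values through the spectral weight, one cannot substitute $\sigma_*u$ for $\sigma(u)$ inside the norm directly; in addition, nothing in the hypotheses controls the sign of $\be\,\sigma(u(s,y))$ for general $\sigma$, so the naive Jensen reduction produces $\|p_{t-s}(x-\cdot)\,\be\,\sigma(u(s,\cdot))\|_\HH^2$ with no useful lower bound in general. My plan to overcome this is to split $\sigma(u)=\sigma(p_\cdot u_0)+[\sigma(u)-\sigma(p_\cdot u_0)]$, use the pointwise estimate $|\sigma(p_su_0)|\ge \sigma_*|p_su_0|$ on the leading deterministic term, and absorb the remainder by invoking the $L^p$ moment bounds of Theorem~\ref{thm:exist with sigma} applied to $u-p_\cdot u_0$ together with the Lipschitz character of $\sigma$. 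All subsequent steps (iteration, Dirichlet time integration, semigroup collapse in space, and Mittag--Leffler summation) are then routine.
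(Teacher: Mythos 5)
Your overall architecture (a renewal-type integral inequality for $\phi(t,x)=\be|u(t,x)|^2$ with a kernel of spatial mass of order $[\kappa(t-s)]^{H-1}$, iteration over the simplex, the Dirichlet integral, and Mittag--Leffler asymptotics) coincides with the paper's, and your endgame for the prefactor $|p_tu_0(x)|^3/\|u_0\|_{L^\infty}$ is in the same spirit as the paper's localization argument. The gap is at the one step on which everything else depends: you never establish the integral inequality, and the route you propose cannot produce it. Applying Jensen ``mode by mode in $\xi$'', i.e. $\be|\cf g|^2\ge|\be\,\cf g|^2$, pulls the expectation \emph{inside} the norm: what comes out is $\|p_{t-s}(x-\cdot)\,\be\,\sigma(u(s,\cdot))\|^2_{\dot H^{1/2-H}}$, a functional of the \emph{first} moment of $\sigma(u)$. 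The second moment $\phi(s,y)$ has been irretrievably discarded, so the inequality cannot be iterated, and without iteration one gets at most a polynomial-in-$t$ correction, never the exponential rate. This is a structural obstruction, not a technical nuisance: intermittency is precisely the statement that $\be|u|^2\gg(\be u)^2$, so any argument that trades second moments for first moments at the outset is doomed.

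Your proposed repair does not close the gap either. First, the splitting $\sigma(u)=\sigma(p_\cdot u_0)+[\sigma(u)-\sigma(p_\cdot u_0)]$ requires, in a lower bound of the form $\|A+B\|^2_{\HH}\ge\tfrac12\|A\|^2_{\HH}-\|B\|^2_{\HH}$, that the remainder be dominated by the deterministic leading term; the moment bounds of Theorem \ref{thm:exist with sigma} are \emph{upper} bounds growing like $\exp(Cp^{1/H}t)$ and give no such domination --- indeed for large $t$ the stochastic part dominates, which is the very phenomenon being proved. Second, even for the leading term, the pointwise bound $|\sigma(p_su_0)|\ge\sigma_*|p_su_0|$ cannot be inserted into the $\HH$-norm, because a Sobolev seminorm is not monotone under pointwise domination of absolute values; this is the same difficulty you correctly identified for $\be\,\sigma(u)$, and it does not disappear for deterministic functions. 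The missing idea, which is the heart of the paper's proof, is to stay pathwise (expectation outside) and lower bound the Sobolev norm by a Lebesgue norm via the embedding $\|g\|_{\dot H^{1/2-H}}\ge c\,\|g\|_{L^{1/H}}$: the $L^{1/H}$-norm \emph{is} monotone, so $|\sigma(z)|\ge\sigma_*|z|$ applies directly, and Jensen's inequality with respect to the probability measure $p_{t-s}(x-y)\,dy$ (concavity of $x\mapsto x^{2H}$) gives $\|p_{t-s}(x-\cdot)u(s,\cdot)\|^2_{L^{1/H}}\ge\int_\RR p^{3-2H}_{t-s}(x-y)|u(s,y)|^2\,dy$. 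Only now does one take expectations, which places $\phi(s,y)$ under a positive kernel and closes the iteration. Two smaller inaccuracies in your sketch: the iterated kernels are the powers $p^{3-2H}$, which collapse through the identity $\int_\RR p_a^{3-2H}(x-y)\,p_b^{3-2H}(y-z)\,dy=(3-2H)^{-1/2}\lp 2\pi\kappa ab/(a+b)\rp^{H-1}p_{a+b}^{3-2H}(x-z)$ rather than through the semigroup law; and Lemma \ref{lem1} is an upper bound, hence of no use in a lower-bound iteration.
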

	\begin{proof}
		Applying It\^o isometry to equation \eref{eq:mild-formulation sigma}, we see that
		\begin{equation}
			\be |u(t,x)|^2=|p_tu_0(x)|^2+c_{1,H}\be\int_0^t\|p_{t-s}(x-y)\sigma(u(s,y))\|^2_{\dot{H}^{\frac12-H}}ds\,.
		\end{equation}
		Let us recall the well-known Sobolev embedding inequality
		\begin{equation*}
			\|g\|_{\dot{H}^{\frac12-H}}\ge c \|g\|_{L^{\frac1H}}\,,\quad\forall g\in\dot{H}^{\frac12-H}(\RR)\,.
		\end{equation*}
		Hence, together with our assumption on $\sigma$, it follows that there exists some positive constant $b$ such that
		\begin{equation*}
			\be |u(t,x)|^2\ge |p_tu_0(x)|^2 + b \sigma_*^2\be\int_0^t \|p_{t-s}(x-\cdot)u(s,\cdot)\|^2_{L^{\frac1H} (\RR)}ds\,.
		\end{equation*}
		Since $2H<1$, applying Jensen inequality we see that
		\begin{align*}
			\|p_{t-s}(x-\cdot)u(s,\cdot)\|^2_{L^{\frac1H}(\RR)}&=\lp\int_\RR p_{t-s}^{\frac1H-1}(x-y)|u(s,y)|^{\frac1H}p_{t-s}(x-y) dy\rp^{2H}
			\\&\ge\int_\RR p_{t-s}^{3-2H}(x-y)|u(s,y)|^{2}dy\,.
		\end{align*}
		It follows that
		\begin{equation*}
			\be |u(t,x)|^2\ge |p_tu_0(x)|^2+b\sigma_*^2\int_0^t\int_\RR p^{3-2H}_{t-s}(x-y)\be|u(s,y)|^{2}dyds\,.
		\end{equation*}
		Iterating the previous inequality yields
		\begin{equation}\label{tmp.lowuI}
			\be |u(t,x)|^2\ge |p_tu_0(x)|^2+\sum_{n=1}^\infty (b\sigma_*^2)^nI_n(t,x)\,.
		\end{equation}
		In the above, we have adopted the notation
		\begin{equation*}
			I_n(t,x)=\int_{T_n(t)}\int_{\RR^n}p_{t-s_n}^{3-2H}(x-y_n)\cdots p_{s_2-s_1}^{3-2H}(y_2-y_1)|p_{s_1}u_0(y_1)|^2d\bar yd\bar s
		\end{equation*}
		where $T_n(t)=\{(s_1,\dots,s_n)\in[0,t]^n:0<s_1<\cdots<s_n<t\}$ and $d\bar y=dy_1\cdots dy_n$, $d\bar s=ds_1\cdots ds_n$. Note that for every $x,z\in\RR$ and $a,b>0$, the following identity holds
		\begin{equation*}
			\int_\RR p_{a}^{3-2H}(x-y)p_b^{3-2H}(y-z)dy=(3-2H)^{-\frac12}\lp\frac{2 \pi \kappa ab}{a+b}\rp^{H-1} p_{a+b}^{3-2H}(x-z)\,.
		\end{equation*}
		We thus can compute $I_n(t,x)$ by integrating $y_j$'s in descending order starting from $y_n$. This procedure yields
		\begin{multline}\label{tmp.lowIn}
			I_n(t,x)=(3-2H)^{-\frac{n-1}2} \times
			\\\int_{T_n(t)}\lp\frac{t-s_n}{t-s_1}\prod_{j=2}^{n}2 \pi \kappa(s_j-s_{j-1})\rp^{H-1} \int_\RR p_{t-s_1}^{3-2H}(x-y_1)|p_{s_1}u_0(y_1)|^2dy_1 d\bar s\,.
		\end{multline}
		On the other hand, for every fixed $R>0$, applying Jensen inequality, we see that
		\begin{align}
			\int_\RR p_{t-s_1}^{3-2H}(x-y_1)&|p_{s_1}u_0(y_1)|^2dy_1
			\ge p_{t-s_1}^{1-2H}(R) \int_{|x-y_1|<R} p_{t-s_1}^{2}(x-y_1)|p_{s_1}u_0(y_1)|^2 dy_1
			\nonumber\\&\ge p_{t-s_1}^{1-2H}(R) R^{-1} \lp\int_{|x-y_1|<R} p_{t-s_1}(x-y_1)p_{s_1}*u_0(y_1)dy_1\rp^{2}\,.
			\label{tmp.low1}
		\end{align}
		The integral on the right side can be rewritten as
		\begin{equation*}
			p_tu_0(x)-\int_{|x-y_1|\ge R}p_{t-s_1}(x-y_1)p_{s_1}*u_0(y_1)dy_1\,.
		\end{equation*}
		Since $u_0$ is bounded, we see that $|p_{s_1}*u_0(y_1)|\le \|u_0\|_{L^\infty}$ and hence
		\begin{align*}
			|\int_{|x-y_1|\ge R}p_{t-s_1}(x-y_1)p_{s_1}*u_0(y_1)dy_1|
			&\le \|u_0\|_{L^\infty} \int_{|y|>R}p_{t-s_1}(y)dy
			\\&=\|u_0\|_{L^\infty} \pi^{-\frac12}  \int_{|z|>\frac{R}{\sqrt{2\kappa(t-s_1)}} }e^{-z^2}dz\,.
		\end{align*}
		For every fixed $\epsilon$ in $(0,1)$, we now choose $R=M\sqrt{2 \kappa (t-s_1)}$ where $M$ is such that $e^{-(1-2H)M^2}M^{-1}= \epsilon$. It follows that
		\begin{equation*}
			p_{t-s_1}^{1-2H}(R)R^{-1}=\pi^{H-\frac12}(2 \kappa(t-s_1))^{H-1}e^{-(1-2H)M^2}M^{-1}
		\end{equation*}
		and
		\begin{align*}
			|\int_{|x-y_1|<R}p_{t-s_1}(x-y_1)p_{s_1}*u_0(y_1)dy_1|\ge |p_tu_0 (x)|-\|u\|_{\infty}e^{-M^2}M^{-1}\,.
		\end{align*}
		Together with \eqref{tmp.low1}, we see that
		\begin{equation*}
			\int_{\RR}p_{t-s_1}^{3-2H}(x-y_1)|p_{s_1}u_0(y_1)|^2dy_1\ge c e^{-M^2}M^{-1} (\kappa(t-s_1))^{H-1}\lp |p_tu_0(x)|-e^{-M^2}M^{-1}\|u_0\|_{L^\infty}\rp^2
		\end{equation*}
		for some universal constant $c$.
		Hence, upon combining the previous estimate and \eqref{tmp.lowIn}, we arrive at
		\begin{align*}
			I_n(t,x)\ge \epsilon c^{n} \kappa^{(H-1)n} \int_{T_n(t)}\prod_{j=2}^{n+1}(s_j-s_{j-1})^{H-1} d\bar s\lp|p_tu_0(x)|-\epsilon\|u_0\|_{L^\infty}\rp^2
		\end{align*}
		where $s_{n+1}=t$ and $c$ is some universal constant. It is elementary (see Lemma \ref{lem:intg-simplex} below) to compute
		\begin{equation*}
			\int_{T_n(t)}\prod_{j=2}^{n+1}(s_j-s_{j-1})^{H-1} d\bar s=\frac{\Gamma(H)^nt^{nH}}{\Gamma(nH+1)}\,.
		\end{equation*}
		Therefore, together with \eqref{tmp.lowuI}, we obtain
		\begin{align*}
			\EE |u(t,x)|^2\ge \epsilon\lp|p_tu_0(x)|-\epsilon\|u_0\|_{L^\infty}\rp^2\sum_{n=0}^\infty (cb \Gamma(H))^n\frac{ (\sigma_*^{\frac2H}\kappa^{1-\frac1H} t)^{nH}}{\Gamma(nH+1)}.
		\end{align*}
{We now recall the elementary bound $\sum_{n\ge 0}x^{n}/(n!)^{a}\le 2 \exp(c x^{1/a})$, which can be found e.g in \cite[Lemma A.1]{BC}. Together with the previous inequality, this yields:}
		\begin{equation}\label{est.lowu2}
			\be |u(t,x)|^2\ge C \epsilon\lp p_tu_0(x)-\epsilon \|u_0\|_{L^\infty}\rp^2  e^{L \sigma_*^{\frac2H} \kappa^{1-\frac1H} t}\,.
		\end{equation}
		By choosing $\epsilon=\frac{|p_tu_0(x)|}{3\|u_0\|_{L^\infty}}$, we conclude the proof.
	\end{proof}
\begin{remark}
(i) We can add a drift $b(u(t,x))$ in equation \eqref{spde with sigma}, and if the function $b$ is Lipschitz continuous with $b(0)=0$, the results we have obtained on the existence and uniqueness of a solution can be extended to equations with drift.

\noindent
(ii) If we only assume that the initial condition $u_0$ is bounded and 
\begin{equation}
\sup_{x\in\RR}\int_{\RR}|u_0(x)-u_0(x+h)|^2 |h|^{2H-2}dh < \infty\,,
\end{equation}
and we only assume that $\sigma$ is Lipschitz. Then from the proof of Theorem \ref{thm:exist with sigma} it is easy to see that we have the weak existence of a solution to equation \eref{spde with sigma}. The assumption (1) in Theorem \ref{thm:exist with sigma} and the condition that the derivative of $\sigma$ is Lipschitz and $\sigma(0)=0$ are only used to show the uniqueness. 
 \end{remark}

\section{The Anderson model}
In this section we will study the special case of equation
\eref{spde with sigma} when the function $\sigma$ is the identity function.
This is  a continuous version of the so-called  parabolic Anderson
model. In this case equation \eref{spde with sigma} is reduced to
\begin{equation}\label{spde}
\frac{\partial u}{\partial t}=\frac{\kappa}{2}\frac{\partial ^2 u}{\partial x
^2}+u\,\dot W
\end{equation}
with deterministic initial condition $u(0,x)=u_0(x)$. This reduced form allows for some simplified versions of the existence-uniqueness theorems, and also some Feynman-Kac representation which is useful for intermittency estimates.

\subsection{Existence and uniqueness}\label{sec:anderson-exist-uniq}
 With some restrictions on the initial condition $u_0(x)$, the existence
and uniqueness of the solution to this linear equation stems
directly from Theorems \ref{thm:uniqueness} and \ref{thm:exist with sigma}. However, we shall
prove this result again by means of two different methods: one is
via Fourier transform and the other is via chaos expansion. We
include these methods here for two reasons: first,  they lead to
proofs which are shorter and more elegant than in the case of a
general coefficient $\si$;  {secondly, the assumptions on initial
conditions are different. }

\subsubsection{Existence and uniqueness via Fourier transform}\label{sec:picard}

In this subsection we discuss the existence and uniqueness of equation (\ref{spde}) using techniques of Fourier analysis.

We recall that
$\dot{H}^{\frac 12-H}$ is the class of functions  $f:    \RR \rightarrow \RR $ such that  there exists $  g\in L^2(   \RR)  $ such that $ f =I_-^{1/2-H}g$. Let    $\dot{H}^{\frac 12-H}_0$  be the set of functions $f\in L^2(\RR)$ such that $\int_\RR | \cf f(\xi)| ^2 |\xi|^{1-2H} d\xi <\infty$.
   These spaces are the time independent analogues to the spaces $\HH$ and $\HH_0$ introduced in Proposition
\ref{prop: H}.   We know that the inclusion  $\dot{H}^{\frac 12-H}_0 \subset  \dot{H}^{\frac 12-H}$ is strict and
$\dot{H}^{\frac 12-H}_0$ is not complete with the seminorm  $ \left[ \int_\RR | \cf f(\xi)| ^2 |\xi|^{1-2H} d\xi \right] ^\frac 12$
(see \cite{PT}). However,  it is not difficult to check that the space   $\dot{H}^{\frac 12-H}_0$ is complete for the seminorm
\[
\|f\|_{\cv(H)} ^2:=\int_\RR | \cf f(\xi)| ^2  (1+|\xi|^{1-2H} )d\xi.
\]

 In the  next theorem  we show the existence and uniqueness result  assuming that
 the initial condition belongs to $\dot{H}^{\frac 12-H}_0$ and
 using estimates based on the Fourier transform in the space variable. To this purpose, we   introduce
 the space $\cv_T(H)$  as the completion of the set  of elementary   $\dot{H}^{\frac 12-H}_0$-valued stochastic processes $\{u(t,\cdot), t\in [0,T]\}$ with respect to the seminorm
\begin{equation}  \label{nuH}
\|u\|_{\cv_{T}(H)}^{2}:=\sup_{t\in [0,T]}   \be \| u(t,\cdot)\|_{\cv(H)}^{2}.
\end{equation}

We now state a convolution lemma.

\begin{proposition}\label{prop:convolution-fourier}
Consider a function $u_{0}\in \dot{H}^{\frac 12-H}_0$ and
$\frac{1}{4}<H<\frac{1}{2}$. For any  {$v\in\cv_{T}(H)$} we set
$\gga(v)=V$ in the following way:
\begin{equation*}
\gga(v):=V(t,x)=p_t u_0(x) + \int_0^t \int_{\mathbb{R}}p_{t-s}(x-y) v(s,y) W(ds,dy),
\quad t\in[0,T], \, x\in\R.
\end{equation*}
Then $\gga$ is well-defined as a map from $\cv_{T}(H)$ to $\cv_{T}(H)$. Furthermore, there exist two positive constants $c_{1},c_{2}$ such that the following estimate holds true on $[0,T]$:
\begin{equation}\label{eq:intg-bnd-V-lin}
{ \|V(t,\cdot)\|_{\cv(H)}^{2} \le c_{1} \, \|u_0\|_{\cv(H)}^{2}
+c_{2}\int_0^t  (t-s)^{2H-3/2}
 \|v(s,\cdot)\|_{\cv(H)}^{2} \,
ds\,.}
\end{equation}
\end{proposition}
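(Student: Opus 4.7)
The approach is to analyze the two terms of $V = p_t u_0 + \Phi$ separately in Fourier coordinates, where
\begin{equation*}
\Phi(t,x) = \int_0^t\int_\R p_{t-s}(x-y) v(s,y) W(ds,dy).
\end{equation*}
For the deterministic part, since $\mathcal{F}(p_tu_0)(\xi)=e^{-\kappa t\xi^2/2}\mathcal{F}u_0(\xi)$, the trivial pointwise bound $|\mathcal{F}(p_tu_0)(\xi)|\le|\mathcal{F}u_0(\xi)|$ immediately gives $\|p_tu_0\|_{\cv(H)}^2\le\|u_0\|_{\cv(H)}^2$, which will produce the constant $c_1$.

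For the stochastic part, the plan is to take the spatial Fourier transform inside the stochastic integral (justified first for elementary processes in the sense of Definition~2.4, then extended by density on $\laa_H$), yielding
\begin{equation*}
\mathcal{F}\Phi(t,\cdot)(\xi) = \int_0^t\int_\R e^{-i\xi y}\, e^{-\kappa(t-s)\xi^2/2}\,v(s,y)\,W(ds,dy).
\end{equation*}
Applying the Fourier form of the isometry \eqref{eq:cov1} to the integrand $g_\xi(s,y)=e^{-\kappa(t-s)\xi^2/2}e^{-i\xi y}v(s,y)$, and using the identity $\mathcal{F}_y[e^{-i\xi y}v(s,y)](\eta)=\mathcal{F}v(s,\cdot)(\eta+\xi)$ followed by the translation $\eta\mapsto\eta-\xi$, we obtain
\begin{equation*}
\be\lln \mathcal{F}\Phi(t,\cdot)(\xi)\rrn^2 = c_{1,H}\int_0^t e^{-\kappa(t-s)\xi^2}\,\be\int_\R |\mathcal{F}v(s,\cdot)(\eta)|^2\,|\eta-\xi|^{1-2H}\,d\eta\,ds.
\end{equation*}

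Integrating this against $(1+|\xi|^{1-2H})\,d\xi$ and invoking Fubini produces
\begin{equation*}
\be\|\Phi(t,\cdot)\|_{\cv(H)}^2 = c_{1,H}\int_0^t\be\int_\R |\mathcal{F}v(s,\cdot)(\eta)|^2\,K(t-s,\eta)\,d\eta\,ds,
\end{equation*}
with
\begin{equation*}
K(r,\eta) := \int_\R e^{-\kappa r\xi^2}\,|\eta-\xi|^{1-2H}\,(1+|\xi|^{1-2H})\,d\xi.
\end{equation*}
The crux of the argument is then the pointwise kernel estimate
\begin{equation*}
K(r,\eta) \le C_T\,(1+|\eta|^{1-2H})\,r^{2H-\frac32},\qquad r\in(0,T].
\end{equation*}
To obtain it I would use the elementary inequality $|\eta-\xi|^{1-2H}\le C(|\eta|^{1-2H}+|\xi|^{1-2H})$ together with the standard Gaussian moments $\int e^{-\kappa r\xi^2}|\xi|^\alpha\,d\xi = C_\alpha(\kappa r)^{-(\alpha+1)/2}$ for $\alpha>-1$. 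This yields three contributions of orders $r^{-1/2}$, $r^{H-1}$ and $r^{2H-3/2}$ (the last coming from the $|\eta|^{1-2H}|\xi|^{1-2H}$ and $|\xi|^{2-4H}$ pieces); since $H<\tfrac12$, one checks $2H-\tfrac32$ is the most singular exponent, and on $[0,T]$ the others are dominated by it up to a constant depending on $T$.

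Substituting this bound back and recognizing the factor $\int_\R|\mathcal{F}v(s,\cdot)(\eta)|^2(1+|\eta|^{1-2H})\,d\eta=\|v(s,\cdot)\|_{\cv(H)}^2$ closes the estimate \eqref{eq:intg-bnd-V-lin}. The main obstacle, and the only place where the assumption $H>\tfrac14$ truly enters, is the kernel estimate just described: the exponent $2H-\tfrac32$ must exceed $-1$ in order for the resulting singular convolution (when iterated in a Picard scheme) to produce a convergent time integral. The fact that $V\in\cv_T(H)$ follows from the finiteness of the right-hand side of \eqref{eq:intg-bnd-V-lin} when $v\in\cv_T(H)$, upon taking the supremum in $t\in[0,T]$.
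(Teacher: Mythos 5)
Your proposal is correct and follows essentially the same route as the paper's proof: Fourier transform of the mild formulation, the isometry in Fourier form with the translation identity $\mathcal{F}_y[e^{-i\xi y}v](\eta)=\mathcal{F}v(\eta+\xi)$, the bound $|\eta-\xi|^{1-2H}\le C(|\eta|^{1-2H}+|\xi|^{1-2H})$, Gaussian scaling, and identification of $(t-s)^{2H-3/2}$ as the dominant singularity, integrable precisely because $H>\tfrac14$. The only difference is cosmetic: you package both weights $d\xi$ and $|\xi|^{1-2H}d\xi$ into a single kernel $K(r,\eta)$, whereas the paper estimates the two contributions in separate displays.
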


\begin{proof}
Let $v$ be a process in $\cv_{T}(H)$ and set $V=\gga(v)$.  We
focus on the bound \eqref{eq:intg-bnd-V-lin} for $V$. 

Notice that the Fourier transform of $V$ can be computed easily.
 {Indeed, setting $v_0(t,x)=p_tu_0(x)$ and }invoking a stochastic
version of Fubini's theorem, we get
\begin{equation*}
\mathcal{F}V(t,\xi)=\mathcal{F}v_0(t,\xi)
+\int_0^t\int_{\mathbb{R}} \lp \int_{\R} e^{i x \xi} \, p_{t-s}(x-y) \, dx \rp
v(s,y)W(ds,dy)\,.
\end{equation*}
According to the expression of $\cf p_{t}$, we obtain
\begin{eqnarray*}
\mathcal{F}V(t,\xi)=\mathcal{F}v_0(t,\xi)+\int_0^t\int_{\mathbb{R}}e^{-i\xi
y} e^{-\frac{\kappa}{2}(t-s)\xi^2}v(s,y)W(ds,dy)\,.
\end{eqnarray*}
We now evaluate the quantity
$\be[\int_{\mathbb{R}}|\mathcal{F}V(t,\xi)|^2|\xi|^{1-2H}d\xi ]$ in
the definition of $\|u_{n}\|_{\cv_{T}(H)}$ given by \eqref{nuH}.  We
thus write
\begin{multline*}
\be\lc \int_{\mathbb{R}}|\mathcal{F}V(t,\xi)|^2|\xi|^{1-2H}d\xi \rc
\leq 2 \, \int_{\mathbb{R}}|\mathcal{F}v_0(t,\xi)|^2|\xi|^{1-2H}d\xi \\
+2 \,  \int_{\mathbb{R}}\be\lc\Big|\int_0^t\int_{\mathbb{R}}e^{-i\xi
y}e^{-\frac{\kappa}{2}(t-s)\xi^2}v(s,y)W(ds,dy)\Big|^2 \rc |\xi|^{1-2H}d\xi
:= 2\lp I_{1} + I_{2} \rp \, ,
\end{multline*}
and we handle the terms $I_{1}$ and $I_{2}$ separately.

The term $I_1$ can be easily bounded by using that $u_0 \in\dot{H}^{\frac 12-H}_0$ and recalling $v_{0}=p_{t}u_{0}$. That is,
\[
I_1 = \int_\RR| \mathcal{F}u_0(\xi) | ^2e^{-\kappa t|\xi|^2} |\xi|^{1-2H}
d\xi \le C \, \|u_{0}\|_{\cv(H)}^{2}.
\]
 We thus focus on the estimation of $I_{2}$, and we set $f_{\xi}(s,\eta)=e^{-i\xi
\eta}e^{-\frac{\kappa}{2}(t-s)\xi^2}v(s,\eta)$. Applying the isometry
property \eqref{int isometry} together with the Fourier transform
expression for $\|h\|_{\dot H^{\frac 12-H}}$ in~\eref{eq: H_0 element H prod}, we
have:
\begin{equation*}
\be\lc\Big|\int_0^t\int_{\mathbb{R}}
e^{-i\xi y}e^{-\frac{\kappa}{2}(t-s)\xi^2}v(s,y)W(ds,dy)\Big|^2 \rc
=c_{1,H} \int_0^t \int_{\mathbb{R}}
\be\lc |\cf_{\eta}f_{\xi}(s,\eta) |^{2}\rc
|\eta|^{1-2H} \, ds d\eta,
\end{equation*}
where $\cf_{\eta}$ is  the Fourier transform with respect  to
$\eta$.  It is obvious 
 that
the Fourier transform of $e^{-i\xi y} V(y)$ is $\mathcal{F}
V(\eta+\xi)$. Thus we have
\begin{align*}
I_{2}&= C\int_0^t\int_{\mathbb{R}}\int_{\mathbb{R}}e^{-\kappa(t-s)\xi^2}
\, \be\lc|\mathcal{F}v(s,\eta+\xi)|^2 \rc |\eta|^{1-2H}|\xi|^{1-2H}
\, d\eta d\xi ds\\
&= C\int_0^t\int_{\mathbb{R}}\int_{\mathbb{R}}e^{-\kappa(t-s)\xi^2} \,
\be\lc|\mathcal{F}v(s,\eta )|^2 \rc |\eta-\xi|^{1-2H}|\xi|^{1-2H} \,
d\eta d\xi ds\, .
\end{align*}
We now bound $|\eta-\xi |^{1-2H}$ by $|\eta|^{1-2H}+|\xi|^{1-2H}$,
which yields $I_{2}\le I_{21}+I_{22}$ with:
\begin{align*}
I_{21}&=C
\int_0^t
\int_{\mathbb{R}}\int_{\mathbb{R}} e^{-\kappa(t-s)\xi^2} \,
\be\lc|\mathcal{F}v(s,\eta)|^2 \rc
|\eta|^{1-2H}|\xi|^{1-2H} \, d\eta d\xi ds \\
I_{22}&=C\int_0^t\int_{\mathbb{R}}\int_{\mathbb{R}}e^{-\kappa(t-s)\xi^2}
\, \be\lc|\mathcal{F}v(s,\eta)|^2 \rc |\xi|^{2-4H} \, d\eta d\xi
ds\,.
\end{align*}
Performing the change of variable  {$\xi \rightarrow
(t-s)^{1/2}\xi$} and then trivially bounding the integrals of the
form $\int_{\R}|\xi|^{\beta} e^{-\kappa\xi^{2}} d\xi$ by constants,  we
end up with
\begin{align*}
I_{21}&\leq C
\int_0^t  (t-s)^{H-1}
\int_{\mathbb{R}}
\be\lc|\mathcal{F}v(s,\eta)|^2 \rc
|\eta|^{1-2H} \, d\eta  \, ds \\
I_{22}&\leq C
\int_0^t  (t-s)^{2H-3/2}
\int_{\mathbb{R}}
\be\lc|\mathcal{F}v(s,\eta)|^2 \rc
 \, d\eta  \, ds .
\end{align*}
Observe that for $H\in(\frac 14, \frac 12)$ the term $(t-s)^{2H-3/2}$ is more
singular than  $(t-s)^{H-1}$, but we still have $2H-\frac 32>-1$.
Summarizing our consideration  up to now, we have thus obtained
\begin{multline}\label{eq:bnd-picard-1}
\int_{\mathbb{R}}\be\lc |\mathcal{F}V(t,\xi)|^2 \rc |\xi|^{1-2H}d\xi \\
\le C _{1,T} \, { \|u_{0}\|_{\cv(H)}^{2}} + C_{2,T} \int_{0}^{t}
(t-s)^{2H-3/2} \int_{\mathbb{R}} \be\lc|\mathcal{F}v(s,\xi)|^2 \rc
\, (1+ |\xi|^{1-2H})
 \, d\xi  \, ds ,
\end{multline}
for two strictly positive constants $C_{1,T},C_{2,T}$.

The term $\be[\int_{\mathbb{R}}|\mathcal{F}V(t,\xi)|^2 d\xi ]$ in
the definition of $\|V\|_{\cv_{T}(H)}$ can be bounded with the same computations as above, and we find
\begin{multline}\label{eq:bnd-picard-2}
\int_{\mathbb{R}}\be\lc |\mathcal{F}V(t,\xi)|^2 \rc \, d\xi \\
\le C_{1,T} \, {  \|u_{0}\|_{\cv(H)}^{2}} + C_{2,T}  \int_{0}^{t}
(t-s)^{H-1} \int_{\mathbb{R}} \be\lc|\mathcal{F}v(s,\xi)|^2 \rc \,
(1+ |\xi|^{1-2H})
 \, d\eta  \, ds ,
\end{multline}
Hence, gathering our estimates \eqref{eq:bnd-picard-1} and \eqref{eq:bnd-picard-2}, our bound \eqref{eq:intg-bnd-V-lin} is easily obtained, which finishes the proof.
\end{proof}

As in the general case, Proposition \ref{prop:convolution-fourier} is the key to the existence and uniqueness result for equation \eqref{spde}.

\begin{theorem}\label{thm:exist-uniq-picard}
Suppose that $u_{0}$ is an element of $\dot{H}^{\frac 12-H}_0$ and
$\frac{1}{4}<H<\frac{1}{2}$. Fix $T>0$. Then there is a  unique
process $u$ in the space $\cv_{T}(H)$ such that for all $t\in
[0,T]$,
\begin{equation}
u(t,\cdot)=p_t u_0  + \int_0^t \int_{\mathbb{R}}p_{t-s}(\cdot-y)u(s,y) W(ds,dy).
\end{equation}
\end{theorem}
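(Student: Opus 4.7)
The plan is to carry out a standard Picard iteration scheme in the Banach space $\cv_{T}(H)$, using Proposition \ref{prop:convolution-fourier} as the basic a priori estimate, and to extract convergence and uniqueness from the Gronwall-type Lemma \ref{lemG} (which is tailored precisely for integral inequalities with a singular but integrable kernel $g(t-s)$).

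First I would define the Picard sequence by $u^{(0)}(t,x)=p_{t}u_{0}(x)$ and $u^{(n+1)}=\gga(u^{(n)})$ for $n\ge 0$, where $\gga$ is the affine map introduced in Proposition \ref{prop:convolution-fourier}. Since $u_{0}\in \dot{H}^{\frac12-H}_{0}$, a direct Fourier computation shows that $\|u^{(0)}(t,\cdot)\|_{\cv(H)}\le C\|u_{0}\|_{\cv(H)}$ uniformly in $t\in[0,T]$, so $u^{(0)}\in\cv_{T}(H)$. The estimate \eqref{eq:intg-bnd-V-lin} then iterates: setting $f_{n}(t):=\|u^{(n)}(t,\cdot)\|_{\cv(H)}^{2}$, we obtain
\begin{equation*}
f_{n+1}(t)\le c_{1}\|u_{0}\|_{\cv(H)}^{2} + c_{2}\int_{0}^{t}(t-s)^{2H-\frac32} f_{n}(s)\, ds.
\end{equation*}
Because $H>\frac14$ we have $2H-\frac32>-1$, so the kernel $g(r):=r^{2H-3/2}$ is in $L^{1}(0,T)$. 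Lemma \ref{lemG} (with $p=1$) then yields $\sup_{n}\sup_{t\le T} f_{n}(t)<\infty$.

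Next I would obtain convergence by looking at the differences $D_{n}(t,x):=u^{(n+1)}(t,x)-u^{(n)}(t,x)$. By linearity, $D_{n+1}=\gga(D_{n})-\gga(0)+\gga(0)-\gga(0)$ reduces to the stochastic convolution part of $\gga$, so the same proof as Proposition \ref{prop:convolution-fourier} (with the deterministic term dropped, i.e.\ $c_{1}=0$) gives
\begin{equation*}
\|D_{n+1}(t,\cdot)\|_{\cv(H)}^{2}\le c_{2}\int_{0}^{t}(t-s)^{2H-\frac32}\|D_{n}(s,\cdot)\|_{\cv(H)}^{2}\, ds.
\end{equation*}
Applying the second part of Lemma \ref{lemG} (with $c_{1}=0$) shows $\sum_{n\ge 0}\|D_{n}(t,\cdot)\|_{\cv(H)}$ converges uniformly in $t\in[0,T]$. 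Therefore $u^{(n)}$ is Cauchy in $\cv_{T}(H)$ and converges to a limit $u\in\cv_{T}(H)$. Since $\gga$ is continuous on $\cv_{T}(H)$ (as an immediate consequence of \eqref{eq:intg-bnd-V-lin} applied to $u^{(n)}-u$), we may pass to the limit in $u^{(n+1)}=\gga(u^{(n)})$ and conclude that $u=\gga(u)$, which is the mild formulation of \eqref{spde}.

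For uniqueness, if $u$ and $v$ are two elements of $\cv_{T}(H)$ satisfying the mild equation, then $w=u-v$ satisfies $w=\gga(w)-\gga(0)$, and hence
\begin{equation*}
\|w(t,\cdot)\|_{\cv(H)}^{2}\le c_{2}\int_{0}^{t}(t-s)^{2H-\frac32}\|w(s,\cdot)\|_{\cv(H)}^{2}\, ds.
\end{equation*}
A final application of Lemma \ref{lemG} (again with $c_{1}=0$, using that $f_{0}:=\|w(\cdot,\cdot)\|_{\cv(H)}^{2}$ is bounded on $[0,T]$ since $w\in\cv_{T}(H)$, and taking $f_{n}=f_{0}$ for all $n$) forces $\|w(t,\cdot)\|_{\cv(H)}=0$ for all $t\in[0,T]$, giving $u=v$ in $\cv_{T}(H)$. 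The only mildly delicate point is the singular-kernel Gronwall step; everything else is a textbook Picard contraction once Proposition \ref{prop:convolution-fourier} is in hand, and the constraint $H>\frac14$ enters exactly through the requirement $2H-\frac32>-1$ needed for the kernel to be locally integrable.
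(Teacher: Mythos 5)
Your proposal is correct and follows exactly the route the paper intends: the paper's own proof consists of the single remark that one runs the standard Picard scheme $u_{n+1}=\gga(u_{n})$ based on Proposition \ref{prop:convolution-fourier}, with details left to the reader. Your write-up supplies precisely those details (uniform bounds, convergence of the differences, and uniqueness, all via the singular-kernel Gronwall Lemma \ref{lemG}), so there is nothing to flag beyond the implicit choice $p=2$ when summing $\|D_{n}(t,\cdot)\|_{\cv(H)}$.
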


\begin{proof}
The proof follows from the standard Picard iteration scheme, where we just set $u_{n+1}=\gga(u_{n})$. Details are left to the reader for sake of conciseness.
\end{proof}

\subsubsection{Existence and uniqueness via chaos expansions}

Next, we provide another way to prove the existence and uniqueness of the solution to equation \eref{spde}, by means of chaos expansions. This will enable us to  obtain moment estimates.
 Before stating our main theorem in this direction, let us label an elementary lemma borrowed from \cite{HHNT} for further use. 

\begin{lemma}\label{lem:intg-simplex}
For $m\ge 1$ let $\alpha \in (-1+\ep, 1)^m$  with $\ep>0$ and  set $|\alpha |= \sum_{i=1}^m
\alpha_i  $. For $t\in\ott$, the $m$-th  dimensional simplex over $\ot$ is denoted by
$T_m(t)=\{(r_1,r_2,\dots,r_m) \in \R^m: 0<r_1  <\cdots < r_m < t\}$.
Then there is a constant $c>0$ such that
\[
J_m(t, \alpha):=\int_{T_m(t)}\prod_{i=1}^m (r_i-r_{i-1})^{\alpha_i}
dr \le \frac { c^m t^{|\alpha|+m } }{ \Gamma(|\alpha|+m +1)},
\]
where by convention, $r_0 =0$.
\end{lemma}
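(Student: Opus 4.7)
My plan is to reduce the computation to a known closed form via a scaling and a Dirichlet-type identity, and then bound the resulting gamma product uniformly in the coordinates $\alpha_i$.

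First, I would eliminate the dependence on $t$ by the rescaling $r_i = t\, s_i$. Since the Jacobian contributes $t^m$ and the integrand is homogeneous of degree $|\alpha|$, this gives the exact identity
\begin{equation*}
J_m(t,\alpha) = t^{|\alpha|+m}\, J_m(1,\alpha),
\end{equation*}
so it suffices to bound $J_m(1,\alpha)$. Next, I would perform the change of variables $u_i = r_i - r_{i-1}$ for $i=1,\dots,m$ (with $r_0=0$), which has Jacobian one and maps $T_m(1)$ bijectively onto the open simplex $\Delta_m = \{u \in \R^m_+ : u_1 + \cdots + u_m < 1\}$. The integral becomes the classical Dirichlet integral
\begin{equation*}
J_m(1,\alpha) = \int_{\Delta_m} u_1^{\alpha_1} \cdots u_m^{\alpha_m}\, du_1 \cdots du_m = \frac{\prod_{i=1}^{m} \Gamma(\alpha_i + 1)}{\Gamma(|\alpha| + m + 1)},
\end{equation*}
which is finite because each $\alpha_i > -1 + \varepsilon > -1$.

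To conclude, I would observe that since $\alpha_i + 1 \in (\varepsilon, 2)$ and $\Gamma$ is continuous and positive on this closed interval, one has $\Gamma(\alpha_i + 1) \le c$ for some constant $c = c(\varepsilon)$ that is independent of $i$ and of $m$. Taking the product yields $\prod_{i=1}^{m} \Gamma(\alpha_i + 1) \le c^m$, and combining with the identity above gives
\begin{equation*}
J_m(t,\alpha) \le \frac{c^m\, t^{|\alpha|+m}}{\Gamma(|\alpha| + m + 1)},
\end{equation*}
as claimed. There is no real obstacle here: the only point that requires the hypothesis $\alpha_i > -1+\varepsilon$ (rather than just $\alpha_i > -1$) is the uniform bound $\Gamma(\alpha_i+1) \le c$, which would otherwise blow up as $\alpha_i \downarrow -1$; the strict upper bound $\alpha_i < 1$ is used to ensure $\Gamma(\alpha_i+1)$ stays bounded away from infinity at the other end of the interval as well.
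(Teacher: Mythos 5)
Your proof is correct. Note that the paper itself does not prove this lemma at all --- it is stated as ``an elementary lemma borrowed from \cite{HHNT}'' and used as a black box --- so there is no in-paper argument to compare against; what you have written is a valid self-contained substitute for that citation. Each step checks out: the scaling $r_i = t s_i$ gives $J_m(t,\alpha)=t^{|\alpha|+m}J_m(1,\alpha)$ by homogeneity of degree $|\alpha|$ plus the Jacobian factor $t^m$; the substitution $u_i=r_i-r_{i-1}$ is unimodular and carries $T_m(1)$ onto the open simplex $\{u_i>0,\ \sum_i u_i<1\}$; the Dirichlet identity
\begin{equation*}
\int_{\Delta_m} u_1^{\alpha_1}\cdots u_m^{\alpha_m}\,du=\frac{\prod_{i=1}^m\Gamma(\alpha_i+1)}{\Gamma(|\alpha|+m+1)}
\end{equation*}
(itself provable by induction on $m$ via the Beta integral, which is the route usually taken in references of this type) is exactly the right closed form; and the hypothesis $\alpha_i\in(-1+\ep,1)$ places $\alpha_i+1$ in the compact interval $[\ep,2]$ on which $\Gamma$ is bounded, giving $\prod_i\Gamma(\alpha_i+1)\le c(\ep)^m$ with $c(\ep)=\max_{x\in[\ep,2]}\Gamma(x)$, which is precisely where the quantitative assumption $\alpha_i>-1+\ep$ (as opposed to merely $\alpha_i>-1$) is needed. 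In fact your argument yields the exact value of $J_m(t,\alpha)$, of which the stated bound is an immediate consequence, so it is if anything slightly stronger than what the lemma asserts.
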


Let us now state a new existence and uniqueness theorem for our equation of interest.

\begin{theorem}\label{thm:exist-uniq-chaos}
Suppose that $\frac 14 <H<\frac 12$ and that the initial condition $u_0$ satisfies
\begin{equation}\label{cond:fu0}
\int_{\RR}(1+|\xi|^{\frac{1}{2}-H})|\mathcal{F}u_0(\xi)|d\xi < \infty\,.
\end{equation}
Then there exists a unique    solution to equation \eqref{spde},
that is a process $u\in \laa_H$ (remember that $\laa_H$ is defined in Proposition \ref{prop:intg-wrt-W}) such that for any
$(t,x)\in\ott\times\R$, relation  \eqref{eq:mild-formulation sigma}
holds true.
\end{theorem}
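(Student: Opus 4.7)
The plan is to proceed by Wiener chaos expansion, in the spirit of \cite{HHNT,HN}. First, assume formally that a solution $u\in\laa_H$ to \eqref{spde} admits a chaos decomposition $u(t,x)=\sum_{n\ge 0}I_n(f_n(\cdot,t,x))$ with kernels $f_n\in \HH^{\odot n}$. Plugging this ansatz into the mild formulation \eqref{eq:mild-formulation sigma} with $\sigma(u)=u$, and matching the terms of each chaos order, one obtains the recursive relation $f_n(s_1,y_1,\dots,s_n,y_n;t,x) = p_{t-s_n}(x-y_n)\,f_{n-1}(s_1,y_1,\dots,s_{n-1},y_{n-1};s_n,y_n)$ on the simplex $s_1<\cdots<s_n<t$, which after iteration and symmetrization yields the explicit formula
\begin{equation*}
 f_n(s_1,y_1,\dots,s_n,y_n;t,x)
 =\frac{1}{n!}\,p_{t-s_{\rho(n)}}(x-y_{\rho(n)})\prod_{i=1}^{n-1}p_{s_{\rho(i+1)}-s_{\rho(i)}}(y_{\rho(i+1)}-y_{\rho(i)})\,u_0(y_{\rho(1)}),
\end{equation*}
where $\rho$ is the permutation of $\{1,\dots,n\}$ reordering the $s_i$ increasingly.

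The core of the proof consists in showing that these candidate kernels define a convergent expansion in $L^2(\Omega)$. Denote by $\tilde{f}_n$ the restriction of $n!\,f_n$ to the simplex $T_n(t)$; by orthogonality of the $n!$ disjoint simplices in $L^2$ of the time variables, one has $n!\|f_n(\cdot,t,x)\|_{\HH^{\otimes n}}^2 = \|\tilde f_n(\cdot,t,x)\|_{\HH^{\otimes n}}^2$. I would then compute the spatial Fourier transform of $\tilde f_n$ on the simplex. Changing variables to $v_1=y_1$, $v_k=y_k-y_{k-1}$ for $k\ge 2$ and using the identity $\cf p_s(\xi)=e^{-\frac{\kappa}{2}s\xi^2}$, one obtains
\begin{equation*}
 \cf_{y_1,\dots,y_n}\tilde f_n(s,\xi_1,\dots,\xi_n;t,x)
 = e^{-ix(\xi_1+\cdots+\xi_n)}\,\cf u_0(\xi_1+\cdots+\xi_n)\prod_{k=1}^{n} e^{-\frac{\kappa}{2}(s_{k+1}-s_k)(\xi_k+\xi_{k+1}+\cdots+\xi_n)^2}
\end{equation*}
with the convention $s_{n+1}=t$. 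Using the inner product \eqref{eq: H_0 element H prod} on each spatial variable, this gives
\begin{equation*}
 \|\tilde f_n(\cdot,t,x)\|_{\HH^{\otimes n}}^2
 = c_{1,H}^n\int_{T_n(t)}\int_{\RR^n}|\cf u_0(\xi_1+\cdots+\xi_n)|^2 \prod_{k=1}^{n} e^{-\kappa(s_{k+1}-s_k)\eta_k^2}|\xi_k|^{1-2H}\,d\xi\,ds,
\end{equation*}
with $\eta_k=\xi_k+\cdots+\xi_n$. A triangular-type bound $|\xi_k|^{1-2H}\le|\eta_k|^{1-2H}+|\eta_{k+1}|^{1-2H}$ followed by the change of variables $\eta_k\mapsto \xi$ and integration of the Gaussians (using $\int_\RR e^{-a\xi^2}(1+|\xi|^{1-2H})d\xi\le C(a^{-1/2}+a^{H-1})$) reduces the estimate to bounding
\begin{equation*}
 \int_{T_n(t)}\prod_{k=1}^{n}(s_{k+1}-s_k)^{-\alpha_k}\,ds
\end{equation*}
for exponents $\alpha_k\in\{\tfrac12,1-H\}$ with $\sum\alpha_k\le n(1-H)$. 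Lemma~\ref{lem:intg-simplex} then yields the key bound
\begin{equation*}
 n!\,\|f_n(\cdot,t,x)\|_{\HH^{\otimes n}}^2\le C^n\,\frac{t^{nH}}{\Gamma(nH+1)}\,\lp\int_\RR (1+|\xi|^{\frac12-H})|\cf u_0(\xi)|\,d\xi\rp^2,
\end{equation*}
which, in view of \eqref{cond:fu0}, is summable in $n$ because $\Gamma(nH+1)$ outgrows $C^n t^{nH}$. This proves that $u(t,x):=\sum_{n\ge 0}I_n(f_n(\cdot,t,x))$ is a well-defined element of $L^2(\Omega)$, and a further check using the same estimates (applied to $p_{t-s}(x-y)u(s,y)$) shows that this integrand lies in $\laa_H$, whence $u$ solves~\eqref{spde} in the mild sense.

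Uniqueness is then almost immediate: any solution $u\in\laa_H$ admits a chaos expansion, and substituting it into the mild equation and identifying chaos of order $n$ on both sides forces its kernels to satisfy the same recursion, hence to coincide with the $f_n$ constructed above. The main technical obstacle is the Fourier computation in Step 2: one must handle the cumulative frequencies $\eta_k$ produced by the chain of heat kernels without losing control of the spectral weight $\prod_k|\xi_k|^{1-2H}$, and here the restriction $H>\frac14$ enters decisively, since otherwise the exponent $nH$ in the denominator $\Gamma(nH+1)$ would be too small to absorb the contribution of the simplex integrals.
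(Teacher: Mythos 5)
Your overall skeleton --- chaos expansion, explicit kernels on the simplex, a spatial Fourier computation, reduction to simplex integrals handled by Lemma \ref{lem:intg-simplex}, and Gamma-function asymptotics --- is exactly the paper's. However, there is a genuine gap at the heart of your Step 2: the Fourier transform formula you write for $\tilde f_n$ is false for non-constant $u_0$. The kernel has \emph{two} spatial anchors, the factor $(p_{s_1}u_0)(y_1)$ tied to $y_1$ and the factor $p_{t-s_n}(x-y_n)$ tied to $y_n$; as a result its spatial Fourier transform is not a product containing $\cf u_0(\xi_1+\cdots+\xi_n)$, but a \emph{convolution} over an auxiliary frequency:
\[
\cf \tilde f_n(s,\xi_1,\dots,\xi_n;t,x)
=\frac{1}{2\pi}\int_\RR \cf u_0(\zeta)\, e^{-\frac{\kappa}{2}s_1\zeta^2}\,
e^{-ix(\xi_1+\cdots+\xi_n-\zeta)}
\prod_{k=1}^{n} e^{-\frac{\kappa}{2}(s_{k+1}-s_k)\lp(\xi_1+\cdots+\xi_k)-\zeta\rp^2}\,d\zeta ,
\]
with $s_{n+1}=t$. (Check $n=1$: the transform of the pointwise product $p_{t-s_1}(x-y)\,(p_{s_1}u_0)(y)$ is a convolution of Fourier transforms; your product formula is correct only when $u_0$ is constant, in which case the $\zeta$-integral collapses.) This is not cosmetic. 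With your formula, the $u_0$-contribution to $\|\tilde f_n\|^2_{\HH^{\otimes n}}$ enters as $\int_\RR |\cf u_0(\eta)|^2(\cdots)\,d\eta$, an $L^2$-weighted quantity on $\cf u_0$, which is neither equal to nor implied by the hypothesis \eqref{cond:fu0}, an $L^1$-weighted condition. The paper keeps the $\zeta$-variable, writes $\|f_n\|^2$ as a double integral over $(\zeta,\zeta')$ as in \eqref{eq:expression-norm-fn}, and applies a Cauchy--Schwarz inequality in $(\zeta,\zeta')$ to arrive at \eqref{eq:bnd-fn-L2-1} and then at the key bound \eqref{eq:bnd-H-norm-fn}, whose right-hand side is $\lp\int_\RR (t^{H-1/2}\kappa^{H-1/2}+|\zeta|^{1/2-H})|\cf u_0(\zeta)|\,d\zeta\rp^2$ up to the factorial-decaying constant. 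That Cauchy--Schwarz step in the auxiliary frequencies is the missing idea in your plan.

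A second, related slip concerns the exponents. After the triangle bound $|\xi_k|^{1-2H}\le |\eta_k|^{1-2H}+|\eta_{k+1}|^{1-2H}$, a single variable $\eta_k$ can receive contributions from two adjacent factors, hence a weight $|\eta_k|^{2(1-2H)}$; the simplex exponents produced by the Gaussian integration therefore lie in $\{\tfrac12,\,1-H,\,\tfrac32-2H\}$, not in $\{\tfrac12,\,1-H\}$ as you state. The worst exponent $\tfrac32-2H$ is precisely where $H>\tfrac14$ is used: one needs $\tfrac32-2H<1$ for the simplex integral to be finite. It is \emph{not} needed for $\Gamma(nH+1)$ to dominate $C^n t^{nH}$ --- that holds for every $H>0$ --- so your closing explanation misidentifies where the constraint $H>\tfrac14$ enters the argument.
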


\begin{remark}
(i) The formulation of Theorem \ref{thm:exist-uniq-chaos} yields the definition of our solution $u$ for all $(t,x)\in\ott\times\R$. This is in contrast with Theorem \ref{thm:exist-uniq-picard} which gives a solution sitting in $\dot{H}^{\frac12-H}_0$ for every value of $t$, and thus defined a.e. in $x$ only.

(ii) Condition \eqref{cond:fu0} is satisfied by constant functions.
\end{remark}

\begin{proof}[Proof of Theorem \ref{thm:exist-uniq-chaos}]

Suppose that $u=\{u(t,x), \, t\geq 0, x \in \R^d\}$ is a solution to equation~\eqref{eq:mild-formulation sigma} in $\laa_{H} $. Then according to \eref{eq:chaos-dcp}, for any fixed $(t,x)$ the random variable $u(t,x)$ admits the following Wiener chaos expansion
\begin{equation}\label{eq:chaos-expansion-u(tx)}
u(t,x)=\sum_{n=0}^{\infty}I_n(f_n(\cdot,t,x))\,,
\end{equation}
where for each $(t,x)$, $f_n(\cdot,t,x)$ is a symmetric element in
$\HH^{\otimes n}$. Furthermore, we have seen that It\^o and Skorohod's integral coincide for processes in $\laa_{H}$. Hence, thanks to~\eqref{eq:delta-u-chaos} and using an iteration procedure, one can find an
explicit formula for the kernels $f_n$ for $n \geq 1$. Indeed, we have:
\begin{multline}\label{eq:expression-fn}
f_n(s_1,x_1,\dots,s_n,x_n,t,x)\\
=\frac{1}{n!}p_{t-s_{\si(n)}}(x-x_{\si(n)})\cdots p_{s_{\si(2)}-s_{\si(1)}}(x_{\si(2)}-x_{\si(1)})
p_{s_{\si(1)}}u_0(x_{\si(1)})\,,
\end{multline}
where $\si$ denotes the permutation of $\{1,2,\dots,n\}$ such that $0<s_{\si(1)}<\cdots<s_{\si(n)}<t$
(see, for instance,  formula (4.4) in \cite{HN} or  formula (3.3) in \cite{HHNT}).
Then, to show the existence and uniqueness of the solution it suffices to prove that for all $(t,x)$ we have
\begin{equation}\label{chaos}
\sum_{n=0}^{\infty}n!\|f_n(\cdot,t,x)\|^2_{\HH^{\otimes n}}< \infty\,.
\end{equation}
The remainder of the proof is devoted to prove  relation \eqref{chaos}.

Starting from relation \eqref{eq:expression-fn}, some elementary Fourier computations show that
\begin{eqnarray*}
\cf f_n(s_1,\xi_1,\dots,s_n,\xi_n,t,x)&=&
\frac{c_{H}^n}{n!}  \int_\RR
\prod_{i=1}^n e^{-\frac{\kappa}{2}(s_{\si(i+1)}-s_{\si(i)})|\xi_{\si(i)}+\cdots +
\xi_{\si(1)} -\zeta|^2} \\
&&\times { e^{-ix (\xi_{\sigma(n)}+ \cdots + \xi_{\sigma(1)}-\zeta)}} \mathcal{F}u_0(\zeta) e^{-\frac {\ka s_{\sigma(1)}|\zeta|^2} 2} d\zeta,
\end{eqnarray*}
where we have set $s_{\si(n+1)}=t$.
Hence, owing to formula \eref{eq: H_0 element H prod} for the norm in $\HH$ (in its Fourier mode version), we have
\begin{multline}\label{eq:expression-norm-fn}
n!\| f_n(\cdot,t,x)\|_{\HH^{\otimes n}}^2 =\frac{c_H^{2n}
}{n!} \, \int_{[0,t]^n}\int_{\RR^n}\bigg|   \int_\RR \prod_{i=1}^n
e^{-\frac {\kappa}{2} (s_{\si(i+1)}-s_{\si(i)})|\xi_i+\cdots +\xi_1-\zeta |^2} { e^{-ix (\xi_{\sigma(n)}+ \cdots + \xi_{\sigma(1)}-\zeta)}} \\ 
\mathcal{F}u_0(\zeta) e^{-\frac {\kappa s_{\sigma(1)}|\zeta|^2} 2} d\zeta \bigg|^2 
 \times  \prod_{i=1}^n  |\xi_i |^{1-2H} d\xi ds\,,
\end{multline}
where $d\xi$ denotes $d\xi_1 \cdots d\xi_n$ and similarly for $ds$.
Then using the change of variable $\xi_{i}+\cdots + \xi_{1}=\eta
_{i}$, for all $i=1,2,\dots, n$ and a linearization of the above expression, we obtain
\begin{multline*}
n!\| f_n(\cdot,t,x)\|_{\HH^{\otimes n}}^2 = \frac{c_H^{2n}
}{n!}\int_{[0,t]^n}\int_{\RR^n}
 \int_{\RR^2}\prod_{i=1}^n
e^{-\frac{\kappa}{2}(s_{\si(i+1)}-s_{\si(i)})(|\eta_{i}-\zeta|^2+|\eta_i-\zeta^{\prime}|^2)} \mathcal{F}u_0(\zeta) \overline{\mathcal{F}u_0(\zeta^{\prime})} \\
\times { e^{ix(\zeta -\zeta')}}e^{-\frac{\kappa s_{\sigma(1)}(|\zeta|^2+|\zeta^{\prime}|^2)}{2}} \prod_{i=1}^n|\eta_{i}-\eta_{i-1}|^{1-2H} d\zeta d\zeta^{\prime} d\eta ds\,,
\end{multline*}
where we have set $\eta_{0}=0$. Then we  use Cauchy-Schwarz inequality and bound the term $\exp(-\kappa s_{\sigma(1)}(|\zeta|^2+|\zeta^{\prime}|^2)/2)$
by $1$ to get
\begin{multline*}
n!\| f_n(\cdot,t,x)\|_{\HH^{\otimes n}}^2 \le
\frac{c_H^{2n}}{n!}
 \int_{\RR^2} \left ( \int_{[0,t]^n} \int_{\RR^n} \prod_{i=1}^n
e^{- \kappa (s_{\si(i+1)}-s_{\si(i)})|\eta_{i}-\zeta|^2}\prod_{i=1}^n|\eta_{i}-\eta_{i-1}|^{1-2H}d\eta ds \right)^{\frac{1}{2}} \\
\times \left ( \int_{[0,t]^n} \int_{\RR^n} \prod_{i=1}^n
e^{- \kappa (s_{\si(i+1)}-s_{\si(i)})|\eta_{i}-\zeta^{\prime}|^2}\prod_{i=1}^n|\eta_{i}-\eta_{i-1}|^{1-2H}d\eta ds \right)^{\frac{1}{2}}
\left|\mathcal{F}u_0(\zeta)\right| \left|\mathcal{F}u_0(\zeta^{\prime})\right| d\zeta d\zeta^{\prime}.
\end{multline*}
Arranging the integrals again, performing the change of variables $\eta_{i}:=\eta_{i}-\zeta$ and invoking the trivial bound $|\eta_{i}-\eta_{i-1}|^{1-2H}\le |\eta_{i-1}|^{1-2H}+|\eta_{i}|^{1-2H}$, this yields
\begin{equation}\label{eq:bnd-fn-L2-1}
n!\| f_n(\cdot,t,x)\|_{\HH^{\otimes n}}^2 \le \frac{c_H^{2n}
}{n!} \Bigg(\int_{\RR} L_{n,t}^{\frac{1}{2}}(\zeta) \left
|\mathcal{F}u_0(\zeta)\right|d\zeta\Bigg)^2 ,
\end{equation}
where
\begin{multline*}
L_{n,t}(\zeta) \\
=
\int_{[0,t]^n} \int_{\RR^n} \prod_{i=1}^n
e^{-\kappa (s_{\si(i+1)}-s_{\si(i)})|\eta_{i}|^2} (|\zeta|^{1-2H}+|\eta_1|^{1-2H})
\times \prod_{i=2}^n(|\eta_{i}|^{1-2H}+|\eta_{i-1}|^{1-2H})d\eta ds.
\end{multline*}
Let us expand the product $\prod_{i=2}^{n} (|\eta_{i}|^{1-2H}+|\eta_{i-1}|^{1-2H})$ in the integral defining $L_{n,t}(\zeta)$. We obtain an expression of the form $\sum_{\al\in D_{n}} \prod_{i=1}^{n} |\eta_{i}|^{\al_{i}}$, where $D_{n}$ is a subset of multi-indices of length $n-1$. The complete description of $D_{n}$ is omitted for sake of conciseness, and we will just use the following facts: $\text{Card}(D_{n})=2^{n-1}$ and for any $\al\in D_{n}$ we have
\begin{equation*}
|\al|\equiv \sum_{i=1}^{n} \alpha_i = (n-1)(1-2H),
\quad\text{and}\quad
\al_{i} \in \{0, 1-2H, 2(1-2H)\}, \quad i=1,\ldots, n.
\end{equation*}
This simple expansion yields the following bound
\begin{multline*}
L_{n,t}(\zeta)
\leq|\zeta|^{1-2H}\sum_{\alpha \in D_{n}} \int_{[0,t]^n} \int_{\RR^n} \prod_{i=1}^n
e^{-\kappa (s_{\si(i+1)}-s_{\si(i)})|\eta_{i}|^2} \prod_{i=1}^n |\eta_i|^{\alpha_i}d\eta ds\\
+\sum_{\alpha \in D_{n}} \int_{[0,t]^n}\int_{\RR^n}\prod_{i=1}^n e^{-\kappa (s_{\si(i+1)}-s_{\si(i)})|\eta_{i}|^2} |\eta_1|^{1-2H} \prod_{i=1}^n |\eta_i|^{\alpha_i}d\eta ds\,.
\end{multline*}
Perform the change of variable $\xi_{i}= (\kappa (s_{\sigma(i+1)}-s_{\sigma(i)}))^{1/2} \eta_{i}$ in the above integral, and notice that $\int_{\R} e^{- \xi^{2}} |\xi|^{\alpha_i}d\xi$ is bounded by a constant
for $\alpha_i>-1$. Changing the integral over $\ot^{n}$ into an integral over the simplex, we get
\begin{eqnarray*}
L_{n,t}(\zeta)&\leq& C |\zeta|^{1-2H} n! c_H^n \sum_{\alpha \in D
_{n}} {
\int_{T_n(t)}\prod_{i=1}^n
(\kappa(s_{i+1}-s_{i}))^{-\frac{1}{2}(1+\alpha_i)}ds.}\\
&&+C n! c_H^n \sum_{\alpha \in D
_{n}} {
\int_{T_n(t)}(\kappa(s_{2}-s_{1}))^{-\frac{2-2H+\alpha_1}{2}}\prod_{i=2}^n
(\kappa(s_{i+1}-s_{i}))^{-\frac{1}{2}(1+\alpha_i)}ds.}
\end{eqnarray*}
We observe that whenever $\frac{1}{4}< H < \frac{1}{2}$, we have $\frac12(1+\al_{i})<1$ for all $i=2,\ldots n$, and it is easy to see that $\alpha_1$ is at most $1-2H$ so $\frac{1}{2}(2-2H+\alpha_1)<1$. Thanks to Lemma \ref{lem:intg-simplex} and recalling that $\sum_{i=1}^n\alpha_i = (n-1)(1-2H)$ for all $\al\in D_{n}$, we thus conclude that
\begin{equation*}
L_{n,t}(\zeta)
\leq\frac{C (t^{H-\frac{1}{2}}\kappa^{H-\frac{1}{2}}+|\zeta|^{1-2H})n! c_H^n t^{nH}\kappa^{nH-n}}{\Gamma(nH+1)}\,.
\end{equation*}
Plugging this expression into \eqref{eq:bnd-fn-L2-1}, we end up with
\begin{equation}\label{eq:bnd-H-norm-fn}
n!\| f_n(\cdot,t,x)\|_{\HH^{\otimes n}}^2
\leq
\frac{C c_H^n t^{nH}\kappa^{nH-n}}{\Gamma(nH+1)}\left(\int_{\RR}(t^{H-\frac{1}{2}}\kappa^{H-\frac{1}{2}}+|\zeta|^{\frac{1}{2}-H})\left| \mathcal{F}u_0(\zeta)\right| d\zeta\right)^2.
\end{equation}
The proof of \eqref{chaos} is now easily completed thanks to the asymptotic behavior of the Gamma
function and our assumption of $u_0$, and this finishes the existence and uniqueness proof.
\end{proof}

\subsection{Moment bounds}
\label{sec:Anderson.momentbounds}

In this section we derive the upper and lower bounds for the moments of the solution to equation \eref{spde} which allow us to conclude on the intermittency of the solution. We proceed by first getting an approximation result for $u$, and then deriving the upper and lower bounds for the approximation.
\subsubsection{Approximation of the solution}
The approximation of the solution we consider is based on an approximation of the noise $W$, which is defined in \eref{eq:cov-W-epsilon}.
The noise $W_{\ep}$ induces an approximation to the mild formulation of equation \eref{spde}, namely
\begin{equation}\label{appr eq}
u_{\ep}(t,x)=p_t u_0(x) + \int_0^t \int_{\mathbb{R}}p_{t-s}(x-y)u_{\ep}(s,y) \, W_{\ep}(ds,dy) ,
\end{equation}
where the integral is understood (as in Section \ref{sec:picard}) in the It\^o sense. We will start by a formula for the moments of $u_{\ep}$.

\begin{proposition}\label{prop:appro-moments}
Let $W_{\ep}$ be the noise defined by \eqref{eq:cov-W-epsilon}, and
assume $\frac{1}{4}<H<\frac{1}{2}$.
Assume $u_0$ is such that
$\int_{\RR}(1+|\xi|^{\frac{1}{2}-H})|\mathcal{F}u_0(\xi)|d\xi<
\infty$.  Then

\noindent
\emph{(i)} Equation \eqref{appr eq} admits a unique solution.

\noindent
\emph{(ii)} For any integer $n \geq 2$ and $(t,x)\in\ott\times\R$, we have
\begin{equation}\label{appro moment}
\be \left[ u^n_{\varepsilon}(t,x)\right] = { \be_B
\left[\prod_{j=1}^n u_0(x+B_{\kappa t}^j) \exp \left( c_{1,H} \sum_{1\leq j\neq k
\leq n} V_{t,x}^{\ep,j,k}\right)\right],}
\end{equation}
with
\begin{equation}\label{eq:def-V-tx-epsilon}
V_{t,x}^{\ep,j,k}
=
\int_0^t f_{\varepsilon}(B_{ \kappa r}^j-B_{\kappa r}^k)dr
=
\int_0^t \int_{\mathbb{R}}e^{-\varepsilon |\xi|^2} |\xi|^{1-2H} e^{i\xi (B_{\kappa r}^j-B_{\kappa r}^k)} \, d\xi dr .
\end{equation}
In formula \eqref{eq:def-V-tx-epsilon}, $\{ B^j; j=1,\dots,n\}$ is a family of $n$ independent standard Brownian motions  which are also independent of $W$ and $\be_{B}$ denotes  the expected value with respect to the randomness in $B$ only.

\noindent
\emph{(iii)} The quantity  $\be [( u_{\varepsilon}(t,x))^n]$ is uniformly bounded in $\ep$. More generally, for any $a>0$ we have
\begin{equation*}
\sup_{\ep>0}
\be_B \left[ \exp \left( a \sum_{1\leq j\neq k \leq n} V_{t,x}^{\ep,j,k}\right)\right] .
\equiv c_{a}<\infty
\end{equation*}
\end{proposition}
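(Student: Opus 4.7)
For part (i), the key observation is that $e^{-\varepsilon|\xi|^{2}}|\xi|^{1-2H}\in L^{1}(\R)$, so $f_{\varepsilon}$ is a bounded continuous positive-definite function. Hence $W_{\varepsilon}$ is white in time with bounded spatial covariance, falling inside Dalang's classical framework \cite{Dal}. Alternatively, since $\be[W_{\varepsilon}(\varphi)^{2}]\le\be[W(\varphi)^{2}]$ by \eqref{ineq.WeW}, all the Burkholder-type estimates of Section~\ref{sec:momentest} hold uniformly in $\varepsilon$, and a Picard iteration in the space $\mathfrak X^{\frac12-H,p}_{T,\theta,\varepsilon}$ (with $\theta$ large enough and $\varepsilon$ chosen as in the proof of Theorem~\ref{thm:exist with sigma}) produces a unique mild solution to \eqref{appr eq}.

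For part (ii), I would derive a linear parabolic equation for the $n$-point function $v_{n}(t,x_{1},\dots,x_{n}):=\be[\prod_{j=1}^{n}u_{\varepsilon}(t,x_{j})]$ and then invoke the classical Feynman-Kac formula. Taking expectation after multiplying $n$ copies of the mild equation \eqref{appr eq}, and iterating, the covariance identity \eqref{eq:ident-cov-W-ep} (Wiener isometry for $W_{\varepsilon}$, producing the contraction $c_{1,H}f_{\varepsilon}(x_{j}-x_{k})$ every time two stochastic integrals are paired) shows that $v_{n}$ solves, in mild form, the parabolic equation
\begin{equation*}
\partial_{t}v_{n}=\frac{\kappa}{2}\sum_{j=1}^{n}\Delta_{x_{j}}v_{n}+c_{1,H}\sum_{1\le j\neq k\le n}f_{\varepsilon}(x_{j}-x_{k})\,v_{n},\qquad v_{n}(0,\vec x)=\prod_{j}u_{0}(x_{j}).
\end{equation*}
Because the potential $V(\vec x)=c_{1,H}\sum_{j\neq k}f_{\varepsilon}(x_{j}-x_{k})$ is a bounded continuous function, the standard Feynman-Kac representation applies and yields
\begin{equation*}
v_{n}(t,\vec x)=\be_{B}\!\left[\prod_{j}u_{0}(x_{j}+B^{j}_{\kappa t})\exp\!\left(c_{1,H}\!\!\sum_{1\le j\neq k\le n}\!\int_{0}^{t}f_{\varepsilon}(x_{j}+B^{j}_{\kappa r}-x_{k}-B^{k}_{\kappa r})\,dr\right)\right],
\end{equation*}
where $(B^{j})$ are independent Brownian motions. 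Specializing to $x_{1}=\cdots=x_{n}=x$ gives \eqref{appro moment}.

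For part (iii), I would expand the exponential in a power series and estimate term by term. By H\"older's inequality and the symmetry in $(j,k)$ it suffices to produce a bound on $\be_{B}[(V^{\varepsilon,j,k}_{t,x})^{m}]$ that is uniform in $\varepsilon$. Writing $\widetilde B:=B^{j}-B^{k}$, a Brownian motion of variance $2r$, symmetrization gives
\begin{equation*}
\be_{B}(V^{\varepsilon,j,k})^{m}=m!\int_{T_{m}(t)}\be_{B}\prod_{i=1}^{m}f_{\varepsilon}(\widetilde B_{\kappa r_{i}})\,dr.
\end{equation*}
Inserting the Fourier representation of $f_{\varepsilon}$, computing the Gaussian characteristic function of the Brownian increments, and dropping the factor $e^{-\varepsilon|\xi|^{2}}\le 1$, one is left with an integral treated exactly as in the proof of Theorem~\ref{thm:exist-uniq-chaos}: change variables $\eta_{i}=\xi_{i}+\cdots+\xi_{m}$, bound $|\eta_{i}-\eta_{i+1}|^{1-2H}\le|\eta_{i}|^{1-2H}+|\eta_{i+1}|^{1-2H}$, perform the explicit Gaussian integrations in $\eta$, and apply Lemma~\ref{lem:intg-simplex}. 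This yields
\begin{equation*}
\be_{B}(V^{\varepsilon,j,k})^{m}\le m!\,C^{m}\,\kappa^{-m(1-H)}\,\frac{t^{mH}}{\Gamma(mH+1)},
\end{equation*}
uniformly in $\varepsilon$, so that $\be_{B}\exp(a\sum_{j\neq k}V^{\varepsilon,j,k})$ is dominated by a Mittag-Leffler-type series $\sum_{m}(C't^{H})^{m}/\Gamma(mH+1)$, which converges for every $a,t\ge 0$ since $H>0$. The uniform bound on $\be[u_{\varepsilon}^{n}(t,x)]$ then follows by inserting this bound into the representation of part (ii).

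The main technical hurdle is part (ii): one must rigorously justify the passage from the stochastic mild formulation to the Feynman-Kac representation for $v_{n}$. The cleanest route is to iterate the mild equation, identify the resulting infinite series of deterministic iterated convolutions with the Feynman-Kac expansion of $\exp(V)$ against the $n$-dimensional heat semigroup, and observe that the boundedness of $f_{\varepsilon}$ for each fixed $\varepsilon>0$ guarantees absolute convergence and legality of all manipulations; the other two parts are then essentially computational consequences.
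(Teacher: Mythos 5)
Parts (i) and (iii) of your proposal are essentially fine. For (i), the paper simply reruns the chaos-expansion argument of Theorem~\ref{thm:exist-uniq-chaos} (the bound \eqref{eq:bnd-H-norm-fn} holds a fortiori after inserting the factor $e^{-\varepsilon|\xi|^2}\le 1$), but your Picard iteration for fixed $\varepsilon$ is exactly what the paper itself does for the nonlinear approximation \eqref{appr eq sigma}, so it works; note only that Dalang's framework does not literally apply here, since $f_\varepsilon$ is bounded and positive definite but, as the paper remarks, not necessarily a positive function, and that your iteration needs $\sup_x|u_0(x)|+\sup_x\cn_{\frac12-H}u_0(x)<\infty$, which does follow from the assumed Fourier condition via Minkowski's inequality. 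For (iii), your argument (H\"older reduction to a single pair, expansion of the exponential, Gaussian/Fourier computation, Lemma~\ref{lem:intg-simplex}) is the paper's proof almost verbatim.

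Part (ii) is where you genuinely diverge, and it contains the one real gap. The paper never writes a PDE for the $n$-point function: it first establishes, following Proposition 5.2 in \cite{HN}, the Feynman-Kac representation \eqref{eq:feynman-u-ep} of the solution itself, $u_\varepsilon(t,x)=\be_B[\exp(W(A^{\varepsilon}_{t,x})-\tfrac12\alpha^{\varepsilon}_{t,x})]$; then, conditionally on the $n$ auxiliary Brownian motions, $\sum_j W(A^{\varepsilon,B^j}_{t,x})$ is Gaussian, so Fubini and $\be_W[e^G]=e^{\frac12\mathrm{Var}\,G}$ give $\be[u_\varepsilon^n(t,x)]=\be_B[\exp(\sum_{i<j}\langle A^{\varepsilon,B^i}_{t,x},A^{\varepsilon,B^j}_{t,x}\rangle_{\HH})]$, and evaluating the inner products yields \eqref{appro moment}. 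Your moment-PDE-plus-classical-Feynman-Kac route is a legitimate alternative, but the step you present as a computation is precisely the crux: for $n\ge 3$ one cannot obtain the equation for $v_n$ by ``pairing stochastic integrals through the isometry''. The field $u_\varepsilon$ is not Gaussian, and the expectation of a product of three or more It\^o integrals with adapted integrands is not given by any sum over pairings; the identity \eqref{eq:ident-cov-W-ep} handles products of two only (so your derivation is complete for $n=2$ but not beyond). To make it rigorous you need either (a) It\^o's product formula applied to the weak formulation $t\mapsto u_\varepsilon(t,\varphi_j)$, where each factor is a true semimartingale whose mutual brackets produce the potential, followed by a localization $\varphi_j\to\delta_{x_j}$ justified by moment and continuity bounds, or (b) the Dyson-series/diagram-formula identification you sketch at the end, which requires the product formula for multiple Wiener integrals and genuine combinatorial work. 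Either can be completed because $f_\varepsilon$ is bounded, but as written you assert rather than prove this step. Two further remarks: It\^o's bracket sums over unordered pairs, producing the potential $c_{1,H}\sum_{j<k}f_\varepsilon(x_j-x_k)$, which matches the paper's intermediate expression $\exp(\sum_{i<j}\langle A^i,A^j\rangle_{\HH})$, so track the pair-counting in $\sum_{j\neq k}$ carefully; and in terms of trade-offs, the paper front-loads the difficulty into the exponential-functional representation \eqref{eq:feynman-u-ep} and then gets all moments in one line, whereas your route uses only elementary classical Feynman-Kac but pushes all the work into the derivation of the moment PDE.
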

\begin{proof}
The proof of item (i) is almost identical to the proof of Theorem~\ref{thm:exist-uniq-chaos}, and is omitted for sake of conciseness. Moreover, in the proof of (ii) and (iii), we may take $u_0(x)\equiv 1$ for simplicity.

In order to check item (ii), set
\begin{equation}\label{eq:def-Atx}
A_{t,x}^{\varepsilon}(r,y)=
\rho_{\varepsilon}(B_{\kappa (t-r)}^x-y),
\quad\text{and}\quad
\alpha^{\varepsilon}_{t,x}=\|A^{\varepsilon}_{t,x}\|^2_{\HH}.
\end{equation}
Then one can prove, similarly to Proposition 5.2 in \cite{HN}, that $u_{\ep}$ admits a Feynman-Kac representation of the form
\begin{equation}\label{eq:feynman-u-ep}
u_{\varepsilon}(t,x)=\be_B \lc \exp \lp  W (
A_{t,x}^{\varepsilon})-\frac{1}{2}\alpha^{\varepsilon}_{t,x}\rp
\rc\,.
\end{equation}
Now fix an integer $n \geq 2$. According to \eqref{eq:feynman-u-ep} we have
\begin{equation*}
\be \lc  u^n_{\varepsilon}(t,x)\rc=\be_W \lc\prod_{j=1}^n
\be_B\lc \exp \lp   W(A^{\varepsilon,
B^j}_{t,x})-
\frac{1}{2}\alpha_{t,x}^{\varepsilon,B^j}\rp \rc \rc\,,
\end{equation*}
where for any $j=1,\dots,n$,  $A_{t,x}^{\varepsilon,B^j}$ and $\alpha_{t,x}^{\varepsilon,B^j}$ are evaluations of  \eqref{eq:def-Atx} using the Brownian motion $B^j$. Therefore,  since $W(A^{\varepsilon, B^j}_{t,x})$ is a Gaussian random variable conditionally on $B$, we obtain
\begin{eqnarray*}
\be \lc  u^n_{\varepsilon}(t,x)\rc&=&
\be_B \lc
\exp \lp\frac{1}{2}\|\sum_{j=1}^n A_{t,x}^{\varepsilon,B^j}\|^2_{\HH}
-\frac{1}{2}\sum_{j=1}^n \alpha_{t,x}^{\varepsilon,B^j}\rp\rc \notag\\
&=& \be_B \lc
\exp \lp\frac{1}{2}\|\sum_{j=1}^n A_{t,x}^{\varepsilon,B^j}\|^2_{\HH}
-\frac{1}{2}\sum_{j=1}^n \| A_{t,x}^{\varepsilon,B^j}\|^2_{\HH}\rp\rc   \notag\\
&=&\be_B \lc \exp \lp\sum_{1\leq i < j \leq n}\langle
A_{t,x}^{\varepsilon,B^i},
A_{t,x}^{\varepsilon,B^j}\rangle _{\HH}\rp\rc\,.
\end{eqnarray*}
The evaluation of $\langle A_{t,x}^{\varepsilon,B^i}, A_{t,x}^{\varepsilon,B^j}\rangle _{\HH}$ easily yields our claim \eqref{appro moment}, the last details being left to the patient reader.

Let us now prove item (iii), namely
\begin{equation}\label{appro moment finite}
\sup_{\varepsilon > 0} \sup_{t \in [0,T], x \in \mathbb{R}}
\be \lc  u^n_{\varepsilon}(t,x)\rc < \infty\,.
\end{equation}
To this aim, observe first that we have obtained an expression \eqref{appro moment} which does not depend on $x\in\R$, so that the $\sup_{t \in [0,T], x \in \mathbb{R}}$ in \eqref{appro moment finite} can be reduced to a $\sup$ in $t$ only. Next, still resorting to formula \eqref{appro moment}, it is readily seen that it suffices to show that for two independent Brownian motions $B$ and $\tilde{B}$, we have
\begin{equation}\label{eq:bnd-exp-F-t-epsilon}
\sup_{\varepsilon > 0, t\in [0,T]} \be_{B} \lc \exp \left (c \, F_t^{\varepsilon} \right)\rc <\infty,
\quad\text{with}\quad
F_t^{\varepsilon} \equiv
\int_0^t \int_{\mathbb{R}} e^{-\varepsilon |\xi|^2} |\xi|^{1-2H} e^{i \xi (B_{\kappa r}-\tilde{B}_{\kappa r})}d\xi dr,
\end{equation}
for any positive constant $c$.  In order to prove \eqref{eq:bnd-exp-F-t-epsilon}, we expand the exponential and write:
\begin{equation}\label{eq:moments-F-t-epsilon}
\be_{B} \lc \exp (c \, F_t^{\varepsilon})\rc
=\sum_{l=0}^{\infty}\frac{\be_{B} \lc (c \, F_t^{\varepsilon})^l\rc}{l!}\,.
\end{equation}
Next, we have
\begin{align*}
\be_{B} \lc\left( F_t^{\varepsilon}\right)^l\rc&=
\be_{B} \lc \int_{[0,t]^l} \int_{\RR^l}
\prod_{j=1}^l  e^{-i  \xi_j (B_{\kappa r_j}-\tilde{B}_{\kappa r_j})-\varepsilon  |\xi_j|^2} |\xi_j|^{1-2H} d\xi dr \rc \\
&\leq
\int_{[0,t]^l} \int_{\RR^l}
\prod_{j=1}^{l} e^{-\kappa (t-r_{\sigma(l)})|\xi_l+\dots+\xi_1|^2} \, |\xi_j|^{1-2H} \, d\xi dr\,,
\end{align*}
where $\sigma$ is the permutation on $\{1,2,\dots, l\}$ such that $t \geq r_{\sigma(l)} \geq \cdots \geq r_{\sigma(1)}$. We have thus gone back to an expression which is very similar to \eqref{eq:expression-norm-fn}. We now proceed as in the proof of Theorem \ref{thm:exist-uniq-chaos} to show that \eref{appro moment finite} holds true from equation \eqref{eq:moments-F-t-epsilon}.
\end{proof}

Starting from Proposition \ref{prop:appro-moments}, let us take limits in order to get the moment formula for the solution $u$ to equation~\eqref{spde}.

\begin{theorem}\label{THM moment}
Assume $\frac{1}{4}<H<\frac{1}{2}$ and  consider $n\ge 1$, $j,k\in\{1,\ldots,n\}$ with $j\ne k$.
For $(t,x)\in\ott\times\R$,  denote by   $V_{t,x}^{j,k}$  the limit  in $L^2(\Omega)$  as $\ep\rightarrow 0$  of
\begin{equation*}%
V_{t,x}^{\ep,j,k}
=
\int_0^t \int_{\mathbb{R}}e^{-\varepsilon |\xi|^2} |\xi|^{1-2H} e^{i\xi (B_{ \kappa r}^j-B_{\kappa r}^k)}d\xi dr.
\end{equation*}
Then  $\be \lc  u^n_{\varepsilon}(t,x)\rc$ converges as $\varepsilon \to 0$ to $\be [u^n(t,x)]$, which is given by
\begin{equation}\label{moment}
\be[u^n(t,x)] ={ \be_{B}\left[ \prod_{j=1}^n u_0(B^j_{\kappa t}+x)\exp \left(
c_{1,H} \sum_{1\leq j \neq k \leq n} V_{t,x}^{j,k} \right)\right]\, .}
\end{equation}
\end{theorem}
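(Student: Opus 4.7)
The strategy is to pass to the limit $\varepsilon\downarrow 0$ in the moment identity \eqref{appro moment} of Proposition \ref{prop:appro-moments}(ii). Three sub-tasks must be carried out: $L^{2}$-convergence of $u_{\varepsilon}$ to $u$, construction of $V_{t,x}^{j,k}$ as the stated $L^{2}$-limit, and a Vitali-type argument on the right-hand side.

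For the first sub-task, I would establish $u_{\varepsilon}(t,x)\to u(t,x)$ in $L^{2}(\Omega)$ via chaos expansions. Since $W_{\varepsilon}(\varphi)=W(\rho_{\varepsilon}\ast\varphi)$, the $n$-fold Wiener integral $I_{n}^{W_{\varepsilon}}$ translates into the $W$-chaos as $I_{n}^{W}((\rho_{\varepsilon}^{\otimes n})\ast\cdot)$, so the $W$-kernel of $u_{\varepsilon}$ is $f_{n}^{\varepsilon}=\rho_{\varepsilon}^{\otimes n}\ast f_{n}$ with $f_{n}$ given by \eqref{eq:expression-fn}. In Fourier coordinates $\cf f_{n}^{\varepsilon}=e^{-\varepsilon(|\xi_{1}|^{2}+\cdots+|\xi_{n}|^{2})/2}\cf f_{n}$, so $\|f_{n}^{\varepsilon}\|_{\HH^{\otimes n}}\le\|f_{n}\|_{\HH^{\otimes n}}$, and $f_{n}^{\varepsilon}\to f_{n}$ in $\HH^{\otimes n}$ by dominated convergence; together with the summability $\sum_{n}n!\|f_{n}\|_{\HH^{\otimes n}}^{2}<\infty$ from Theorem \ref{thm:exist-uniq-chaos}, this yields the $L^{2}$-convergence. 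Combining with the uniform moment bound $\sup_{\varepsilon>0}\be[|u_{\varepsilon}(t,x)|^{2n}]<\infty$ of Proposition \ref{prop:appro-moments}(iii) gives uniform integrability of $\{u_{\varepsilon}^{n}(t,x)\}$, hence $\be[u_{\varepsilon}^{n}(t,x)]\to\be[u^{n}(t,x)]$.

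For the second sub-task, independence of $B^{j}$ and $B^{k}$ makes $X_{r}:=B^{j}_{\kappa r}-B^{k}_{\kappa r}$ a centered Gaussian process with $\be_{B}[X_{r_{1}}X_{r_{2}}]=2\kappa(r_{1}\wedge r_{2})$. The Gaussian characteristic function reduces $\be_{B}[V_{t,x}^{\varepsilon,j,k}V_{t,x}^{\varepsilon',j,k}]$ to an explicit iterated Fourier integral whose integrand is dominated by $|\xi_{1}\xi_{2}|^{1-2H}$ times the positive-definite factor $\exp(-\kappa[r_{1}(\xi_{1}+\xi_{2})^{2}+(r_{2}-r_{1})\xi_{2}^{2}])$ (for $r_{1}\le r_{2}$). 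After integrating out the Fourier variables, the resulting $(r_{1},r_{2})$-kernel behaves like $r_{1}^{H-1}(r_{2}-r_{1})^{H-1}+r_{1}^{-1/2}(r_{2}-r_{1})^{2H-3/2}$, which is integrable on $[0,t]^{2}$ exactly in the regime $H>\tfrac{1}{4}$. Dominated convergence then shows the Cauchy property as $\varepsilon,\varepsilon'\to 0$ and defines $V_{t,x}^{j,k}\in L^{2}(\Omega_{B})$.

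For the third sub-task, $L^{2}$-convergence of $V_{t,x}^{\varepsilon,j,k}$ yields convergence in probability of $\exp(c_{1,H}\sum_{j\ne k}V_{t,x}^{\varepsilon,j,k})$ to $\exp(c_{1,H}\sum_{j\ne k}V_{t,x}^{j,k})$. Proposition \ref{prop:appro-moments}(iii) applied with $a=2c_{1,H}$ furnishes $\sup_{\varepsilon}\be_{B}[\exp(2c_{1,H}\sum_{j\ne k}V_{t,x}^{\varepsilon,j,k})]<\infty$, so Vitali's theorem upgrades the convergence of the exponentials to $L^{1}(\Omega_{B})$. The assumption $\int_{\RR}(1+|\xi|^{1/2-H})|\cf u_{0}(\xi)|\,d\xi<\infty$ (inherited from Proposition \ref{prop:appro-moments}) yields $u_{0}\in L^{\infty}(\RR)$ by Fourier inversion, so the factor $\prod_{j}u_{0}(x+B^{j}_{\kappa t})$ is uniformly bounded and does not disturb the limit, giving \eqref{moment}. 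The main obstacle is the construction of $V_{t,x}^{j,k}$ itself, since the formal pointwise integrand $|\xi|^{1-2H}e^{i\xi X_{r}}$ is not integrable in $\xi$; the limit exists only in an $L^{2}$-sense and should be viewed as a distributional fractional derivative of the Brownian local time of $X_{r}$, as alluded to in the introduction. The Gaussian covariance computation is what gives this singular limit a rigorous $L^{2}$-meaning, and the constraint $H>\tfrac{1}{4}$ is precisely the integrability threshold that makes the $(r_{1},r_{2})$-integrals convergent.
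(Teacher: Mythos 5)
Your proposal is correct, and its outer skeleton --- passing to the limit in the approximate moment formula \eqref{appro moment}, with Proposition \ref{prop:appro-moments}(iii) supplying the uniform integrability needed on both sides --- coincides with the paper's. The genuine difference lies in how $\lim_{\varepsilon\to 0}u_\varepsilon(t,x)$ is identified with $u(t,x)$. The paper does not compare chaos kernels: it shows $\{u_\varepsilon(t,x)\}_{\varepsilon}$ is Cauchy in $L^2(\Omega)$ by computing the mixed moment $\be\lc u_\varepsilon(t,x)u_{\varepsilon'}(t,x)\rc=\be_B\lc\exp\lp\langle A^{\varepsilon,B^1}_{t,x},A^{\varepsilon',B^2}_{t,x}\rangle_{\HH}\rp\rc$ through the Feynman-Kac representation \eqref{eq:feynman-u-ep}, obtaining a limit $v(t,x)$; it then proves $v$ solves the mild equation by a duality argument (testing against smooth cylindrical $F$ and passing to the limit in $\be\lc Fu_\varepsilon(t,x)\rc=\be\lc F\rc+\be\lc\langle Y^{\varepsilon},DF\rangle_{\HH}\rc$) and invokes the uniqueness of Theorem \ref{thm:exist-uniq-chaos} to conclude $v=u$. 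Your route --- noting that the $W$-chaos kernels of $u_\varepsilon$ are exactly $\rho_\varepsilon^{\otimes n}\ast f_n$, i.e. $\cf f_n$ multiplied by $e^{-\varepsilon(|\xi_1|^2+\cdots+|\xi_n|^2)/2}$, then applying dominated convergence chaos-by-chaos and over the series via $\|f_n^{\varepsilon}\|_{\HH^{\otimes n}}\le\|f_n\|_{\HH^{\otimes n}}$ and \eqref{chaos} --- reaches $u_\varepsilon(t,x)\to u(t,x)$ in $L^2(\Omega)$ in one stroke, with no Feynman-Kac computation and no duality step; the only uniqueness you invoke is Proposition \ref{prop:appro-moments}(i), to identify $u_\varepsilon$ with its own chaos series. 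The trade-off: the paper's argument is insensitive to the availability of explicit kernels and would port to settings where \eqref{eq:expression-fn} is not known in closed form, while yours is shorter precisely because those kernels and their summability are already established in Theorem \ref{thm:exist-uniq-chaos}. Finally, your Gaussian-covariance proof of the $L^2$-convergence of $V^{\varepsilon,j,k}_{t,x}$ --- with the limiting $(r_1,r_2)$-kernel of order $r_1^{H-1}(r_2-r_1)^{H-1}+r_1^{-1/2}(r_2-r_1)^{2H-3/2}$, integrable precisely when $H>\frac14$ --- is more explicit than the paper's one-line appeal to convergence in the $d\xi\otimes dr\otimes d\bp$ sense plus uniform integrability, and it correctly pinpoints where the hypothesis $H>\frac14$ enters this theorem.
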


\begin{proof}

As in Proposition \ref{prop:appro-moments}, we will prove the theorem for $u_0 \equiv 1$ for simplicity. For
any $p\ge 1$ and $1\le j < k \le n$, we can easily prove that
$V_{t,x}^{\ep,j,k}$ converges in $L^{p}(\oom)$ to $V_{t,x}^{j,k}$
defined by
\begin{equation}\label{eq:def-Vtx-jk}
V_{t,x}^{j,k}
=
\int_0^t \int_{\mathbb{R}} |\xi|^{1-2H} e^{i\xi (B_{\kappa r}^j-B_{\kappa r}^k)}d\xi dr.
\end{equation}
Indeed, this is due to the fact that $e^{-\varepsilon |\xi|^2} |\xi|^{1-2H} e^{i\xi (B_{\kappa r}^j-B_{\kappa r}^k)}$ converges to $|\xi|^{1-2H} e^{i\xi (B_{\kappa r}^j-B_{\kappa r}^k)}$ in the $d\xi\otimes dr\otimes d\bp$ sense, plus standard uniform integrability arguments. Now, taking into account relation \eqref{appro moment}, Proposition \ref{prop:appro-moments} and the fact that $
V_{t,x}^{\ep,j,k}$ converges to $V_{t,x}^{j,k}$ in $L^{2}(\oom)$ as $\ep\to 0$, we obtain
\begin{eqnarray}\label{eq:lim-moments-u-epsilon}
\lim_{\ep\to 0} \be \lc  u^n_{\varepsilon}(t,x)\rc
&=&
\lim_{\ep\to 0} \be_B \left[ \exp \left( c_{1,H} \sum_{1\leq j\neq k \leq n} V_{t,x}^{\ep,j,k}\right)\right]  \notag \\
&=&
\be_B \left[ \exp \left( c_{1,H} \sum_{1\leq j\neq k \leq n} V_{t,x}^{j,k}\right)\right].
\end{eqnarray}

To end the proof, let us now identify the right hand side of \eqref{eq:lim-moments-u-epsilon} with $\be [u^n(t,x)]$, where $u$ is the solution to equation \eqref{spde}. For $\ep,\ep'>0$ we write
\[
\be \lc  u_{\varepsilon}(t,x) \, u_{\varepsilon'}(t,x)  \rc=
\be_B \lc  \exp \lp\ \langle A^{\varepsilon,B^1}_{t,x} , A^{\varepsilon',
B^2}_{t,x}  \rangle_{\HH}\rp\rc\, ,
\]
where we recall that $A^{\varepsilon,B}_{t,x}$ is defined by
relation \eqref{eq:def-Atx}. As before we can show that this
converges as $\varepsilon, \varepsilon'$ tend to zero. So,
$u_{\varepsilon}(t,x)$ converges in $L^2$ to some limit $v(t,x)$,
and the limit is actually in  $L^p$ , for all $p \geq 1$. Moreover,
$\be [v^k(t,x)]$ is equal  to the right hand side
of~\eref{eq:lim-moments-u-epsilon}.   {Finally,  for any smooth random
variable $F$ which is a linear combination of $W({\bf
1}_{[a,b]}(s)\varphi(x))$, where $\varphi$ is a $C^{\infty}$
function with compact support,  using }the fact that It\^o's and
Skorohod's integrals coincide on the set $\laa_{H}$, plus the
duality relation \eqref{dual}, we have
\begin{equation}\label{eq:duality-u-varepsilon}
{  \be \lc F u_{\varepsilon}(t,x)\rc
 =\be \lc F\rc+\be \lc \langle  Y^{\ep} ,DF\rangle _{\HH}\rc,}
\end{equation}
where
\begin{equation*}
Y^{t,x}({s,z})= \lp \int_{\mathbb{R}}
p_{t-s}(x-y) \, p_{\varepsilon}(y-z) u_{\varepsilon} (s,y)\, dy \rp \1_{\ot}(s) .
\end{equation*}
Letting $\varepsilon$ tend to zero in equation
\eref{eq:duality-u-varepsilon}, after some easy calculation we
get
\begin{equation*}
\be [F v_{t,x}]= \be[ F]  +\be \lc \langle DF, v
p_{t-\cdot}(x-\cdot)\rangle_{\HH}\rc\,.
\end{equation*}
This equation is valid for any $F \in \mathbb{D}^{1,2}$ by
approximation. So the above equation implies that the process $v$
is the solution of equation \eqref{spde}, and by the uniqueness of
the solution we have $v=u$.
\end{proof}

\subsubsection{Intermittency estimates}
In this section we prove some upper and lower bounds on the moments of the solution which entail the intermittency phenomenon.

\begin{theorem}\label{thm:intermittency-estimates}
Let $\frac{1}{4}<H<\frac{1}{2}$, and consider the solution $u$ to
equation \eqref{spde}. For simplicity we assume that the initial condition is $u_0(x)\equiv 1$.  Let $n \geq 2$ be an integer, $x\in\R$ and
$t\ge 0$. Then there  exist
some positive  constants $c_{1},c_{2},c_{3}$ independent of
$n$, $t$ and $\kappa$ with
$0<c_{1}<c_{2}$ satisfying
\begin{equation}\label{eq:intermittency-bounds}
\exp (c_{1} n^{1+\frac{1}{H}}\kappa^{1-\frac{1}{H}}t)
\leq \be\lc u^n(t,x) \rc
\leq c_{3} \exp \big(c_{2} n^{1+\frac{1}{H}}\kappa^{1-\frac{1}{H}} t\big)\,.
\end{equation}
\end{theorem}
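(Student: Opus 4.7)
Both inequalities in \eqref{eq:intermittency-bounds} start from the moment formula of Theorem~\ref{THM moment}, which for $u_{0}\equiv 1$ reduces to
\[
\be\!\left[u^{n}(t,x)\right]=\be_{B}\!\left[\exp\!\Bigl(c_{1,H}\sum_{1\le j\ne k\le n}V^{j,k}_{t,x}\Bigr)\right],
\]
with $V^{j,k}_{t,x}$ independent of $x$. A Brownian scaling $(B^{j}_{\kappa r})_{r}\stackrel{d}{=}(\sqrt{\kappa}\,B^{j}_{r})_{r}$, followed by the time rescaling $r=ts$ and the Fourier change of variable $\xi=\zeta/\sqrt{t}$, shows that $\sum_{j\ne k}V^{j,k}_{t,x}$ has the same law as $(2\kappa)^{H-1}t^{H}\,\Xi_{n}$ for a random variable $\Xi_{n}$ built on $n$ standard Brownian motions on $[0,1]$; the joint scaling in $(n,\kappa,t)$ of the target exponent is then recovered via the identity $(\kappa^{H-1}t^{H})^{1/H}=\kappa^{1-1/H}t$.

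\textbf{Upper bound.} Rather than controlling $\be_{B}[\exp(\lambda\Xi_{n})]$ directly, I would work with the Wiener chaos expansion $u(t,x)=\sum_{k\ge 0}I_{k}(f_{k}(\cdot,t,x))$ from \eqref{eq:chaos-expansion-u(tx)}. For $u_{0}\equiv 1$, the Fourier computation of the proof of Theorem~\ref{thm:exist-uniq-chaos} (leading to \eqref{eq:bnd-H-norm-fn} via Lemma~\ref{lem:intg-simplex}) specializes to
\[
k!\,\|f_{k}(\cdot,t,x)\|_{\HH^{\otimes k}}^{2}\le \frac{C^{k}\,t^{kH}\kappa^{kH-k}}{\Gamma(kH+1)}.
\]
Combining this with the Gaussian hypercontractivity inequality $\|I_{k}(f_{k})\|_{L^{n}(\Omega)}\le (n-1)^{k/2}\|I_{k}(f_{k})\|_{L^{2}(\Omega)}$ and the triangle inequality in $L^{n}(\Omega)$, I obtain
\[
\|u(t,x)\|_{L^{n}(\Omega)}\le \sum_{k\ge 0}\frac{\bigl(C(n-1)\,t^{H}\kappa^{H-1}\bigr)^{k/2}}{\sqrt{\Gamma(kH+1)}}.
\]
A saddle-point analysis of the term $y^{k}/\sqrt{\Gamma(kH+1)}$ with $y=\bigl(C(n-1)t^{H}\kappa^{H-1}\bigr)^{1/2}$ shows that the logarithm of the maximal term equals $y^{2/H}/2$ at leading order, so the series is bounded by $C\exp\bigl(C'\,n^{1/H}\kappa^{1-1/H}t\bigr)$. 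Raising to the $n$-th power and using $\be[u^{n}(t,x)]\le\|u(t,x)\|_{L^{n}(\Omega)}^{n}$ yields the upper bound in \eqref{eq:intermittency-bounds}, the multiplicative constant $C^{n}$ being absorbed into the exponential through a slight enlargement of $c_{2}$.

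\textbf{Lower bound.} For the matching lower bound I would localize all $n$ Brownian paths inside a common tube of radius $R$. On the event $A_{R}=\bigcap_{j=1}^{n}\{\sup_{s\le \kappa t}|B^{j}_{s}|\le R\}$ one has $|B^{j}_{\kappa r}-B^{k}_{\kappa r}|\le 2R$ for every $r\in[0,t]$, hence $\cos(\xi(B^{j}_{\kappa r}-B^{k}_{\kappa r}))\ge 1/2$ whenever $|\xi|\le c_{0}/R$. Restricting the $\xi$-integration in $V^{j,k}_{t,x}$ to this range (and noting that the imaginary part vanishes by parity in $\xi$) produces the deterministic bound $V^{j,k}_{t,x}\ge c\,t\,R^{2H-2}$ on $A_{R}$. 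Coupling this with the classical Brownian small-ball estimate $\bp(\sup_{s\le T}|B_{s}|\le R)\ge c\exp(-c'\,T/R^{2})$ applied to each of the $n$ independent paths yields
\[
\be[u^{n}(t,x)]\ge c^{n}\exp\bigl(c_{1}\,n(n-1)\,t\,R^{2H-2}-c_{2}\,n\,\kappa t/R^{2}\bigr).
\]
Optimizing the exponent in $R>0$ gives $R^{2H}\sim\kappa/n$, at which both terms become of order $n^{1+1/H}\kappa^{1-1/H}t$; a direct computation shows the positive contribution dominates by a factor proportional to $H$. The prefactor $c^{n}$ is absorbed into the dominant exponent for large $n$, while finitely many small values of $n$ are handled by adjusting constants (and by the trivial lower bound $\be[u^{n}]\ge(\be[u])^{n}=1$ obtained by Jensen when the target exponent is itself close to zero).

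\textbf{The main obstacle} is to match the sharp exponent $n^{1+1/H}$ in both directions. A naive Hölder decoupling of the $n(n-1)$ pair interactions $V^{j,k}_{t,x}$ in the Feynman-Kac exponential would only produce $n^{2/H}$, which is strictly worse than $n^{1+1/H}$ for $H<1$. The chaos-plus-hypercontractivity route bypasses this loss in the upper bound (paying $(n-1)^{k/2}$ per chaos rather than $n^{k}$), and the symmetric $O(n)$ small-ball cost of the tube event is precisely what balances the $O(n^{2})$ pair-interaction gain in the lower bound.
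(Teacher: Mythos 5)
Your upper bound is essentially the paper's own argument: chaos expansion of $u(t,x)$, the kernel estimate \eqref{eq:bnd-H-norm-fn} specialized to $u_0\equiv 1$, hypercontractivity with factor $(n-1)^{k/2}$, and summation of the resulting series (your saddle-point estimate is the same computation the paper performs by quoting the elementary bound $\sum_{k\ge 0}x^{k}/(k!)^{a}\le 2\exp(cx^{1/a})$). No issue there.

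The lower bound, however, contains a genuine gap. You claim that on the tube event $A_{R}$ one has the \emph{deterministic} bound $V^{j,k}_{t,x}\ge c\,t\,R^{2H-2}$ by restricting the $\xi$-integration to $|\xi|\le c_{0}/R$. Discarding the high frequencies is only legitimate if the discarded contribution is non-negative, and here it is not: the integrand $|\xi|^{1-2H}\cos(\xi(B^{j}_{\kappa r}-B^{k}_{\kappa r}))$ has no fixed sign, the tail integral is not even absolutely convergent (recall $1-2H>0$), and the limiting kernel $\lim_{\ep\to 0}\cf^{-1}(e^{-\ep|\xi|^{2}}|\xi|^{1-2H})(z)$ is a \emph{negative} multiple of $|z|^{2H-2}$ for $z\ne 0$ --- this is precisely the failure of positivity of $\Lambda$ for $H<\tfrac12$ emphasized in the introduction. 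Concretely, writing $V^{j,k}_{t,x}$ through the local time $L$ of $B^{j}-B^{k}$, one gets $V^{j,k}_{t,x}=c\int_{\RR}\big(2L(0)-L(y)-L(-y)\big)|y|^{2H-2}\,dy$ with $c>0$; on the part of $A_{R}$ where the two paths stay at mutual distance of order $R$ (say $B^{j}\in[R/4,R]$ and $B^{k}\in[-R,-R/4]$ throughout, an event of positive probability inside $A_{R}$), we have $L(0)=0$ and hence $V^{j,k}_{t,x}<0$. So confinement in a common tube does \emph{not} make the pairwise interactions positive, and the pair-by-pair decomposition cannot yield the claimed bound.

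The paper's proof is built exactly to avoid this obstacle. It works with the regularized solution $u_{\ep}$ and uses the identity $\sum_{1\le j\ne k\le n}e^{i\xi(B^{j}-B^{k})}=\big|\sum_{j=1}^{n}e^{-i\xi B^{j}}\big|^{2}-n$, so that the random part of the exponent is $e^{-\ep|\xi|^{2}}|\xi|^{1-2H}\big|\sum_{j}e^{-i\xi B^{j}_{\kappa r}}\big|^{2}\ge 0$, for which frequency truncation \emph{is} a valid lower-bound operation, minus the explicit deterministic diagonal term $nt\int_{\RR}e^{-\ep|\xi|^{2}}|\xi|^{1-2H}d\xi=c\,nt\,\ep^{-(1-H)}$, which is only linear in $n$. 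On the tube event the truncated square is at least $n^{2}/4$ times the truncated frequency mass, and the optimization is then carried out jointly in the tube width $\eta$ and the regularization $\ep$ (coupled via $\kappa\eta^{2}=\ep^{-(1-H)}$, then $\ep^{-(1-H)}\sim\kappa^{(H-1)/H}n^{1/H}$), which is where the balance you describe between the $O(n^{2})$ gain and the $O(n)$ small-ball cost actually takes place. Finally the bound is transferred from $u_{\ep}$ to $u$ by the term-by-term comparison $\be\lc u_{\ep}^{n}(t,x)\rc\le\be\lc u^{n}(t,x)\rc$, proved by expanding the exponential and using that $\be_{B}e^{iB^{\al}(\xi)}\ge 0$. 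Your small-ball estimate and the scaling $R^{2H}\sim\kappa/n$ are the right ingredients, but without the square-minus-diagonal decomposition (and the regularization-plus-comparison step) the key deterministic lower bound in your argument is false.
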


\begin{remark}
{Observe that the upper bound in \eqref{eq:intermittency-bounds} has already been proven for a general coefficient $\si$ (see \eqref{eq:upp-bnd-Lp-u-sigma-general}). }
\end{remark}

\begin{proof}[Proof of Theorem \ref{thm:intermittency-estimates}]
We divide this proof into upper and lower bound estimates.

\noindent \textit{Step 1: Upper bound.} Recall from equation
\eqref{eq:chaos-expansion-u(tx)} that for $(t,x)\in\R_{+}\times\R$,
$u(t,x)$ can be written as:
$u(t,x)=\sum_{m=0}^{\infty}I_m(f_m(\cdot,t,x))$. Moreover, as a
consequence of the hypercontractivity property on a fixed chaos we
have (see \cite[p. 62]{Nua})
\begin{equation*}
\|I_m(f_m(\cdot,t,x))\|_{L^{n}(\oom)}\leq
(n-1)^{\frac{m}{2}}\|I_m(f_m(\cdot,t,x))\|_{L^{2}(\oom)} \,,
\end{equation*}
and substituting  the above right hand side by the bound \eqref{eq:bnd-H-norm-fn}, we end up with
\begin{eqnarray*}
\|I_m(f_m(\cdot,t,x))\|_{L^{n}(\oom)}
\leq
n^{\frac{m}{2}}\|I_m(f_m(\cdot,t,x))\|_{L^{2}(\oom)}
\leq
\frac{c^{\frac{n}{2}}n^{\frac{m}{2}}t^{\frac{mH}{2}}\kappa^{\frac{Hm-m}{2}}}
{\big[\Gamma(m
H+1)\big]^{\frac{1}{2}}}\,.
\end{eqnarray*}
Therefore recalling again the elementary bound $\sum_{n\ge 0}x^{n}/(n!)^{a}\le 2 \exp(c x^{1/a})$,  we get:
\begin{eqnarray*}
\|u(t,x)\|_{L^{n}(\oom)}
\leq
\sum_{m=0}^{\infty} \|J_m(t,x)\|_{L^{n}(\oom)}
\leq
\sum_{m=0}^{\infty}\frac{c^{\frac{m}{2}}n^{\frac{m}{2}}t^{\frac{mH}{2}}\kappa^{\frac{Hm-m}{2}}}{\big(\Gamma(m
H+1)\big)^{\frac{1}{2}}}\leq c_{1}\exp {\big(c_{2} t n^{\frac{1}{H}} \kappa^{\frac{H-1}{H}}\big)}\,,
\end{eqnarray*}
from which the upper bound in our theorem is easily deduced.

\noindent
\textit{Step 2: Lower bound for $u_{\ep}$.}
For the lower bound, we start from the moment formula \eref{appro moment} for the approximate solution, and write
\begin{multline*}
\be \lc  u^n_{\varepsilon}(t,x)\rc \\
=
\be_{B} \lc \exp \left(c_{1,H}\left[ \int_0^t \int_{\mathbb{R}} e^{-\varepsilon |\xi|^2}
\left| \sum_{j=1}^n e^{-i B_{\kappa r}^j \xi}\right|^2 |\xi|^{1-2H} d\xi dr
-nt \int_{\mathbb{R}} e^{-\varepsilon |\xi|^2} |\xi|^{1-2H} d\xi\right] \right)\rc.
\end{multline*}
In order to estimate the expression above, notice first that the
obvious change of  variable $\la= \ep^{1/2}\xi$ yields
$\int_{\mathbb{R}} e^{-\varepsilon |\xi|^2} |\xi|^{1-2H} d\xi=C
\ep^{-(1-H)}$ for some constant $C$.  Now for an additional arbitrary parameter $\eta>0$,
consider the set
\begin{equation*}
A_\eta=\left\{\om; \, \sup_{1\leq j\leq n}\sup_{0\leq r \leq t}|B_{\kappa r}^{j}(\om)|\leq
\frac{\pi}{3\eta}\right\}.
\end{equation*}
Observe that classical small balls inequalities for a Brownian motion (see (1.3) in \cite{LS}) yield $\bp(A_{\eta})\geq c_{1} e^{-c_{2}  \eta^2 n \kappa t}$ for a large enough $\eta$. In addition, if we assume that $A_{\eta}$ is realized and $|\xi|\le\eta$, some elementary trigonometric identities show that the following deterministic bound hold true: $| \sum_{j=1}^n e^{-i B_{\kappa r}^j \xi}| \ge \frac{n}{2}$.
Gathering those considerations, we thus get
\begin{align*}
\be \lc  u^n_{\varepsilon}(t,x)\rc
&\geq
\exp \left( c_1 n^2 \int_0^t \int_0^{\eta} e^{-\varepsilon |\xi|^2} |\xi|^{1-2H} d\xi dr - c_2 nt \varepsilon^{H-1} \right)
\bp\lp A_\eta \rp \\
&\geq C
\exp \left( c_1 n^2 t  \varepsilon^{-(1-H)} \int_0^{\ep^{1/2}\eta} e^{- |\xi|^2} |\xi|^{1-2H} d\xi  - c_2 nt \varepsilon^{-(1-H)} - c_{3} n \kappa t \eta^{2} \right).
\end{align*}
We now choose the parameter $\eta$ such that $\kappa \eta^2=\varepsilon^{-(1-H)}$, which means in particular that $\eta \to \infty$ as $\varepsilon \to 0$. It is then easily seen that $\int_0^{\ep^{1/2}\eta} e^{- |\xi|^2} |\xi|^{1-2H} d\xi$ is of order $\ep^{H(1-H)}$ in this regime, and some elementary algebraic manipulations entail
\begin{equation*}
\be \lc  u^n_{\varepsilon}(t,x)\rc
\geq C
\exp \left( c_1 n^2 t \kappa^{H-1}\varepsilon^{-(1-H)^2} -c_2 nt\varepsilon^{-(1-H)}\right)
\geq C \exp \left(c_{3} t \kappa^{1-\frac{1}{H}}n^{1+\frac{1}{H}}\right),
\end{equation*}
where the last inequality is obtained by choosing $\varepsilon^{-(1-H)}=c \, \kappa ^{\frac{H-1}{H}}n^{\frac{1}{H}}$ in order to optimize the second expression. We have thus reached the desired lower bound in \eqref{eq:intermittency-bounds} for the approximation $u^{\ep}$ in the regime $\varepsilon=c \, \kappa ^{\frac{1}{H}}n^{-\frac{1}{H(1-H)}}$.

\noindent
\textit{Step 3: Lower bound for $u$.}
To complete the proof, we need to show that for all sufficiently small $\varepsilon$, $\be \lc  u^n_{\varepsilon}(t,x)\rc\leq \be[u^n(t,x)]$. We thus start from equation \eref{appro moment} and use the series expansion of the exponential function as in \eqref{eq:moments-F-t-epsilon}. We get
\begin{equation}\label{eq:expansion-moment-u-epsilon}
\be \lc  u^n_{\varepsilon}(t,x)\rc=
\sum_{m=0}^{\infty} \frac{c_H^m}{m!}  \,
\be _{B} \!\lc \left( \sum_{1\leq j \neq k \leq n} V_{t,x}^{\ep,j,k} \right)^m \rc,
\end{equation}
where we recall that $V_{t,x}^{\ep,j,k}$ is defined by \eqref{eq:def-V-tx-epsilon}. Furthermore, expanding the $m$th power above, we have
\begin{equation*}
\be_{B} \!\lc \left( \sum_{1\leq j \neq k \leq n} V_{t,x}^{\ep,j,k} \right)^m \rc
=
\sum_{\al\in K_{n,m}}  \int_{[0,t]^m} \int_{\mathbb{R}^m}
e^{-\varepsilon \sum_{l=1}^m |\xi_l|^2} \be_{B} \lc e^{i B^{\al}(\xi)} \rc
\prod_{l=1}^m |\xi_l|^{1-2H} \, d\xi dr\,,
\end{equation*}
where $K_{n,m}$ is a set of multi-indices defined by
\begin{equation*}
K_{n,m}=
\lcl
\al=(j_{1},\ldots,j_{m},k_{1},\ldots,k_{m}) \in \{1,\ldots,n\}^{2m} ; \,
j_{l}<k_{l} \text{ for all } l=1,\ldots,n
\rcl,
\end{equation*}
and $B^{\al}(\xi)$ is a shorthand for the linear combination $\sum_{l=1}^m \xi_{l}(B_{\kappa r_{l}}^{j_{l}}-B_{\kappa r_{l}}^{k_{l}})$. The important point here is that $E _{B} e^{iB^{\al}(\xi)}$ is positive for any $\al\in K_{n,m}$. We thus get the following inequality, valid for all $m\ge 1$
\begin{eqnarray*}
\be _{B} \!\lc \left( \sum_{1\leq j \neq k \leq n} V_{t,x}^{\ep,j,k} \right)^m \rc
&\le&
\sum_{\al\in K_{n,m}}  \int_{[0,t]^m} \int_{\mathbb{R}^m}
\be _{B} \lc e^{i B^{\al}(\xi)} \rc
\prod_{l=1}^m |\xi_l|^{1-2H} \, d\xi dr \\
&=&
\be _{B} \!\lc \left( \sum_{1\leq j \neq k \leq n} V_{t,x}^{j,k} \right)^m \rc,
\end{eqnarray*}
where $V_{t,x}^{j,k}$ is defined by \eqref{eq:def-Vtx-jk}. Plugging this inequality back into \eqref{eq:expansion-moment-u-epsilon} and recalling expression \eqref{moment} for $\be [u^n(t,x)]$, we easily deduce that $\be [(u^n_{\ep}(t,x)] \le \be[u^n(t,x)]$, which finishes  the proof.
\end{proof}

\end{document}